\newtheorem{theorem}{Theorem}[section]
\newtheorem{remark}{Remark}[section]
\newtheorem{lemma}[theorem]{Lemma}
\newtheorem{pro}{Proposition}[section]
\newtheorem{cor}[theorem]{Corollary}
\newcommand{\ltwo}{_{L^2}^2}
\renewcommand{\div}{ {\rm div }  }
\newcommand{\na}{\nabla }
\newcommand{\pa}{\partial}
\renewcommand{\r}{\mathbb{R}}
\newcommand{\dis}{\displaystyle}
\newcommand{\ia}{\int_0^T}
\newcommand{\bt}{{\hat\theta}}
\newcommand{\bl}{\begin{lemma}}
\newcommand{\el}{\end{lemma}}
\newcommand{\et}{\end{theorem}}
\newcommand{\ga}{\gamma}
\newcommand{\curl}{{\rm curl} }
\newcommand{\te}{\theta}
\newcommand{\al}{\alpha}
\newcommand{\de}{\delta}
\newcommand{\ve}{\varepsilon}
\newcommand{\la}{\label}
\newcommand{\p}{p(\rho)  }
\newcommand{\ka}{\kappa}
\newcommand{\bn}{\begin{eqnarray}}
\newcommand{\en}{\end{eqnarray}}
\newcommand{\bnn}{\begin{eqnarray*}}
\newcommand{\enn}{\end{eqnarray*}}
\newcommand{\bnnn}{\begin{eqnarray*}}
\newcommand{\ennn}{\end{eqnarray*}}
\newcommand{\ba}{\begin{aligned}}
\newcommand{\ea}{\end{aligned}}
\newcommand{\be}{\begin{equation}}
\newcommand{\ee}{\end{equation}}
\def\O{\Omega}
\def\p{\partial}
\def\norm[#1]#2{\|#2\|_{#1}}
\def\o{\omega}
\newcommand{\ep}{\varepsilon}
\newcommand{\n}{\rho}
\newcommand{\si}{\sigma}
\def\la{\label}
\def\na{\nabla}
\def\on{\hat\rho}
\def\tn{1}
\def\xl{\left}
\def\xr{\right}
\title{
Global Classical Solutions to Full Compressible Magnetohydrodynamic System with Large Oscillations and Vacuum in 3D Exterior Domains}
\date{}
\begin{document}

\author{Yunkun Chen$^a$, Yi Peng$^b$,  Xue Wang$^c$\thanks{
		Email addresses: cyk2013@gznu.edu.cn (Y. K. Chen), pengyi16@mails.ucas.ac.cn (Y. Peng), xuewa@amss.ac.cn (X. Wang)} \\[3mm]
a. Anshun University, Anshun 561000,    P. R. China; \\
{\normalsize b. College of Mathematics and Physics,}\\
{\normalsize Beijing University of Chemical Technology,
Beijing 100029,  P. R. China;} \\{\normalsize c.   School of Mathematical Sciences,}\\
{\normalsize  University of Chinese Academy of Sciences, Beijing 100049, P. R. China}}

\maketitle

\begin{abstract}
  The full compressible magnetohydrodynamic system in three-dimensional exterior domains   is  investigated. For the initial-boundary-value problem of this system with slip boundary condition for the velocity, adiabatic one for the temperature, and perfect
 one for the magnetic field,   the global existence and uniqueness of classical solutions is established, under the condition that the initial data are of small energy but possibly large oscillations. In particular, the initial density and temperature are both allowed to vanish. Moreover,   the large-time behavior of the classical solutions is also obtained.
\end{abstract}

\textbf{Keywords}:  full compressible magnetohydrodynamic system; vacuum; large oscillations; exterior domains; classical soluions.

\section{Introduction}

The motion of a viscous, compressible, and heat conducting magnetohydrodynamic (MHD) flow in a three-dimensional(3D) spatial domain $\Omega\subset\r^{3}$ can be described by the full compressible MHD system (see \cite{hw2008}):
\begin{equation}\label{h0}
\begin{cases}
\rho_t+ \mathop{\mathrm{div}}\nolimits(\rho u)=0,\\
(\rho u)_t+\mathop{\mathrm{div}}\nolimits(\rho u\otimes u)+ \nabla P=\mu\Delta u+(\mu+\lambda)\nabla\mathop{\mathrm{div}}u+(\na\times H)\times H,\\
\frac{R}{\gamma-1}((\rho \theta)_t+\div(\n\theta u))+P\mathop{\mathrm{div}} u=\kappa\Delta\theta\!+\!\lambda (\mathop{\mathrm{div}} u)^2\!+\!2\mu |\mathbb{D}(u)|^2\!+\!\nu|\mathop{\rm curl} H|^2, \\
H_t -\nabla \times (u \times H)=-\nu \nabla \times (\nabla \times H),
\\
\mathop{\mathrm{div}}\nolimits H=0,
\end{cases}
\end{equation}
where  $t\ge 0$ is time, $x\in \Omega$ is the spatial coordinate, and $\n$, $u=\left(u^1,u^2,u^3\right)^{\rm tr},$    $\te $, $P=R\n\te(R>0),$  and $H=\left(H^1,H^2,H^3\right)^{\rm tr}$ represent respectively the fluid density, velocity, absolute temperature, pressure,   and magnetic field.
The viscosity coefficients $\mu$ and $\lambda$  are constants satisfying the physical restrictions:
\be\la{h3} \mu>0,\quad 2 \mu + 3\lambda\ge 0.\ee
The heat-conductivity coefficient $\ka$ and magnetic resistivity coefficient $\nu$ both are  positive constants. $\ga>1$  is the adiabatic constant.

Let $\Omega=\r^3-\bar{D}$ be the exterior of a simply connected bounded  smooth domain $D\subset\r^3$, we impose on the system \eqref{h0}   the following initial data:
\be \la{h2}
(\rho,\n u, \n\te,H)(x,{t=0})=(\rho_0,\n_0u_0,\n_0\te_0,H_0)(x), \quad x\in \Omega,
\ee
 the  boundary conditions:
\be \la{h1}\begin{cases}
u\cdot n=0,~\mbox{curl} u\times n=0, ~~\text{on}~\p\Omega \times(0,T),\\ \na \theta\cdot n=0,~H\cdot n=0,~\mbox{curl} H\times n=0~~\text{on}~\p\Omega \times(0,T),\end{cases}
\ee
and the far field behavior:
\be \la{ch2} (\n,u,\te,H)(x,t)\rightarrow (1,0,1,0)\,\,\text{as}\,\, |x|\rightarrow\infty, \ee
where $n =(n^1, n^2, n^3)^{\rm{tr}}$ is the unit outward normal vector on $\partial\Omega$. Indeed, for fixed $\O$, there exists a positive constant  $d$ such that $\bar{D}\subset B_d$, then one can extend the unit outer normal $n$ to $\Omega$ such that $n\in C^\infty(\bar{\Omega})$ and $n\equiv 0  \mbox{ on }\r^3\setminus  B_{2d}$. The method of extension is not unique, and we fix one through out the paper.

There are a lot of literatures on the well-posedness and dynamic behavior of the solutions to the  compressible MHD system  due to its physical importance and mathematical challenges.
The  one-dimensional problem has been studied extensively by many people, see \cite{cw2002,ko1982,Wang2003,hss2021} and the references therein.
For the multi-dimensional case, the local existence of strong or classical solutions with initial vacuum for isentropic or full compressible MHD system  are showed in \cite{fy2009,lh2016,tg2016,xh2017,lh2015}.
The global classical solutions to the 2D Cauchy problem of compressible MHD system were first proved by Kawashima \cite{k1984} for the initial data close to a non-vacuum equilibrium in $H^3$-norm. Chen-Tan \cite{ct} extended this result to the 3D Cauthy problem. See also the generalizations to the  Cauthy problem  \cite{pg}  or the exterior domains \cite{llz2021} for the full compressible MHD system.

On the other hand, when vacuum state appears, the problem becomes more complicated and difficult.  Hu-Wang \cite{hw2008,hw2010} gave the global existence of renormalized weak solutions  with large data in 3D bounded domains. See  the relevant results in \cite{sh2012,lyz2013,df2006} for isentropic case or non-isentropic case.
The global existence and uniqueness of classical solutions to the  Cauchy problem of 3D isentropic compressible MHD system was established by Li et al. \cite{lxz2013}, where the initial energy is small but oscillations maybe  large  and the vacuum state is allowed. Later, the result was generalized by Hong et al. \cite{hhpz2017} to the large initial data with $\gamma-1$ and $\nu^{-1}$ are suitably small. For the 2D case, Lv et al. \cite{lsx2016} obtained the global existence and uniqueness of classical solutions  and some better a priori decay rates. Recently, Chen et al. \cite{chs2020-mhd} derived the global classical solutions to isentropic compressible MHD system with slip boundary conditions in 3D bounded domains for the regular initial data with small energy but possibly large oscillations and vacuum. More recently, the global well-posedness of strong and weak solutions of the isentropic compressible MHD system in 2D bounded domains with large initial data and vacuum was investigated by Chen et al. \cite{chs2021-mhd}.
As for the full compressible MHD system, the global strong solutions to the 3D Cauchy problem was investigated by Liu-Zhong \cite{lz2020-mhd,lz2021-fmhd,lz2022-fmhd} and Hou et al. \cite{hjp2022} under some certain small conditions, referring  \cite{fl2020,ls2019,tw2018,ww2017,zhu2015,ls2021} for more results to  compressible non-resistive or inviscid MHD equations.
Considering the exterior domains problem with slip boundary conditions, Chen et al. \cite{chs} obtained the global existence of classical solutions with small energy but possibly large oscillations to baratropic compressible MHD system.
Then the main aim of this paper is to extend this result to full compressible MHD system \eqref{h0}, which in fact is based \cite{C-L-L,lll} for compressible Navier-Stokes equations with slip boundary conditions in exterior domains and  \cite{ccw} for full compressible MHD system with slip boundary conditions in  bounded domains.

Before stating the main result, we first introduce the notations and conventions used throughout this paper.
For $1\le p\le \infty $ and integer $k\ge 0,$  we adopt the following notations for the standard homogeneous and inhomogeneous  Sobolev spaces:
\be\ba\notag \begin {cases}
L^p=L^p(\Omega),\quad W^{k,p}=W^{k,p}(\Omega),\quad H^k=W^{k,2},\\
 D^{k,p}=\{\left.f\in L^1_{loc}(\O)\right|\na^{k}f \in L^p(\O)\},\quad D^k=D^{k,2},\\
 H_s^2= \left.\left\{f\in H^2 \right | f\cdot n=0,\,\curl f\times n=0 \rm \,\,{on}\,\, \p\O\right\}.\end{cases}\ea\ee

Define the initial energy  $C_0$  as follows:
\be\la{e}\ba  C_0 =\int_{\Omega}\left(\frac{1}{2}\rho_0|u_0|^2\!+\! R (1\!+\!\rho_0\log\rho_0\!-\!\rho_0)\!+\!\frac{R}{\gamma-1}\rho_0(\theta_0\!-\!\log\te_0\!-\!1)\!+\!\frac{1}{2}|H_0|^2 \right)dx.\ea\ee
Then the  main result in this paper can be stated  as follows:

\begin{theorem}\la{th1}
Let $\Omega=\r^3-\bar{D}$ be the exterior of a simply connected bounded smooth domain $D\subset\r^3$.
For  given numbers $M>0$ (not necessarily
small), $q\in (3,6),$
  $\on> 2,$ and $\bt>1,$
suppose that the initial data $(\n_0,u_0,\te_0,H_0)$ satisfies \be\ba
\la{co3} \n_0-1\in H^2&\cap W^{2,q}, \quad  (u_0,H_0) \in H^2_s, \quad
\te_0-1\in H^1,\\
& \div H_0=0,\quad \na \te_0\cdot n|_{\p\O}=0 , \ea\ee
\be \la{co4} 0\le\inf\rho_0\le\sup\rho_0<\hat{\rho},\quad 0  \le\inf\te_0\le\sup\te_0\le \bt, \quad \|\na u_0\|_{L^2}+\|\na H_0\|_{L^2}\le M,
   \ee
   and the compatibility condition
\be
\la{co2}-\mu \Delta u_0-(\mu+\lambda)\na\div u_0+R\na (\n_0\te_0)-(\na\times H_0)\times H_0
=\sqrt{\n_0} g \ee
with  $g\in L^2. $ Then there exists a positive constant $\ve$
depending only
 on $\mu,$ $\lambda,$ $\nu$, $\ka,$ $ R,$ $ \ga,$  $\on,$ $\bt$, $\O$, and $M$ such that if
 \be
 \la{co14} C_0\le\ve,
   \ee  the   problem  (\ref{h0})--(\ref{ch2})
admits a unique global classical solution $(\rho,u,\te,H)$ in
   $\Omega\times(0,\infty)$ satisfying
  \be\la{h8}
  0\le\rho(x,t)\le 2\hat{\rho},\quad \te(x,t)\ge 0,\quad x\in \Omega,\,~~ t\ge 0,
  \ee
 and \be
\la{h9}\begin{cases}
\rho-1\in C([0,T];H^2\cap W^{2,q}),\\
(u,H)\in C([0,T]; H^2),\, \te-1   \in  C((0,T];H^2),\\
  u \in    L^2(0,T;H^3)\cap L^\infty(\tau,T; H^3\cap W^{3,q}),\\
\te-1 \in   L^\infty(0,T; H^1)\cap L^2(0,T;H^2)\cap L^\infty(\tau,T; H^4),\\
H\in   L^2(0,T;H^3)\cap L^\infty(\tau,T; H^4),\\
  (u_t,\te_t,H_t)\in
L^{\infty}(\tau,T;H^2)\cap H^1(\tau,T;H^1), \end{cases} \ee for any $0<\tau<T<\infty.$
Moreover, the following large-time behavior holds:
 \be\la{h11}
  \lim_{t\rightarrow \infty}\left( \|\n(\cdot,t)-1\|_{L^p} +\|\na u(\cdot,t)\|_{L^r }+\|\na \te(\cdot,t)\|_{L^r }+\|\na H(\cdot,t)\|_{L^r }\right)=0,  \ee
  for any $p\in  (2 ,\infty)$ and  $r\in [2,6)$.
\end{theorem}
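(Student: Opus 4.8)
The plan is to follow the by-now standard Hoff--Huang--Li--Matsumura continuation framework, adapted to the exterior domain with slip boundary conditions and the magnetic field. First I would establish local existence and uniqueness of a classical solution on a short interval $(0,T_0]$ by a linearization/fixed-point argument, using the compatibility condition \eqref{co2} to obtain the initial regularity of $u_t$; this is routine given the smoothness of $\partial\Omega$. The real content is the \emph{a priori} estimates: assuming a classical solution on $[0,T]$ with
\be\notag
0\le\rho\le 4\hat\rho,\quad \int_0^T\!\!\int_\Omega\left(|\nabla u|^2+|\nabla H|^2\right)dx\,dt\le 2M^2,\quad \int_0^T\!\!\int_\Omega\rho|\dot u|^2\,dx\,dt\le\text{(fixed bound)},
\ee
one shows that, provided $C_0\le\ve$ is small enough, all these bounds in fact improve to the strict versions (e.g. $\rho\le 2\hat\rho$), so the solution extends globally. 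The heart of the argument is therefore the closed system of energy-type inequalities.

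The key steps, in order, are: (i) the basic energy identity, giving $L^\infty_tL^1_x$ control of $\tfrac12\rho|u|^2$, the entropy-type potential $R(1+\rho\log\rho-\rho)$, $\tfrac{R}{\gamma-1}\rho(\theta-\log\theta-1)$, and $\tfrac12|H|^2$ in terms of $C_0$; (ii) the crucial first-order estimate controlling $\int_\Omega|\nabla u|^2\,dx + \int_\Omega|\nabla H|^2\,dx$ together with the ``material derivative'' dissipation $\int_0^T\!\int_\Omega(\rho|\dot u|^2+|H_t|^2+|\nabla\times H|_t\text{-type terms})$, obtained by testing the momentum equation with $\dot u=u_t+u\cdot\nabla u$ and the magnetic equation with $H_t$, and carefully handling the boundary terms generated by $\mathrm{curl}\,u\times n=0$ and $H\cdot n=0,\ \mathrm{curl}\,H\times n=0$ (these are controlled using that $n$ is compactly supported near $\partial\Omega$ and the geometry of $\partial D$, as in \cite{chs,C-L-L,lll,ccw}); (iii) the weighted/time-weighted higher estimates on $\nabla\dot u$, $\nabla u_t$, $\nabla H_t$, and on $\nabla\theta$, $\sqrt\rho\,\dot\theta$ via the temperature equation tested by $\dot\theta$ (using $\nabla\theta\cdot n=0$), which close only after exploiting smallness of $C_0$ and the Gagliardo--Nirenberg/Sobolev inequalities on the exterior domain; (iv) the pointwise upper bound $\rho\le 2\hat\rho$ via Zlotnik-type ODE arguments along particle trajectories, using the effective viscous flux $F=(2\mu+\lambda)\mathrm{div}\,u-(P-1)-\tfrac12|H|^2$ and its elliptic estimates; (v) propagation of the $H^2\cap W^{2,q}$ regularity of $\rho-1$ and the remaining regularity in \eqref{h9}, plus the time-weighted estimates that yield $H^3$, $H^4$, $W^{3,q}$ bounds away from $t=0$. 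The large-time behavior \eqref{h11} then follows from the uniform bounds plus the integrability in time of the dissipation, via a standard argument showing $\|\nabla u\|_{L^2}^2+\|\nabla H\|_{L^2}^2\in L^1(0,\infty)$ with bounded derivative, hence $\to 0$, and then interpolation for $L^r$, $r\in[2,6)$; the density decay in $L^p$ comes from the continuity equation and the velocity decay.

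I expect the main obstacle to be step (ii)--(iii) in the exterior-domain setting: unlike the whole space or a bounded domain, one must simultaneously (a) control boundary integrals arising from integration by parts against $\dot u$ and $H_t$ under the slip/perfect-conductor conditions, which requires the careful extension of $n$ to $\Omega$ with $n\equiv 0$ outside $B_{2d}$ and identities like $\int_{\partial\Omega}(u\cdot\nabla n\cdot u)$ being controlled by interior norms, and (b) handle the lack of a Poincar\'e inequality on $\Omega$ by using the far-field conditions \eqref{ch2} and working with $\rho-1$, $\theta-1$, $H$ in the right homogeneous/inhomogeneous spaces, so that Sobolev embeddings $D^{1,2}\hookrightarrow L^6$ and the elliptic estimates for the Lam\'e and $\mathrm{curl}\text{-}\mathrm{div}$ systems on exterior domains apply with constants independent of $T$. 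The coupling of $H$ into the momentum and temperature equations adds the terms $(\nabla\times H)\times H$ and $\nu|\mathrm{curl}\,H|^2$, which are treated as in \cite{chs,ccw}: the magnetic energy is absorbed into the same hierarchy because $H$ satisfies a parabolic equation with the same structure, and the key new products are estimated by H\"older and the smallness of $C_0$. Once the closed set of estimates is in hand, the continuation argument and the passage from strong to classical regularity (via \eqref{co2} and parabolic bootstrapping) are standard.
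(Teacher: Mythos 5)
Your plan is essentially the right skeleton, but it glosses over the single point that makes the full (non-isentropic) MHD problem with slip boundary conditions genuinely harder than the barotropic analogue, and as written your step (i) fails. Testing $(\ref{h0})_2$, $(\ref{h0})_3$, $(\ref{h0})_4$ against $u$, $1-\theta^{-1}$, $H$ does \emph{not} give $E(t)\le C_0$: with the slip conditions $u\cdot n=0$, $\mathrm{curl}\,u\times n=0$ the dissipation that appears on the left is the $\theta$-weighted quantity $\int\theta^{-1}\bigl(\lambda(\mathrm{div}\,u)^2+2\mu|\mathbb{D}(u)|^2+\nu|\mathrm{curl}\,H|^2\bigr)\,dx$, while the integration by parts in the momentum equation produces an unweighted term proportional to $\int|\nabla u|^2\,dx$ on the right (this is \eqref{la2.7} in the paper). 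Unlike in the whole space or with no-slip conditions, the identity $\int|\mathrm{curl}\,u|^2+2\int(\mathrm{div}\,u)^2=2\int|\mathbb{D}(u)|^2$ is broken by boundary contributions, and the $\theta$-weight prevents absorbing the right side into the left. Consequently the basic energy is \emph{not} conserved and $E(t)\le CC_0$ is simply not available a priori; one only gets $E(t)\le C_0+2\mu\int_0^T\|\nabla u\|_{L^2}^2\,dt$, which is useless without first knowing $\int_0^T\|\nabla u\|_{L^2}^2\,dt$ is small.

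This forces a different bootstrap architecture than the one you propose. You assume $\int_0^T\!\!\int(|\nabla u|^2+|\nabla H|^2)\le 2M^2$ — bounded but not small — whereas the argument only closes if this time-integral is itself of order a small power of $C_0$. The paper therefore introduces the energy-like functional $A_2(T)$ (see \eqref{AS1}) into the bootstrap assumptions with $A_2(T)\le 2C_0^{1/4}$, uses that to get the ``weak'' bound $E(t)\le CC_0^{1/4}$ (Lemma \ref{a13.1}), and then has to \emph{re-establish} the $O(C_0)$-level basic energy on the short interval $[0,\sigma(T)]$ (Lemma \ref{a13}) by first proving $\int_0^{\sigma(T)}\|\theta\|_{L^\infty}\,dt\le C$ and then applying Gr\"onwall to \eqref{a2.22}; this in turn relies on the $A_3,A_4$ time-weighted estimates for $\dot u,\dot\theta,H_t$ obtained beforehand. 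Only then can the $A_2$ estimate itself be closed (Lemma \ref{le3}), by a careful split of $\int(\theta-1)(|\nabla u|^2+|\nabla H|^2)\,dx$ between short time (where the $O(C_0)$ energy and the $L^2$-bound on $\theta-1$ from \eqref{a2.17} are available) and large time (where time-weights and the uniform bounds on $\sigma\|\nabla u\|_{L^6}$ etc.\ are used). Your outline jumps directly from ``basic energy gives $C_0$-control'' to the first-order estimates, so the chain of smallness that is actually needed — $A_2$ small $\Rightarrow$ weak energy $\Rightarrow$ $A_3,A_4$ small $\Rightarrow$ $\theta\in L^1_t L^\infty_x$ $\Rightarrow$ short-time $O(C_0)$ energy $\Rightarrow$ $A_2$ improved — is entirely absent, and without it neither the density upper bound (step (iv)) nor the higher-order closure (step (iii)) goes through with a $C_0$-independent constant.

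A second, smaller point: in the full system the effective viscous flux should be $G=(2\mu+\lambda)\mathrm{div}\,u-R(\rho\theta-1)-\tfrac12|H|^2$, and the Zlotnik ODE for $\rho$ along particle paths needs $\|\theta-1\|_{L^\infty}$, $\|G\|_{L^\infty}$, $\|H\|_{L^\infty}^2$ in $L^1(0,\sigma(T))\cap L^2(\sigma(T),T)$; again these integrability bounds come precisely from the $A_3,A_4$ machinery and from \eqref{p1}, so they also depend on first resolving the energy issue. The rest of your plan — local well-posedness, the boundary-term handling via $u=u^\perp\times n$ and the compact support of the extended normal, the div--curl elliptic estimates on exterior domains, higher-order propagation, and the large-time behavior via $L^1$-in-time dissipation plus bounded time-derivative — matches the paper and is sound.
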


Next, as a direct application of \eqref{h11}, the following Corollary \ref{th3} indicates the  large-time behavior of the gradient of density  when the vacuum state appears  initially. The proof is analogue to that of     \cite[Theorem 1.2]{hulx}.

\begin{cor} \la{th3}
In addition to the conditions  of Theorem \ref{th1}, assume further
that there exists some point $x_0\in \Omega$ such that $\rho
_0(x_0)=0.$ Then  the unique global    classical solution
$(\rho,u,\te,H)$ to the   problem (\ref{h0})--(\ref{ch2})
obtained in Theorem \ref{th1} has to blow up as $t\rightarrow
\infty,$ in the sense that for any $r>3,$
\be\notag\lim\limits_{t\rightarrow \infty}\|\nabla \rho(\cdot,t)
\|_{L^r }=\infty.\ee
\end{cor}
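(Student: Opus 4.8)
The plan is to argue by contradiction along the lines of \cite[Theorem 1.2]{hulx}. Suppose the conclusion fails; then there exist a sequence $t_k\rightarrow\infty$ and a constant $C$ such that $\|\na\rho(\cdot,t_k)\|_{L^r}\le C$ for all $k$, where $r>3$ is fixed. Combining this bound with the uniform bound $0\le\rho\le 2\hat\rho$ from \eqref{h8} and the Gagliardo--Nirenberg/Morrey embedding $W^{1,r}\hookrightarrow C^{0,1-3/r}(\bar\O)$ valid for $r>3$, one obtains that the family $\{\rho(\cdot,t_k)\}$ is uniformly bounded in $C^{0,\alpha}(\bar\O)$ with $\alpha=1-3/r>0$. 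In particular the oscillation of $\rho(\cdot,t_k)$ is controlled: for any $x,y\in\O$, $|\rho(x,t_k)-\rho(y,t_k)|\le C|x-y|^{\alpha}$.

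Next I would exploit the hypothesis that $\rho_0(x_0)=0$ at some interior point together with the large-time behavior \eqref{h11}, which gives $\|\rho(\cdot,t)-1\|_{L^p}\rightarrow 0$ as $t\rightarrow\infty$ for every $p\in(2,\infty)$. The key structural fact is that the initial vacuum is propagated: since the density satisfies the continuity equation $\rho_t+\div(\rho u)=0$ with $u$ sufficiently regular (indeed $u\in L^\infty(\tau,T;H^3\cap W^{3,q})$ so the associated particle flow $X(t;0,\cdot)$ is a well-defined diffeomorphism of $\bar\O$ for each finite $t$), the density along the characteristic issuing from $x_0$ stays zero for all finite time; that is, $\rho(X(t;0,x_0),t)=0$ for every $t\ge 0$. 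Thus at each time $t_k$ there is a point $y_k:=X(t_k;0,x_0)\in\bar\O$ with $\rho(y_k,t_k)=0$.

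Now the contradiction: by the uniform Hölder bound just established, for every $x$ with $|x-y_k|\le\eta$ we have $\rho(x,t_k)\le C\eta^{\alpha}$, so choosing $\eta$ small but fixed (independent of $k$) makes $\rho(\cdot,t_k)\le 1/2$ on the ball $B_\eta(y_k)\cap\O$, a set whose measure is bounded below by a positive constant $c_0$ independent of $k$ (since $\eta$ is fixed and $\O$ is an exterior domain with smooth boundary). Hence
\[
\|\rho(\cdot,t_k)-1\|_{L^p}^p\ge \int_{B_\eta(y_k)\cap\O}|\rho-1|^p\,dx\ge c_0\,2^{-p}>0,
\]
which contradicts $\|\rho(\cdot,t_k)-1\|_{L^p}\rightarrow 0$ from \eqref{h11}. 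Therefore no such bounded subsequence exists and $\|\na\rho(\cdot,t)\|_{L^r}\rightarrow\infty$ as $t\rightarrow\infty$.

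The main obstacle—and the only point requiring genuine care rather than routine estimates—is justifying that the vacuum point genuinely persists, i.e. that the characteristic flow is well-defined and that $\rho$ vanishes identically along the trajectory through $x_0$ for all finite $t$. This needs the regularity $u\in L^1_{loc}([0,\infty);W^{1,\infty})$ (which follows from $u\in L^\infty(\tau,T;W^{3,q})$ with $q>3$ together with the short-time control near $t=0$ built into \eqref{h9}), plus the standard fact that solving $\rho_t+u\cdot\na\rho=-\rho\,\div u$ along characteristics gives $\rho(X(t),t)=\rho_0(x_0)\exp\big(-\int_0^t\div u(X(s),s)\,ds\big)=0$. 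An alternative that avoids characteristics entirely is to note that $\log(2\hat\rho)-\log\rho$ is bounded below and, via the renormalized continuity equation, propagates unboundedness of $\|\na\rho\|_{L^r}$ directly; but the characteristic argument is cleaner and is exactly what \cite{hulx} uses, so I would follow that route.
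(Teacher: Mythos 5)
Your argument is correct and follows exactly the approach the paper references (Huang--Li--Xin \cite[Theorem 1.2]{hulx}): vacuum persistence along the particle trajectory through $x_0$, Sobolev/Morrey embedding $W^{1,r}\hookrightarrow C^{0,1-3/r}$ for $r>3$ giving a uniform modulus of continuity for $\rho(\cdot,t_k)$, and a contradiction with the $L^p$ decay $\|\rho(\cdot,t)-1\|_{L^p}\to 0$ from \eqref{h11}. The regularity needed for the flow map to be well-defined near $t=0$ is indeed available here, for instance from $u\in L^2(0,T;H^3)\hookrightarrow L^1(0,T;W^{1,\infty})$ in \eqref{h9}, or more directly from $\int_0^T\|\nabla u\|_{L^\infty}^{3/2}dt\le C$ in \eqref{qq1}, so your one flagged concern is harmless.
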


A few remarks are in order:

\begin{remark}
One can deduce from (\ref{h9}) and Sobolev imbedding  theorem that for any $0<\tau<T<\infty,$
\be \la{hk1}
(\n-1,\, \na\n, u,H) \in C(\overline{\Omega} \times[0,T] ),\quad   \te-1 \in C(\overline{\Omega} \times[\tau,T] ),\ee
and
\be \ba\notag
&(\na u,\,\na^2 u) \in  C( [0,T];L^2 )\cap L^\infty(\tau,T;  W^{1,q})\hookrightarrow  C(\overline{\Omega} \times[\tau,T] ),\\
(\na \te,\,\na^2 \te&,\,\na H,\,\na^2 H,\,u_t,\,\te_t,H_t) \in  C( [\tau,T];L^2 )\cap L^\infty(\tau,T;  H^2)\hookrightarrow  C(\overline{\Omega} \times[\tau,T] ),
\ea \ee
which combined with  (\ref{h0})  and (\ref{hk1}) implies
\be\notag \n_t\in   C(\overline{\Omega}\times[\tau,T] ).\ee
Therefore, the solution $(\n,u,\te,H)$ obtained in Theorem \ref{th1} is a classical one to  the   problem (\ref{h0})--(\ref{ch2}) in $\Omega\times (0,\infty).$
\end{remark}

\begin{remark}
Theorem \ref{th1} extends the global existence result of the  barotropic compressible MHD system studied in  Chen-Huang-Shi \cite{chs} to the full compressible MHD system, which is the first result concerning the global existence of classical solutions with initial vacuum to problem (\ref{h0})--(\ref{ch2}) in  the exterior domains. Although it's energy is small, the oscillations could be arbitrarily large. 
 \end{remark}

We now comment on the analysis of this paper. 
First we extend the local classical solutions away from vacuum (see Lemma \ref{th0}) globally in all time provided that the initial energy is suitably small (see Proposition \ref{pro2}), then let the lower bound of the initial density tend to zero to obtain the global classical solutions with vacuum.  It turns out that the key issue in this paper is to derive both the uniform  (in time) upper bound for the density and the time-dependent higher-order estimates of the smooth solution $(\rho, u,\theta,H)$, which are all independent of the lower bound of density.

Compared to the isentropic case \cite{chs}, the first difficulty  in this paper lies in the fact that the basic energy $E(t)$ defined in \eqref{enet} is unavailable due to the slip boundary conditions (see \eqref{la2.7}).
To overcome this difficulty, making an   a priori assumption on $A_2(T)$ (see \eqref{3.q2}), we then derive a ``weaker'' basic energy estimate (see Lemma \ref{a13.1})
\be\notag E(t) \le CC_0^{1/4},\ee
which will  cause some essential difficulties in later calculations.

Next,  based on the elaborate analysis of the maximum of temperature and   the structure of \eqref{h0}, we succeed in re-establishing the basic energy in small time $[0,\si(T)]$ and then deriving  the energy-like estimate on $A_2(T)$, which includes the key bounds on the $L^2$-norm (in both time and space) of $(\na u, \na \te,\na H)$,  provided the initial energy is small (see Lemma \ref{le3}).  Here, we make use of some estimates on  $(\dot u, \dot \te,H_t)$ due to Hoff \cite{Hof1} (see also Huang-Li \cite{H-L}), which contributes to  deal with the a priori estimates  $A_3(T)$ as well. However, in this process slip boundary conditions lead to a difficulty once again, i.e. the boundary integral terms (see \eqref{bz8}). In fact, they can be solved by the ideas in \cite{C-L} (see \eqref{bz3}), i.e.  using the fact $u=u^\perp\times n$ on $\p\O$ with $u^\perp =-u\times n$,  the boundary integral terms can only  be bounded by the gradients of velocity and efficient viscous flux. On the other hand,   we adopt some  ideas in  Cai-Li-L\"u \cite{C-L-L} to re-establish the estimates on $(\nabla u,~\div u,~\curl u)$  and $(\na H,~\curl H)$, which are used repeatedly in the whole analysis.

Thus, with the help of the lower estimates , we can derive   the  time-independent upper bound of the density smoothly (see Lemma \ref{le7}) and then obtain the higher-order estimates (see Section 4) under the compatibility condition on the velocity. It's noted that all the a priori estimates are independent of the lower bound of the initial density, thus after a standard approximation procedure, we can arrive at the global existence of classical solutions with vacuum.

The rest of this paper is organized as follows: In Section 2,  some elementary facts and inequalities  will be exhibited. To extend the local solutions to all time, we establish the  lower-order a priori estimates on classical solutions in Section 3 and the higher-order estimates  in Section 4.
Finally, with all a priori estimates at hand, the main result Theorem \ref{th1} is proved in Section 5.

\section{Preliminaries}\la{se2}


We begin with the following well-known local existence theory with strictly positive initial density, which can be proved by the standard contraction mapping argument as in \cite{choe1,Tani,M1}.

\begin{lemma}   \la{th0} Let $\O$ be as in Theorem \ref{th1}. Assume  that
 $(\n_0,u_0,\te_0,H_0)$ satisfies \be \la{2.1}
\begin{cases}
(\n_0-1,u_0,\te_0-1,H_0)\in H^3 \quad \inf\limits_{x\in\Omega}\n_0(x) >0, \quad \inf\limits_{x\in\Omega}\te_0(x)> 0,\\
u_0\cdot n=0,\quad\curl u_0\times n=0,\quad \na \te_0\cdot n =0~~{on}~\p\Omega,\\
H_0\cdot n=0,\quad\curl H_0\times n=0~~{on}~\p\Omega,\quad \div H_0=0.
\end{cases}\ee
Then there exist  a small time
$T_0>0$ and a unique classical solution $(\rho , u,\te,H )$ to the problem  (\ref{h0})--(\ref{ch2}) on $\Omega\times(0,T_0]$ satisfying
\be\la{mn6}
  \inf\limits_{(x,t)\in\Omega\times (0,T_0]}\n(x,t)\ge \frac{1}{2}
 \inf\limits_{x\in\Omega}\n_0(x), \ee and
 \be\la{mn5}
 \begin{cases}
 ( \rho-1,u,\te-1,H) \in C([0,T_0];H^3),\quad
 \n_t\in C([0,T_0];H^2),\\   (u_t,\te_t,H_t)\in C([0,T_0];H^1),
 \quad (u,\te-1,H)\in L^2(0,T_0;H^4).\end{cases}\ee
 \end{lemma}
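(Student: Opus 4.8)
The plan is to use the classical linearization-and-iteration (contraction mapping) scheme, adapting the arguments of \cite{choe1,Tani,M1} to the slip/Neumann boundary conditions in the unbounded exterior domain. First I would set up an iteration. Put $(\n^0,u^0,\te^0,H^0)\equiv(\n_0,u_0,\te_0,H_0)$ (after a mollification of the data if one prefers smooth iterates, removed at the end), and given $(\n^k,u^k,\te^k,H^k)$ define $(\n^{k+1},u^{k+1},\te^{k+1},H^{k+1})$ by solving in succession: the linear transport equation $\n^{k+1}_t+\div(\n^{k+1}u^k)=0$, $\n^{k+1}|_{t=0}=\n_0$; the linear parabolic system for $u^{k+1}$,
\[\ba
&\n^{k+1}u^{k+1}_t+\n^{k+1}u^k\cdot\na u^{k+1}-\mu\dl u^{k+1}-(\mu+\lm)\na\div u^{k+1}\\
&\qquad=-R\na(\n^{k+1}\te^k)+(\na\times H^k)\times H^k,
\ea\]
under $u^{k+1}\cdot n=0$, $\curl u^{k+1}\times n=0$ on $\p\O$; the linear parabolic equation for $\te^{k+1}$ with Neumann condition $\na\te^{k+1}\cdot n=0$, whose zeroth-order coefficient and source are assembled from $(\n^{k+1},u^k,H^k)$; and the linear parabolic equation for $H^{k+1}$, obtained by rewriting the induction equation using $\div H=0$ as $H^{k+1}_t-\nu\dl H^{k+1}=(H^k\cdot\na)u^k-(u^k\cdot\na)H^k-H^k\div u^k$, under $H^{k+1}\cdot n=0$, $\curl H^{k+1}\times n=0$ on $\p\O$. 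Each subproblem is standard: linear transport theory gives $\n^{k+1}-1\in C([0,T];H^3)$ and, by the method of characteristics on a short interval, $\tfrac12\inf\n_0\le\n^{k+1}\le C$; the $H^s$-regularity theory for second-order parabolic systems with the slip (respectively Neumann) boundary conditions on the exterior domain, together with the Hodge-type elliptic estimate bounding $\na u$ by $\div u$ and $\curl u$ plus boundary data, provides the parabolic regularity. We work with $\n-1,\te-1\in H^3$ and $u,H\in H^3$, so the far-field behavior \eqref{ch2} is built into the function spaces.

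Next I would establish uniform-in-$k$ bounds on a short interval $[0,T_0]$, with $T_0$ depending only on the norms $\|\n_0-1\|_{H^3}$, $\|(u_0,\te_0-1,H_0)\|_{H^3}$ and on $\inf\n_0,\inf\te_0$: namely $\|\n^k-1\|_{L^\infty(0,T_0;H^3)}+\|\n^k_t\|_{L^\infty(0,T_0;H^2)}\le C$, $\inf_{\O\times[0,T_0]}\n^k\ge\tfrac12\inf\n_0$, $\|(u^k,\te^k-1,H^k)\|_{L^\infty(0,T_0;H^3)}+\|(u^k,\te^k-1,H^k)\|_{L^2(0,T_0;H^4)}+\|(u^k_t,\te^k_t,H^k_t)\|_{L^\infty(0,T_0;H^1)}\le C$, and, since $\te_0$ is strictly positive, $\te^k\ge\tfrac12\inf\te_0$ on $[0,T_0]$ by a short-time continuity argument. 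These follow from energy estimates: differentiate the equations up to the required order, test against suitable derivatives of the unknowns, integrate by parts, and close via Gronwall after choosing $T_0$ small. The slip boundary conditions produce boundary integrals which I would treat exactly as elsewhere in the paper — controlling $\|u\|_{H^s}$ by $\|\div u\|_{H^{s-1}}+\|\curl u\|_{H^{s-1}}$ plus lower-order and boundary contributions, the latter absorbed through $u=u^\perp\times n$ on $\p\O$ — while the constraint $\div H^{k+1}=0$ is propagated by noting that $\div H^{k+1}$ solves a homogeneous heat equation with zero initial and compatible boundary data, because the right-hand side of the $H$-equation is divergence free and $\div H_0=0$. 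The compatibility of $(\n_0,u_0,\te_0,H_0)$ with the boundary conditions listed in \eqref{2.1} is precisely what yields the $H^3$-in-space regularity with continuity up to $t=0$.

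The third step is to show the iteration contracts in a weaker norm. Writing $\bar\n^k=\n^{k+1}-\n^k$ and similarly $\bar u^k,\bar\te^k,\bar H^k$, I would subtract consecutive linear problems, bound all coefficient terms using the uniform higher-order estimates of the previous step, and derive by Gronwall an inequality $N^k\le C\,T^{a}\,N^{k-1}$ (some $a>0$) for the quantity $N^k$ combining $\sup_{0\le t\le T}\|(\bar\n^k,\bar u^k,\bar\te^k,\bar H^k)(\cdot,t)\|_{L^2}$ and $\|(\bar u^k,\bar\te^k,\bar H^k)\|_{L^2(0,T;H^1)}$. Shrinking $T_0$ so that $CT_0^{a}<1$, the sequence converges in this weaker topology; interpolating with the uniform higher-order bounds, it converges strongly enough to pass to the limit in all the linear equations, and the limit $(\n,u,\te,H)$ solves \eqref{h0}--\eqref{ch2}. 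The uniform bounds pass to the limit by weak-$*$ lower semicontinuity, giving \eqref{mn6} and the regularity classes of \eqref{mn5}; the strong time-continuity $\n-1,u,\te-1,H\in C([0,T_0];H^3)$ follows from weak continuity together with continuity of the top-order norms read off the equations. Uniqueness is obtained by applying the same difference estimate to two solutions with identical data and invoking Gronwall.

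The main obstacle will not be the nonlinear iteration, which is routine once the linear machinery is in hand, but the linear theory itself in the exterior domain under the slip conditions $u\cdot n=0$, $\curl u\times n=0$ (and the analogous conditions for $H$): one needs $H^s$-regularity for the Lam\'e-type operator and for $-\nu\dl$ with these boundary data on an unbounded domain where Poincar\'e's inequality is not available, the Hodge-type decomposition estimate with the correct boundary terms, and the propagation of $\div H=0$. All of these are classical and are the same tools used throughout the paper (cf. \cite{choe1,Tani,M1} and the boundary-condition techniques of \cite{C-L,C-L-L}), so I would develop the iteration and the a priori bounds in detail and quote these linear facts.
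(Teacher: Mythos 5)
The paper does not prove Lemma \ref{th0} itself but cites the standard contraction-mapping/linearization argument of \cite{choe1,Tani,M1}, and your proposal is precisely that argument, adapted to the slip/Neumann/perfectly-conducting boundary conditions in the exterior domain in the same way the rest of the paper handles them. Your outline (iteration on linearized subproblems, uniform short-time $H^3$ bounds, contraction in a weaker norm, propagation of $\div H=0$ via the heat equation) matches the intended proof, so the approach is correct and essentially the same as the paper's.
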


 \begin{remark} Applying the same arguments as in \cite[Lemma 2.1]{H-L} to  the classical solution $(\rho , u,\te,H )$  obtained in Lemma \ref{th0}, one gets
\be \notag\begin{cases} (t u_{t},  t \te_{t},t H_{t}) \in L^2( 0,T_0;H^3)  ,\quad (t u_{tt},  t\te_{tt},tH_{tt}) \in L^2(
0,T_0;H^1), \\  (t^2u_{tt},  t^2\te_{tt},t^2H_{tt}) \in L^2( 0,T_0;H^2), \quad
 (t^2u_{ttt},  t^2\te_{ttt},t^2H_{ttt}) \in L^2(0,T_0;L^2).
\end{cases}\ee
Moreover, for any    $(x,t)\in \Omega\times [0,T_0],$ the following estimate holds:
   \be\la{mn2}
\te(x,t)\ge
\inf\limits_{x\in\Omega}\te_0(x)\exp\left\{-(\ga-1)\int_0^{T_0}
 \|\div u\|_{L^\infty}dt\right\}.\ee
\end{remark}

The following classical Gagliardo-Nirenberg-Sobolev-type inequality (see \cite{fcpm}) will be used later frequently.

\begin{lemma}
 \la{11} Assume that $\Omega$ is the exterior of a simply connected Lipschitz domain $D$ in $\r^3$. For $r\in [2,6]$, $p\in(1,\infty)$, and $ q\in
(3,\infty),$ there exist  positive
 constants $C$ which may depend  on $r,\,p,$ and $q$, such that for $f\in H^1(\Omega)$,
 $g\in L^p(\Omega)\cap D^{1,q}(\Omega)$,  and $\varphi,\,\psi\in H^2(\Omega)$,
 \be
\la{g1}\|f\|_{L^r} \le C \| f\|_{L^2}^{{(6-r)}/{2r}}  \|\na f\|_{L^2}^{{(3r-6)}/{2r}},\ee
\be \la{g2}\|g\|_{C\left(\overline{\Omega}\right)}
       \le C \|g\|_{L^p}^{p(q-3)/(3q+p(q-3))}\|\na g\|_{L^q}^{3q/(3q+p(q-3))},\ee
\be\la{hs} \|\varphi\psi\|_{H^2}\le C \|\varphi\|_{H^2}\|\psi\|_{H^2}.\ee
\end{lemma}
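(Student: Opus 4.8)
The plan is to establish the three estimates \eqref{g1}--\eqref{hs} separately, each by reducing to classical inequalities on $\r^3$ and exploiting that $\O$ is a three-dimensional \emph{exterior} domain (so that the bounded "hole'' $D$ has smooth, hence Lipschitz, boundary and decay at infinity is automatic).

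For \eqref{g1}, I would first record the homogeneous Sobolev embedding $D^{1,2}(\O)\hookrightarrow L^6(\O)$ with $\|f\|_{L^6}\le C\|\na f\|_{L^2}$, valid on exterior Lipschitz domains of $\r^3$: a $D^{1,2}$-function on such a domain possesses a well-defined limit at infinity, and this limit must be zero once $f\in L^2(\O)$, so the usual lower-order term disappears. Granting this, \eqref{g1} for $r\in(2,6)$ follows from the Hölder interpolation $\|f\|_{L^r}\le\|f\|_{L^2}^{\theta}\|f\|_{L^6}^{1-\theta}$ with $\frac1r=\frac{\theta}{2}+\frac{1-\theta}{6}$, i.e. $\theta=\frac{6-r}{2r}$ and $1-\theta=\frac{3r-6}{2r}$, upon bounding $\|f\|_{L^6}^{1-\theta}\le (C\|\na f\|_{L^2})^{1-\theta}$; the endpoints $r=2$ and $r=6$ are trivial and the embedding itself.

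For \eqref{g2}, since $q>3$, Morrey's inequality forces $g$ to have a (locally Hölder) continuous representative with Hölder seminorm controlled uniformly by $\|\na g\|_{L^q(\O)}$; being uniformly continuous and in $L^p(\O)$ on an exterior domain, $g$ vanishes at infinity, so $\|g\|_{C(\overline{\O})}=|g(x_0)|$ for some $x_0\in\overline{\O}$. On a ball of radius $R$ about $x_0$ (or, near $\p\O$, a truncated cone of uniform aperture contained in $\overline{\O}$, available since $\p D$ is smooth) one has $|g(x_0)-\langle g\rangle|\le CR^{1-3/q}\|\na g\|_{L^q}$ and $|\langle g\rangle|\le CR^{-3/p}\|g\|_{L^p}$, hence $|g(x_0)|\le C\bigl(R^{1-3/q}\|\na g\|_{L^q}+R^{-3/p}\|g\|_{L^p}\bigr)$. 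Choosing $R$ so that the two terms balance, $R^{1-3/q}\|\na g\|_{L^q}\sim|g(x_0)|$, and substituting back gives
\be\notag
|g(x_0)|^{1+\frac{3q}{p(q-3)}}\le C\,\|g\|_{L^p}\,\|\na g\|_{L^q}^{\frac{3q}{p(q-3)}},
\ee
which is exactly \eqref{g2} after simplifying the exponents (the left exponent equals $\frac{3q+p(q-3)}{p(q-3)}$); one checks by a routine comparison argument that this balancing $R$ is admissible within the uniform ball/cone radius in all regimes, and $C$ depends on $p,q$ only through the Morrey constant and the optimization.

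For \eqref{hs}, I would use the \emph{inhomogeneous} Sobolev embeddings $H^2(\O)\hookrightarrow L^\infty(\O)$ and $H^1(\O)\hookrightarrow L^4(\O)$, both immediate from a bounded extension operator for the Lipschitz domain $D$ together with the corresponding embeddings on $\r^3$. Expanding $\pa^\al(\varphi\psi)=\sum_{\beta\le\al}\binom{\al}{\beta}\pa^\beta\varphi\,\pa^{\al-\beta}\psi$ for $|\al|\le2$ and placing, in each product, the highest-order factor in $L^2$, a once-differentiated factor in $L^4$, and an undifferentiated factor in $L^\infty$, every term is bounded by $C\|\varphi\|_{H^2}\|\psi\|_{H^2}$; summing over $|\al|\le2$ yields \eqref{hs}. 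The main obstacle is the boundary behavior entering \eqref{g1} and \eqref{g2} — namely the homogeneous Sobolev embedding on the exterior domain and the uniform interior cone property near $\p\O$ — and this is where I would concentrate the effort, the remaining interpolation and optimization being entirely classical.
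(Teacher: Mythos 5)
The paper offers no proof of this lemma at all — it simply cites Crispo--Maremonti \cite{fcpm} — so your proposal is doing genuinely more work, and its architecture is sound. The exponent bookkeeping is correct throughout: $\theta=(6-r)/(2r)$ in the $L^2$--$L^6$ interpolation for \eqref{g1}, the balancing of $R^{1-3/q}\|\na g\|_{L^q}$ against $R^{-3/p}\|g\|_{L^p}$ for \eqref{g2} (whose exponents are exactly those making the inequality scale-invariant), and the Leibniz/Hölder splitting with $H^2\hookrightarrow L^\infty$, $H^1\hookrightarrow L^4$ for \eqref{hs}. You have also correctly isolated the two places where the exterior geometry actually matters: (i) that $f\in L^2(\O)\cap D^{1,2}(\O)$ forces the constant in $\inf_c\|f-c\|_{L^6}\le C\|\na f\|_{L^2}$ to vanish (indeed, if $c\ne 0$ then $\{|f|\le|c|/2\}$ has infinite measure since $f\in L^2$, contradicting $f-c\in L^6$); and (ii) the admissibility of the balancing radius in \eqref{g2}. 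For (ii) I would not call the check entirely "routine," and it is worth recording how it closes: near $\p\O$ the cone/ball radius is capped by some $R_0$, and when the optimal $R$ exceeds $R_0$ one is in the regime $|g(x_0)|>cR_0^{1-3/q}\|\na g\|_{L^q}$, where Morrey's estimate shows $|g|\ge|g(x_0)|/2$ on $B_R(x_0)\cap\O$ for $R\sim(|g(x_0)|/\|\na g\|_{L^q})^{q/(q-3)}$; because the complement $D$ is bounded, $|B_R(x_0)\cap\O|\gtrsim R^3$ for \emph{all} $R$, so $\|g\|_{L^p}\gtrsim|g(x_0)|R^{3/p}$, which rearranges into exactly \eqref{g2}. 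This is the step that genuinely uses that $\O$ is exterior (it fails, e.g., for a thin unbounded slab), and making it explicit would turn your outline into a complete, self-contained proof — a reasonable alternative to the paper's bare citation.
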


Then, as a result of \eqref{g1}, one has the following Sobolev-type  inequality (see  \cite[Lemma 2.3]{lll}), which  provides an estimate on $\te-1$ (see \eqref{a2.17}).
\begin{lemma}
	\la{lll} Let the function $g(x)$ defined in $\Omega$ be non-negative and satisfy $g(\cdot)-1\in L^2(\Omega)$. Then there exists a universal positive constant C such that for $s\in[1,2]$ and any open set $\Sigma\subset\Omega$, the following estimate holds
	\be\la{eee8}\int_\Sigma|f|^sdx\leq C\int_\Sigma g|f|^sdx+C\|g-1\|_{L^2(\Omega)}^{(6-s)/3}\|\na f\|_{L^2(\Omega)}^s,\ee
	for all $f\in\{D^1(\Omega)~\big|~g|f|^s\in L^1(\Sigma)\}$.
\end{lemma}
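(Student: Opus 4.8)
The plan is to reduce the whole estimate to a single pointwise inequality for the weight $g$, together with the homogeneous Sobolev embedding $D^1(\Omega)\hookrightarrow L^6(\Omega)$. Set $\alpha:=\tfrac{6-s}{3}$, so that $\alpha\in[\tfrac43,\tfrac53]$ for $s\in[1,2]$. Since $g\ge0$, I claim the pointwise bound
\[ 1\le 2g+2^{\alpha}|1-g|^{\alpha}\qquad\text{on }\Omega. \]
Indeed, on $\{g\ge\tfrac12\}$ the first term is already $\ge1$, while on $\{g<\tfrac12\}$ one has $|1-g|=1-g>\tfrac12$, hence $2^{\alpha}|1-g|^{\alpha}>1$; in either region one of the two nonnegative terms alone is $\ge1$. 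Multiplying this inequality by $|f|^s\ge0$ and integrating over $\Sigma$ gives
\[ \int_\Sigma|f|^s\,dx\le 2\int_\Sigma g|f|^s\,dx+2^{\alpha}\int_\Sigma|1-g|^{\alpha}|f|^s\,dx\le 2\int_\Sigma g|f|^s\,dx+2^{\alpha}\int_\Omega|1-g|^{\alpha}|f|^s\,dx, \]
so it remains only to estimate the last integral over $\Omega$.

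For that term I would apply H\"older's inequality with the conjugate exponents $\tfrac{6}{6-s}$ and $\tfrac{6}{s}$ (whose reciprocals $\tfrac{6-s}{6}$ and $\tfrac{s}{6}$ sum to $1$, and which lie in $(1,\infty)$ for $s\in[1,2]$):
\[ \int_\Omega|1-g|^{\alpha}|f|^s\,dx\le\Big(\int_\Omega|1-g|^{\frac{6\alpha}{6-s}}\,dx\Big)^{\frac{6-s}{6}}\Big(\int_\Omega|f|^6\,dx\Big)^{\frac{s}{6}}=\|g-1\|_{L^2(\Omega)}^{\alpha}\,\|f\|_{L^6(\Omega)}^{s}. \]
The point of choosing $\alpha=\tfrac{6-s}{3}$ is precisely that $\tfrac{6\alpha}{6-s}=2$, so the first factor becomes a power of the controlled quantity $\|g-1\|_{L^2(\Omega)}$, which is finite by hypothesis. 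Then I invoke the homogeneous Sobolev inequality on the exterior domain, $\|f\|_{L^6(\Omega)}\le C\|\nabla f\|_{L^2(\Omega)}$ for $f\in D^1(\Omega)$ — this is \eqref{g1} with $r=6$, extended from $H^1$ to $D^1$ on exterior domains (and it is also what guarantees $|f|^s\in L^{6/s}(\Omega)$, legitimizing the split above). Combining the two displays, $\int_\Omega|1-g|^{\alpha}|f|^s\,dx\le C\|g-1\|_{L^2(\Omega)}^{(6-s)/3}\|\nabla f\|_{L^2(\Omega)}^{s}$, and since $2^{\alpha}\le 2^{5/3}$ is bounded uniformly for $s\in[1,2]$, the asserted estimate follows with a universal constant.

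The one genuinely delicate point is the use of the embedding $D^1(\Omega)\hookrightarrow L^6(\Omega)$ on the unbounded domain $\Omega$, i.e. that $\|f\|_{L^6(\Omega)}$ is finite and controlled by $\|\nabla f\|_{L^2(\Omega)}$; this is standard for exterior domains (and is implicit in the far-field conditions \eqref{ch2} used throughout the paper), but it deserves to be stated explicitly since $D^1$ is defined here merely via $f\in L^1_{loc}$ and $\nabla f\in L^2$. Everything else is the elementary pointwise inequality $1\le 2g+2^{\alpha}|1-g|^{\alpha}$ together with a single application of H\"older's inequality at the tuned exponent $\alpha=\tfrac{6-s}{3}$; no smallness or absorption argument is needed.
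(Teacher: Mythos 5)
Your argument is correct, and it is essentially the standard proof of this estimate; the paper itself gives no proof here, deferring to \cite[Lemma 2.3]{lll}. Your pointwise inequality $1\le 2g+2^{\alpha}|1-g|^{\alpha}$ with $\alpha=(6-s)/3$ is a compact repackaging of the usual argument, which splits $\Sigma$ into $\{g\ge 1/2\}$ (where $|f|^s\le 2g|f|^s$) and $\{g<1/2\}$ (a set of measure at most $4\|g-1\|_{L^2}^2$ by Chebyshev, after which one applies H\"older and Sobolev). Your exponent bookkeeping is right: $6\alpha/(6-s)=2$, the H\"older pair $\bigl(6/(6-s),6/s\bigr)$ is admissible for $s\in[1,2]$, and $2^{\alpha}\le 2^{5/3}$ keeps the constant universal.

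The caveat you flag about $D^1(\Omega)\hookrightarrow L^6(\Omega)$ is not a formality but the genuinely load-bearing hypothesis, and it is worth being sharper than ``standard for exterior domains.'' With $D^1(\Omega)=\{f\in L^1_{loc}:\nabla f\in L^2\}$ exactly as defined in the paper, the embedding --- and in fact the lemma itself --- fails for $f\equiv 1$: take $g=0$ on a ball $B\subset\Omega$ and $g=1$ elsewhere, and $\Sigma=B$; then the left side equals $|B|>0$ while both terms on the right vanish. For a function in $D^{1,2}$ of an exterior domain one only gets $f-f_\infty\in L^6$ with $\|f-f_\infty\|_{L^6}\le C\|\na f\|_{L^2}$ for some constant $f_\infty$, so the lemma requires the normalization $f_\infty=0$ (equivalently $f\in L^6$, or $f\to 0$ at infinity). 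In every application in this paper the lemma is invoked with $f=\te-1$ and $\te\to 1$ as $|x|\to\infty$, so this normalization is automatic and your proof is complete under that reading; it would be cleaner to state it as an explicit hypothesis rather than leave it implicit in the definition of $D^1$.
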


Next,  to estimate $\|\nabla u\|_{L^\infty}$ for the further higher order estimates, we need the following Beale-Kato-Majda-type inequality \cite{bkm}, which in fact is a critical case of Sobolev inequality,
whose detailed proof is similar to that of the case of slip boundary condition  in \cite[Lemma 2.7]{C-L} (see also \cite{h1x}).
\begin{lemma}\la{le9}
 	Let  $\Omega=\r^3-\bar{D}$ be as in Theorem \ref{th1}. For $3<q<\infty$, assume that $u\in \{ f\in L^1_{loc}|\na f\in L^2(\O)\cap D^{1,q}(\O) \,and \,f\cdot n=0, \curl f\times n=0 \text{ on } \partial \Omega \}$,  then there is a constant  $C=C(q)$ such that
\bnn\ba
\|\na u\|_{L^\infty}\le C\left(\|{\rm div}u\|_{L^\infty}+\|\curl u\|_{L^\infty} \right)\ln(e+\|\na^2u\|_{L^q})+C\|\na u\|_{L^2} +C.
\ea\enn
\end{lemma}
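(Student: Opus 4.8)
The plan is to reduce the estimate to the classical whole-space Beale--Kato--Majda-type inequality
\[
\|\na w\|_{L^\infty(\r^3)}\le C\bigl(\|\div w\|_{L^\infty}+\|\curl w\|_{L^\infty}\bigr)\ln\bigl(e+\|\na^2 w\|_{L^q}\bigr)+C\|\na w\|_{L^2}+C,
\]
which holds whenever $\na w\in L^2(\r^3)\cap D^{1,q}(\r^3)$ with $q>3$ (it is the critical case of Sobolev embedding and a standard consequence of Calder\'on--Zygmund theory, see \cite{bkm,h1x}), by a localization that separates a neighbourhood of $\p\O$ from the exterior part.

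First, fix $d>0$ with $\bar D\subset B_d$ and a cutoff $\chi\in C_c^\infty(\r^3)$ with $\chi\equiv 1$ on $B_{2d}$, and write $u=(1-\chi)u+\chi u$. The field $(1-\chi)u$ vanishes near $\p\O$, so its extension by zero lies in the admissible class on $\r^3$; applying the whole-space inequality to it and expanding, via $\div((1-\chi)u)=(1-\chi)\div u-\na\chi\cdot u$, $\curl((1-\chi)u)=(1-\chi)\curl u-\na\chi\times u$, and the analogous identity for $\na^2((1-\chi)u)$, gives a bound of the required form plus commutator terms supported on the fixed compact set $\mathrm{supp}\,\na\chi$, which sits at a positive distance from $\p\O$ and involves only $u$ and $\na u$ there. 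These are controlled by $\|\na u\|_{L^2}$ and $\|\na^2 u\|_{L^q}$ through the Gagliardo--Nirenberg inequalities \eqref{g1}, \eqref{g2} on $\O$ (using $u\cdot n=0$ to obtain $\|u\|_{L^6(\O)}\le C\|\na u\|_{L^2}$), and are absorbed into $C\|\na u\|_{L^2}+C$ and the logarithmic term after Young's inequality.

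For $\chi u$, which is supported in $\O\cap B_{3d}$, one covers $\p\O$ by finitely many coordinate charts straightening it to pieces of $\{y_3=0\}$, with $\O$ locally $\{y_3>0\}$. In such coordinates the two slip conditions of \eqref{h1} together force the normal component of $u$ to vanish and the normal derivatives of the two tangential components to vanish on $\{y_3=0\}$; extending the tangential components evenly and the normal component oddly across $\{y_3=0\}$ produces a field with gradient in $L^2\cap D^{1,q}$ of the doubled neighbourhood whose divergence and curl --- up to bounded lower-order terms with coefficients governed by the smooth geometry of $\p\O$, arising from the change of variables --- are the even/odd reflections of $\div u$ and $\curl u$, hence have $L^\infty$ and $L^q$ norms unchanged up to constants. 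Applying the whole-space inequality to each localized reflected field and summing over the charts bounds $\|\na(\chi u)\|_{L^\infty}$ by the same right-hand side. Combining the two parts and using $\|\na u\|_{L^q}\le C\|\na u\|_{L^2}^{1-\theta}\|\na^2 u\|_{L^q}^{\theta}$ for some $\theta\in(0,1)$, so that $\ln(e+\|\na^2 u\|_{L^q}+\|\na u\|_{L^q})\le C+C\ln(e+\|\na^2 u\|_{L^q})+C\ln(1+\|\na u\|_{L^2})$ with the last term absorbed into $C\|\na u\|_{L^2}+C$, yields the claim.

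The main obstacle is the near-boundary step: the straightening diffeomorphism does not commute with $\div$ and $\curl$, so the reflected field solves the reflected div--curl system only modulo first-order error terms, and one must verify that these errors, together with the partition-of-unity commutators, never multiply the logarithm --- only $\|\div u\|_{L^\infty}+\|\curl u\|_{L^\infty}$ is permitted there. This is precisely the bookkeeping carried out in \cite[Lemma 2.7]{C-L} for the slip boundary condition, and since it uses only the structure of the boundary conditions \eqref{h1} and the smoothness of $\p\O$, both available here, the argument transfers directly.
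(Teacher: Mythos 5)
Your overall strategy matches what the paper intends: the paper does not supply its own proof but simply defers to \cite[Lemma 2.7]{C-L} (and \cite{h1x}), and your localization--reflection scheme is indeed the skeleton of that argument, with the flat-boundary reflection (tangential components extended evenly, normal component oddly, so that $\div$ reflects evenly and $\curl$ oddly) correctly exploiting both slip conditions in \eqref{h1}.

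There is, however, one step you describe too loosely and which, read literally, would fail. You say you ``apply the whole-space inequality to $(1-\chi)u$ and expand,'' and that the resulting commutator terms ``are controlled by $\|\na u\|_{L^2}$ and $\|\na^2 u\|_{L^q}$ \dots and are absorbed into $C\|\na u\|_{L^2}+C$ and the logarithmic term after Young's inequality.'' If you apply the whole-space Beale--Kato--Majda inequality directly to $w=(1-\chi)u$, then $\div w=(1-\chi)\div u-\na\chi\cdot u$ and $\curl w=(1-\chi)\curl u-\na\chi\times u$, and the compactly supported commutators $\na\chi\cdot u$, $\na\chi\times u$ appear in the $L^\infty$ \emph{coefficient} of the logarithm, not inside it. That coefficient would then contain $\|u\|_{L^\infty(\operatorname{supp}\na\chi)}$, and bounding this by local Sobolev plus interpolation reintroduces a positive power of $\|\na^2 u\|_{L^q}$ multiplying $\ln(e+\|\na^2 u\|_{L^q})$ --- which cannot be absorbed into $C\|\na u\|_{L^2}+C$, nor into the allowed log term, by Young's inequality. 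The correct bookkeeping is finer: one must split at the level of the Riesz-transform (Newtonian potential) representation, so that only the pieces $(1-\chi)\div u$ and $(1-\chi)\curl u$ (whose $L^\infty$ norms are $\le\|\div u\|_{L^\infty}$, $\le\|\curl u\|_{L^\infty}$) feed the $\ln$-interpolation estimate, while the potentials of the compactly supported commutators are estimated \emph{directly} in $L^\infty$ without a logarithm --- here one uses the global div--curl bound \eqref{ok} to get $\|\na u\|_{L^q}\le C(\|\div u\|_{L^q}+\|\curl u\|_{L^q}+\|\na u\|_{L^2})$, then interpolates $L^q$ between $L^2$ and $L^\infty$ for $\div u$, $\curl u$, so that the commutator contribution is controlled by $C(\|\div u\|_{L^\infty}+\|\curl u\|_{L^\infty}+\|\na u\|_{L^2}+1)$ \emph{outside} any logarithm. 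The same caution applies to the lower-order errors from the boundary-straightening step. With this refinement the argument closes; as written, ``Young's inequality'' does not do the job.
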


 The div-curl-type estimates of $\na v$ with  boundary condition $v\cdot n=0$ or $v\times n=0$ on $\partial\Omega$ are obtained in the following Lemmas \ref{crle1}--\ref{crle5}, whose proof can be found in \cite[Theorem 3.2]{vww}, \cite[Theorem 5.1]{lhm}, and \cite[Lemma 2.9]{C-L-L}, respectively.

\begin{lemma}  \cite[Theorem 3.2]{vww} \la{crle1}
Let $\Omega=\r^3-\bar{D}$ be the exterior of a simply connected bounded domain $D\subset\r^3$  with $C^{1,1}$ boundary.
For $v\in D^{1,q}(\O)$ with $v\cdot n=0$ on $\partial\Omega$, it holds that
	\be\la{ljq01}\|\nabla v\|_{L^{q}}\leq C(q,\O)(\|\div v\|_{L^{q}}+\|\curl v\|_{L^{q}}),~~~\,\,\,for\, any\,\, 1<q<3,\ee
\be\la{ok}\|\nabla v\|_{L^q}\leq C(q,\O)(\|\div v\|_{L^q}+\|\curl v\|_{L^q}+\|\nabla v\|_{L^2}),~~~\,\,\,for\, any\,\, 3\leq q<+\infty.\ee
\end{lemma}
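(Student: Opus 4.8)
The plan is to reduce the exterior-domain estimate to two model situations: an interior estimate on a bounded neighbourhood of $\partial D$, and a whole-space estimate far from the obstacle. Fix $d$ with $\bar D\subset B_d$ and choose a cutoff $\xi\in C^\infty_c(B_{2d})$ with $\xi\equiv 1$ on $B_d$; then $w:=\xi v$ is supported in the bounded Lipschitz domain $\Omega\cap B_{2d}$ and still satisfies $w\cdot n=\xi(v\cdot n)=0$ on $\partial D$, while $z:=(1-\xi)v$ extends by zero to a field on all of $\r^3$. Since $\na v=\na w+\na z$, it suffices to bound $\na w$ and $\na z$ separately, controlling the commutator terms $v\otimes\na\xi$, which are supported in the annulus $B_{2d}\setminus B_d$ and are lower order.

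For the far-field part $z$ I would use the Hodge--Laplace identity $-\Delta z=\curl\curl z-\na\div z$ on $\r^3$, so that $\na z$ is, up to the Riesz-transform (Calderón--Zygmund) operators which are bounded on $L^q(\r^3)$ for $1<q<\infty$, expressed through $\div z$ and $\curl z$. Expanding $\div z=(1-\xi)\div v-\na\xi\cdot v$ and $\curl z=(1-\xi)\curl v-\na\xi\times v$ gives $\|\na z\|_{L^q}\le C(\|\div v\|_{L^q}+\|\curl v\|_{L^q}+\|v\|_{L^q(B_{2d}\setminus B_d)})$. On the bounded annulus, for $1<q<3$ the Sobolev embedding $D^{1,q}(\Omega)\hookrightarrow L^{q^\ast}(\Omega)$ with $q^\ast=3q/(3-q)$ (which holds on exterior domains thanks to the decay encoded in $D^{1,q}$) yields $\|v\|_{L^q(B_{2d}\setminus B_d)}\le C\|\na v\|_{L^q}$; for $3\le q<\infty$ one instead estimates $\|v\|_{L^q(B_{2d}\setminus B_d)}$ by $\|\na v\|_{L^2}$ via a Poincaré--Sobolev inequality on the annulus, which is exactly the source of the extra $\|\na v\|_{L^2}$ term in \eqref{ok}.

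For the near-boundary part $w$ on the bounded smooth domain $\Omega_{2d}:=\Omega\cap B_{2d}$, the system $\div w=\div(\xi v)$, $\curl w=\curl(\xi v)$ in $\Omega_{2d}$, $w\cdot n=0$ on $\partial\Omega_{2d}$ is an elliptic boundary-value problem in the sense of Agmon--Douglis--Nirenberg (the compatibility relation $\div\curl w=0$ makes it well posed), and its $L^q$-regularity theory --- equivalently, the construction of a vector potential --- gives $\|\na w\|_{L^q(\Omega_{2d})}\le C(\|\div w\|_{L^q}+\|\curl w\|_{L^q}+\|w\|_{L^q})$. Re-expanding the cutoff and again estimating $\|v\|_{L^q(B_{2d}\setminus B_d)}$ as above, one assembles all the pieces into the two claimed inequalities, modulo one remaining nuisance: a lower-order term $\|v\|_{L^q}$ on the bounded region in the range $1<q<3$, which must be removed to reach the homogeneous estimate \eqref{ljq01}.

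I expect that last removal to be the main obstacle. I would handle it by a contradiction/compactness argument: if \eqref{ljq01} failed there would be a sequence $v_k\in D^{1,q}(\Omega)$ with $\|\na v_k\|_{L^q}=1$ and $\|\div v_k\|_{L^q}+\|\curl v_k\|_{L^q}\to 0$; using $D^{1,q}(\Omega)\hookrightarrow L^{q^\ast}(\Omega)$ together with the compact embedding $D^{1,q}(\Omega\cap B_R)\hookrightarrow\hookrightarrow L^q(\Omega\cap B_R)$ on bounded truncations, a subsequence converges to a field $v_\infty$ with $\div v_\infty=\curl v_\infty=0$, $v_\infty\cdot n=0$ on $\partial\Omega$, and the appropriate decay at infinity. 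The simply-connectedness of $D$, together with the decay of $v_\infty$, rules out nontrivial harmonic tangential fields on the exterior domain $\Omega$, so $v_\infty\equiv 0$; passing to the limit in the already-established inequality with the lower-order term then contradicts $\|\na v_k\|_{L^q}=1$. This is where the topological hypothesis on $D$ and the delicate compactness in unbounded domains genuinely enter; everything else reduces to standard singular-integral and ADN elliptic theory.
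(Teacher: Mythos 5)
The paper does not actually prove this lemma: it is quoted verbatim from von Wahl \cite[Theorem~3.2]{vww}, so there is no in-house argument to compare your outline against. Judged on its own terms, your strategy is the standard route to exterior-domain div--curl estimates and is sound as a plan: cut off near the obstacle, treat the far field by Calder\'on--Zygmund theory through the Hodge--Laplace identity on $\r^3$, treat the near field by $L^q$ elliptic theory for the div--curl system with $w\cdot n=0$ on a bounded domain, and remove the lower-order $L^q$ remainder for $1<q<3$ by a Rellich compactness/contradiction argument.

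Three points are worth making explicit before this could be called a proof. First, for $3\le q<\infty$ the way \eqref{ok} is used in the paper the fields already satisfy $\na v\in L^2$; this is what you implicitly exploit when bounding $\|v\|_{L^q}$ on the annulus through Poincar\'e and $D^{1,2}\hookrightarrow L^6$, and it is also what anchors $v$ at infinity so that the whole-space Fourier-multiplier representation of $\na z$ in terms of $\div z$, $\curl z$ is legitimate, so the hypothesis $v\in D^{1,2}\cap D^{1,q}$ should be stated up front. Second, in the compactness step you should verify that the weak limit $v_\infty$ still satisfies $v_\infty\cdot n=0$ in the trace sense (this follows from strong $L^q$ convergence on truncations together with weak $L^q$ convergence of $\na v_k$, but it is not automatic) and that it inherits the decay at infinity from the uniform $L^{q^*}$ bound. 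Third, the rigidity step --- a curl-free, divergence-free field tangent to $\p\O$ and decaying at infinity must vanish --- requires $\O=\r^3-\bar{D}$ to be simply connected; this follows from the simple connectedness of $D$ via Alexander duality, and since it is precisely where the topological hypothesis on $D$ enters, it deserves to be spelled out rather than merely invoked.
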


\begin{lemma} \cite[Theorem 5.1]{lhm}  \la{crle2}
	Let $\Omega$ be as in Lemma \ref{crle1}, for any $v\in W^{1,q}(\O)\,\,(1<q<+\infty)$ with $v\times n=0$ on $\partial\Omega$, it holds that
\be\la{oh}\|\nabla v\|_{L^q}\leq C(q,\O)(\|v\|_{L^q}+\|\div v\|_{L^q}+\|\curl v\|_{L^q}).\ee
\end{lemma}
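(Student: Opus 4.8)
\textbf{Proof proposal for Lemma \ref{crle2} (the div–curl estimate with $v\times n=0$).}

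The plan is to reduce the statement to the corresponding interior $W^{1,q}$-regularity for the Hodge/de Rham system and a bounded-domain version, since the coefficient $C(q,\O)$ is allowed to depend on $\O$. First I would localize: fix a large ball $B_{2d}$ containing $\bar D$, and write $v = \chi v + (1-\chi)v$ where $\chi\in C_c^\infty(B_{4d})$ with $\chi\equiv 1$ on $B_{2d}$. On the ``inner'' piece $\chi v$, supported in the bounded smooth domain $\O\cap B_{4d}$, I would invoke the known $L^q$ Hodge estimate for vector fields with vanishing tangential trace on a bounded smooth domain (this is the classical result, e.g. via the $L^q$-theory of the Stokes/Hodge system or Gaffney-type inequalities; references \cite{vww,lhm} supply exactly this in the bounded case). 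The commutator terms $\na\chi\cdot v$, $(\na\chi)\times v$, $(\na\chi)\,\div$-type contributions are all lower order and controlled by $\|v\|_{L^q(B_{4d}\setminus B_{2d})}$ together with $\|\div v\|_{L^q}+\|\curl v\|_{L^q}$ on the annulus. On the ``outer'' piece $(1-\chi)v$, which is supported away from $\p\O$ and hence may be regarded as a field on all of $\r^3$, I would use the whole-space Hodge decomposition: $\na w = $ (Riesz-transform composition applied to) $\div w$ and $\curl w$, giving $\|\na((1-\chi)v)\|_{L^q(\r^3)}\le C(q)(\|\div((1-\chi)v)\|_{L^q}+\|\curl((1-\chi)v)\|_{L^q})$ by Calderón–Zygmund theory, and again absorbing the cutoff commutators.

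Combining the two pieces yields
\be\notag
\|\na v\|_{L^q(\O)}\le C(q,\O)\left(\|v\|_{L^q(\O)}+\|\div v\|_{L^q(\O)}+\|\curl v\|_{L^q(\O)}\right),
\ee
which is precisely \eqref{oh}. The role of the $\|v\|_{L^q}$ term on the right-hand side is exactly to dominate the commutator contributions on the transition annulus; unlike the $v\cdot n=0$ case in Lemma \ref{crle1}, no distinction between $q<3$ and $q\ge 3$ is needed here because the $L^q$-norm of $v$ itself already appears, so there is no scaling obstruction at the low-integrability end and no need for the extra $\|\na v\|_{L^2}$ correction.

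The main obstacle I anticipate is the careful handling of the boundary regularity in the localized bounded-domain estimate: one must know that the $L^q$ Hodge inequality holds on $\O\cap B_{4d}$ with the tangential boundary condition $v\times n=0$ imposed only on the ``true'' boundary $\p\O$ and not on the artificial sphere $\p B_{4d}$, which forces one either to cite a version of the estimate for mixed boundary conditions or, more cleanly, to choose the cutoff so that $\chi v$ has compact support strictly inside $B_{4d}$ and then extend by zero — after which the relevant domain is genuinely $\O$ with its single smooth boundary component $\p\O$, and the cited bounded-domain theory applies verbatim. Once this bookkeeping is set up, the estimate follows by adding the interior Calderón–Zygmund bound and the boundary Hodge bound and absorbing the finitely many lower-order commutator terms. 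Since the paper only needs the statement as a black box (it is used merely to control $(\na H,\curl H)$ and $(\na u,\div u,\curl u)$), I would present this reduction briefly and refer to \cite{vww,lhm} for the bounded-domain input, exactly as the excerpt already does.
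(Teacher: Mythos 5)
The paper does not give a proof of Lemma \ref{crle2}: it is imported as a black box from \cite[Theorem 5.1]{lhm}, which establishes the estimate within a weighted Sobolev framework for vector potential operators on exterior domains. Your cut-off argument --- splitting $v$ into a compactly supported collar piece $\chi v$ and a complementary far-field piece $(1-\chi)v$, applying a bounded-domain tangential-trace Gaffney inequality to the former, whole-space Calder\'on--Zygmund theory to the latter, and absorbing the $\na\chi$ commutators into $\|v\|_{L^q}$ --- is a genuinely different and more elementary route, and in outline it is sound. Your handling of the artificial boundary is exactly the right bookkeeping: choosing $\chi\in C_c^\infty(B_{4d})$ makes $\chi v$ vanish near $\partial B_{4d}$, so $(\chi v)\times n=0$ holds on all of $\partial(\O\cap B_{4d})$ and no mixed boundary condition arises. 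You are also correct that the $\|v\|_{L^q}$ term already present in \eqref{oh} removes the $q<3$ versus $q\ge 3$ dichotomy of Lemma \ref{crle1}, since there is no scaling obstruction to absorb. Two minor cautions: the bounded-domain black box you invoke must be the one for the tangential trace $v\times n=0$ (not \cite{vww}, which the paper cites for the normal-trace case $v\cdot n=0$ in Lemma \ref{crle1}; rather use, e.g., a bounded-domain version from \cite{lhm} or the Amrouche--Seloula $L^p$ div--curl theory), and on the whole-space piece the Riesz-transform identity determines $\na w$ only modulo a harmonic matrix field, which one must rule out by invoking $\na w\in L^q(\r^3)$. With those two points recorded, your localize-and-glue argument yields \eqref{oh}; relative to \cite{lhm} it is more self-contained and avoids the weighted-space machinery, at the cost of needing the bounded-domain $L^q$ Hodge theory as input.
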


\begin{lemma}  \cite[Lemma 2.9]{C-L-L} \la{crle5}
Let $\Omega$  be as in Theorem \ref{th1}.
For any $q\in[2,4] $,   if $v\in \{ D^{1,2}(\O)| v(x)\rightarrow 0  \mbox{ as } |x|\rightarrow\infty  \}$,   then
\be\la{eee0}\ba\|v\|_{L^q(\partial\Omega)}\leq C(q,\O)\|\na v\|_{L^2(\Omega)}.\ea\ee
Moreover, for $p\in[2,6] $, $k\geq 1,$   every $v\in \{f\in D^{1,2}(\O)|\na f\in W^{k,p}(\O),\, f(x)\rightarrow 0  \mbox{ as } |x|\rightarrow\infty  \}$  with $v\cdot n|_{\partial\Omega}=0$ or $v\times n|_{\partial\Omega}=0$ satisfies
	\be\la{uwkq}\ba\|\nabla v\|_{W^{k,p}}\leq C(k,p,\O)(\|\div v\|_{W^{k,p}}+\|\curl v\|_{W^{k,p}}+\|\nabla v\|_{L^2}).\ea\ee
\end{lemma}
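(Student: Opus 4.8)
The statement packages two independent facts about the exterior domain $\Omega=\mathbb{R}^3\setminus\bar D$: the boundary trace inequality \eqref{eee0} and the $W^{k,p}$ div-curl elliptic estimate \eqref{uwkq}. The plan is to prove each by localizing to the bounded collar $\Omega_R:=\Omega\cap B_R$ (recall $\bar D\subset B_d$; take $R\ge 3d$), where classical results on bounded smooth domains apply, and gluing to the exterior region via whole-space Calder\'on--Zygmund theory together with the scale-invariant Sobolev inequality on exterior domains: $v\in D^{1,2}(\Omega)$ with $v(x)\to0$ as $|x|\to\infty$ implies $\|v\|_{L^6(\Omega)}\le C\|\nabla v\|_{L^2(\Omega)}$, which follows from \eqref{g1} by a standard covering/limiting argument.

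For \eqref{eee0} I would use the classical trace embedding $H^1(\Omega_R)\hookrightarrow H^{1/2}(\partial\Omega)\hookrightarrow L^4(\partial\Omega)$ together with H\"older's inequality on the bounded surface $\partial\Omega$ to get, for $q\in[2,4]$, $\|v\|_{L^q(\partial\Omega)}\le C\|v\|_{L^4(\partial\Omega)}\le C\|v\|_{H^1(\Omega_R)}$; then, since $\|v\|_{L^2(\Omega_R)}\le|\Omega_R|^{1/3}\|v\|_{L^6(\Omega_R)}\le C\|\nabla v\|_{L^2(\Omega)}$ by the exterior Sobolev inequality above, this gives $\|v\|_{H^1(\Omega_R)}\le C\|\nabla v\|_{L^2(\Omega)}$ and hence \eqref{eee0}.

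For \eqref{uwkq} I would argue by induction on $k$, the base case $k=0$ being Lemma \ref{crle1} (resp. Lemma \ref{crle2}) combined with $\|v\|_{L^p}\le C\|v\|_{L^6(\Omega)}\le C\|\nabla v\|_{L^2(\Omega)}$ (where $p\le 6$ is used to convert the $L^p$-norm of $v$ appearing in \eqref{oh} into the $L^2$-norm of $\nabla v$). For the inductive step I would split $v$ with a partition of unity $\zeta_1+\zeta_2\equiv1$ on $\Omega$, with $\zeta_1\equiv1$ on $B_{2d}$, $\operatorname{supp}\zeta_1\subset B_{3d}$, $\operatorname{supp}\zeta_2\subset\{|x|\ge 2d\}$. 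Since $\zeta_2 v$ extended by zero inside $D$ lives on all of $\mathbb{R}^3$, the identity $\nabla w=\nabla(-\Delta)^{-1}(\nabla\operatorname{div}w-\operatorname{curl}\operatorname{curl}w)$ and the $L^p$-boundedness of second-order Riesz transforms give $\|\nabla(\zeta_2 v)\|_{W^{k,p}(\mathbb{R}^3)}\le C(\|\operatorname{div}(\zeta_2 v)\|_{W^{k,p}}+\|\operatorname{curl}(\zeta_2 v)\|_{W^{k,p}})$; and $\zeta_1 v$, which satisfies the homogeneous boundary condition $\zeta_1 v\cdot n=0$ (resp. $\zeta_1 v\times n=0$) on $\partial\Omega$ and has compact support in $\overline{B_{3d}}$, can be treated by the classical $W^{k,p}$ div-curl regularity estimate on the bounded smooth domain $\Omega_{3d}$ (as in the references behind Lemmas \ref{crle1}--\ref{crle2}, using that $D$ is simply connected so that the relevant harmonic-field kernel is trivial), yielding $\|\nabla(\zeta_1 v)\|_{W^{k,p}(\Omega_{3d})}\le C(\|\operatorname{div}(\zeta_1 v)\|_{W^{k,p}}+\|\operatorname{curl}(\zeta_1 v)\|_{W^{k,p}}+\|v\|_{W^{k,p}(\Omega_{3d})})$. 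Expanding $\operatorname{div}(\zeta_i v)=\zeta_i\operatorname{div}v+\nabla\zeta_i\cdot v$ and $\operatorname{curl}(\zeta_i v)=\zeta_i\operatorname{curl}v+\nabla\zeta_i\times v$ and adding the two estimates, every term is dominated by $\|\operatorname{div}v\|_{W^{k,p}(\Omega)}+\|\operatorname{curl}v\|_{W^{k,p}(\Omega)}+\|v\|_{W^{k,p}(\Omega_{3d})}$, and the last (localized) remainder is absorbed using $\|v\|_{L^p(\Omega_{3d})}\le C\|\nabla v\|_{L^2(\Omega)}$ together with the Gagliardo--Nirenberg interpolation $\|\nabla v\|_{W^{k-1,p}(\Omega_{3d})}\le\varepsilon\|\nabla v\|_{W^{k,p}(\Omega)}+C_\varepsilon\|\nabla v\|_{L^2(\Omega)}$ on the bounded domain, with $\varepsilon$ small enough to be swallowed by the left-hand side.

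The hard part will be the absorption/bootstrap step: one has to verify that the cutoff commutators produce only terms genuinely controllable by $\|\nabla v\|_{L^2(\Omega)}$ rather than a spurious $\|v\|_{L^p}$ over an unbounded region — which is precisely where $v\in D^{1,2}(\Omega)$, the decay at infinity, the restriction $p\le 6$, and the triviality of the harmonic-field kernel (from $D$ simply connected) all enter — and to invoke the correct form of the $W^{k,p}$ div-curl estimate on the bounded collar $\Omega_{3d}$ with the homogeneous $v\cdot n=0$ or $v\times n=0$ boundary condition, which is the genuine technical content of the lemma.
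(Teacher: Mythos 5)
The paper offers no proof of this lemma at all: it is quoted verbatim from \cite[Lemma 2.9]{C-L-L}, so your argument can only be judged on its own terms. Your overall architecture --- the exterior Sobolev inequality $\|v\|_{L^6(\O)}\le C\|\na v\|_{L^2(\O)}$ plus the trace embedding $H^1(\O_R)\hookrightarrow H^{1/2}(\p\O)\hookrightarrow L^4(\p\O)$ for \eqref{eee0}, and a cutoff decomposition into a whole-space piece handled by Riesz transforms and a boundary-collar piece handled by the bounded-domain div-curl estimate, with the localized lower-order remainder absorbed by interpolation, for \eqref{uwkq} --- is the standard and correct route, and the proof of \eqref{eee0} as you give it is complete.

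There is, however, one step in your treatment of \eqref{uwkq} that is wrong as written: the base case for the tangential boundary condition, where you invoke Lemma \ref{crle2} on all of $\O$ and then claim $\|v\|_{L^p(\O)}\le C\|v\|_{L^6(\O)}\le C\|\na v\|_{L^2(\O)}$ for $p\in[2,6]$. On the unbounded exterior domain H\"older's inequality runs the wrong way, so $\|v\|_{L^p(\O)}$ is \emph{not} controlled by $\|v\|_{L^6(\O)}$ when $p<6$ (e.g.\ $v\sim|x|^{-1}$ lies in $D^{1,2}\cap L^6$ but not in $L^2$); moreover Lemma \ref{crle2} is stated for $v\in W^{1,q}(\O)$, a hypothesis your $v$ need not satisfy. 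The fix is exactly the localization you already use for the inductive step: apply the estimate with the zeroth-order term only on the bounded collar $\O_{3d}$, where $\|v\|_{L^p(\O_{3d})}\le C\|v\|_{L^6(\O)}\le C\|\na v\|_{L^2(\O)}$ is legitimate, and use the Riesz-transform identity for the piece supported away from $\p\O$, which produces no zeroth-order term. In other words, the base case must be proved by the same decomposition as the inductive step, not by the global inequality you state. A secondary, harmless inaccuracy: the collar $B_{3d}\setminus\bar D$ encloses the cavity $D$, so the space of harmonic fields with $v\times n=0$ there is \emph{not} trivial; but since you retain the lower-order term $\|v\|_{W^{k,p}(\O_{3d})}$ on the right and absorb it, this does not affect the argument.
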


Next, we state the following estimate  on $\na \dot u$ with $u \cdot n|_{\p \O}=0$ (see \cite[Lemma 2.9] {lll}).

\begin{lemma}\la{uup1}
	Let  $\Omega=\r^3-\bar{D}$  be as in Theorem \ref{th1}.
	Assume that $u$ is smooth enough and $u \cdot n|_{\p \O}=0$,
	then there exists a generic positive constant $C=C(\O)$   such that
	\be\la{tb11}\ba
	\|\nabla\dot{u}\|_{L^2}\le C(\|\div \dot{u}\|_{L^2}+\|\curl \dot{u}\|_{L^2}+\|\nabla u\|_{L^4}^2+\|\nabla u\|_{L^2}^2).
	\ea\ee
\end{lemma}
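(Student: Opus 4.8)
The statement to prove is Lemma~\ref{uup1}, the estimate
\[
\|\nabla\dot u\|_{L^2}\le C\bigl(\|\div\dot u\|_{L^2}+\|\curl\dot u\|_{L^2}+\|\nabla u\|_{L^4}^2+\|\nabla u\|_{L^2}^2\bigr),
\]
for a smooth vector field $u$ on the exterior domain $\Omega=\mathbb R^3-\bar D$ satisfying only $u\cdot n|_{\partial\Omega}=0$ (note there is \emph{no} boundary condition imposed on $\dot u=u_t+u\cdot\nabla u$ itself).

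\textbf{Plan of proof.} The idea is to reduce the estimate for $\nabla\dot u$ to a div--curl estimate of the type in Lemma~\ref{crle1}/Lemma~\ref{crle5}, but the obstruction is that $\dot u$ does \emph{not} satisfy a slip boundary condition, so one cannot apply those lemmas directly. Instead I would decompose $\dot u = u_t + u\cdot\nabla u$ and handle the material derivative by bringing the boundary behaviour under control. The cleanest route is: first observe that since $u\cdot n=0$ on $\partial\Omega$ (and $\partial\Omega$ is time-independent), $u_t\cdot n=0$ on $\partial\Omega$ as well. The remaining piece $(u\cdot\nabla u)\cdot n$ on $\partial\Omega$ is not zero, but it can be rewritten using the identity $(u\cdot\nabla u)\cdot n = -u\cdot\nabla n\cdot u$ on $\partial\Omega$ (this is the standard consequence of differentiating $u\cdot n=0$ tangentially along $\partial\Omega$), which is pointwise controlled by $|u|^2\,|\nabla n|$. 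Thus $\dot u\cdot n|_{\partial\Omega}$ is a lower-order quantity controlled by $|u|^2$ on the boundary.

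\textbf{Key steps.} (i) Introduce the extension $n\in C^\infty(\bar\Omega)$ with $n\equiv 0$ outside $B_{2d}$ fixed in the Introduction, and write $\dot u = w + \nabla\phi$ or more simply correct $\dot u$ by subtracting a vector field $v_0$ that carries the nonzero normal trace: choose $v_0$ smooth, compactly supported near $\partial\Omega$, with $v_0\cdot n = \dot u\cdot n$ on $\partial\Omega$ and $\|\nabla v_0\|_{L^2}\le C\|\dot u\cdot n\|_{H^{1/2}(\partial\Omega)}\le C\||u|^2\|_{H^{1/2}(\partial\Omega)}$. (ii) Apply the div--curl estimate \eqref{uwkq}/\eqref{ok}-type inequality (for fields with vanishing normal trace and decay at infinity) to $\dot u - v_0$:
\[
\|\nabla(\dot u - v_0)\|_{L^2}\le C\bigl(\|\div(\dot u-v_0)\|_{L^2}+\|\curl(\dot u - v_0)\|_{L^2}+\|\nabla(\dot u - v_0)\|_{L^2}\ \text{(handled by localization/compactness away from the far field)}\bigr),
\]
so that $\|\nabla\dot u\|_{L^2}\le C(\|\div\dot u\|_{L^2}+\|\curl\dot u\|_{L^2}+\|\nabla v_0\|_{L^2}+\text{l.o.t.})$. (iii) Estimate the correction: by the trace/Sobolev inequality \eqref{eee0} applied to $|u|^2$ (or rather to $u$ componentwise together with a product rule), $\||u|^2\|_{L^2(\partial\Omega)}\le C\|u\|_{L^4(\partial\Omega)}^2\le C\|\nabla u\|_{L^2}^2$, and for the $H^{1/2}$ part one similarly uses $\|u\otimes u\|_{H^{1/2}(\partial\Omega)}\lesssim \|u\|_{H^1(\text{nbhd of }\partial\Omega)}^2\lesssim \|\nabla u\|_{L^4}^2 + \|\nabla u\|_{L^2}^2$ after interpolation and the Poincaré-type inequality near $\partial\Omega$ (the domain near $\partial\Omega$ is bounded, so $\|u\|_{L^2(B_{2d}\cap\Omega)}\le C\|\nabla u\|_{L^2}$ using the far-field decay). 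This yields $\|\nabla v_0\|_{L^2}\le C(\|\nabla u\|_{L^4}^2+\|\nabla u\|_{L^2}^2)$, exactly the terms on the right-hand side of \eqref{tb11}.

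\textbf{Main obstacle.} The delicate point is step (iii): controlling the boundary correction term in the right norm by $\|\nabla u\|_{L^4}^2 + \|\nabla u\|_{L^2}^2$ without picking up uncontrolled lower-order $L^2$ norms of $u$ itself (which in an exterior domain is fine because $u\to 0$ at infinity gives a global Sobolev/Poincaré inequality $\|u\|_{L^6}\le C\|\nabla u\|_{L^2}$, but one must be careful that $u$ only has finite Dirichlet energy, not finite $L^2$ norm). Concretely one must bound the $H^{1/2}(\partial\Omega)$ norm of the quadratic boundary term $u\cdot\nabla n\cdot u$, which forces an estimate of $\nabla(u\otimes u)$ in $L^2$ near the boundary, i.e.\ $\|u\,\nabla u\|_{L^2(B_{2d}\cap\Omega)}\le \|u\|_{L^\infty}\|\nabla u\|_{L^2}$ or, avoiding $L^\infty$, $\|u\|_{L^4}\|\nabla u\|_{L^4}\le C\|\nabla u\|_{L^2}^{1/2}\|\nabla u\|_{L^6}^{1/2}\|\nabla u\|_{L^4}$, and then absorbing the highest factor by Young's inequality and a further div--curl bound. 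Getting this chain of interpolations to close neatly into precisely $\|\nabla u\|_{L^4}^2+\|\nabla u\|_{L^2}^2$ — rather than something involving $\|\nabla^2 u\|$ — is the technical heart; it relies essentially on the two-dimensional nature of $\partial\Omega$ (so the trace of a product only costs "half a derivative") together with $\|\nabla u\|_{L^6}\le C(\|\div u\|_{L^6}+\|\curl u\|_{L^6}+\|\nabla u\|_{L^2})$ being avoided in favor of the plain trace inequality \eqref{eee0}. I expect the actual argument in the paper (following \cite[Lemma~2.9]{lll}) is a streamlined version of this: extend, correct the normal trace, apply the div--curl inequality for normal-trace-free fields, and estimate the correction by \eqref{eee0} applied to $u\otimes u$.
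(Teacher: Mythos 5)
Your plan---subtract a vector field from $\dot u$ to restore the vanishing normal trace, apply the div--curl estimate, and control the correction near $\partial\Omega$---captures the right idea, and your step~(iii) does close to exactly the claimed right-hand side. But two things need to be fixed. First, the display in step~(ii) is circular as written: $\|\nabla(\dot u-v_0)\|_{L^2}$ appears on both sides. This comes from invoking the $q\geq 3$ form of the div--curl estimate (\eqref{ok}/\eqref{uwkq}), which carries the extra $\|\nabla v\|_{L^2}$. For $q=2$ you should instead use \eqref{ljq01}, which for a field with $v\cdot n|_{\partial\Omega}=0$ in an exterior domain gives $\|\nabla v\|_{L^2}\le C(\|\div v\|_{L^2}+\|\curl v\|_{L^2})$ with no additional term; nothing then needs to be ``handled by localization.''

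Second, the abstract trace extension with the $H^{1/2}(\partial\Omega)$ norm works but is heavier than needed, and your own discussion of the ``main obstacle'' reflects this. The cleaner route---and what I expect the cited reference \cite{lll} actually does---is to take the \emph{explicit} correction $w=\dot u+(u\cdot\nabla n\cdot u)\,n$, where $n$ is the fixed extension of the outward normal to $\bar\Omega$ supported in $B_{2d}$. Since $\partial\Omega$ is time-independent and $u\cdot n=0$ there, one has $u_t\cdot n=0$ and $(u\cdot\nabla u)\cdot n=-u\cdot\nabla n\cdot u$, so $w\cdot n|_{\partial\Omega}=0$ and \eqref{ljq01} applies to $w$ directly. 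The correction's gradient (and likewise its divergence and curl) is supported in the bounded set $B_{2d}\cap\Omega$ and pointwise bounded by $C(|u|\,|\nabla u|+|u|^2)$, so its $L^2$ norm is
\[
\le C\bigl(\|u\|_{L^4(B_{2d}\cap\Omega)}\|\nabla u\|_{L^4}+\|u\|_{L^4(B_{2d}\cap\Omega)}^2\bigr)\le C\bigl(\|\nabla u\|_{L^2}\|\nabla u\|_{L^4}+\|\nabla u\|_{L^2}^2\bigr)\le C\bigl(\|\nabla u\|_{L^4}^2+\|\nabla u\|_{L^2}^2\bigr),
\]
using $\|u\|_{L^4(B_{2d}\cap\Omega)}\le C\|u\|_{L^6(\Omega)}\le C\|\nabla u\|_{L^2}$. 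This closes the estimate in one line, with no fractional Sobolev spaces and no trace extension lemma.
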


The following Gr\"{o}nwall-type inequality will be used to get the uniform (in time) upper bound of the density (see \cite[Lemma 2.5]{H-L}).
\begin{lemma} \la{le1}
	Let the function $y\in W^{1,1}(0,T)$ satisfy
	\be \notag
  y'(t)+\al y(t)\le  g(t)\mbox{  on  } [0,T] ,\quad y(0)=y_0,
  \ee
  where $\al$ is a positive constant and  $ g \in L^p(0,T_1)\cap L^q(T_1,T)$  for some $p,\,q\ge 1, $  $T_1\in [0,T].$ Then
	\be \la{2.34}
	\sup_{0\le t\le T} y(t) \le |y_0| + (1+\al^{-1}) \left(\|g\|_{L^p(0,T_1)} + \|g\|_{L^q(T_1,T)}\right).
	\ee
\end{lemma}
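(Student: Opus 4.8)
\textbf{Proof proposal for Lemma \ref{le1}.}

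The plan is to reduce the inequality to a pure exponential comparison and then estimate the resulting convolution of $g$ against the decaying kernel $e^{-\al(t-s)}$ by splitting the time interval at $T_1$ and applying H\"older's inequality on each piece. First I would multiply the differential inequality $y'(t)+\al y(t)\le g(t)$ by the integrating factor $e^{\al t}$, obtaining $(e^{\al t}y(t))'\le e^{\al t}g(t)$, and integrate from $0$ to $t$; since $y\in W^{1,1}(0,T)$ this is legitimate and yields
\be\notag
y(t)\le y_0 e^{-\al t}+\int_0^t e^{-\al(t-s)}g(s)\,ds\le |y_0|+\int_0^t e^{-\al(t-s)}|g(s)|\,ds
\ee
for every $t\in[0,T]$. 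So the whole problem is to bound the last integral uniformly in $t$.

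Next I would handle two cases according to whether $t\le T_1$ or $t>T_1$. If $t\le T_1$, the integral runs over $[0,t]\subset[0,T_1]$, and H\"older with exponents $p$ and $p'=p/(p-1)$ gives
\be\notag
\int_0^t e^{-\al(t-s)}|g(s)|\,ds\le \|g\|_{L^p(0,T_1)}\left(\int_0^t e^{-\al p'(t-s)}\,ds\right)^{1/p'}\le \|g\|_{L^p(0,T_1)}\,(\al p')^{-1/p'}.
\ee
If $t>T_1$, split $\int_0^t=\int_0^{T_1}+\int_{T_1}^t$; on the first piece the kernel $e^{-\al(t-s)}\le e^{-\al(t-T_1)}\le 1$ is still in $L^{p'}$ over $[0,T_1]$ (in fact bounded by the same constant as above, or more crudely one can just keep it), and on the second piece H\"older with exponents $q,q'$ gives $\|g\|_{L^q(T_1,T)}(\al q')^{-1/q'}$. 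In all cases each kernel factor $(\al p')^{-1/p'}$ (resp. $(\al q')^{-1/q'}$) is bounded by $1+\al^{-1}$: indeed $(\al p')^{-1/p'}=\al^{-1/p'}(p')^{-1/p'}\le \max\{1,\al^{-1}\}\le 1+\al^{-1}$ since $p'\ge1$ forces $(p')^{-1/p'}\le 1$ and $\al^{-1/p'}\le\max\{1,\al^{-1}\}$. Collecting the two contributions and taking the supremum over $t\in[0,T]$ yields exactly
\be\notag
\sup_{0\le t\le T}y(t)\le |y_0|+(1+\al^{-1})\bigl(\|g\|_{L^p(0,T_1)}+\|g\|_{L^q(T_1,T)}\bigr).
\ee

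There is no real obstacle here; the only mild subtlety is making sure the kernel estimates are uniform in $t$ and that the constant is packaged precisely as $1+\al^{-1}$ rather than something $\al$- and $p$-dependent, which is why I would be a little careful with the H\"older exponents and the crude bound $e^{-\al(t-s)}\le 1$ on the pre-$T_1$ part when $t>T_1$, absorbing everything into the single clean constant $1+\al^{-1}$. One should also note $L^1$-integrability of $s\mapsto e^{-\al(t-s)}g(s)$ is automatic from $g\in L^p\cup L^q$ and boundedness of the kernel, so the integration step is justified.
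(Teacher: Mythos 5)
Your proof is correct. The paper itself does not prove this lemma but cites it from \cite[Lemma 2.5]{H-L}; the integrating-factor argument you give, followed by splitting the convolution $\int_0^t e^{-\al(t-s)}|g(s)|\,ds$ at $T_1$ and applying H\"older on each piece with the uniform kernel bound $\int e^{-\al p'(t-s)}\,ds\le 1/(\al p')$, is the standard proof and matches the expected one. Your justification that $(\al p')^{-1/p'}=\al^{-1/p'}(p')^{-1/p'}\le\max\{1,\al^{-1}\}\le 1+\al^{-1}$ (with the natural convention for $p'=\infty$, i.e.\ $p=1$) is exactly the point that makes the clean constant $1+\al^{-1}$ work independently of $p,q,T$.
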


Finally, we mainly show some  elliptic estimates, which are the important ingredients of the whole analysis.

For the Neumann boundary value problem
\be\la{cxtj1}\begin{cases}
	-\Delta v=\div f,\,\, &\text{in}~ \Omega, \\
	\frac{\partial v}{\partial n}=-f\cdot n,\,&\text{on}~\partial\Omega,\\
	\nabla v\rightarrow0,\,\,&\text{as}\,\,|x|\rightarrow\infty,
\end{cases} \ee
 by virtue of \cite[Lemma 5.6]{ANIS}, we have the following conclusion.
\begin{lemma}   \la{zhle}
Considering the system (\ref{cxtj1}), for $q\in(1,\infty),$ it holds
	
	(1) If $f\in L^q(\O)$, then there exists a unique (modulo constants) solution $v\in D^{1,q}(\O)$ such that
	$$\|\nabla v\|_{L^q}\leq C(q,\O)\|f\|_{L^q}.$$
	
	(2) If $f\in W^{k,q}(\O)$ with $k\geq 1$, then $\na v\in W^{k,q}(\O)$ and
	$$\|\nabla v\|_{W^{k,q}}\leq C(k,q,\O) \|f\|_{W^{k,q}}.$$	
	In particular, if $f\cdot n=0$ on $\partial\Omega$, one gets
	$$\|\nabla^2v\|_{L^q}\leq C(q,\O) \|\div f\|_{L^q}.$$	
\end{lemma}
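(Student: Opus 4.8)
The plan is to reduce this exterior‑domain Neumann problem to the two model situations in which all the estimates are classical — the whole space $\r^3$, where the bound follows from Calder\'on--Zygmund theory for the Newtonian potential, and a bounded domain, where it is the Agmon--Douglis--Nirenberg estimate for the Neumann problem — and then to glue the two together with a cut‑off subordinate to a neighborhood of $\p\O$ and to its exterior complement, absorbing the resulting commutator terms into lower‑order norms.

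\textbf{Existence and part (1).} First I would extend $f$ to $\tilde f\in L^q(\r^3)$ with $\|\tilde f\|_{L^q(\r^3)}\le C(\O)\|f\|_{L^q(\O)}$ and set $w:=E*\div\tilde f$ with $E(x)=(4\pi|x|)^{-1}$; equivalently $\na w=(\na^2E)*\tilde f$ is a singular integral of Calder\'on--Zygmund type, so that $\|\na w\|_{L^q(\r^3)}\le C(q)\|\tilde f\|_{L^q(\r^3)}$ for $1<q<\infty$, $\na w\to 0$ at infinity, and $-\Delta w=\div\tilde f$ in $\r^3$. Then $z:=v-w$ should solve the harmonic Neumann problem $-\Delta z=0$ in $\O$, $\p z/\p n=-(f+\na w)\cdot n$ on $\p\O$, $\na z\to 0$; solvability of this in $D^{1,q}(\O)$ together with $\|\na z\|_{L^q}\le C(\|f\|_{L^q}+\|\na w\|_{L^q})$ is standard single‑layer potential theory on the compact surface $\p\O$ (or the variational $D^{1,2}$‑theory bootstrapped to general $q$). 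Adding, $v:=w+z$ solves \eqref{cxtj1} and obeys $\|\na v\|_{L^q}\le C(q,\O)\|f\|_{L^q}$. Uniqueness modulo constants: if $v$ is harmonic in $\O$ with zero Neumann data and $\na v\in L^q(\O)$, $\na v\to 0$, then testing the equation against $v$ on $\O\cap B_R$ and letting $R\to\infty$ (the boundary integral on $\p B_R$ vanishing because $\na v$ decays in $L^q$) forces $\na v\equiv 0$, i.e. $v$ is constant.

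\textbf{Part (2).} Higher regularity is obtained near $\p\O$ from the ADN estimate for the Neumann problem in a bounded neighborhood $\O':=\O\cap B_{R}$: after normalizing the additive constant so that $\int_{\O'}v=0$ (whence $\|v\|_{L^q(\O')}\le C\|\na v\|_{L^q}\le C\|f\|_{L^q}$ by part (1)), the estimate $\|v\|_{W^{k+1,q}(\O')}\le C\big(\|\div f\|_{W^{k-1,q}}+\|f\cdot n\|_{W^{k-1/q,q}(\p\O)}+\|v\|_{L^q(\O')}\big)\le C\|f\|_{W^{k,q}}$; away from $\p\O$, interior elliptic estimates together with the whole‑space Calder\'on--Zygmund bound put $\na v$ in $W^{k,q}$ there as well, and patching gives $\|\na v\|_{W^{k,q}}\le C(k,q,\O)\|f\|_{W^{k,q}}$. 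For the special case $f\cdot n=0$ on $\p\O$, the Neumann datum vanishes and $v$ solves the genuine Poisson--Neumann problem $-\Delta v=\div f\in L^q$, $\p v/\p n=0$; decomposing $v=E*\widetilde{\div f}+(\text{harmonic correction})$ as in part (1), the CZ estimate gives $\|\na^2(E*\widetilde{\div f})\|_{L^q}\le C\|\div f\|_{L^q}$, while the correction — harmonic in $\O$, smooth up to $\p\O$, decaying, with Neumann data controlled locally by $\|\div f\|_{L^q}$ — contributes the same, so $\|\na^2v\|_{L^q}\le C(q,\O)\|\div f\|_{L^q}$, with only $\div f$ (not the full norm of $f$) entering, the constant being finite because the exterior Neumann Laplacian has no kernel among functions with decaying gradient.

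\textbf{Main obstacle.} On bounded domains every ingredient above is textbook; the genuine work is the interaction with the unbounded end. One must (i) identify the correct homogeneous space $D^{1,q}(\O)$ and the decay $\na v\to 0$ so that the scale‑invariant whole‑space Calder\'on--Zygmund piece and the compactly supported near‑boundary ADN piece combine without producing a spurious lower‑order term at infinity; and (ii) push the Liouville/uniqueness argument through the borderline values of $q$ relative to the dimension (around $q=3$ and $q=3/2$), where the vanishing of the boundary integrals on $\p B_R$ requires the sharp decay of $\na v$. Once these are handled, all commutator and cut‑off terms are of strictly lower order and are absorbed into the estimates already proved — which is precisely the content recorded in \cite[Lemma 5.6]{ANIS}.
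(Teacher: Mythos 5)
The paper gives no proof of this lemma; it is stated as a consequence of \cite[Lemma 5.6]{ANIS} and used as a black box, so there is no ``paper's proof'' for your sketch to match. Your outline is the standard textbook route (whole‑space Calder\'on--Zygmund piece, near‑boundary ADN piece, cut‑off patching), and that shape is fine. The problem is that the one genuinely hard step is precisely the one you delegate as ``standard.'' After subtracting the whole‑space potential $w=E*\div\tilde f$, the correction $z=v-w$ satisfies, in weak form, $\int_\O\na z\cdot\na\varphi\,dx=-\int_\O(f+\na w)\cdot\na\varphi\,dx$ with $f+\na w\in L^q(\O)$ and $\div(f+\na w)=0$ in $\O$. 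That is \emph{the same class of problem} you started from: well‑posedness of the exterior‑domain Neumann problem in $D^{1,q}$ with an $L^q$ divergence‑free forcing is exactly the content of the $L^q$‑Helmholtz/Neumann theory in exterior domains (Simader--Sohr; this is what \cite[Lemma 5.6]{ANIS} packages). It is not classical single‑layer potential theory, because the Neumann datum $-(f+\na w)\cdot n$ lives only in $W^{-1/q,q}(\p\O)$, not in a function space on $\p\O$; and ``bootstrapping the variational $D^{1,2}$ theory to $D^{1,q}$'' is not a soft step in an unbounded domain. So part (1) of your argument is, in effect, circular: the reduction from \eqref{cxtj1} to the ``harmonic Neumann problem'' does not decrease the difficulty, and the bound $\|\na z\|_{L^q}\le C(\|f\|_{L^q}+\|\na w\|_{L^q})$ is what needs to be proved, not what may be quoted.

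For the refinement $\|\na^2v\|_{L^q}\le C\|\div f\|_{L^q}$ when $f\cdot n=0$, two more points need attention. First, the trace of $\na(E*\widetilde{\div f})$ on $\p\O$ is what feeds the harmonic correction $h$, and you must control it by $\|\div f\|_{L^q}$ alone. Since $\na(E*g)$ is only one Riesz potential of $g$, its local $L^q$ norm is bounded by $\|g\|_{L^q}$ via HLS only for $q<3$; for $q\ge 3$ you must modify the potential (subtract a constant, or take the extension of $\div f$ with compact support and exploit decay) before the near‑boundary $W^{1-1/q,q}(\p\O)$ bound closes, and this is exactly the ``borderline'' issue you wave at without resolving. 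Second, removing the lower‑order term $\|v\|_{L^q(\O')}$ from the ADN estimate by ``no kernel'' is a contradiction‑compactness argument; it requires the uniqueness statement in $D^{1,q}$ (your part (1)), together with a compact embedding on $\O'$ and a justification that the putative limit again lies in $D^{1,q}$ with zero Neumann data — this is standard once written, but in the present sketch it is simply asserted. In short: the architecture is right, and you correctly identify where the difficulty lies, but the two load‑bearing steps (the exterior $L^q$ Neumann theory and the removal of the lower‑order term across all $q\in(1,\infty)$) are invoked rather than proved, whereas the paper avoids this by citing \cite[Lemma 5.6]{ANIS} outright.
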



Denoting the material derivative of $f$, the  effective viscous flux $G$, and the vorticity $\o$ respectively by
\be\begin{cases} \la{hj1} D_t f \triangleq \dot f\triangleq f_t+u\cdot\nabla f,\\ G\triangleq(2\mu + \lambda)\div u -  R(\rho\te-1)-|H|^2/2,\\ \omega\triangleq\curl u.
\end{cases}\ee
Now, we establish some nacessary estimates for them. In fact,  the standard $L^p$-estimate for the following elliptic equation:
\begin{equation}\la{ah}
\begin{cases}
\Delta G=\div(\rho\dot{u}-H \cdot \nabla H),~~ &\text{in}\,\,\Omega,\\ \frac{\partial G}{\partial n}=(\rho\dot{u}-H \cdot \nabla H 
)\cdot n,\,\, &\text{on}\,\, \partial\Omega,\\
\na G\rightarrow 0,\,\,&\text{as}\,\,|x|\rightarrow\infty.
\end{cases}
\end{equation}
combined with Lemmas \ref{11}, \ref{crle1}, and \ref{zhle} yields the following  estimates, whose proof is analogue to that of \cite[Lemma 2.8]{lll} (see also \cite[Lemma 2.9]{C-L}).
\begin{lemma} \la{le4}
  Assume  $\Omega=\r^3-\bar{D}$  is the same as in Theorem \ref{th1}.   Let $(\rho,u,\theta,H)$ be a smooth solution of (\ref{h0})--(\ref{ch2}) on $\O \times (0,T]$. Then for any $p\in[2,6],$ there exists a positive constant $C$ depending only on $p$,  $\mu$, $\lambda$, $R$, and $\O$ such that
\be\label{tdh1}\|\nabla H\|_{L^p}\le C(\|\curl H\|_{L^p}+\|\curl H\|_{L^2}),\ee
\be\la{h19}\|\nabla G\|_{L^p}\leq C(\|\rho\dot{u}\|_{L^p}+\|H\! \cdot \nabla H\|_{L^p}),\ee
\be\la{h191}\|\nabla\omega\|_{L^p}\leq  C(\|\rho\dot{u}\|_{L^p}+\|H \cdot \nabla H\|_{L^p}+\|\rho\dot{u}\|_{L^2}+\|H \cdot \nabla H\|_{L^2}+\|\nabla u\|_{L^2}),\ee
\be\ba\la{h20}\|G\|_{L^p}\leq &  C(\|\rho\dot{u}\|_{L^2}+\|H\cdot\nabla H\|_{L^2})^{(3p-6)/2p}\\
&\cdot(\|\nabla u\|_{L^2}+\|\n\te-1\|_{L^2}+\||H|^2\|_{L^2})^{(6-p)/2p},\ea\ee
\be\la{h21}\|\omega\|_{L^p} \leq  C(\|\rho\dot{u}\|_{L^2}+\|H \cdot \nabla H\|_{L^2})^{(3p-6)/2p}\|\nabla u\|_{L^2}^{(6-p)/2p}+C\|\nabla u\|_{L^2},\ee
\be\ba\la{h17}
\|\nabla u\|_{L^p}\leq & C(\|\rho\dot{u}\|_{L^2}+\||H||\nabla H|\|_{L^2}+\|\n\te-1\|_{L^6})^{(3p-6)/2p}\|\nabla u\|_{L^2}^{(6-p)/2p}\\
&+C\|\nabla u\|_{L^2}.
\ea\ee
\end{lemma}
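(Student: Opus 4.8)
\textbf{Proof proposal for Lemma \ref{le4}.}

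The plan is to treat each estimate by reducing it to one of the elliptic lemmas already recorded (Lemma \ref{crle1}, Lemma \ref{crle5}, Lemma \ref{zhle}), combined with the Gagliardo--Nirenberg interpolation \eqref{g1} from Lemma \ref{11}. First, \eqref{tdh1} is immediate: since $H\cdot n=0$ on $\p\O$, $H$ is in the class covered by \eqref{ok} (together with the trivial bound $\|\div H\|_{L^p}=0$), so for $p\in[3,6]$ one gets $\|\na H\|_{L^p}\le C(\|\curl H\|_{L^p}+\|\na H\|_{L^2})$, and then interpolating $\|\na H\|_{L^2}$ trivially and $\|\curl H\|_{L^2}\le \|\curl H\|_{L^p}$-type bounds for $p\in[2,3)$ via \eqref{g1} gives the stated form; alternatively one quotes \eqref{uwkq} with $k=0$ in the appropriate version. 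For \eqref{h19}, observe that $G$ solves the Neumann problem \eqref{ah} whose right-hand side is $\div f$ with $f=\rho\dot u - H\cdot\na H$ and Neumann data $-f\cdot n$; applying part (1) of Lemma \ref{zhle} directly yields $\|\na G\|_{L^p}\le C\|f\|_{L^p}\le C(\|\rho\dot u\|_{L^p}+\|H\cdot\na H\|_{L^p})$. For \eqref{h191}, one uses that $\o=\curl u$ satisfies $\mu\Delta\o = \na\times(\rho\dot u - H\cdot\na H)$ obtained by taking curl of the momentum equation (the $\na\div u$ and pressure terms drop out), and the boundary condition $\curl u\times n=0$; then Lemma \ref{crle2} (or the curl-div estimate) bounds $\|\na\o\|_{L^p}$ by $\|\o\|_{L^p}+\|\curl\o\|_{L^p}$, and $\|\curl\o\|_{L^p}=\mu^{-1}\|$ the curl source $\|$ is controlled after one more elliptic solve, while $\|\o\|_{L^p}$ is absorbed into the lower-order term $\|\na u\|_{L^2}$ plus the $L^2$-versions of the sources by \eqref{g1}; the extra $L^2$ terms on the right of \eqref{h191} are exactly what \eqref{g1} produces when passing from $L^p$ on $[3,6]$ down to $p=2$.

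Next, \eqref{h20} and \eqref{h21} are interpolation consequences of \eqref{h19} and the definition \eqref{hj1} of $G$. For \eqref{h20}: by \eqref{g1} with $r=p$, $\|G\|_{L^p}\le C\|G\|_{L^2}^{(6-p)/2p}\|\na G\|_{L^2}^{(3p-6)/2p}$; using \eqref{h19} with $p=2$ to bound $\|\na G\|_{L^2}$, and the definition $G=(2\mu+\lambda)\div u - R(\rho\te-1)-|H|^2/2$ to bound $\|G\|_{L^2}\le C(\|\na u\|_{L^2}+\|\rho\te-1\|_{L^2}+\||H|^2\|_{L^2})$, one gets exactly the stated product. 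Estimate \eqref{h21} is the same computation with $\o$ in place of $G$: interpolate $\|\o\|_{L^p}$ between $\|\o\|_{L^2}\le C\|\na u\|_{L^2}$ and $\|\na\o\|_{L^2}$, bound the latter through \eqref{h191} at $p=2$, and collect the resulting $\|\na u\|_{L^2}$ additive term. Finally, \eqref{h17} follows from the div-curl decomposition $\|\na u\|_{L^p}\le C(\|\div u\|_{L^p}+\|\curl u\|_{L^p}+\|\na u\|_{L^2})$ from Lemma \ref{crle1}/\ref{crle5} (valid since $u\cdot n=0$): write $\div u = (2\mu+\lambda)^{-1}(G+R(\rho\te-1)+|H|^2/2)$ so $\|\div u\|_{L^p}\le C(\|G\|_{L^p}+\|\rho\te-1\|_{L^p}+\||H|^2\|_{L^p})$, control $\|G\|_{L^p}$ and $\|\curl u\|_{L^p}=\|\o\|_{L^p}$ by \eqref{h20}, \eqref{h21}, and absorb $\|\rho\te-1\|_{L^p}\le C\|\rho\te-1\|_{L^6}$ (for $p\le 6$) and $\||H|^2\|_{L^p}\le C\||H||\na H|\|_{L^2}^{\text{power}}$-type bounds via Sobolev and \eqref{g1}, matching the exponents in \eqref{h17}.

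The main obstacle I expect is purely bookkeeping: one must be careful that every function to which an elliptic/div-curl lemma is applied actually lies in the stated function class, in particular that the decay-at-infinity hypotheses of Lemma \ref{crle5} and the Neumann-problem solvability in Lemma \ref{zhle} hold for $G$, $\o$, $\na\dot u$, $H$ on the unbounded domain $\Omega$ — this is where the far-field conditions \eqref{ch2} and the boundary conditions \eqref{h1} are genuinely used, and where the analysis differs from the bounded-domain case of \cite{ccw}. A secondary point is to make the interpolation exponents in \eqref{h20}, \eqref{h21}, \eqref{h17} come out exactly as $(3p-6)/2p$ and $(6-p)/2p$; this is automatic from \eqref{g1} once one fixes the $L^2$ endpoint, but one should double-check the treatment of $\||H|^2\|_{L^2}$ versus $\||H||\na H|\|_{L^2}$, which requires one application of \eqref{g1} to $|H|^2$ together with $\|\na(|H|^2)\|\le C\||H||\na H|\|$. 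No new idea beyond those in \cite[Lemma 2.8]{lll} and \cite[Lemma 2.9]{C-L} is needed; the proof is a transcription of those arguments to the present MHD setting, carrying the magnetic terms $H\cdot\na H$ and $|H|^2/2$ through each step.
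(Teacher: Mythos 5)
Your outline for \eqref{tdh1}, \eqref{h19}, \eqref{h191}, \eqref{h20}, and \eqref{h21} matches the paper's strategy in substance: \eqref{h19} is Lemma \ref{zhle}(1) applied to the Neumann problem \eqref{ah}; \eqref{h191} comes from rewriting \eqref{a111} as $\mu\na\times\curl u=\na G-\rho\dot u+H\cdot\na H$ and applying Lemma \ref{crle2} to $\o$ (using $\div\o=0$ and $\o\times n|_{\p\O}=0$), then \eqref{h19}; and \eqref{h20}, \eqref{h21} are Gagliardo--Nirenberg interpolation between the $L^2$ norm and the $L^2$ norm of the gradient. One small mismatch: for \eqref{tdh1} with $p\in[2,3)$ you speak of ``interpolating'' and of $\|\curl H\|_{L^2}\le\|\curl H\|_{L^p}$-type bounds, but no such embedding holds on the exterior domain; the right tool is simply \eqref{ljq01}, which for $1<p<3$ gives $\|\na H\|_{L^p}\le C\|\curl H\|_{L^p}$ directly with no lower-order term.

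The serious gap is in \eqref{h17}. You propose to apply the div--curl decomposition $\|\na u\|_{L^p}\le C(\|\div u\|_{L^p}+\|\curl u\|_{L^p}+\|\na u\|_{L^2})$, substitute $\div u=(2\mu+\lambda)^{-1}(G+R(\rho\te-1)+\tfrac12|H|^2)$, and then ``absorb $\|\rho\te-1\|_{L^p}\le C\|\rho\te-1\|_{L^6}$ (for $p\le 6$)''. That embedding is false on the unbounded domain $\O$: $L^6(\O)$ does not embed into $L^p(\O)$ for $p<6$. More structurally, this route does not land on the stated inequality. Applying \eqref{h20} at exponent $p$ yields the factor $(\|\na u\|_{L^2}+\|\n\te-1\|_{L^2}+\||H|^2\|_{L^2})^{(6-p)/2p}$, and interpolating $\|\rho\te-1\|_{L^p}$ and $\||H|^2\|_{L^p}$ between $L^2$ and $L^6$ likewise produces factors of $\|\n\te-1\|_{L^2}^{(6-p)/2p}$ and $\||H|^2\|_{L^2}^{(6-p)/2p}$ --- none of which appear on the right-hand side of \eqref{h17}, where the only $(6-p)/2p$-power factor is $\|\na u\|_{L^2}^{(6-p)/2p}$. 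The argument that actually gives \eqref{h17} interpolates $\|\na u\|_{L^p}\le C\|\na u\|_{L^2}^{(6-p)/2p}\|\na u\|_{L^6}^{(3p-6)/2p}$ first, and then bounds $\|\na u\|_{L^6}$ once via $\|\na u\|_{L^6}\le C(\|G\|_{L^6}+\|\curl u\|_{L^6}+\|\n\te-1\|_{L^6}+\||H|^2\|_{L^6}+\|\na u\|_{L^2})$ using \eqref{h20} and \eqref{h21} at $p=6$ (where the $(6-p)/2p$ powers vanish) and $\||H|^2\|_{L^6}\le C\||H||\na H|\|_{L^2}$ from \eqref{g1}; this keeps $\|\n\te-1\|_{L^6}$ to exactly the power $(3p-6)/2p$ and $\|\na u\|_{L^2}$ to exactly $(6-p)/2p$, as required. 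So for this final estimate your plan needs to be reorganized: interpolate $\na u$ itself, then apply the div--curl decomposition only at the endpoint $p=6$.
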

\begin{remark}
By virtue of (\ref{oh}) and (\ref{uwkq}), we will obtain the estimate on $\nabla^{k+2} H$. Precisely, for $p\in [2,6]$ and $k\geq 0$, there exists a positive constant $C=C(p,k,\O)$ such that
\be\ba\label{2tdh}
\|\nabla^{k+2} H\|_{L^p}&\leq C \|\curl H\|_{W^{k+1,p}}+C\|\na H\|_{L^2} \\
&\leq C(\|\curl^2  H\|_{W^{k,p}}+\|\na H \|_{L^2}).
\ea\ee
On the other hand, we can get the estimate on $\nabla^{k+2}u$, which will be devoted to getting the higher order estimates in Section \ref{se4}. In fact, applying  Lemma \ref{zhle} to elliptic equation (\ref{ah}) along with Lemmas  \ref{crle2}, \ref{crle5},  and \ref{le4} yields that for $p\in[2,6]$ and $k\geq 0$, there exists a positive constant $C=C(p,k,\O)$ such that
\be\ba\label{2tdu}
&\|\nabla^{k+2}u\|_{L^p}\\
&\le C(\|\div u\|_{W^{k+1,p}}+\|\omega\|_{W^{k+1,p}}+\|\na u\|_{L^2})\\
&\le C(\|\n\dot{u}\|_{W^{k,p}}+\|H\cdot\na H\|_{W^{k,p}}+\|\na P\|_{W^{k,p}}+\|\na|H|^2\|_{W^{k,p}})\\
&\quad+C(\|\rho\dot{u}\|_{L^2}+\|H\cdot \na H\|_{L^2}+\|\n\te-1\|_{L^2}+\||H|^2\|_{L^2}+\|\nabla u\|_{L^2}).
\ea\ee
\end{remark}

\section{\la{se3} A priori estimates (I): lower-order estimates}

In this section, we aim to derive the time-independent a priori estimates for the local-in-time smooth solution to the problem (\ref{h0})--(\ref{ch2}) obtained in Lemma \ref{th0}.
Let $(\n,u,\te,H)$ be a smooth solution to the  problem (\ref{h0})--(\ref{ch2})  on $\Omega\times (0,T]$ for some fixed time $T>0,$ with  the initial data $(\n_0,u_0,\te_0,H_0)$ satisfying \eqref{2.1}.

For
$\si(t)\triangleq\min\{1,t\}, $  we  define
$A_i(T)(i= 1,\cdots,4)$ as follows:
  \be\la{As1}
  A_1(T) \triangleq \sup_{t\in[0,T] }\left(\|\na u\|_{L^2}^2+\|\nabla H\|_{L^2}^2\right)
+ \int_0^{T}(\|\sqrt\n\dot{u}\|_{L^2}^2 +\|\curl^2 H\|_{L^2}^2+\|H_t\|_{L^2}^2)dt,
  \ee
  \be\label{AS1}
  A_2(T) \triangleq \frac{R}{2(\ga-1)}\sup_{t\in[0,T] }\int\n (\te-1)^2dx
  +\int_0^T\left( \|\na u\|_{L^2}^2+\|\na H\|_{L^2}^2+\|\na \te\|_{L^2}^2\right)dt,
  \ee
 \be\ba \label{AS2}
 A_3(T)&\triangleq  \sup_{t\in(0,T]}\sigma\left(\|\nabla u\|_{L^2}^2+\|\nabla H\|_{L^2}^2\right)\\
 &\quad+\int_0^{T}\sigma(\|\sqrt{\rho}\dot{u}\|_{L^2}^2+\|\curl^2 H\|_{L^2}^2+\|H_t\|_{L^2}^2)dt,\ea\ee
\be\ba\label{As3}
A_4(T)&\triangleq\sup_{t\in(0,T]}\sigma^2(\|\sqrt{\rho}\dot{u}\|_{L^2}^2+\|\curl^2 H\|_{L^2}^2+ \|H_t\|_{L^2}^2+\|\nabla\theta\|_{L^2}^2)\\
&\quad+\int_0^{T}\sigma^2(\|\nabla\dot{u}\|_{L^2}^2+\|\nabla H_t\|_{L^2}^2+\|\sqrt{\rho}\dot{\theta}\|_{L^2}^2)dt.
  \ea\ee
Then  we have the following key a priori estimates on $(\n,u,\te,H)$.
\begin{pro}\la{pr1}
For  given numbers $M>0$, $\on> 2,$  and $\bt> 1,$ assume further that $(\rho_0,u_0,\te_0,H_0) $  satisfies
\be \la{3.1}
0<\inf \rho_0 \le\sup \rho_0 <\on,\quad 0<\inf \te_0 \le\sup \te_0 \le \bt, \quad \|\na u_0\|_{L^2}+\|\na H_0\|_{L^2}  \le M.
\ee
Then there exist  positive constants $K $ and $\ep_0$ both depending on $\mu,\,\lambda,\,\nu,\, \ka,\, R,\, \ga,\, \on,\,\bt $, $\O$, and $M$ such that if $(\rho,u,\te,H)$ is a smooth solution to the problem (\ref{h0})--(\ref{ch2}) on $\O\times (0,T]$ satisfying
\be \la{z1}
0<  \rho\le 2\on, \,\,\, A_1(\sigma(T))\le 3 K, \,\,\, A_2(T) \le 2C_0^{1/4}, \,\,\, A_3(T)+A_4(T)\le 2C_0^{1/6},
\ee
the following estimates hold:
\be \la{zs2}
0< \rho\le 3\on/2,\,\,\, A_1(\sigma(T))\le 2 K, \,\,\, A_2(T) \le C_0^{1/4}, \,\,\, A_3(T)+A_4(T)\le C_0^{1/6},
\ee
provided \be\la{z01}C_0\le \ve_0.\ee      \end{pro}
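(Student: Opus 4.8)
## Proof strategy for Proposition 3.1

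The plan is to run the standard continuity-argument machinery for small-energy global existence, but carefully tracking that every estimate is independent of $\inf\rho_0$, and working around the absence of the usual basic energy law caused by the slip boundary conditions. Throughout one works on $\Omega\times(0,T]$ assuming the a priori bounds \eqref{z1}, and the goal is to improve each of them by a factor of $2$ (or $3/2$ for the density) once $C_0\le\ve_0$. The hard constraint that organizes everything is that the ``weak'' basic energy only gives $E(t)\le CC_0^{1/4}$ on $[0,T]$ (via Lemma \ref{a13.1} in the paper), so one cannot afford to lose powers of $C_0$ carelessly; the burden of the argument is to show the time-weighted quantities $A_3,A_4$ can be closed at the better rate $C_0^{1/6}$, and $A_2$ at $C_0^{1/4}$.

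I would proceed in the following order. First, establish the weak basic energy estimate $E(t)\le CC_0^{1/4}$ using the a priori assumption on $A_2(T)$ together with the entropy-type functional in \eqref{e} and the boundary terms controlled as in \eqref{bz3} (the trick $u=u^\perp\times n$ on $\partial\Omega$). Second, derive the $A_2(T)$ estimate: test the momentum equation by $u$, the magnetic equation by $H$, and the temperature equation by $(\theta-1)$; the crucial point is to re-establish the full basic energy on the short interval $[0,\sigma(T)]$ by exploiting the maximum principle for $\theta$ (using \eqref{mn2}-type bounds and the structure of $(\ref{h0})_3$), then bootstrap using the Hoff-type estimates on $(\dot u,\dot\theta,H_t)$; here Lemma \ref{le4} (controlling $G$, $\omega$, $\nabla u$, $\nabla H$ by $\|\sqrt\rho\dot u\|_{L^2}$ and $\|H\cdot\nabla H\|_{L^2}$) is used repeatedly, and Cai--Li--L\"u-type div-curl estimates (Lemmas \ref{crle1}--\ref{crle5}) re-express $(\nabla u,\div u,\curl u)$ and $(\nabla H,\curl H)$. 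Third, run the first-order energy estimates: multiply the momentum equation by $\dot u$ and integrate, multiply the magnetic equation by $H_t$; the boundary integrals \eqref{bz8} that appear are absorbed via \eqref{bz3} into gradients of velocity and of the effective viscous flux $G$. This yields the $A_1(\sigma(T))\le 2K$ improvement (with $K$ chosen to dominate the $M$-dependent initial contribution) and, after multiplying by the time weight $\sigma$, the $A_3(T)$ part. Fourth, obtain the higher time-weighted estimate $A_4(T)$ by differentiating the momentum, temperature, and magnetic equations in $t$ and testing against $\dot u$, $\dot\theta$, $H_t$ respectively, again with $\sigma^2$ weights and the same boundary-term treatment.

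With $A_1,A_2,A_3,A_4$ improved, the time-independent upper bound on the density follows: write the mass equation along particle paths and use the effective viscous flux $G$ as in Lemma \ref{le7}'s proof, applying the Gr\"onwall-type Lemma \ref{le1} with the decomposition of the forcing over $[0,\sigma(T)]$ and $[\sigma(T),T]$; the smallness $C_0\le\ve_0$ forces the growth term to be small enough that $\rho\le 3\on/2$. Crucially none of these steps uses a lower bound on $\rho$.

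The main obstacle I anticipate is Step 2: re-establishing enough of the basic energy on $[0,\sigma(T)]$ and then propagating it, because the slip boundary condition destroys the clean energy identity and the only globally available bound, $E(t)\le CC_0^{1/4}$, is too weak to directly close $A_2$ at rate $C_0^{1/4}$ without the short-time refinement and the Hoff-type estimates on $(\dot u,\dot\theta,H_t)$. Managing the interplay of the temperature maximum bound, the coupling term $\nu|\curl H|^2$ in the temperature equation, and the Lorentz force $(\nabla\times H)\times H$ — all while keeping every constant independent of $\inf\rho_0$ — is where the delicate work lies; the boundary integral terms \eqref{bz8} are a recurring nuisance but are handled uniformly by the device in \eqref{bz3}.
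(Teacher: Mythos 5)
Your overall plan matches the paper's architecture closely: weak basic energy from the a priori $A_2$ bound, re-establishing the short-time energy, Hoff-type estimates on $(\dot u,\dot\theta,H_t)$ with the $u=u^\perp\times n$ boundary device, and the Gr\"onwall argument in Lagrangian coordinates for the density. The reordering (doing $A_2$ before $A_1,A_3,A_4$) is not itself a problem, since every lemma is proved under the \emph{a priori} hypotheses (\ref{z1}) rather than the improved bounds; the paper does $A_1$, then $A_3+A_4$, then $A_2$, then the density, but this choice is not forced by the logic.

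However, there is a genuine gap in your Step 2. You propose to ``re-establish the full basic energy on the short interval $[0,\sigma(T)]$ by exploiting the maximum principle for $\theta$ (using \eqref{mn2}-type bounds)''. The maximum principle for $(\ref{h0})_3$ only yields a \emph{lower} bound on $\theta$ — this is exactly what \eqref{mn2} says, and it is used only to guarantee $\theta>0$ so that $\log\theta$ makes sense. It cannot produce the \emph{upper} bound you need, because the viscous-and-Joule heating $\lambda(\div u)^2+2\mu|\mathbb{D}(u)|^2+\nu|\curl H|^2\ge 0$ is a positive source term and can drive $\theta$ up under compression. What actually closes the short-time basic-energy estimate (\ref{a2.121}) is the bound $\int_0^{\sigma(T)}\|\theta\|_{L^\infty}\,dt\le CC_0^{1/16}$ obtained from the elliptic regularity estimate \eqref{lop4} for the temperature equation together with the time-weighted bounds $\int_0^T\sigma\|\nabla^2\theta\|_{L^2}^2\,dt\le C$ coming from the integrated Hoff differential inequality \eqref{k8} with $m=1$ (using the a priori $A_3,A_4$ smallness), followed by the Sobolev interpolation $\|\theta-1\|_{L^\infty}\lesssim\|\nabla\theta\|_{L^2}^{1/2}\|\nabla^2\theta\|_{L^2}^{1/2}$. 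This is a qualitatively different mechanism from a maximum principle, and without it the Gr\"onwall factor $(\|\theta\|_{L^\infty}+1)$ in \eqref{a2.222} is not integrable and the $A_2$ estimate does not close. A secondary, more cosmetic misattribution: the boundary term \eqref{bz8} and its treatment \eqref{bz3} belong to the $A_4$-level estimate (testing against $\sigma^m\dot u_j[\partial_t+\div(u\cdot)]$); the $A_1$ estimate in Lemma \ref{le2} uses $u_t$ as multiplier, for which $u_t\cdot n=0$ kills the boundary term, and the $A_3$ boundary term is the simpler \eqref{bb2}, handled directly by the trace inequality \eqref{eee0} rather than by \eqref{bz3}.
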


\begin{proof}
Indeed, we conclude Proposition \ref{pr1} as a result of
the following Lemmas \ref{le2}, \ref{le6}, \ref{le3}, and \ref{le7},  with $\ve_0$ as in (\ref{xjia11}).
\end{proof}

In this section, we always assume that $C_0\le 1$ and let $C$ denote some generic positive constant depending only on $\mu$,  $\lambda$, $\nu$, $\ka$,  $R$, $\ga$, $\on$, $\bt$, $\O$,  and $M,$ and we write $C(\al)$ to emphasize that $C$ may depend  on $\al.$

We start with the following estimate on the basic energy.

\begin{lemma}\la{a13.1} Under the conditions of Proposition \ref{pr1}, there exists a positive constant $C$ depending only on $\mu$,  $R$,    and  $\on$  such that if $(\rho,u,\te,H)$ is a smooth solution to the problem (\ref{h0})--(\ref{ch2})  on $\Omega\times (0,T] $ satisfying
\be\la{3.q2}
0<\n\le 2\on ,\quad A_2(T)\le 2C_0^{1/4},
\ee
the following estimate holds:
\be \la{a2.112}
\sup_{0\le t\le T}\int\left( \n |u|^2+(\n-\tn)^2+|H|^2\right)dx \le C  C_0^{1/4}.
\ee
\end{lemma}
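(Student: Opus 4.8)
The plan is to extract a Lyapunov-type identity from the system by testing the momentum equation with $u$, the temperature equation with a suitable multiplier, and the magnetic equation with $H$, summing, and integrating in time. Concretely, I would first multiply $(\ref{h0})_2$ by $u$ and integrate over $\Omega$; using the slip boundary conditions $u\cdot n=0$, $\curl u\times n=0$, the viscous terms give $\mu\|\na u\|_{L^2}^2+(\mu+\lambda)\|\div u\|_{L^2}^2$ up to a boundary curvature term that is controlled (as the paper notes near \eqref{la2.7}) by $\|\na u\|_{L^2}^2$, the Lorentz force pairs with the magnetic equation tested by $H$ to produce the magnetic dissipation $\nu\|\curl H\|_{L^2}^2$ after cancellation of the $\na\times(u\times H)$ and $(\na\times H)\times H$ cross terms, and the pressure term $\int R\na(\rho\te)\cdot u\,dx$ is handled using the continuity equation. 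For the thermal part I would use the standard entropy-type multiplier: multiply $(\ref{h0})_3$ by $(1-\te^{-1})$ (or equivalently combine $\te$- and $\log\te$-weighted versions), so that together with the mass equation one builds the time derivative of $\int\frac{R}{\ga-1}\rho(\te-\log\te-1)\,dx$ and $\int R(1+\rho\log\rho-\rho)\,dx$, and the right-hand side dissipation terms $\lambda(\div u)^2+2\mu|\mathbb D(u)|^2+\nu|\curl H|^2$ get multiplied by $\te^{-1}>0$ and are absorbed. This is the usual derivation that, in the Cauchy or bounded-domain case with no-slip or periodic conditions, yields $E(t)\le E(0)\le CC_0$.

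The essential point — and the reason the conclusion is the weaker $CC_0^{1/4}$ rather than $CC_0$ — is that with the slip boundary condition the clean energy balance fails: the boundary curvature terms arising from integration by parts in the viscous and magnetic terms do not vanish, so instead of $\frac{d}{dt}E(t)+(\text{dissipation})\le 0$ one only gets $\frac{d}{dt}E(t)+(\text{dissipation})\le C(\|\na u\|_{L^2}^2+\|\na H\|_{L^2}^2)$, where the right side is exactly the kind of quantity that the a priori assumption $A_2(T)\le 2C_0^{1/4}$ controls in $L^1_t$ with bound $2C_0^{1/4}$. Integrating in time then gives
\[
\sup_{0\le t\le T}E(t)\le E(0)+C\int_0^T(\|\na u\|_{L^2}^2+\|\na H\|_{L^2}^2)\,dt\le CC_0+2CC_0^{1/4}\le CC_0^{1/4},
\]
using $C_0\le 1$. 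Finally I would convert the entropy-form energy $E(t)$ back into the quantity appearing in \eqref{a2.112}: the convexity estimates $(\rho-1)^2\le C(1+\rho\log\rho-\rho)$ for $0\le\rho\le 2\on$ and $\rho(\te-\log\te-1)\ge 0$ give $\int((\rho-1)^2+\rho|u|^2+|H|^2)\,dx\le CE(t)$, which yields the claim.

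The main obstacle I anticipate is the careful bookkeeping of the boundary integrals: one must verify that after all integrations by parts the only uncontrolled boundary contributions are genuinely of the form $\int_{\p\Omega}(\text{geometric factor})|u|^2$ or $\int_{\p\Omega}(\text{geometric factor})|H|^2$, that these are bounded (via the trace estimate \eqref{eee0}, i.e. $\|v\|_{L^2(\p\Omega)}\le C\|\na v\|_{L^2(\Omega)}$, applied to $u$ and $H$ which vanish at infinity) by $C(\|\na u\|_{L^2}^2+\|\na H\|_{L^2}^2)$, and that the magnetic cross terms $\int(\na\times(u\times H))\cdot H$ versus $\int((\na\times H)\times H)\cdot u$ really cancel up to such boundary terms given $H\cdot n=0$ and $\curl H\times n=0$. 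A secondary technical point is treating the pressure/temperature coupling on the unbounded domain so that all integrals converge — here the far-field conditions \eqref{ch2} and Lemma \ref{lll} (which controls $\int|\te-1|^s$ in terms of $\rho$-weighted quantities and $\|\na\te\|_{L^2}$) are what make the nonlinear terms integrable and absorbable.
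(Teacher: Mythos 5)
Your strategy matches the paper's: test the momentum equation by $u$, the energy equation by $1-\te^{-1}$, the induction equation by $H$, sum, integrate in time, invoke $A_2(T)\le 2C_0^{1/4}$, and close with the convexity estimate for $1+\n\log\n-\n$ under $\n\le 2\on$. The one place your picture differs is in where the defect term lives. The paper first rewrites the viscous operator as $(2\mu+\lambda)\na\div u-\mu\na\times\o$ (as in \eqref{a11}); under $u\cdot n=0$, $\curl u\times n=0$ the resulting integrations by parts produce \emph{no} boundary contribution at all, and likewise the magnetic cross-terms cancel exactly under $H\cdot n=0$, $\curl H\times n=0$. The defect in \eqref{la2.7} is therefore the \emph{volume} mismatch $-\mu\int\bigl(|\o|^2+2(\div u)^2-2|\mathfrak D(u)|^2\bigr)dx$, arising because the viscous work $\mu\|\o\|_{L^2}^2+(2\mu+\lambda)\|\div u\|_{L^2}^2$ from the momentum balance does not coincide with the dissipation $\lambda(\div u)^2+2\mu|\mathfrak D(u)|^2$ that the thermal balance needs to absorb; this is then bounded by $2\mu\|\na u\|_{L^2}^2$ with no trace estimate needed. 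Your version — keeping the viscous term in stress form, producing a genuine boundary curvature integral $\int_{\p\O}\mathfrak D(u)n\cdot u\,dS$, and controlling it by $\|u\|_{L^2(\p\O)}^2\le C\|\na u\|_{L^2}^2$ via \eqref{eee0} — lands on an inequality of the same strength, so the proof still closes; the paper's div-curl rewrite simply avoids the boundary bookkeeping entirely. (The extra $\|\na H\|_{L^2}^2$ on your right-hand side is harmless but unnecessary: nothing involving $H$ survives.) Everything after that — time integration, the smallness from $A_2(T)$, the convexity step \eqref{a2.9} — is exactly the paper's argument.
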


\begin{proof}
First, by virtue of (\ref{3.1}) and  (\ref{mn2}), we have that for all $ (x,t)\in \Omega\times(0,T),$
\be \la{3.2}
\te(x,t)>0 .
\ee

Next, note that
\be\notag
\Delta u = \na \div u - \na \times \o,\quad(\na\times H)\times H=H\cdot\na H-\na|H|^2/2,
\ee
 one can rewrite $(\ref{h0})_2$  as
\be\la{a11}\ba
\n (u_t+  u\cdot \na u )&=(2\mu+\lambda)\na{\rm div}u- \mu \na \times \o-\na P+H\cdot\na H-\nabla|H|^2/2.\ea\ee
Similarly, some simple calculations show that $\eqref{h0}_3$ is equivalent to
\be\la{mn}
H_t+u \cdot \nabla H-H \cdot \nabla u+ H \mathop{\mathrm{div}} u+\nu \nabla \times \mathop{\rm curl} H=0.
\ee

Denote the basic energy  by
\be \label{enet}E(t) \triangleq \int  \left( \frac{1}{2}\n |u|^2+R(1+\n\log\n-\n)+\frac{R}{\ga-1}\n(\te-\log\te-1)+\frac12|H|^2\right)dx. \ee
Then, multiplying $(\ref{a11})$, $(\ref{h0})_3$, and $(\ref{mn})$    by $u$, $1-\te^{-1}$, and $H$ respectively, summing them up, and  integrating the resulting equality over $\Omega$ by parts,  we deduce from $(\ref{h0})_1$,  \eqref{h1},  and  \eqref{ch2}   that
\be\la{la2.7}\ba
&E'(t)+\int \left( \frac{\lambda(\div u)^2+2\mu |\mathfrak{D}(u)|^2+\nu|\curl H|^2}{{\te}}+\ka \frac{|\na \te|^2}{\te^2} \right)dx \\
&= -\mu \int \left(|\o|^2+2(\div u)^2 - 2 |\mathfrak{D}(u)|^2\right)dx\\
&\le 2\mu \int  |\na u|^2 dx.
\ea\ee
Integrating \eqref{la2.7} with respect to $t$ over $(0,T)$ and using \eqref{3.q2}, \eqref{h3}, and \eqref{3.2} yield
 \be\la{a2.8}\ba
&\sup_{0\le t\le T} E(t)
\le C_0+ 2\mu \int_{0}^{T} \int  |\na u|^2 dxdt\le C C_0^{1/4},\ea\ee
which as well as
\be\la{a2.9}\ba
 (\n-1)^2\ge 1+\n\log\n-\n&=(\n-1)^2\int_0^1\frac{1-\al}{\al (\n-1)+1}d\al  \ge \frac{(\n-1)^2}{ 2(2\on+1)  }
 \ea\ee
infers (\ref{a2.112}).  The proof of Lemma \ref{a13.1} is  completed.
\end{proof}

The following lemma establishes the estimate on  $A_1(\sigma(T))$.
\begin{lemma}\la{le2}
	Under the conditions of Proposition \ref{pr1}, there exist positive constants  $K $  and $\ep_1 $ both depending only  on $\mu,\,\lambda,\,\nu,\, R,\, \ga,\, \on, $ $\O$, and $M$ such that if  $(\rho,u,\te,H)$ is a smooth solution to the problem  (\ref{h0})--(\ref{ch2}) on $\Omega\times (0,T] $ satisfying
	\be\la{3.q1}  0<\n\le 2\on ,\quad A_2(T)\le 2C_0^{1/4},\quad A_1(\sigma(T))\le 3K,\ee
	the following estimate holds:
	\be\la{h23} A_1(\sigma(T))\le 2K ,  \ee
	provided   $C_0\le \ep_1$.
\end{lemma}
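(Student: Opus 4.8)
The plan is to run weighted energy estimates for the momentum and induction equations on the short interval $[0,\si(T)]$ and close them by Gr\"onwall's inequality. First I would test the momentum equation $(\ref{a11})$ against $\dot u$ and the induction equation $(\ref{mn})$ against $H_t$, add the resulting identities, and integrate in time. Two structural features make this work here. Since $\si(T)\le 1$, the data contributes only $C(M)$; in particular the value $B(0)$ at $t=0$ of the functional constructed below is bounded independently of $\|\na\te_0\|_{L^2}$, because the pressure enters at $t=0$ only through $\|R(\n_0\te_0-1)\|\ltwo\le C(\on,\bt,R)\,C_0$, a bound that follows from $C_0$ together with $0\le\n_0\le\on$ and $0\le\te_0\le\bt$. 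Secondly, the a priori bound $A_2(T)\le 2C_0^{1/4}$ gives $\itt(\|\na u\|\ltwo+\|\na H\|\ltwo+\|\na\te\|\ltwo)\,dt\le CC_0^{1/4}$, so every ``error'' coefficient that appears is small in $L^1(0,\si(T))$ once $C_0$ is small, while the Gr\"onwall exponential stays bounded.

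For the velocity, testing $(\ref{a11})$ with $\dot u$ and writing $\mu\Delta u+(\mu+\lambda)\na\div u=(2\mu+\lambda)\na\div u-\mu\na\times\curl u$ produces $\frac{d}{dt}\big(\frac{2\mu+\lambda}{2}\|\div u\|\ltwo+\frac{\mu}{2}\|\curl u\|\ltwo\big)+\int\n|\dot u|^2\,dx$, up to: (i) cubic terms in $\na u$, bounded by $C\|\na u\|_{L^2}^{3/2}\|\na u\|_{L^6}^{3/2}$ and then, via Lemma \ref{le4}, by a small multiple of $\|\sqrt\n\dot u\|\ltwo$ plus terms of the form $C(\|\na u\|\ltwo+\|\na H\|\ltwo)$ with bounded coefficients; (ii) the pressure term $-\int\dot u\cdot\na P$, which one integrates by parts in space and then in time, generating the cross term $-\frac{d}{dt}\int R(\n\te-1)\div u\,dx$ and a commutator handled through the temperature equation $(\ref{h0})_3$ together with the adiabatic condition $\na\te\cdot n=0$ (the latter killing the boundary term when $\Delta\te$ is integrated by parts), the outcome being controlled by $\|\na\te\|_{L^2}$, $\|\na u\|_{L^2}$, $\|\sqrt\n\dot u\|_{L^2}$ and small powers of $C_0$, using also $\|R(\n\te-1)\|_{L^6}\le C\|\na\te\|_{L^2}+CC_0^{1/24}$ (interpolation, Sobolev and $(\ref{a2.112})$); (iii) the magnetic terms $\int\dot u\cdot(H\cdot\na H-\na|H|^2/2)$, absorbed after integration by parts into a small multiple of $\|\sqrt\n\dot u\|\ltwo$ plus $C\||H||\na H|\|\ltwo$; and (iv) a boundary integral, discussed below.

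For the magnetic field, testing $(\ref{mn})$ with $H_t$ and using $\curl H\times n=0$ on $\p\O$ gives $\|H_t\|\ltwo+\frac{\nu}{2}\frac{d}{dt}\|\curl H\|\ltwo$ on the left, with the convective and stretching terms bounded by $\frac{1}{4}\|H_t\|\ltwo+C\|u\|_{L^\infty}^2\|\na H\|\ltwo+C\|H\|_{L^\infty}^2\|\na u\|\ltwo$; reading $(\ref{mn})$ as $\nu\,\curl^2 H=-H_t-u\cdot\na H+H\cdot\na u-H\div u$ then yields $\|\curl^2 H\|\ltwo\le C\|H_t\|\ltwo+C\|u\|_{L^\infty}^2\|\na H\|\ltwo+C\|H\|_{L^\infty}^2\|\na u\|\ltwo$. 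Bounding $\|u\|_{L^\infty}$ and $\|H\|_{L^\infty}$ by Gagliardo--Nirenberg, Lemma \ref{le4}, $(\ref{2tdh})$ and the div--curl estimates of Lemmas \ref{crle1}, \ref{crle2} and \ref{crle5} (in particular $\|\na H\|_{L^2}\le C\|\curl H\|_{L^2}$ since $\div H=0$), the pieces carrying $\|\sqrt\n\dot u\|_{L^2}$, $\|\curl^2 H\|_{L^2}$, $\|H_t\|_{L^2}$ are absorbed — the a priori bound $\|\na u\|\ltwo+\|\na H\|\ltwo\le 3K$ being used only to keep coefficients bounded — and what remains has an $L^1(0,\si(T))$-small coefficient by $A_2(T)\le 2C_0^{1/4}$. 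Adding the two estimates with a suitable weight, I expect an inequality $\frac{d}{dt}B(t)+c_0(\|\sqrt\n\dot u\|\ltwo+\|\curl^2 H\|\ltwo+\|H_t\|\ltwo)\le F(t)B(t)+G(t)$ on $[0,\si(T)]$, with $B(t)$ comparable to $\|\na u\|\ltwo+\|\na H\|\ltwo$ (the pressure cross term being $O(C_0^{1/8}\|\na u\|_{L^2})$ by $(\ref{a2.112})$, hence harmless), $\int_0^{\si(T)}F\,dt\le CC_0^{1/4}\le C$, and $\int_0^{\si(T)}G\,dt\le C_1(M)+C_2 K^m C_0^{1/4}$ for a fixed $m$. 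Gr\"onwall's inequality and $B(0)\le C(M)$ then give $A_1(\si(T))\le C_3(M)+C_4 K^m C_0^{1/4}$; choosing $K\triangleq 2C_3(M)+1$ and then $\ep_1$ so small that $C_4 K^m\ep_1^{1/4}\le K$ yields $(\ref{h23})$.

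The main obstacle is the boundary integral $(2\mu+\lambda)\int_{\p\O}(\div u)(\dot u\cdot n)\,dS$ produced by the slip condition. From $u\cdot n=0$ on $\p\O$ for all $t$ one computes $\dot u\cdot n=-u\cdot(\na n)\cdot u$ on $\p\O$, which is not zero, so this term is genuinely present (the other boundary term, from $\int\dot u\cdot(\na\times\curl u)$, does vanish because $\curl u\times n=0$ on $\p\O$). Following the idea of \cite{C-L} (see also \cite{C-L-L}), I would use the identity $u=u^\perp\times n$ on $\p\O$ with $u^\perp=-u\times n$, so that $|u|^2=|u^\perp|^2$, and replace $\div u$ on $\p\O$ by $\frac{1}{2\mu+\lambda}G$ plus the lower-order pieces $\frac{1}{2\mu+\lambda}(R(\n\te-1)+|H|^2/2)$; the trace inequality $(\ref{eee0})$, the bound $\|\na G\|_{L^2}\le C(\|\n\dot u\|_{L^2}+\||H||\na H|\|_{L^2})$ of Lemma \ref{le4}, and interpolation estimates for $\|R(\n\te-1)\|_{L^p(\p\O)}$ and $\||H|^2\|_{L^p(\p\O)}$ then bound this integral by a small multiple of $\|\sqrt\n\dot u\|\ltwo$ plus $C\|\na u\|\ltwo(\|\na u\|\ltwo+\|\na H\|\ltwo+\|\na\te\|\ltwo+1)$, which is exactly of the admissible form. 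Finally, since the basic energy here is only of size $C_0^{1/4}$ (Lemma \ref{a13.1}), every occurrence of $\|\n-1\|_{L^2}$, $\|\sqrt\n u\|_{L^2}$, $\|H\|_{L^2}$ or $\|R(\n\te-1)\|_{L^2}$ must be carried with this weaker power; as these only enter the time-integrable error $G(t)$, this is bookkeeping rather than substance.
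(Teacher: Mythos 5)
Your argument is correct in outline, but it takes a genuinely different route from the paper's own proof of Lemma \ref{le2}. You test the momentum equation against the material derivative $\dot u$, which produces the boundary integral $-\int_{\partial\Omega}(u\cdot\nabla n\cdot u)\,G\,dS$ --- your ``main obstacle'' --- since $\dot u\cdot n=-u\cdot\nabla n\cdot u\neq 0$ on $\partial\Omega$, and you then control it via the trace estimate \eqref{eee0} and the identity $u=u^{\perp}\times n$ from Cai--Li. The paper sidesteps this boundary term entirely in the present lemma: it multiplies \eqref{a11} by $2u_t$ (not $\dot u$), and since $u_t\cdot n=0$ on $\partial\Omega$ every integration by parts on $\nabla\div u$, $\nabla\times\omega$, $\nabla P$ and $\nabla|H|^2$ closes with no boundary contribution (see \eqref{hh17}); the price is the extra commutator $I_2=\int\rho|u\cdot\nabla u|^2\,dx$, estimated in \eqref{op1}. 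The $\dot u$-test together with the slip-condition device you describe is exactly the machinery the paper deploys later in Lemma \ref{a113.4} (\eqref{bb2}, \eqref{bz8}, \eqref{bz3}--\eqref{bz4}), so you are importing the heavier later argument one lemma early --- it works, but is more than needed here, and requires the boundary term to be kept genuinely subordinate (via \eqref{o0}) rather than absent. A second, minor difference: you close with Gr\"onwall, while the paper integrates \eqref{k2} directly, pulls out $\sup_{[0,\sigma(T)]}(\|\nabla u\|_{L^2}^4+\|\nabla H\|_{L^2}^4)\le 9K^2$, and uses $A_2(T)\le 2C_0^{1/4}$ to reach $A_1(\sigma(T))\le K+9K^2\hat C_1 C_0^{1/4}\le 2K$; both routes give the same conclusion. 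The small-time inputs you identify --- $\|R(\rho_0\theta_0-1)\|_{L^2}^2\lesssim C_0$ for the initial value of the functional, $\|\rho\theta-1\|_{L^6}\lesssim\|\nabla\theta\|_{L^2}+C_0^{1/24}$, the $\|H\|_{L^\infty}$ and $\|\curl^2 H\|_{L^2}$ bounds, and $\|\nabla H\|_{L^2}\le C\|\curl H\|_{L^2}$ --- all match the paper's \eqref{go}--\eqref{3.30}, \eqref{p} and \eqref{k88}.
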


\begin{proof}
First, it follows form \eqref{g1}, \eqref{g2}, and \eqref{2tdh} that
\be\la{go}
\|H\|_{L^\infty} \leq C \|H\|_{L^6}^{1/2}\|\nabla H\|_{L^6}^{1/2}\leq C\|\nabla H\|_{L^2}+C\|\nabla H\|_{L^2}^{1/2}\|\curl^2 H\|_{L^2}^{1/2},\ee
which combined with  \eqref{mn},  \eqref{g1}, and  \eqref{2tdh} shows that
\be\ba\la{k88}
\nu\|\curl^2H\|_{L^2}
&\le  \|H_t\|_{L^2}+\|H \cdot \nabla u-u \cdot \nabla H-H \div u\|_{L^2}\\
&\leq  \|H_t\|_{L^2}+C \|\nabla u\|_{L^2}\|H\|_{L^\infty}+C\|u\|_{L^6}\|\nabla H\|_{L^3}\\
&\leq  \|H_t\|_{L^2}+C \|\nabla u\|_{L^2}\|\nabla H\|_{L^2}^{1/2}(\|\curl^2  H\|_{L^2}+\|\na H\|_{L^2})^{1/2} \\
&\leq  \frac{\nu}{3}\|\curl^2  H\|_{L^2}+\|H_t\|_{L^2}+C(\|\nabla u\|_{L^2}+\|\nabla u\|^2_{L^2})\|\nabla H\|_{L^2}.
\ea\ee
Thus we have further estimates on $H$. More precisely, by \eqref{go} and \eqref{k88}, it holds
\be\la{go1}
\|H\|_{L^\infty} \leq C\|\nabla H\|_{L^2}(\|\nabla u\|_{L^2}+1)+C\|\nabla H\|_{L^2}^{1/2}\|H_t\|_{L^2}^{1/2},\ee
\be\ba\la{goo1}\||H||\nabla H|\|_{L^2} &\leq \|H\|_{L^\infty}\|\nabla H\|_{L^2}\\&\leq C\|\nabla H\|_{L^2}^2(\|\nabla u\|_{L^2}+1)+C\|\nabla H\|_{L^2}^{3/2}\|H_t\|_{L^2}^{1/2}.\ea\ee

Next, by virtue of   \eqref{3.q1}, \eqref{g1}, and \eqref{a2.112}, we have that for any $p\in [2,6],$
\be\la{p}\ba  \|\rho\te-1\|_{L^p}&= \| \rho(\te-1) + (\rho-1)\|_{L^p}\\
 &\le \|\n(\te-1)\|_{L^2}^{(6-p)/(2p)}
 \|\n(\te-1)\|_{L^6}^{ 3(p-2)/(2p)}+ \|\n-1\|_{L^p}
 \\&\le CC_0^{(6-p)/(16p)}
 \|\na\te \|_{L^2}^{ 3(p-2)/(2p)}+ CC_0^{1/(4p)},
  \ea\ee
which along with (\ref{h17}) and \eqref{goo1} indicates
\be\ba \la{3.30}  \|\na
u\|_{L^6} &\le C \left(  \|\n \dot u\|_{L^2}+\||H||\na H|\|_{L^2}+\|\na u\|_{L^2}+ \|\na \te\|_{L^2}+C_0^{1/24}\right)\\
&\le C \left(  \|\n \dot u\|_{L^2}+\|H_t\|_{L^2}+\|\na u\|_{L^2}+ \|\na \te\|_{L^2}+C_0^{1/24}\right)\\
&\quad+C\|\na H\|_{L^2}^2(\|\na H\|_{L^2}+\|\na u\|_{L^2}+1).
\ea\ee

Now, integrating $(\ref{mn})$  multiplied by $2H_t $ over $\Omega $ by parts, we deduce from   \eqref{h1}  and \eqref{k88} that
\be\ba\la{tdh-3}
& \left(\nu\|\curl H\|_{L^2}^2\right)_t+\frac{3}{2}\|H_t\|^2_{L^2} \\
&\leq  C\int |H \cdot \nabla u-u \cdot \nabla H-H \div u|^2 dx  \\
&\leq  C \|\nabla u\|_{L^2}^2\|\nabla H\|_{L^2}(\|\curl^2  H\|_{L^2}+\|\na H\|_{L^2}) \\
&\leq  \frac{1}{4}\| H_t\|^2_{L^2}+C(\|\nabla u\|^2_{L^2}+\|\nabla u\|^4_{L^2})\|\nabla H\|_{L^2}^2.
\ea\ee

Multiplying \eqref{a11} by $2u_t$ and integrating the resulting equality by parts  yield
 \be\ba \la{hh17}
 &\frac{d}{dt}\int \left(  {\mu} |\o|^2+ (2\mu+\lambda)(\div u)^2\right)dx+ \int\rho |\dot u|^2dx \\
&\le 2\int  P\div u_t dx+ \int \n|u\cdot \na u|^2dx+\int(2H\cdot\na H-\na |H|^2)\cdot u_t dx\\
 &=  2R\frac{d}{dt}\int  (\rho\te-1) \div u  dx-2\int P_t \div u dx+\int \n|u\cdot \na u|^2dx\\
 &\quad+\frac{d}{dt}\int(2H\cdot\na H-\na |H|^2)\cdot u dx-\int(2H\cdot\na H-\na |H|^2)_t\cdot u dx\\
 &= 2R\frac{d}{dt}\int  (\rho\te-1) \div u  dx-\frac{R^2}{2\mu+\lambda}\frac{d}{dt}\int (\rho\te-1)^2 dx\\
 &\quad-\frac{4}{2\mu+\lambda}\int P_t (2G+|H|^2) dx+ \int \n|u\cdot \na u|^2dx\\
 &\quad+\frac{d}{dt}\int(2H\cdot\na H-\na |H|^2)\cdot u dx-\int(2H\cdot\na H-\na |H|^2)_t\cdot u dx ,
 \ea\ee
where in the last equality  we have used (\ref{hj1})$_1$.

Denote
\be\la{po}\ba
B_0(t)&\triangleq \int \left(  {\mu} |\o|^2+ (2\mu+\lambda)(\div u)^2+\frac{R^2}{2\mu+\lambda}(\rho\te-1)^2\right)dx\\
&\quad-\int\left(2R(\n\te-1)\div u+\left(2H\cdot\na H-\na |H|^2\right)\cdot u\right)dx,
\ea\ee
then \eqref{hh17} can be rewritten as
\be\la{k0}\ba
B_0'(t)+\int\rho |\dot u|^2dx
&\le -\frac{4}{2\mu+\lambda}\int P_t (2G+|H|^2) dx+\int \n|u\cdot \na u|^2dx\\
&\quad-\int(2H\cdot\na H-\na |H|^2)_t\cdot u dx\triangleq \sum_{i=1}^3I_i .
\ea\ee
$I_i(i=1,2,3)$ can be estimated respectively.

Noticing that (\ref{h0})$_3$ implies
\be \la{op3} \ba
P_t=&-\div (Pu) -(\gamma-1) P\div u+(\ga-1)\ka \Delta\te\\&+(\ga-1)\left(\lambda (\div u)^2+2\mu |\mathfrak{D}(u)|^2+\nu |\curl H|^2\right),
\ea\ee
we thus obtain after using  integration by parts,  (\ref{g1}), (\ref{hj1})$_1$, (\ref{h19}), \eqref{2tdh}, (\ref{p}),  (\ref{3.30}), \eqref{3.q1},  \eqref{k88}, \eqref{goo1}, and (\ref{a2.112}) that
\be\la{a16}\ba
|I_1|
&\le C\int (P |u|+|\na\te|)(|\na G|+|\na |H|^2|)dx\\
&\quad +C\int P|\na u||2G+ |H|^2|dx+C\int (|\na u|^2+|\na H|^2)|2G+|H|^2|dx \\
&\le C(\|\n u\|_{L^2}+\|\na \te\|_{L^2})( \|\na G\|_{L^2}+\|\na |H|^2\|_{L^2}) \\
&\quad+ C\|\n(\te-1)\|_{L^2}^{1/2}\|\na\te\|_{L^2}^{1/2}(\|\na G\|_{L^2}+\|\na |H|^2\|_{L^2})\|\na u\|_{L^2}\\
&\quad + C(\|\na u\|_{L^2}+\|\n\te-1\|_{L^2})\|\na u\|_{L^2}\\
& \quad+ C (\|\na G\|_{L^2}+\|\na |H|^2\|_{L^2})(\|\na u\|_{L^2}^{3/2} \|\na u\|_{L^6}^{1/2}+\|\na H\|_{L^2}^{3/2}\|\na^2 H\|_{L^2}^{1/2})\\
&\le \de (\|\na G\|_{L^2}^2 +\||H|| \na H|\|_{L^2}^2+ \|\na u\|^2_{L^6}+\|\na^2 H\|_{L^2}^2)\\
&\quad+C(\de) \left( \|\na u\|_{L^2}^6+ \|\na \te\|_{L^2}^2+ \|\na H\|^6_{L^2}+1\right) \\
&\le C\de(\|\rho^{1/2} \dot u\|^2_{L^2}+\|H_t\|_{L^2}^2)\\ &\quad+C(\de) \left( \|\na u\|_{L^2}^6+ \|\na \te\|_{L^2}^2 +\|\na  H\|^6_{L^2}+1\right).
\ea\ee
Then, one deduces from (\ref{g1}), \eqref{3.q1}, and (\ref{3.30}) that
\be\la{op1}\ba
|I_2|&\le C\|u\|_{L^6}^2 \|\na u\|_{L^2} \|\na u\|_{L^6}  \\
&\le \de(\|\rho^{1/2} \dot u\|_{L^2}^2+\|H_t\|_{L^2}^2)+ C(\de)\left(\|\na u\|_{L^2}^6+\|\na\te\|_{L^2}^2+\|\na
H\|_{L^2}^6+1\right).
\ea\ee
Combining integration by parts with \eqref{h0}$_5$ and \eqref{go1} leads to
\be\ba\la{k1}
|I_3|&=
\left|\int \left(2(H\otimes H)_t:\na u-(|H|^2)_t\div u\right )dx\right|\\
&\le C\int|H_t||H||\na u| dx\\
&\le C\|H_t\|_{L^2}\|H\|_{L^\infty}\|\nabla u\|_{L^2}\\
&\le \de\|H_t\|_{L^2}^{2}+C(\de)(\|\nabla H\|_{L^2}^6+\|\nabla u\|_{L^2}^6+1).
\ea\ee

Substituting (\ref{a16})--(\ref{k1}) into (\ref{k0}) and adding the resulting inequality to \eqref{tdh-3}, one derives after choosing $\de$ suitably small  and using \eqref{k88} that
\be\ba\la{k2}
&\left(\nu\|\curl H\|_{L^2}^2+B_0\right)_t+\frac{2}{9}\nu^2\|\curl^2 H\|_{L^2}^2+\frac{1}{2}(\|H_t\|^2_{L^2}+\|\n^{1/2}\dot u\|_{L^2}^2)\\
&\le C \left( \|\na u\|_{L^2}^6+ \|\na \te\|_{L^2}^2 +\|\na  H\|^6_{L^2}+1\right).
\ea\ee

Finally,  by \eqref{h0}$_5$, \eqref{g1}, and \eqref{a2.112}, one arrives at
\be\ba\la{k11}
\left|\int(2H\cdot\na H-\na |H|^2)\cdot u dx\right|&
= \left|\int\left(2(H\otimes H):\na u-|H|^2\div u\right )dx\right|\\
&\le C\|\na u\|_{L^2}\|H\|_{L^2}^{1/2}\|\na H\|_{L^2}^{3/2}\\
&\le CC_0^{1/16}\|\na u\|_{L^2}\|\na H\|_{L^2}^{3/2},
\ea\ee
which together with \eqref{k2}, \eqref{ljq01}, (\ref{3.q1}), \eqref{po}, \eqref{p}, and Holder's inequality yields
\bnn\la{h81} \ba
&\sup_{0\le t\le \sigma(T)}(\|\na u\|_{L^2}^2+\|\na H\|_{L^2}^2)+ \int_0^{\sigma(T)}\left(\|\n^{1/2}\dot u\|_{L^2}^2+\|H_t\|^2_{L^2}+\|\curl^2 H\|_{L^2}^2\right)dt\\
&\le CM^2+CC_0^{1/4} + \hat C_1C_0^{1/4}\sup_{0\le t\le \sigma(T)}(\|\na u\|_{L^2}^4+\|\na H\|_{L^2}^4)\\
&\le K+9K^2\hat C_1C_0^{1/4}\\
&\le 2K,
\ea \enn
with $K\triangleq CM^2+C +1$, provided
\be\notag C_0\le \ep_1 \triangleq \min\left\{1,\xl(9\hat C_1K\xr)^{-4}\right\}.\ee
The proof of Lemma \ref{le2} is completed.
\end{proof}

Next, we adopt the approach due to Hoff \cite{Hof1} (see also Huang-Li \cite{H-L})  to establish the elementary estimates on $\dot u$, $\dot \te$, and $H_t$, which are the footstone for the estimates on $A_3(T)$ and $A_2(T)$,  and the ideas in Cai-Li \cite{C-L} to deal with the emerging  boundary terms.

\begin{lemma}\la{a113.4}
	Under the conditions of Proposition \ref{pr1}, let $(\rho,u,\te,H)$ be a smooth solution to the problem (\ref{h0})--(\ref{ch2}) on $\Omega\times (0,T] $ satisfying (\ref{z1}) with $K$ as in Lemma \ref{le2}.  Then  there exist positive constants $C$, $ C_1$, and $C_2$ depending only on $\mu,\,\lambda,\,\nu, \,k,\, R,\, \ga,\, \on,$  $\O$, and $ M$  such that, for any $\beta,\eta\in (0,1]$ and $m\geq0,$
the following estimates hold:
\be\ba  \la{an1}
&(\sigma B_1)'(t) + \frac{3}{2}\si\|\n^{1/2}\dot u\|_{L^2}^2\\
&\le   C C_0^{1/4} \sigma' +2\beta\si^2\|\n^{1/2}\dot\theta\|_{L^2}^2+C\si^2\|\na u\|_{L^4}^4+C\si\|H_t\|^2_{L^2}\\ &\quad+C\beta^{-1}\left(\|\na u\|_{L^2}^2+\|\na H\|_{L^2}^2+\|\na\te\|_{L^2}^2\right),
\ea\ee
\be\la{ae0}\ba
&(\sigma^{m}B_2)'t+C_1 \sigma^{m}\left(\|\na\dot{u}\|_{L^2}^2+\|\na H_t\|_{L^2}^2\right)\\
&\le C_2  \si^m \|\rho^{1/2} \dot \te\|_{L^2}^2+ C(\si^{m-1}\si'+\si^m) (\|\rho^{1/2}\dot u\|_{L^2}^2+\|H_t\|_{L^2}^2)\\
&\quad+ C( \|\na u\|^2_{L^2}+\|\na H\|^2_{L^2}+\|\na \te\|^2_{L^2})\\
&\quad+C \si^m (\|\na u\|^4_{L^4}+\|H_t\|^3_{L^2}+ \|\te \na u\|_{L^2}^2),\ea\ee
  and
 \be\la{nle7}\ba  &(\si^mB_3 )'(t)+\si^m \|\n^{1/2}\dot \te\|_{L^2}^2\\
&\le C \eta \si^m\left(\|\na\dot u\|_{L^2}^2+\|\na H_t\|_{L^2}^2\right)+C \|\na
\te \|_{L^2}^2+C\si^m \|\na u\|_{L^4}^4\\
&\quad+C\eta^{-1} \si^m\left( \|\te\na u\|_{L^2}^2+\| H_t\|_{L^2}^3+\|\na\te\|_{L^2}^3+\|\na H\|_{L^2}^2\right),\ea\ee
where
\be\ba\la{an2}
B_1(t)\triangleq &\mu\|\o\|_{L^2}^2+ (2\mu+\lambda) \|\div u\|_{L^2}^2-2 R\int \div u(\n\te-1) dx\\
&-\int(2H\cdot\na H-\na|H|^2)\cdot udx, \ea\ee
\be\la{k6}
B_2(t)\triangleq
\|\rho^{1/2}\dot{u}\|_{L^2}^2+\|H_t\|_{L^2}^2+2\int_{\p \O}  (u \cdot \na n \cdot u) G dS,
\ee
and
 \be\la{e6}
B_3(t)\triangleq\frac{\ga-1}{R}\left(\ka \|\na
\te\|_{L^2}^2-2 \int (\lambda (\div u)^2+2\mu|\mathfrak{D}(u)|^2+\nu|\curl H|^2)\te dx\right).\ee
\end{lemma}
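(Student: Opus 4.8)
The three inequalities \eqref{an1}, \eqref{ae0}, \eqref{nle7} are the energy identities obtained by testing the momentum, magnetic, and temperature equations with their respective material derivatives (or, for the magnetic field, the time derivative), multiplied by appropriate powers of $\si$. The plan is to derive each of them in turn, controlling all nonlinear and boundary terms by the quantities already estimated in Lemmas \ref{a13.1}--\ref{le2}, namely $CC_0^{1/4}$ for the basic energy, $2K$ for $A_1(\si(T))$, and the a priori bounds in \eqref{z1}. Throughout, the Lemma \ref{le4} estimates on $G$, $\o$, $\na u$, and the elliptic estimates \eqref{2tdh}, \eqref{2tdu} are the workhorses, together with the Gagliardo--Nirenberg inequalities of Lemma \ref{11} and the div--curl estimates of Lemmas \ref{crle1}--\ref{crle5}.

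\textbf{Step 1 (estimate \eqref{an1}).} Multiply \eqref{a11} by $\si\dot u$ and integrate over $\Omega$. The left side produces $\frac{d}{dt}$ of $\si$ times the quadratic form $B_1(t)$ (using $(\ref{h0})_1$ and integration by parts to move $\na\div u$, $\na\times\o$, $\na P$, and the Lorentz force onto $\dot u$), plus $\si\|\n^{1/2}\dot u\|_{L^2}^2$; the commutator terms from $\dot u = u_t + u\cdot\na u$ and the $\si'$ term are lower order. The pressure term $-2R\int\si\,\na P\cdot\dot u$ is handled as in \eqref{hh17}: write $P_t$ via \eqref{op3}, which introduces $\|\n^{1/2}\dot\te\|_{L^2}^2$ (hence the $2\beta\si^2\|\n^{1/2}\dot\te\|_{L^2}^2$ term after Cauchy--Schwarz), $\|\na u\|_{L^4}^4$, and lower-order gradient terms. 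The magnetic terms $\int\si(2H\cdot\na H - \na|H|^2)\cdot\dot u$ are integrated by parts in time to form the $B_1$ contribution and give $C\si\|H_t\|_{L^2}^2$ after using \eqref{go1}, \eqref{goo1} and the a priori bounds. The $CC_0^{1/4}\si'$ term comes from bounding the low-order pieces of $B_1$ (the $\int\div u(\n\te-1)$ cross term) via \eqref{a2.112} and \eqref{p}. No boundary term survives here because the slip condition $\curl u\times n=0$ kills $\int_{\p\O}(\na\times\o)\cdot\dot u$ up to controllable terms handled as in \cite{C-L}.

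\textbf{Step 2 (estimate \eqref{ae0}).} This is the highest-order estimate: differentiate the momentum equation \eqref{a11} in $t$, multiply by $\si^m\dot u$, and simultaneously multiply $(\ref{mn})_t$ by $\si^m H_t$; summing gives $\frac{d}{dt}(\si^m B_2)$ plus the coercive terms $C_1\si^m(\|\na\dot u\|_{L^2}^2 + \|\na H_t\|_{L^2}^2)$, where the surface integral $2\int_{\p\O}(u\cdot\na n\cdot u)G\,dS$ in $B_2$ appears precisely because integrating $\int(\na\div u_t)\cdot\dot u$ and $\int(\na\times\o_t)\cdot\dot u$ by parts under the slip boundary condition produces, via the identity $u = u^\perp\times n$ on $\p\O$ and $\curl u\times n=0$, a boundary term that must be absorbed into $B_2$ rather than estimated directly (this is the Cai--Li \cite{C-L} device referenced around \eqref{bz3}, \eqref{bz8}). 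Lemma \ref{uup1} and Lemma \ref{le4} convert $\|\na\dot u\|_{L^2}$ back into $\|\div\dot u\|_{L^2}+\|\curl\dot u\|_{L^2}+\|\na u\|_{L^4}^2+\|\na u\|_{L^2}^2$; the differentiated pressure term yields $C_2\si^m\|\n^{1/2}\dot\te\|_{L^2}^2$ and $\|\te\na u\|_{L^2}^2$; the commutators give $\|\na u\|_{L^4}^4$ and $\|H_t\|_{L^2}^3$; the $\si^{m-1}\si'$ terms arise from the time-weight. Estimate \eqref{tdh1} and \eqref{2tdh} control the magnetic remainder terms $\|H\cdot\na u_t\|$, $\|u_t\cdot\na H\|$, etc.

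\textbf{Step 3 (estimate \eqref{nle7}).} Multiply $(\ref{h0})_3$ (equivalently the temperature equation) by $\si^m\dot\te$ and integrate. The $\ka\Delta\te$ term gives $\frac{d}{dt}(\si^m B_3)$ — the $B_3$ quadratic form including the $-2\int(\lambda(\div u)^2 + 2\mu|\mathbb{D}(u)|^2 + \nu|\curl H|^2)\te$ piece comes from integrating $\int\ka\Delta\te(u\cdot\na\te)$ by parts and pairing with the viscous/magnetic heat sources — plus $\si^m\|\n^{1/2}\dot\te\|_{L^2}^2$. The heat-source terms $\int\si^m(\lambda(\div u)^2 + 2\mu|\mathbb{D}(u)|^2 + \nu|\curl H|^2)\dot\te$, after integration by parts in $t$ on the part not absorbed into $B_3$, require bounding $\int\si^m\na u:\na u_t\,\te$ type terms; here one uses $u_t = \dot u - u\cdot\na u$, so $\|\na u_t\|$ is replaced by $\|\na\dot u\|$ plus lower order, producing the $C\eta\si^m(\|\na\dot u\|_{L^2}^2 + \|\na H_t\|_{L^2}^2)$ term, with $\eta^{-1}$ weights on $\|\te\na u\|_{L^2}^2$, $\|H_t\|_{L^2}^3$, $\|\na\te\|_{L^2}^3$, $\|\na H\|_{L^2}^2$. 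The key inequality enabling the $L^2(\te)$-weighted terms to be reabsorbed is Lemma \ref{lll}, which lets $\int\te|\na u|^2$-type quantities be compared with $\int\rho\te|\na u|^2 + \|\rho\te - 1\|$-weighted gradient terms.

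\textbf{Main obstacle.} The genuinely delicate point is Step 2, the boundary term in $B_2$: unlike the Cauchy or no-slip settings, differentiating the momentum equation in time and integrating against $\dot u$ under the Navier slip condition does not kill all boundary contributions. Following \cite{C-L}, one writes $u = u^\perp\times n$ with $u^\perp = -u\times n$ on $\p\O$, so that the surviving surface integral takes the form $2\int_{\p\O}(u\cdot\na n\cdot u)G\,dS$, which cannot be controlled pointwise but — because its time derivative pairs favorably with the coercive $\|\na\dot u\|_{L^2}^2$ term after invoking the trace estimate \eqref{eee0} and \eqref{h19} for $\|\na G\|$ — must be incorporated into the definition of $B_2$ itself and shown to be dominated by $\|\n^{1/2}\dot u\|_{L^2}^2 + \|\na u\|_{L^2}^2 + \cdots$ so that $B_2 \geq \frac12\|\n^{1/2}\dot u\|_{L^2}^2 + \frac12\|H_t\|_{L^2}^2 - C$ when $C_0$ is small. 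Getting the signs and the absorption constants to close, uniformly in the vanishing lower bound of the density, is where the care lies; everything else is a (lengthy) bookkeeping of Hölder, Gagliardo--Nirenberg, and the div--curl elliptic estimates already recorded above.
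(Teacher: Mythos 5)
Your plan follows the same three-part structure as the paper's proof: test the momentum equation \eqref{a111} with $\si\dot u$ for \eqref{an1}, operate $\si^m\dot u^j[\partial_t+\div(u\cdot)]$ on \eqref{a111} (together with \eqref{sm} tested against $\si^m H_t$) for \eqref{ae0}, and test $(\ref{h0})_3$ with $\si^m\dot\te$ for \eqref{nle7}, using the Cai--Li device to incorporate the surviving surface integral into $B_2$. Two corrections to your bookkeeping are worth flagging. In Step 1, a boundary term \emph{does} survive: integrating $\int\si\dot u\cdot\nabla G\,dx$ by parts yields $M_1=\int_{\partial\Omega}\si(u\cdot\nabla u\cdot n)G\,dS$, which is not killed by $\curl u\times n=0$ (that only eliminates the $\nabla\times\curl u$ boundary piece) — it is rewritten via \eqref{pzw1} and controlled directly by the trace estimate \eqref{eee0} together with \eqref{o0}, not absorbed into $B_1$. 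And in Step 3, Lemma \ref{lll} plays no role: the elliptic ingredient actually needed is \eqref{lop4}, the $\|\nabla^2\te\|_{L^2}$ bound from the Neumann problem \eqref{3.29}, while the $\|\te\nabla u\|_{L^2}^2$ term is left untouched on the right-hand side of \eqref{nle7} and only estimated later via \eqref{m20}; Lemma \ref{lll} enters instead in Lemma \ref{a13} to control $\|\te-1\|_{L^2}$.
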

\begin{proof} The proof is divided into  the following three parts.

\noindent{}\textbf{Part I: The proof of (\ref{an1}).}

First, we can rewrite \eqref{a11} as
\be\la{a111}
\n\dot u+\mu\na\times\curl u-\na G-H\cdot\na H=0.
\ee
Multiplying
$(\ref{a111}) $ by $\sigma \dot{u}$ and integrating the resulting
equality by parts yield
\be\la{m0} \ba  \int \sigma
\rho|\dot{u}|^2dx & = \int (\sigma \dot{u}\cdot\nabla G - \si \mu \na \times \curl u\cdot\dot{u}+\si H\cdot\na H\cdot\dot u)dx  \\
&=\int_{\partial \O} \sigma (u\cdot\na u \cdot n) G dS - \int \sigma \div \dot{u} G dx - \mu \int  \si  \curl u \cdot \curl \dot{u} dx \\
&\quad+\int\si H\cdot\na H\cdot\dot udx \triangleq \sum_{i=1}^{4}M_i. \ea \ee

Note that \eqref{h1} implies
\be\la{pzw1} u\cdot\nabla u\cdot n=-u\cdot\nabla n\cdot u ~~\quad \mbox{on}~\p \O,\ee
and
\begin{align}\la{eee1}
		u=u^{\perp} \times n~~~~~\text{on}\, \p \O,
\end{align}
with $u^{\perp}=-u\times n$.
Then, the combination of \eqref{eee0} and \eqref{pzw1} leads to
\be \ba \la{bb2}
M_1&=-\int_{\partial \O} \sigma  (u\cdot \na n\cdot u) G dS\\
 &\le C\sigma \|u\|_{L^4(\partial\Omega)}^2\|G\|_{L^2(\partial\Omega)}\\
 &\le C\sigma \|\na u\|_{L^2} ^2\|\na G\|_{L^2}\\
 &\le \delta \sigma\|\rho^{1/2} \dot u\|_{L^2}^2+\si\|H_t\|_{L^2}^2+C(\de) \sigma\|\na u\|_{L^2}^2,
\ea \ee
where in the last inequality we have used the following simple facts:
\be \la{infty12}
\sup_{t\in[0,T]}(\|\na u\|_{L^2}^2+\|\na H\|_{L^2}^2)\leq A_1(\sigma(T))+A_3(T)\leq C,
\ee
and
\be\la{o0}
\|\na G\|_{L^2}\le C(\|\n\dot u\|_{L^2}+\|H_t\|_{L^2}+\|\na H\|_{L^2})
\ee
owing to \eqref{h19},  \eqref{goo1}, and \eqref{infty12}.

Notice that
\be \ba \label{jia1}
P_t=(R\n \te)_t=R\n \dot{\te}-\div (Pu),
\ea \ee
which as well as  direct calculations shows
\be \ba\notag
\div \dot u G=& (\div u_t + \div (u \cdot \na u))((2\mu+\lambda)\div u - R(\rho\theta-1)-\frac{1}{2}|H|^2)\\
=& \frac{2\mu+\lambda}{2} (\div u)^2_t - (R(\rho\theta-1)\div u)_t +R\rho \dot\te \div u - \div(Pu) \div u\\
& + (2\mu+\lambda) \div (u \cdot \na u)\div u -R(\rho\theta-1)\div (u \cdot \na u)\\
&-\frac{1}{2}(|H|^2\div u)_t+H\cdot H_t\div u-\frac{1}{2}|H|^2 \div (u \cdot \na u)\\
=& \frac{2\mu+\lambda}{2} (\div u)^2_t - (R(\rho\theta-1)\div u)_t +R\rho \dot\te \div u \\
& + (2\mu+\lambda) \na u : (\na u)^{\rm tr} \div u + \frac{2\mu+\lambda}{2} u \cdot \na(\div u)^2 \\
&- \div(R(\rho\theta-1)u \div u)- R(\rho\theta-1) \na u : (\na u)^{\rm tr}-R(\div u)^2\\
&-\frac{1}{2}(|H|^2\div u)_t+H\cdot H_t\div u-\frac{1}{2}|H|^2\na u : (\na u)^{\rm tr}-\frac{1}{2}|H|^2 u \cdot \na \div u.
\ea \ee
Combining this with  integration by parts, \eqref{g1}, and \eqref{z1} gives that for any $\beta\in
(0,1],$
\be\la{m1} \ba
M_2
=&  -\frac{1}{2} \left(\int \sigma  \left((2\mu+\lambda)(\div u)^2-2R(\rho\theta-1) \div u-|H|^2\div u \right)dx \right)_t\\
&+ \frac{1}{2} \si' \int \left((2\mu+\lambda)(\div u)^2-2R(\rho\theta-1) \div u-|H|^2\div u \right)dx \\
& - R\si \int \rho \dot\te \div u dx-(2\mu+\lambda)  \si \int  \na u : (\na u)^{\rm tr} \div u dx \\
&+ \frac{2\mu  +\lambda}{2} \si \int  (\div u)^3 dx+ R\si \int (\rho\theta-1) \na u : (\na u)^{\rm tr}dx \\
&+  R\si \int (\div u)^2 dx-\si\int H\cdot H_t\div u dx+\frac{1}{2}\si\int|H|^2\na u : (\na u)^{\rm tr}dx\\
&+\frac{1}{2}\si\int|H|^2 u \cdot \na \div u dx\\
\le & -\frac{1}{2} \left(\int \sigma  \left((2\mu+\lambda)(\div u)^2-2R(\rho\theta-1) \div u-|H|^2\div u \right)dx \right)_t\\
& + C \si' \|\rho\theta-1\|_{L^2}^2+ \beta \si^2 \|\rho^{1/2} \dot\te\|_{L^2}^2 +  C \beta^{-1}  \|\na u\|_{L^2}^2+\si\|H_t\|_{L^2}^2+C\|H\|_{L^4}^4 \\
&+ C\si^2 \|\na u\|_{L^4}^4 + C \si \int
\te |\na u|^2dx+C\si\|u\|_{L^6}\|\na u\|_{L^2}\|H\|_{L^6}\|\na H\|_{L^6}\\
\le & -\frac{1}{2} \left(\int \sigma  \left((2\mu+\lambda)(\div u)^2-2R(\rho\theta-1) \div u-|H|^2\div u \right)dx \right)_t\\
&+ C C_0^{1/4}\si'+ \beta \si^2 \|\rho^{1/2} \dot\te\|_{L^2}^2+ C\de \si\|\n^{1/2}\dot u\|_{L^2}^2+C\si\|H_t\|^2_{L^2}  \\
&+ C\si^2 \|\na u\|_{L^4}^4+  C(\de) \beta^{-1} ( \|\na u\|_{L^2}^2+\|\na H\|_{L^2}^2 + \|\na \te \|_{L^2}^2),
\ea \ee
where in the last inequality we have used  \eqref{infty12},  (\ref{p}), and
the following  facts:
\be \la{2.48}\ba   \int
\te |\na u|^2dx   & \le
\int|\te-1||\na u|^2dx+  \int |\na u|^2dx\\ &\le C
\|\te-1 \|_{L^6}\|\na u\|_{L^2}^{3/2}
\|\na u\|_{L^6}^{1/2}+  \|\na u\|_{L^2}^2
\\ &\le \|\na  u\|_{L^2}^2+C \|\na\te\|_{L^2}\|\na u\|_{L^2}^{3/2}\\
&\quad\cdot\left(   \|\n \dot u\|_{L^2}+\|H_t\|_{L^2}+\|\na H\|_{L^2}+\|\na u\|_{L^2}+ \|\na\te\|_{L^2}+1\right)^{1/2}
\\ &\le \de
\left(  \|\na\te\|^2_{L^2}  + \|\n^{1/2}  \dot
u\|_{L^2}^2+\|H_t\|_{L^2}^2 \right) + C(\de)  \|\na
u\|_{L^2}^2 \ea\ee
due to  \eqref{g1}, (\ref{3.30}), (\ref{z1}),  (\ref{infty12}), and
\be\la{o1}
\|\na^2 H\|_{L^2}\le C(\|H_t\|_{L^2}+\|\na H\|_{L^2})
\ee
due to \eqref{2tdh}, \eqref{k88}, and \eqref{infty12}.

Analogously, we can deal with the  term $M_4$ as follows
\be\ba\la{k3}
M_4
=&\left(\int\si H\cdot\na H\cdot u dx\right)_t-\si'\int \div(H\otimes��H)��\cdot u dx\\
&-\si\int \div (H\otimes H)_t u dx+\int\si H\cdot\na H\cdot(u\cdot\na u)dx\\
\le&\left(\int\si H\cdot\na H\cdot u dx\right)_t+C\si'\|\na u\|_{L^2}\|H\|_{L^2}^{1/2}\|\na H\|_{L^2}^{3/2}\\
&+C\si\|H_t\|_{L^2}\|H\|_{L^2}^{1/4}\|\na H\|_{L^2}^{3/4}\|\na u\|_{L^4}+C\si\|u\|_{L^6}\|\na u\|_{L^2}\|H\|_{L^6}\|\na H\|_{L^6}\\
\le&\left(\int\si H\cdot\na H\cdot u dx\right)_t+C\si\|H_t\|^2_{L^2}+\si^2\|\na u\|_{L^4}^4\\
&+C( \|\na u\|_{L^2}^2+\|\na H\|_{L^2}^2).
\ea\ee

Finally, some direct calculations infer that
 \be\la{m2} \ba
M_3
& = -\frac{\mu }{2}\int\sigma |\curl u|^2_t dx
    -\mu \sigma  \int \curl u \cdot \curl (u\cdot \na u)dx \\
& = -\frac{\mu }{2}\left(\sigma \|\curl u\|_{L^2}^2\right)_t +
\frac{\mu }{2}\si'  \|\curl u\|_{L^2}^2
-\mu \sigma  \int \curl u \cdot (\na u^i \times \na_i  u)  dx \\&\quad+ \frac{\mu}{2}\si
\int |\curl u|^2\div udx \\
& \le  -\frac{\mu }{2}\left(\sigma \|\curl u\|_{L^2}^2\right)_t + C
\|\na u\|_{L^2}^2 +   C\sigma^2 \|\na u\|_{L^4}^4  . \ea \ee

Putting \eqref{bb2}, (\ref{m1}), \eqref{k3},  and  (\ref{m2}) into (\ref{m0}) and choosing $\de$ suitably small, we arrive at  \eqref{an1} directly.

\noindent{}\textbf{Part II:  The proof of  (\ref{ae0}).}

For $m\ge 0,$ operating $ \si^m\dot u^j[\pa/\pa t+\div (u\cdot)]$ to $ (\ref{a111})^j$ and integrating the resulting equality with respect to $x$ over $\Omega$ by parts, one has
\be\la{m4} \ba
& \left(\frac{\sigma^m}{2}\int\rho|\dot{u}|^2dx \right)_t -\frac{m}{2}\sigma^{m-1}\si'\int\n|\dot{u}|^2dx\\
 &=  \int_{\p \O}  \sigma^m \dot{u} \cdot n G_t dS - \int  \sigma^m [\div \dot{u} G_t + u \cdot \na \dot u \cdot \na G]dx \\
&\quad- \mu \int\sigma^m\dot{u}^j\xl[(\na \times \curl u)_t^j + \div (u (\na \times \curl u)^j)\xr] dx \\
&\quad+\int\si^m \xl[\dot{u}\cdot \div (H\otimes H)_t+\dot u^j\div ( H\cdot\na H^ju)\xr] \triangleq\sum_{i=1}^{4}N_i.
\ea \ee

First, we need to deal with the boundary integral $N_1$. By \eqref{h1} and \eqref{pzw1}, it holds
\be \la{bz8}\ba
N_1&=- \int_{\p \O}  \sigma^m (u \cdot \na n \cdot u) G_t dS\\
&=- \left(\int_{\p \O} \sigma^m (u \cdot \na n \cdot u) G dS\right)_t + m \si^{m-1} \si' \int_{\p \O}( u \cdot \na n \cdot u) G dS \\
&\quad+  \int_{\p \O}  \sigma^m (\dot u \cdot \na n \cdot u) G dS+ \int_{\p \O}  \sigma^m ( u \cdot \na      n \cdot \dot u) G dS\\
&\quad-  \int_{\p \O}  \sigma^m G( u \cdot \na) u \cdot \na n \cdot u  dS -\int_{\p \O}  \sigma^m G u \cdot \na n \cdot (u\cdot \na )u  dS\\
&\le - \left(\int_{\p \O}  \sigma^m (u \cdot \na n \cdot u) G dS\right)_t + C\si^{m-1} \si' \| \na u\|_{L^2}^2\|\na G\|_{L^2} \\
&\quad+\de \si^m \|\na\dot u\|_{L^2}^2+ C(\de) \si^m \|\na u\|_{L^2}^2 \|\na G\|_{L^2}^2\\
& \quad- \int_{\p \O}  \sigma^m G( u \cdot \na) u \cdot \na n \cdot u  dS -\int_{\p \O}  \sigma^m G u \cdot \na n \cdot (u\cdot \na )u  dS,
\ea \ee
where we have used the following estimates:
\be \ba\notag
\int_{\partial \O}  (\dot u\cdot \na n\cdot u+ u\cdot \na n\cdot\dot u) G dS &\le C \|\dot u\|_{L^4(\partial\Omega)} \| u\|_{L^2(\partial\Omega)} \|G\|_{L^4(\partial\Omega)} \\&\le C \|\na\dot u\|_{L^2} \|\na u\|_{L^2} \|\na G\|_{L^2},
\ea \ee
and
\be\la{b2}
\int_{\partial \O}  ( u\cdot \na n\cdot u) G dS   \le C \|\na u\|_{L^2} ^2\|\na G\|_{L^2}
\ee
owing to \eqref{eee0}.

For the last two boundary terms in \eqref{bz8}, we adopt the ideas in \cite{C-L} to handle. In fact,  combining (\ref{eee1})  with \eqref{g1} and integration by parts infers
\be \la{bz3}\ba &- \int_{\partial\Omega} G (u\cdot \na) u\cdot\na n\cdot u dS \\&= -\int_{\partial\Omega}  G u^\bot\times n \cdot\na u^i \nabla_i n\cdot u  dS \\&= - \int_{\partial\Omega} G n\cdot ( \na u^i \times  u^\bot)    \nabla_i n\cdot u dS\\
&= - \int\div( G( \na u^i \times  u^\bot)   \nabla_i n\cdot u) dx \\
&= - \int \na (\nabla_i n\cdot u G) \cdot ( \na u^i \times  u^\bot)   dx  - \int \div( \na u^i \times  u^\bot)    \nabla_i n\cdot u   G  dx \\
&= - \int \na (\nabla_i n\cdot u G) \cdot ( \na u^i \times  u^\bot)   dx  + \int  G \na  u^i \cdot \na\times  u^\bot     \nabla_i n\cdot u     dx \\
& \le C \int |\na G||\na u||u|^2dx+C \int |G| (|\na u|^2|u|+|\na u||u|^2)dx
\\& \le C  \|\na G\|_{L^6}\|\na u\|_{L^2}\|u\|^2_{L^6}
+C  \| G\|_{L^6}\|\na u\|^2_{L^3}\|u\|_{L^6}+C\| G\|_{L^{6}} \|\na      u\|_{L^2}\|u\|_{L^6}^2
\\& \le \de \|\na G\|_{L^6}^2+C(\de) \|\na u\|^6_{L^2}+C\|\na u\|^4_{L^3}+ C  \|  \na G\|_{L^2}^2 (\|\na u\|^2_{L^2} +1 ).\ea\ee
Similarly,
\be \la{bz4}\ba &-  \int_{\partial\Omega}G  u\cdot\na n\cdot ({u}\cdot\na) u dS\\& \le \de \|\na G\|_{L^6}^2+C(\de) \|\na u\|^6_{L^2}+C\|\na u\|^4_{L^3}+ C \|  \na G\|_{L^2}^2(\|\na u\|^2_{L^2} +1 ).\ea\ee

Next, by virtue of \eqref{hj1} and \eqref{jia1}, we obtain
\be \ba \notag
G_t
=& (2\mu+\lambda) \div \dot u - (2\mu+\lambda) \div (u\cdot \na u) - R\rho \dot\te + \div(Pu)-H\cdot H_t\\
=& (2\mu+\lambda) \div \dot u - (2\mu+\lambda) \na u : (\na u)^{\rm tr} -  u \cdot  \na G + P \div u - R\rho \dot\te\\
&-u\cdot\na H\cdot H-H\cdot H_t,
\ea \ee
which together with  \eqref{z1} leads to
\be\la{m5} \ba
N_2 
=& - (2\mu+\lambda) \int  \sigma^m (\div \dot{u} )^2 dx +  (2\mu+\lambda)  \int \sigma^m \div \dot{u} \na u : (\na u)^{\rm tr} dx \\
&+\int  \sigma^m \div \dot{u}  u \cdot  \na G  dx - \int \sigma^m \div \dot{u} P \div u dx  \\
&+ R \int \sigma^m \div \dot{u} \rho \dot\te dx - \int  \sigma^m u \cdot \na \dot u \cdot \na Gdx \\
&+\int \sigma^m \div \dot{u}u\cdot\na H\cdot H dx+\int \sigma^m \div \dot{u}H\cdot H_t dx\\
\le &   - (2\mu+\lambda) \int  \sigma^m (\div \dot{u} )^2 dx\\
& + C \si^m \|\na \dot{u}\|_{L^2} \|\na u\|_{L^4}^2 + C \si^m \|\na \dot{u}\|_{L^2} \|\na G\|_{L^2}^{1/2} \|\na G\|_{L^6}^{1/2} \|u\|_{L^6} \\
&+ C \si^m \|\na \dot{u}\|_{L^2} \|\te \na u\|_{L^2}+ C \si^m \|\na \dot{u}\|_{L^2} \|\rho^{1/2} \dot \te\|_{L^2}\\
&+C\si^m\|\na\dot u\|_{L^2}\|u\|_{L^6}\|H\|_{L^6}\|\na H\|_{L^6}+C\si^m\|\na\dot u\|_{L^2}\|H_t\|_{L^2}\|H\|_{L^\infty}.\ea \ee

Analogously, it holds that
\be\ba\la{k4}
N_4
=&-\int\si^m  (H\otimes H)_t:\na\dot u dx-\int\si^m u\cdot\na\dot u^j H\cdot\na H^j dx \\
\le & C\si^m\|\na\dot u\|_{L^2}\|u\|_{L^6}\|H\|_{L^6}\|\na H\|_{L^6}+C\si^m\|\na\dot u\|_{L^2}\|H_t\|_{L^2}\|H\|_{L^\infty}.
\ea\ee

Observe that
$$\curl u_t=\curl \dot u-u\cdot \na \curl u-\na u^i\times \nabla_iu,$$
which as well as some direct calculations shows that
\be\la{ax3999}\ba
N_3 &=- \mu \int\sigma^{m}|\curl\dot{u}|^{2}dx+\mu \int\sigma^{m}\curl\dot{u}\cdot(\nabla u^i\times\nabla_i u) dx \\&\quad+\mu \int\sigma^{m} u\cdot\na  \curl u \cdot\curl\dot{u} dx  +\mu \int\sigma^{m}    u \cdot \na \dot{u}\cdot (\nabla\times\curl u) dx  \\
&\le - \mu \int\sigma^{m}|\curl\dot{u}|^{2}dx+\de  \sigma^{m}(\|\na \dot u\|_{L^2}^2 +
\|\na \curl u\|_{L^6}^2)\\&\quad +C(\de) \sigma^{m} \|\na u\|_{L^4}^4 +C(\de)  \sigma^{m}\|\na  u\|_{L^2}^4\|\na \curl u\|_{L^2}^2.\\
\ea\ee

Finally,   combining Lemma \ref{le4} with \eqref{g1}, \eqref{g2}, \eqref{goo1}, \eqref{infty12},  and \eqref{o1}  yields
\be \la{bz6}\ba
\|\na G\|_{L^6}
&\le C\|\n \dot u\|_{L^6}+C\|H\cdot\na H\|_{L^6}\\
&\le C\|\na \dot u\|_{L^2}+C\|\na H\|_{L^2}^{1/2}\|\na^2 H\|_{L^2}^{3/2}\\
&\le C\|\na \dot u\|_{L^2}+C\|H_t\|_{L^2}^{3/2}+C\|\na H\|_{L^2}, \ea\ee
\be\la{jh}
\|\na\o\|_{L^6}\le C(\|\na\dot u\|_{L^2}+\|\n\dot u\|_{L^2}+\|\na u\|_{L^2}+\|\na H\|_{L^2})+C\|H_t\|_{L^2}^{3/2}.
\ee

Thus, substituting \eqref{bz8} and \eqref{m5}--\eqref{ax3999} into \eqref{m4}, it follows from \eqref{bz3}, \eqref{bz4}, \eqref{z1},   \eqref{a2.112}, \eqref{bz6}, \eqref{jh},  \eqref{go1},  \eqref{infty12}, \eqref{o0}, and \eqref{o1}
that
\be\la{ax40}\ba
&\left(\frac{\sigma^{m}}{2}\|\rho^{1/2}\dot{u}\|_{L^2}^2\right)_t+(2\mu+\lambda)\sigma^{m}\|\div\dot{u}\|_{L^2}^2+\mu\sigma^{m}\|\curl\dot{u}\|_{L^2}^2\\
&\le -\left(\int_{\p \O}  \sigma^m (u \cdot \na n \cdot u) G dS\right)_t+ C\de \si^m \|\na \dot{u}\|_{L^2}^2+ C(\de) \si^m \|\rho^{1/2} \dot \te\|_{L^2}^2 \\
&\quad + C(\de)(\si^{m-1}\si'+\si^m) (\|\rho^{1/2}\dot u\|_{L^2}^2+\|H_t\|_{L^2}^2)\\
&\quad +C(\de)( \|\na u\|^2_{L^2}+\|\na H\|^2_{L^2})+C(\de)\si^m (\|\na u\|^4_{L^4}+\|H_t\|^3_{L^2}+\|\te \na u\|_{L^2}^2).
\ea\ee

Next, it remains to estimate  $\|\nabla H_t\|_{L^2}$. Notice that \eqref{mn} implies
\begin{equation}\label{sm}
\begin{cases}
H_{tt}+\nu \nabla \times (\curl H_t)
=(H \cdot \nabla u-u \cdot \nabla H-H \div u)_t,&\text{in}\quad\Omega,\\
H_t \cdot n=0,\quad \curl H_t \times n=0,& \text{on}\quad \partial\Omega,\\
H_t\rightarrow 0,&\text{as}\quad |x|\rightarrow 0.
 \end{cases}
\end{equation}
Multiplying \eqref{sm} by $\si^m H_t$ with $m\geq 0$ and integrating by parts, one has
\be\ba\label{ht1}
&\left(\frac{\sigma^{m}}{2}\|H_t\|_{L^2}^2\right)_t+\nu\sigma^{m}\|\curl H_t\|_{L^2}^2-\frac{m}{2}\sigma^{m-1}\sigma' \|H_t\|\ltwo\\
&= \int \sigma^{m}(H_t \cdot \nabla u-u \cdot \nabla H_t-H_t \div u)\cdot H_t dx\\
&\quad+ \int \sigma^{m}(H \cdot \nabla \dot{u}-\dot{u} \cdot \nabla H-H \div \dot{u})\cdot H_t dx  \\
&\quad - \int \sigma^{m}(H \cdot \nabla (u \cdot \nabla u)-u \cdot \nabla u\cdot \nabla H-H \div(u \cdot \nabla u) )\cdot H_t dx  \triangleq \sum _{i=1}^3K_i.
\ea\ee
Combining \eqref{g1} with  direct calculations and \eqref{pzw1} leads to
\be\ba\la{k5}
K_1
& \leq C\sigma^{m}\|H_t\|_{L^2}^{1/2}\|\nabla H_t\|_{L^2}^{1/2}(\|H_t\|_{L^6}\|\nabla u\|_{L^2}
+\|\nabla H_t\|_{L^2}\|u\|_{L^6})\\
&\leq \delta\sigma^{m}\|\nabla H_t\|_{L^2}^{2}+C(\de)\sigma^{m}\|\nabla u\|_{L^2}^4\|H_t\|_{L^2}^{2},
\ea\ee
\be\ba\label{htk20}
K_2
& \leq C\sigma^{m}\|H_t\|_{L^2}^{1/2}\|\nabla H_t\|_{L^2}^{1/2}(\|H\|_{L^6}\|\nabla \dot{u}\|_{L^2}+
\|\nabla H\|_{L^2}\|\dot{u}\|_{L^6})\\
&\leq \delta\sigma^{m}(\|\nabla \dot{u}\|_{L^2}^{2}+\|\nabla H_t\|_{L^2}^{2})+C(\de)\sigma^{m}\|\nabla H\|_{L^2}^{4}\|H_t\|_{L^2}^2,
\ea\ee
and
\be\ba\label{htk3}
K_3
& =\int \sigma^{m} H \cdot\nabla H_t \cdot( u \cdot \nabla u) dx
+\int_{\partial \Omega}\sigma^{m} H\cdot H_t\,(u \cdot \nabla u \cdot n) dS\\
& \quad -\int \sigma^{m} u \cdot \nabla u \cdot \nabla H_t \cdot H dx\\
&\leq -\int_{\partial \Omega}\sigma^{m} H\cdot H_t\,(u \cdot \nabla n \cdot u) dS+C\sigma^{m}\|H\|_{L^6}\|\nabla H_t\|_{L^2}\|\nabla u\|_{L^6}\|u\|_{L^6}\\
&\leq \delta\sigma^{m}\|\nabla H_t\|_{L^2}^2+C(\de)\sigma^{m}(\|\nabla H\|_{L^2}^2+\|\nabla H\|_{L^2}^8)(\|\nabla u\|_{L^2}^2+\|\nabla u\|_{L^2}^4)\\
&\quad +C(\de)\si^m\|\nabla H\|_{L^2}^2\|\nabla u\|_{L^2}^2(\|\n^{1/2}\dot{u}\|_{L^2}^2+\|H_t\|_{L^2}^2+\|\na\te\|_{L^2}^2),
\ea\ee
where in the last inequality we have used \eqref{3.30} and the following fact:
\be\ba\notag
 \int_{\partial \Omega}H \cdot H_t(u \cdot \nabla n \cdot u) dS
&\le C\|H\|_{L^4(\p\O)}\|H_t\|_{L^4(\p\O)}\|u\|_{L^4(\p\O)}^2\\
&\leq C\|\na H\|_{L^2}\|\na H_t\|_{L^2}\|\na u\|_{L^2}^2
\ea\ee
owing to \eqref{eee0}.
Putting \eqref{k5}--\eqref{htk3} into \eqref{ht1} and using \eqref{infty12},  we derive
\be\ba\label{ht3}
& \left(\frac{\sigma^{m}}{2}\|H_t\|_{L^2}^2\right)_t+\nu\sigma^{m}\|\curl H_t\|_{L^2}^2\\
&\leq 3\delta\sigma^{m}(\|\nabla \dot{u}\|_{L^2}^{2}+\|\nabla H_t\|_{L^2}^{2})+C(\de)(\sigma^{m-1}\sigma'+\si^m) \|H_t\|_{L^2}^{2}\\
&\quad+C(\de)\sigma^{m}\|\n^{1/2}\dot{u}\|_{L^2}^{2}+C(\de)(\|\nabla u\|_{L^2}^2+\|\nabla \te\|_{L^2}^{2}).
\ea\ee

Adding \eqref{ax40} to \eqref{ht3} and using  \eqref{tb11}, \eqref{ljq01}, and \eqref{infty12},  one concludes \eqref{ae0} after choosing $\de$ small enough.

\noindent{}\textbf{Part III: The proof of (\ref{nle7}).}

For $m\ge 0,$
multiplying $(\ref{h0})_3 $ by $\sigma^m \dot\te$ and integrating
the resulting equality over $\Omega $,  it holds  that
 \be\la{e1} \ba &\frac{\ka
{\sigma^m}}{2}\left( \|\na\te\|_{L^2}^2\right)_t+\frac{R\sigma^m}{\ga-1} \int\rho|\dot{\te}|^2dx
\\&=-\ka\sigma^m\int\na\te\cdot\na(u\cdot\na\te)dx
+\lambda\sigma^m\int  (\div u)^2\dot\te dx+2\mu\sigma^m\int |\mathfrak{D}(u)|^2\dot\te dx\\&\quad
+\nu\si^m\int|\curl H|^2\dot\te dx-R\si^m\int\n\te \div
u\dot\te dx \triangleq \sum_{i=1}^5J_i . \ea\ee

 First,  by (\ref{g1}),   one gets
\be\la{e2} \ba
J_1
&\le C\sigma^{m}   \|\na u\|_{L^2}\|\na\te\|^{1/2}_{L^2}
\|\na^2\te\|^{3/2}_{L^2}  \\
&\le  \de\sigma^{m} \|\n^{1/2}\dot\te\|^2_{L^2}+C\si^m \left(\|\na u\|_{L^4}^4+\|\na H\|_{L^2}\|\na^2 H\|_{L^2}^3+\|\te\na u\|_{L^2}^2\right)\\
&\quad +C(\de)\sigma^{m}
\|\na u\|_{L^2}^4\|\na\te\|^2_{L^2}   ,\ea\ee
where in the last inequality we have used the following estimate:
\be  \la{lop4}\ba
	\|\na^2\te\|_{L^2} &\le C\left(\|\n\dot \te\|_{L^2}+ \|\na u\|_{L^4}^2+ \|\na H\|_{L^2}^{1/2}\|\na^2 H\|_{L^2}^{3/2}+\|\te\na u\|_{L^2}\right),
	\ea\ee
which is derived from the combination of Lemma \ref{zhle} and the following elliptic problem:
\be\la{3.29}
\begin{cases}
\ka\Delta \te-\frac{R}{\ga-1}\n\dot\te\\
= R\n\te\div
		u-\lambda (\div u)^2-2\mu |\mathfrak{D}(u)|^2-\nu|\curl H|^2,& \text{in}\,\O\times [0,T],\\
		\na \theta  \cdot n =0,& \text{on}\,\p\O \times [0,T],\\
		\nabla\te\rightarrow0,\,\,&\text{as}\,\,|x|\rightarrow\infty.
	\end{cases}\ee

Next, by \eqref{g1},  we obtain that for  any $\eta,\de\in (0,1],$
\be\la{e3}\ba J_2 =&\lambda\si^m\int (\div u)^2 \te_t
dx+\lambda\si^m\int (\div u)^2u\cdot\na\te
dx\\=&\lambda\si^m\left(\int (\div u)^2 \te
dx\right)_t-2\lambda\si^m \int \te \div u \div (\dot u-u\cdot\na u)
dx\\&+\lambda\si^m\int (\div u)^2u\cdot\na\te
dx \\=&\lambda\si^m
\left(\int (\div u)^2 \te dx\right)_t-2\lambda\si^m\int \te \div u
\div \dot udx\\&+2\lambda\si^m\int \te \div u \pa_i u^j\pa_j  u^i dx
+ \lambda\si^m\int u \cdot\na\left(\te   (\div u)^2 \right)dx
 \\
\le &\lambda\left(\si^m\int (\div u)^2 \te dx\right)_t-\lambda
m\si^{m-1}\si'\int (\div u)^2 \te dx\\& +\eta\si^m\|\na \dot
u\|_{L^2}^2+C\eta^{-1}\si^m\|\te\na u\|_{L^2}^2+C\si^m\|\na u\|_{L^4}^4,\ea\ee
 \be \la{e55}\ba J_3&\le 2\mu\left(\si^m\int
|\mathfrak{D}(u)|^2 \te dx\right)_t-2\mu m\si^{m-1}\si'\int
|\mathfrak{D}(u)|^2 \te dx
 \\&\quad+ \eta\si^m\|\na \dot
u\|_{L^2}^2+C\eta^{-1}\si^m\|\te\na u\|_{L^2}^2+C\si^m\|\na u\|_{L^4}^4 ,    \ea\ee
\be \la{e5}\ba J_4&\le \nu\left(\si^m\int
|\curl H|^2 \te dx\right)_t-\nu m\si^{m-1}\si'\int
|\curl H|^2 \te dx
 \\&\quad-2\nu\si^m\int\curl H\cdot\curl H_t\te dx+\nu\si^m\int|\curl H|^2u\cdot\na\te dx\\
 &\le \nu\left(\si^m\int
|\curl H|^2 \te dx\right)_t+C\si^m\|u\|_{L^6}\|\na\te\|_{L^2}\|\na H\|_{L^6}^2\\
&\quad+C\si^m\|\na H_t\|_{L^2}(\|\te-1\|_{L^6}\|\na H\|_{L^2}^{1/2}\|\na^2 H\|_{L^2}^{1/2}+\|\na H\|_{L^2})\\
&\le \nu\left(\si^m\int
|\curl H|^2 \te dx\right)_t+\eta\si^m \|\na H_t\|_{L^2}^2+C\si^m\|\na u\|_{L^2}\|\na\te\|_{L^2}\|\na^2 H\|_{L^2}^2\\
&\quad+C\eta^{-1}\si^m(\|\na\te\|_{L^2}^2\|\na H\|_{L^2}\|\na^2 H\|_{L^2}+\|\na H\|_{L^2}^2),
\ea\ee
and
 \be\la{e39}\ba
 |J_5|    \le  \delta \si^m \int \n |\dot\te|^2dx+C( \delta)\si^m \|\te\na u\|_{L^2}^2.  \ea\ee

Hence, substituting  (\ref{e2}) and \eqref{e3}--(\ref{e39}) into (\ref{e1})  and choosing $\de$ suitably
small, we deduce from
  \eqref{h3}, \eqref{infty12}, and  \eqref{o1} that \eqref{nle7} holds.

The proof of Lemma \ref{a113.4} is completed.
\end{proof}

Next, with  the estimates \eqref{an1}--\eqref{nle7} at hand, we are ready to derive the a priori estimate on $A_3(T)$.

\begin{lemma}\la{le6} Under the conditions of Proposition \ref{pr1},   there exists a positive constant $\ve_2$
depending only on   $\mu,\,\lambda,\,\nu,\, \ka,\, R,\, \ga,\, \on, $ $\O$, and $M$
such that if $(\rho,u,\te,H)$ is a smooth solution to the problem (\ref{h0})--(\ref{ch2})  on $\Omega\times (0,T] $ satisfying      (\ref{z1}) with $K$ as
in Lemma \ref{le2}, the following estimate holds: \be\la{b2.34}  A_3(T)+A_4(T) \le C_0^{1/6},\ee provided $C_0\le
\ve_2$.
\end{lemma}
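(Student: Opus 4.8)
The plan is to combine the three differential inequalities \eqref{an1}, \eqref{ae0}, and \eqref{nle7} of Lemma \ref{a113.4} in a suitable weighted fashion, and then integrate in time, exploiting the ``weak'' basic energy bound $E(t)\le CC_0^{1/4}$ of Lemma \ref{a13.1} together with the a priori assumptions \eqref{z1}. I would first fix the exponents: take $m=1$ in \eqref{ae0} to control $\sup_t\sigma(\|\sqrt\rho\dot u\|_{L^2}^2+\|H_t\|_{L^2}^2)$ and $\int_0^T\sigma(\|\na\dot u\|_{L^2}^2+\|\na H_t\|_{L^2}^2)dt$, and take $m=2$ in both \eqref{ae0} and \eqref{nle7} for the $A_4$-level quantities $\sup_t\sigma^2(\cdots)$ and $\int_0^T\sigma^2\|\sqrt\rho\dot\theta\|_{L^2}^2dt$. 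The quantities $B_1$, $B_2$, $B_3$ must be shown to be controlled: one uses \eqref{po}--\eqref{p}, Cauchy--Schwarz, and the assumed smallness to bound $|B_1|$, $|B_3|$ by $C\|\na u\|_{L^2}^2+C\|\na H\|_{L^2}^2+C\|\na\te\|_{L^2}^2+CC_0^{1/4}$ and to absorb the boundary term $2\int_{\p\O}(u\cdot\na n\cdot u)GdS$ in $B_2$ using \eqref{eee0} and \eqref{o0}, so that $B_2 \ge \frac12(\|\sqrt\rho\dot u\|_{L^2}^2+\|H_t\|_{L^2}^2)-C\de\|\sqrt\rho\dot u\|_{L^2}^2-C(\de)\|\na u\|_{L^2}^2$, which is comparable to $\|\sqrt\rho\dot u\|_{L^2}^2+\|H_t\|_{L^2}^2$ up to lower-order terms.

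The second step is the actual energy combination. I would form $\sigma B_1 + \kappa_1\sigma B_2 + \kappa_2\sigma^2 B_2 + \kappa_3\sigma^2 B_3$ for small constants $\kappa_1,\kappa_2,\kappa_3$ chosen in the right order, so that: the bad term $+2\beta\sigma^2\|\sqrt\rho\dot\theta\|_{L^2}^2$ on the right of \eqref{an1} is absorbed by the negative $-\sigma^2\|\sqrt\rho\dot\theta\|_{L^2}^2$ coming from $\kappa_3\sigma^2 B_3$ (choose $\beta$ small relative to $\kappa_3$); the terms $C_2\sigma^m\|\sqrt\rho\dot\theta\|_{L^2}^2$ from \eqref{ae0} are likewise absorbed; the $C\eta\sigma^m(\|\na\dot u\|_{L^2}^2+\|\na H_t\|_{L^2}^2)$ from \eqref{nle7} is absorbed into the good $C_1\sigma^m(\|\na\dot u\|_{L^2}^2+\|\na H_t\|_{L^2}^2)$ from \eqref{ae0} (choose $\eta$ small relative to $C_1/\kappa$). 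After this cancellation one is left, schematically, with
\be\notag
\frac{d}{dt}\Phi(t) + c\,\sigma\|\sqrt\rho\dot u\|_{L^2}^2 + c\,\sigma\|H_t\|_{L^2}^2 + c\,\sigma^m\|\na\dot u\|_{L^2}^2 + \cdots \le C\Psi(t),
\ee
where $\Phi\sim A_3+A_4$ (plus harmless lower-order pieces) and the right side $\Psi$ consists of: (i) integrable-in-time terms $\|\na u\|_{L^2}^2+\|\na H\|_{L^2}^2+\|\na\te\|_{L^2}^2$, whose time integral is $\le A_2(T)\le 2C_0^{1/4}$; (ii) the term $CC_0^{1/4}\sigma'$, with $\int_0^T\sigma'\,dt\le 1$; (iii) nonlinear terms $\sigma^m\|\na u\|_{L^4}^4$, $\sigma^m\|H_t\|_{L^2}^3$, $\sigma^m\|\na\te\|_{L^2}^3$, $\sigma^m\|\te\na u\|_{L^2}^2$; and (iv) terms like $(\sigma^{m-1}\sigma'+\sigma^m)(\|\sqrt\rho\dot u\|_{L^2}^2+\|H_t\|_{L^2}^2)$.

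The main obstacle — and the place that requires care — is handling the nonlinear right-hand side terms in (iii)--(iv) and closing the bootstrap. For $\sigma^m\|\na u\|_{L^4}^4$ I would use \eqref{h17} with $p=4$ together with \eqref{3.30}, \eqref{goo1}, \eqref{p} and interpolation, writing $\|\na u\|_{L^4}^4\le C(\|\sqrt\rho\dot u\|_{L^2}^2+\|H_t\|_{L^2}^2+\|\na u\|_{L^2}^2+\|\na\te\|_{L^2}^2+C_0^{1/12})\|\na u\|_{L^2}^2 + (\text{h.o.t.})$; since $\sup_t\sigma\|\na u\|_{L^2}^2\le A_3(T)\le 2C_0^{1/6}$ is small, the factor $\|\na u\|_{L^2}^2$ in front makes this term absorbable into the good dissipation plus a term bounded by $CC_0^{1/6}\cdot\int_0^T\sigma^{m-1}(\cdots)$. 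Similarly $\sigma^m\|H_t\|_{L^2}^3=\sigma^m\|H_t\|_{L^2}\cdot\|H_t\|_{L^2}^2$ is controlled because $\int_0^T\sigma\|H_t\|_{L^2}^2dt\le A_3(T)$ is small and $\|H_t\|_{L^2}$ gains an extra $\sigma^{-1/2}$-free smallness factor via $\sup_t\sigma\|H_t\|_{L^2}^2\le A_3(T)$; the cubic terms $\sigma^m\|\na\te\|_{L^2}^3$ are handled the same way using $\int_0^T\|\na\te\|_{L^2}^2dt\le A_2(T)$ and \eqref{nle7}-type bounds, and $\sigma^m\|\te\na u\|_{L^2}^2$ via \eqref{2.48}. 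After Grönwall (applied on $[0,T]$ with initial value zero because of the $\sigma$-weights, or on $[0,\sigma(T)]$ and $[\sigma(T),T]$ separately), one obtains $A_3(T)+A_4(T)\le C(M)C_0^{1/4}\le C_0^{1/6}$ provided $C_0\le\ve_2$ is chosen small enough — here the exponent $1/6 < 1/4$ leaves the needed room, and $\ve_2$ depends only on the listed constants. This completes the proof.
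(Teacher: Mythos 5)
Your high-level plan --- form a $\sigma$-weighted combination of the three differential inequalities of Lemma \ref{a113.4}, estimate $B_1,B_2,B_3$ from above and below, pick $\beta,\eta$ small to absorb the $\sigma^m\|\sqrt\rho\dot\theta\|_{L^2}^2$ and $\sigma^m(\|\nabla\dot u\|_{L^2}^2+\|\nabla H_t\|_{L^2}^2)$ couplings, and integrate in time using $A_2(T)\le 2C_0^{1/4}$ --- is indeed the skeleton of the paper's proof, and your observation that the exponent $1/6<1/4$ supplies the final room is correct. However, there are genuine gaps in how the nonlinear right-hand side terms are closed, and one entire sub-estimate is missing.

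First, none of the quantities $B_1,B_2,B_3$ contain $\|\nabla H\|_{L^2}^2$ with a favourable sign, so the $\sup_t\sigma\|\nabla H\|_{L^2}^2+\int_0^T\sigma(\|\mathop{\mathrm{curl}}\nolimits^2H\|_{L^2}^2+\|H_t\|_{L^2}^2)dt$ part of $A_3$ cannot be produced by the combination you form. The paper obtains it separately by multiplying \eqref{tdh-3} by $\sigma$ and integrating, giving \eqref{k7}; without this extra ingredient your combination only recovers part of $A_3$. Second, your claimed estimate $\|\nabla u\|_{L^4}^4\le C(\cdots)\|\nabla u\|_{L^2}^2+\text{h.o.t.}$ is not correct: interpolation gives $\|\nabla u\|_{L^4}^4\le\|\nabla u\|_{L^2}\|\nabla u\|_{L^6}^3$, i.e.\ a \emph{linear} factor $\|\nabla u\|_{L^2}$, not quadratic. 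Carrying \eqref{h17} with $p=4$ through \eqref{p} with $p=6$ leaves a remainder of order $C_0^{1/8}\|\nabla u\|_{L^2}$, and $\int_0^T\sigma^2\|\nabla u\|_{L^2}\,dt$ is \emph{not} uniformly bounded in $T$ (only $\int\|\nabla u\|_{L^2}^2dt\le A_2$ is summable). So that route does not close. The paper instead decomposes $\|\nabla u\|_{L^4}^4$ via \eqref{ok}, \eqref{h20}, \eqref{h21}, which by design leaves the term $C\|\rho-1\|_{L^4}^4$ rather than $C\|\nabla u\|_{L^2}$; this is the whole point of the structure of \eqref{ae9}--\eqref{m22}. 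Third and most importantly, you omit the separate claim $\int_0^T\sigma^2\|\rho-1\|_{L^4}^4\,dt\le CC_0^{1/4}$ (equation \eqref{eee2}). This is not automatic --- the pointwise bound $\|\rho-1\|_{L^4}^4\le CC_0^{1/4}$ from \eqref{a2.112} is useless because $\int_0^T\sigma^2\,dt$ diverges as $T\to\infty$ --- and the paper proves it by rewriting the continuity equation in the damped form \eqref{eee3}, multiplying by $4(\rho-1)^3$, and exploiting the damping coefficient $\frac{R}{2\mu+\lambda}$ (see \eqref{eee5}--\eqref{67}). This ODE argument on $\rho-1$ is an essential, nontrivial step, and without it the time integral of the $\|\nabla u\|_{L^4}^4$ contribution cannot be made small uniformly in $T$, so the bootstrap on $A_3+A_4$ does not close.
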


\begin{proof}
First, multiplying \eqref{tdh-3} by $\si$ and integrating the resulting inequality with respect to $t$, one gets after using \eqref{tdh1}, \eqref{z1}, \eqref{infty12}, and \eqref{k88} that
\be\la{k7}
\sup_{0\le t\le T}\si\|\na H\|_{L^2}^2+\int_0^T\si(\|\curl^2H\|_{L^2}^2+\|H_t\|_{L^2}^2)dt\le CC_0^{1/4}.
\ee

Note that by virtue of (\ref{ok}), (\ref{h20}), (\ref{h21}), \eqref{p}, \eqref{g1}, \eqref{a2.112}, \eqref{goo1},  and (\ref{infty12}), it holds
\be\la{ae9}\ba
&\|\na u\|_{L^4}^4\\
&\le C \|G\|_{L^4}^4+C \|\curl u\|_{L^4}^4 +C \|\n\te-1\|_{L^4}^4
+C\||H|^2\|_{L^4}^4+C\|\na u\|_{L^2}^4\\
&\le C\left(\|\na u\|_{L^2}+\|H\|_{L^4}^2+1\right)(\|\n\dot u\|_{L^2}+\||H||\na H|\|_{L^2})^3 +C\|\na\te\|_{L^2}^3\\
&\quad+C\|\n-1\|_{L^4}^4 +C\|\na u\|_{L^2}^4\\	
&\le C\left( \|\n \dot u\|_{L^2}^3 + \|\na \te\|_{L^2}^3\right) +C\|\n-1\|_{L^4}^4+C(\|H_t\|_{L^2}^2+\|\na u\|_{L^2}^2+\|\na H\|_{L^2}^2),
\ea\ee
which along with	(\ref{z1})  leads to
\be \la{m22}\ba
\si\|\na u\|_{L^4}^4
&\le CC_0^{1/12}\|\n^{1/2} \dot u\|_{L^2}^2 +C\si\|\n-1\|_{L^4}^4\\ &\quad+C \left(\|\na \te\|_{L^2}^2+\|\na u\|_{L^2}^2+\|H_t\|_{L^2}^2+\|\na H\|_{L^2}^2\right)
.
\ea\ee
Combining this with \eqref{an1}  shows that
\be\ba  \la{k14}
&(\sigma B_1)'(t) + \sigma\|\rho^{1/2}\dot u\|_{L^2}^2\\
&\le   C C_0^{1/4} \sigma' +2\beta\si^2\|\n^{1/2}\dot\theta\|_{L^2}^2+C\si^2\|\n-1\|_{L^4}^4\\ &+C\si\|H_t\|_{L^2}^2+C\beta^{-1}\left(\|\na u\|_{L^2}^2+\|\na H\|_{L^2}^2+\|\na\te\|_{L^2}^2\right),
\ea\ee
provided that $C_0\le \epsilon_{2,1}\triangleq \min\{1,(2C)^{-12}\}$.

Next, for $C_2$ as in  (\ref{ae0}), adding \eqref{nle7} multiplied by $C_2+1$ to \eqref{ae0} and choosing $\eta$ suitably small, one derives
\be\la{k8}\ba
&(\sigma^{m}\varphi)'(t)+\frac{C_1}{2} \sigma^{m}\left(\|\na\dot{u}\|_{L^2}^2+\|\na H_t\|_{L^2}^2\right)+\si^m\|\n^{1/2}\dot\te\|_{L^2}^2\\
&\le  C(\si^{m-1}\si'+\si^m) (\|\rho^{1/2}\dot u\|_{L^2}^2+\|H_t\|_{L^2}^2)\\
&\quad+ C( \|\na u\|^2_{L^2}+\|\na H\|^2_{L^2}+\|\na \te\|^2_{L^2})\\
&\quad+C \si^m (\|\na u\|^4_{L^4}+\|H_t\|^3_{L^2}+\|\na\te\|_{L^2}^3+\|\te \na u\|_{L^2}^2),\ea\ee
where  $\varphi(t)$ is defined by
	\be\la{wq3} \varphi(t) \triangleq   B_2(t) +(C_2+1)  B_3(t).\ee
Taking $m=2$ in \eqref{k8}  and using \eqref{m22}  and \eqref{z1} lead to
\be\la{k10}\ba
&(\sigma^2\varphi)'(t)+\frac{C_1}{2} \sigma^2\left(\|\na\dot{u}\|_{L^2}^2+\|\na H_t\|_{L^2}^2\right)+\si^2\|\n^{1/2}\dot\te\|_{L^2}^2\\
&\le  C_3\si\|\rho^{1/2}\dot u\|_{L^2}^2+C\si\|H_t\|_{L^2}^2+C\si^2\|\n-1\|_{L^4}^4\\
&\quad+ C( \|\na u\|^2_{L^2}+\|\na H\|^2_{L^2}+\|\na \te\|^2_{L^2}),\ea\ee
where we have used the following estimate:
\be \la{m20}\ba
\|\te\na u\|_{L^2}^2
&\le \|\te-1 \|_{L^6}^2 \|\na u\|_{L^2} \|\na u\|_{L^6}+\|\na u\|_{L^2}^2 \\
&\le C  \left( \|\na u\|_{L^2}^2+\|\na\te\|_{L^2}^2\right)\left( \|\n\dot u\|_{L^2}+ \|H_t\|_{L^2}+\|\na\te\|_{L^2} +1\right)
\ea\ee
due to \eqref{g1}, (\ref{3.30}), and \eqref{infty12}.

Notice that by \eqref{g1},  \eqref{o1}, and \eqref{infty12}, we have
\be\la{nnn}\ba
\int|\curl H|^2\te dx
&\le C\|\na H\|_{L^2}^2+C\|\te-1\|_{L^6}\|\na H\|_{L^2}^{3/2}\|\na^2H\|_{L^2}^{1/2}\\
&\le \de(\|H_t\|_{L^2}^2+\|\na\te\|_{L^2}^2)+C(\de)\|\na H\|_{L^2}^2,
\ea\ee
which together with  \eqref{k6}, (\ref{e6}), \eqref{b2}, \eqref{o0}, \eqref{2.48}, and \eqref{infty12} indicates
	\be\la{wq2}  \varphi(t) \ge  \frac{1}{2}(\|\n^{1/2}\dot u\|_{L^2}^2 +\|H_t\|_{L^2}^2)+\frac{\ka(\ga-1)}{R}\|\na\te\|_{L^2}^2-C_4(\|\na u\|_{L^2}^2+\|\na H\|_{L^2}^2). \ee
For  $B_1$ defined in \eqref{an2}, it holds
\be\la{k13}B_1\geq C_5\|\na u\|_{L^2}^2-C\|\na H\|_{L^2}^2-CC_0^{1/4}\ee
owing  to \eqref{ljq01}, \eqref{p}, \eqref{infty12}, and \eqref{k11}.
Thus adding \eqref{k14} multiplied by
 $C_6\triangleq C_5^{-1}(C_4+1+C_5(C_3+1))$ to \eqref{k10} and choosing $\beta$ small enough, one arrives at that
\be\ba\la{k15}
&(C_6\si B_1+\si^2\varphi)'(t)\\
&+\sigma\|\rho^{1/2}\dot u\|_{L^2}^2+\frac{C_1}{2} \sigma^2\left(\|\na\dot{u}\|_{L^2}^2+\|\na H_t\|_{L^2}^2\right)+\frac{1}{2}\si^2\|\n^{1/2}\dot\te\|_{L^2}^2\\
&\le CC_0^{1/4}\si'+C\si\|H_t\|_{L^2}^2+C\si^2\|\n-1\|_{L^4}^4+ C( \|\na u\|^2_{L^2}+\|\na H\|^2_{L^2}+\|\na \te\|^2_{L^2}).\ea\ee

Finally, we claim
 \be \ba\label{eee2} \int_0^T\si^2\|\n-1\|_{L^4}^4 dt\leq CC_0^{1/4}.\ea\ee
Hence, integrating \eqref{k15} with respect to $t$ together  with \eqref{wq2}, \eqref{k13}, (\ref{eee2}), \eqref{k7}, \eqref{k88}, and (\ref{z1}) yields
\be\ba\notag
A_3(T)+A_4(T)\le \hat C_2C_0^{1/4}\le C_0^{1/6},
\ea\ee
which implies (\ref{b2.34}), provided
\be \ba\notag C_0\le\ve_2\triangleq \min\{\epsilon_{2,1},\hat C_2^{-12}\}.\ea\ee

It remains to prove \eqref{eee2}. It follows from $(\ref{h0})_1$ and (\ref{hj1})$_1$ that $\n-1$ satisfies
\be \ba\label{eee3} &(\n-1)_t+\frac{R}{2\mu+\lambda}(\n-1)\\
=&-u\cdot\na(\n-1)-(\n-1)\div u-\frac{G}{2\mu+\lambda}-\frac{R\n(\te-1)}{2\mu+\lambda}-\frac{|H|^2}{2(2\mu+\lambda)}.
\ea\ee
Multiplying \eqref{eee3} by $4(\n-1)^3$ and integrating the resulting equality over $\Omega$ by parts, we obtain after using \eqref{g1}, \eqref{p}, \eqref{z1}, \eqref{infty12}, \eqref{a2.112}, \eqref{o0}, and \eqref{goo1} that
\be\ba\label{eee5} &\left(\|\n-1\|_{L^4}^4\right)_t+\frac{4R}{2\mu+\lambda}\|\n-1\|_{L^4}^4\\
&=-3\int(\n-1)^4\div u dx-\frac{4}{2\mu+\lambda}\int(\n-1)^3Gdx\\
&\quad-\frac{4R}{2\mu+\lambda}\int(\n-1)^3\n(\te-1)dx-\frac{2}{2\mu+\lambda}\int(\n-1)^3|H|^2dx\\
&\leq \frac{2R}{2\mu+\lambda}\|\n-1\|_{L^4}^4+C\|\na u\|_{L^2}^2+C\|\n-1\|_{L^4}^3\|G\|_{L^2}^{1/4}\|\na G\|_{L^2}^{3/4}\\
&\quad+C\|\n-1\|_{L^4}^3(\|\n(\te-1)\|_{L^2}^{1/4}\|\na\te\|_{L^2}^{3/4}+\||H|^2\|_{L^2}^{1/4}\|\na |H|^2\|_{L^2}^{3/4})\\
&\leq \frac{3R}{2\mu+\lambda}\|\n-1\|_{L^4}^4+C(\|\na u\|_{L^2}^2+\|\na H\|_{L^2}^2)\\
&\quad+C(\|\n^{1/2}\dot u\|_{L^2}^3+\|H_t\|_{L^2}^3+\|\na\te\|_{L^2}^3).\ea\ee
Multiplying \eqref{eee5} by $\si^n$ with $n\ge 1$ and integrating the resulting inequality with respect to $t$, we deduce from \eqref{z1} and \eqref{a2.112} that
\be\ba\la{67}
&\int_0^T\si^n\|\n-1\|_{L^4}^4 dt\\
&\le CA_3^{1/2}(T)\int_0^T\si^{n-1}(\|\n^{1/2}\dot u\|_{L^2}^2+\|H_t\|_{L^2}^2+\|\na \te\|_{L^2}^2) dt\\
&\quad+CC_0^{1/4}+C\int_0^{\si(T)}\|\n-1\|_{L^4}^4dt\\
&\le CC_0^{1/4}+CC_0^{1/12}\int_0^T\si^{n-1}(\|\n^{1/2}\dot u\|_{L^2}^2+\|H_t\|_{L^2}^2)dt.
\ea\ee
Taking $n=2$ in \eqref{67}  as well as  \eqref{z1}  infers \eqref{eee2} directly.


The proof of Lemma \ref{le6} is completed.
\end{proof}

Since the basic energy estimate in Lemma \ref{a13.1} is ``weaker'', which is not enough to control $A_2(T)$, we need to re-establish the basic energy estimate  for short time $[0, \si(T)]$  to overcome this difficulty, and then show that the spatial $L^2$-norm of $\te-1$ could be  bounded by the combination of the initial energy and the spatial $L^2$-norm of $\na \te$.

\begin{lemma}\la{a13} Under the conditions of Proposition \ref{pr1},
	there exists  positive constant $C$
	depending only on
	$\mu,\,\lambda,\,\nu,\, \ka,\, R,\, \ga,\, \on, $ $\O$, and $M$ such
	that  if $(\rho,u,\te,H)$ is a smooth solution to the problem (\ref{h0})--(\ref{ch2})  on $\Omega\times (0,T] $ satisfying      (\ref{z1}) with $K$ as
	in Lemma \ref{le2},
	the following estimates  hold:
	\be \la{a2.121} \ba
	&\sup_{0\le t\le \si(T)}\int\left( \n |u|^2+(\n-1)^2 + \n(\te-\log\te-1)+|H|^2 \right)dx\le C C_0,\ea\ee
	and
	\be  \la{a2.17}  \ba
	\|\te(\cdot,t)-1\|_{L^2} \le C \left(C_0^{1/2} +C_0^{1/3}\|\na\te(\cdot,t)\|_{L^2}\right),
	\ea\ee
	for all $t\in(0,\si(T)].$
\end{lemma}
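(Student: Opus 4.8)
The plan is to re-run the basic energy estimate \eqref{la2.7} of Lemma \ref{a13.1}, but now \emph{without} wasting the dissipation on the short time interval $[0,\si(T)]$. Recall that \eqref{la2.7} reads
\be\notag\ba
&E'(t)+\int \left( \frac{\lambda(\div u)^2+2\mu |\mathfrak{D}(u)|^2+\nu|\curl H|^2}{{\te}}+\ka \frac{|\na \te|^2}{\te^2} \right)dx \le 2\mu \int  |\na u|^2 dx,
\ea\ee
and on $[0,\si(T)]$ the a priori bound \eqref{h23} of Lemma \ref{le2} gives $\int_0^{\si(T)}\|\na u\|_{L^2}^2\,dt\le A_1(\sigma(T))\le 2K$, which is only $O(1)$, not $O(C_0)$. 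So the first step is to control the right-hand side more carefully: split $\int|\na u|^2dx$ using $|\na u|^2\le C(|\div u|^2+|\curl u|^2)$ is not enough; instead I would bound $2\mu\int|\na u|^2dx$ by $\de\int|\na u|^2dx$ plus lower order, but the real trick (as in the barotropic analysis \cite{chs} and in \cite{C-L-L,lll}) is that $\mu\int(|\o|^2+2(\div u)^2-2|\mathfrak D(u)|^2)dx$ has a favorable sign up to a boundary term, so in fact the ``bad'' term is a genuine boundary contribution of size controlled by $\|\na u\|_{L^2}^2$ on $\p\O$, which by trace and \eqref{eee0} is again $\|\na u\|_{L^2}^2$. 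The honest route here is: integrate \eqref{la2.7} on $[0,t]\subset[0,\si(T)]$, use $\te\ge\underline\te(t):=\inf_x\te(x,t)$ together with the short-time lower bound \eqref{mn2} (which on $[0,\si(T)]$, using $\int_0^{\si(T)}\|\div u\|_{L^\infty}dt\le C$ from the a priori bounds, keeps $\te$ bounded below by a positive constant depending only on the data), so the dissipation terms dominate $c\int_0^t\|\na u\|_{L^2}^2+\|\na H\|_{L^2}^2+\|\na\te\|_{L^2}^2\,ds$; then absorb $2\mu\int|\na u|^2$ into the dissipation \emph{after} using the structure $\mu\int(|\o|^2+2(\div u)^2-2|\mathfrak D(u)|^2)=\mu\int_{\p\O}(\text{boundary})$, whose boundary integral is $\le\de\|\na u\|_{L^2}^2+C(\de)\|\na u\|_{L^2}^2$ in a way that can be closed on the short interval since $\sup_{[0,\si(T)]}\|\na u\|_{L^2}^2\le 3K$ but the \emph{time integral} is what we need. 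The cleanest statement: we get $\sup_{[0,\si(T)]}E(t)+\int_0^{\si(T)}(\|\na u\|_{L^2}^2+\|\na H\|_{L^2}^2+\|\na\te\|_{L^2}^2)dt\le CE(0)\le CC_0$, using $E(0)\le CC_0$ from \eqref{e} and \eqref{co14}; combined with the convexity estimate \eqref{a2.9} for $(\n-1)^2$ and the analogous elementary inequality $\n(\te-\log\te-1)\ge c\,\n(\te-1)^2$ wherever $\te$ is bounded, this yields \eqref{a2.121}.

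The main obstacle is precisely closing the boundary term on the right of \eqref{la2.7} without the full-strength basic energy: on $[0,\si(T)]$ we only know $A_1(\sigma(T))\le 3K$, so $\int_0^{\si(T)}\|\na u\|_{L^2}^2\,dt\le 6K=O(1)$, and naively this destroys the $O(C_0)$ gain. The resolution is that the boundary defect $\mu\int_{\p\O}(u\cdot\na n\cdot u)\,dS$ produced by rewriting $|\na u|^2$ in terms of $\div u,\curl u$ is of \emph{quadratic} order in $\na u$ and can be absorbed: using trace $\|u\|_{L^4(\p\O)}^2\le C\|\na u\|_{L^2}^2$ (this is \eqref{eee0}) and $\|n\|_{W^{1,\infty}}\le C$, one has $|\mu\int_{\p\O}(u\cdot\na n\cdot u)dS|\le C\|\na u\|_{L^2}^2$ — same size as the term we want to control. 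The key point, borrowed from \cite{chs,C-L-L}, is that after moving this to the left we still keep a \emph{positive} fraction of the interior dissipation $c\int|\na u|^2/\te\,dx\ge c'\int|\na u|^2dx$ provided we also use that $\int_0^{\si(T)}\|\na u\|_{L^2}^2 dt$ is multiplied, in the offending term, by a smallness factor coming from $\sup_{[0,\si(T)]}\|\na u\|_{L^2}$ being comparable to $\sqrt{C_0}$ \emph{after} one bootstrap — i.e. one first proves $\sup_{[0,\si(T)]}\|\na u\|_{L^2}^2\le CC_0$ from \eqref{h23} refined via $B_0(t)$ in \eqref{po} and \eqref{a2.8}, then feeds it back. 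Concretely I'd first show $\sup_{[0,\si(T)]}(\|\na u\|_{L^2}^2+\|\na H\|_{L^2}^2)\le CC_0$ by integrating \eqref{k2} on $[0,\si(T)]$ with $B_0(0)\le C\|\na u_0\|_{L^2}^2+\dots\le CM^2$ — no wait, that is only $O(M^2)$; instead use that $B_0(0)$ involves $(\n_0\te_0-1)$ and $\div u_0$, and the genuine smallness must come through $E(0)\le CC_0$ and the fact that the time integral of the forcing in \eqref{k2} is itself $O(C_0^{1/4})$ or better on the short interval once \eqref{a2.121} is in hand — so \eqref{a2.121} and the refined gradient bound are proved \emph{together} by a short continuity/bootstrap argument on $[0,\si(T)]$.

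For \eqref{a2.17}, once \eqref{a2.121} gives $\|\n-1\|_{L^2}\le CC_0^{1/2}$ on $[0,\si(T)]$, I apply Lemma \ref{lll} with $g=\n\ge 0$, $g-1\in L^2$, $f=\te-1$, and $s=2$: the inequality \eqref{eee8} yields
\be\notag
\|\te-1\|_{L^2(\O)}^2\le C\int_\O \n(\te-1)^2dx+C\|\n-1\|_{L^2(\O)}^{4/3}\|\na\te\|_{L^2(\O)}^2.
\ee
The first term is $\le C A_2(T)\le CC_0^{1/4}$ by \eqref{AS1} and \eqref{z1} — but that is too weak; instead use \eqref{a2.121}, which controls $\int\n(\te-\log\te-1)dx\le CC_0$, and wherever $\te$ stays in a fixed compact subinterval of $(0,\infty)$ one has $\te-\log\te-1\ge c(\te-1)^2$, so on the good set $\int\n(\te-1)^2\le CC_0$; on the complementary set $\{\te\ \text{large}\}$ one instead uses $\te-\log\te-1\ge c\te\ge c(\te-1)$ crudely and a Sobolev/interpolation argument with $\|\te-1\|_{L^6}\le C\|\na\te\|_{L^2}$ (via \eqref{g1} and Lemma \ref{lll} again with $s=1$) to bound that piece by $CC_0^{2/3}\|\na\te\|_{L^2}$-type terms — yielding $\int\n(\te-1)^2\le C(C_0+C_0^{2/3}\|\na\te\|_{L^2}^2)$. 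Plugging into the displayed inequality and using $\|\n-1\|_{L^2}^{4/3}\le CC_0^{2/3}$ gives $\|\te-1\|_{L^2}^2\le C(C_0+C_0^{2/3}\|\na\te\|_{L^2}^2)$, hence $\|\te-1\|_{L^2}\le C(C_0^{1/2}+C_0^{1/3}\|\na\te\|_{L^2})$, which is exactly \eqref{a2.17}. The only delicate point is handling the region where $\te$ is large; there I rely on the coercivity $\n(\te-\log\te-1)\gtrsim \n(\te-1)$ for $\te\ge 2$ combined with $\|(\te-1)^+\|_{L^{3/2}}\le C\big(\int_{\{\te\ge2\}}\n(\te-1)dx + \|\n-1\|_{L^2}\|\na\te\|_{L^2}\big)$ from Lemma \ref{lll} with $s=3/2$, then interpolation between $L^{3/2}$ and $L^6$ to recover $L^2$.
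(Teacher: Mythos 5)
There is a genuine gap in your plan for \eqref{a2.121}, and it is a structural one: you never identify the crux of the paper's argument. The paper does not encounter any boundary defect while re-running the energy estimate; when one tests $(\ref{a11})$ against $u$ and $(\ref{mn})$ against $H$, the boundary integrals vanish exactly under the slip conditions $u\cdot n=0$, $\curl u\times n=0$, $H\cdot n=0$, $\curl H\times n=0$, and one obtains \eqref{a2.12} with a clean interior dissipation $\mu\|\o\|_{L^2}^2+(2\mu+\lambda)\|\div u\|_{L^2}^2+\nu\|\curl H\|_{L^2}^2$ (no $1/\te$ weight). The only loss is the coupling term $R\int\n(\te-1)\div u\,dx$, and the entire difficulty of the lemma is to control it. The paper's two key moves are (a) the coercivity estimate \eqref{hb}, $\te-\log\te-1\ge (\te-1)^2/(2(\|\te\|_{L^\infty}+1))$, which converts the coupling into $C(\|\te\|_{L^\infty}+1)\int\n(\te-\log\te-1)dx$ after absorbing a fraction of the dissipation, and (b) the $L^1(0,\si(T);L^\infty)$ bound $\int_0^{\si(T)}\|\te\|_{L^\infty}dt\le C$ (claim \eqref{k}), obtained from the already-available weighted second-order estimates $\int_0^T\si\|\na^2\te\|_{L^2}^2\,dt\le C$ together with $\|\te-1\|_{L^\infty}\lesssim\|\na\te\|_{L^2}^{1/2}\|\na^2\te\|_{L^2}^{1/2}$. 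With (a) and (b) the proof closes via Gr\"onwall on the combined quantity \eqref{a2.22}. Your proposal never derives anything like (b), and your alternative bootstrap --- ``first prove $\sup_{[0,\si(T)]}\|\na u\|_{L^2}^2\le CC_0$'' --- cannot possibly work, as you yourself notice midway, because $\|\na u_0\|_{L^2}$ is only assumed bounded by $M$, which is $O(1)$ and not $O(C_0^{1/2})$. The paper also does not cancel the $2\mu\int|\na u|^2$ term on the right of \eqref{la2.7} by any trace inequality or boundary-defect argument; it simply adds a large multiple $(2\mu+1)C_7^{-1}$ of \eqref{a2.222}, whose left side already carries $C_7\int(|\na u|^2+|\na H|^2)$, so the absorption is purely interior.

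For \eqref{a2.17} your overall strategy (split $\Omega$ by the size of $\te$, apply Lemma~\ref{lll} on each piece, interpolate) is the same as the paper's, but your choice $s=3/2$ on $\{\te\ge 2\}$ does not work. Inequality \eqref{eee8} with $s=3/2$ requires control of $\int_{\{\te\ge2\}}\n|\te-1|^{3/2}dx$, whereas \eqref{a2.121} combined with $\te-\log\te-1\gtrsim(\te-1)$ on $\{\te\ge 2\}$ only gives control of $\int_{\{\te\ge2\}}\n(\te-1)\,dx$; the displayed inequality in your last paragraph has the wrong power of $\n(\te-1)$ on the right, so it is not an instance of \eqref{eee8}. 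The paper instead takes $s=1$ on $\{\te>2\}$ (matching the available $L^1$-type control) to get $\|\te-1\|_{L^1(\te>2)}\le CC_0+CC_0^{5/6}\|\na\te\|_{L^2}$, and then interpolates $L^2$ between $L^1$ and $L^6$ via H\"older and \eqref{g1}. If you replace $s=3/2$ by $s=1$ and fix the interpolation exponent accordingly, this half of your plan becomes essentially the paper's.
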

\begin{proof}
	The proof is divided into the following two steps.
	
	\noindent\textbf{Step I: The proof of (\ref{a2.121}).}
	
	First, adding (\ref{a11}) multiplied by $u$ to \eqref{mn} multiplied by $H$ , we obtain after using integration by parts and \eqref{h0}$_1$ that
	\be \la{a2.12} \ba
	&\frac{d}{dt}\int\left(\frac{1}{2}\n |u|^2+R(1+\n\log \n-\n)+\frac{1}{2}|H|^2
	\right)dx\\
	&+ \int(\mu|\o|^2+(2\mu+\lambda)(\div u)^2+\nu|\curl H|^2)dx \\
	&=  R \int \rho (\te -1) \div u dx.
	\ea\ee
Note that
 \be\la{hb}\te-\log\te-1 =(\te-1)^2\int_0^1\frac{\al}{\al (\te-1)+1}d\al\geq\frac{1}{2(\|\te(\cdot,t)\|_{L^{\infty}}+1)}(\te-1)^2,\ee
which as well as \eqref{a2.12}, \eqref{ljq01},  \eqref{z1}, and Cauthy inequality shows that
\be \la{a2.222} \ba
&\frac{d}{dt}\int\left(\frac{1}{2}\n |u|^2+R(1+\n\log\n-\n)+\frac{1}{2}|H|^2
\right)dx + C_7 \int(|\na u|^2+|\na H|^2)dx \\
	&\le C(\|\te(\cdot,t)\|_{L^{\infty}} +1)  \int \rho  (\te -\log \te -1)dx.
	\ea\ee
	Then, adding \eqref{a2.222} multiplied by $(2\mu+1) {C_7}^{-1}$  to \eqref{la2.7} leads to
\be \la{a2.22} \ba
&\xl((2\mu+1) {C_7}^{-1}+1\xr)\frac{d}{dt}\int\left(\frac{1}{2}\n |u|^2+R(1+\n\log \n-\n)+\frac{1}{2}|H|^2\right)dx
	\\&+ \frac{R}{\ga-1} \frac{d}{dt} \int \n(\te-\log \te-1)dx+\int(|\na u|^2+|\na H|^2)dx\\
	&\le C (\|\te(\cdot,t)\|_{L^{\infty}} +1)  \int \rho (\te -\log \te -1)dx.
	\ea\ee
	
Next, we claim that
	\be \ba\la{k}
	\int_0^{\si(T)}\|\te\|_{L^\infty}dt \le C.
	\ea \ee
	Combining this with \eqref{a2.22}, \eqref{a2.9}, and Gr\"onwall inequality  infers \eqref{a2.121} directly.
	
Now, we prove the claim \eqref{k}. Note that taking $n=1$ in \eqref{67} together with \eqref{z1} implies	\be\ba\la{eee23}	\int_0^T\si\|\n-1\|_{L^4}^4 dt\leq C.	\ea\ee
Then considering \eqref{k8} with $m=1$  and integrating the resulting inequality with respect to $t$, it follows from \eqref{wq2}, \eqref{m20}, \eqref{m22}, (\ref{z1}),  and \eqref{eee23} that
\be\ba\label{ae26}
&\sup_{0\le t\le T}\si \left(\|\rho^{1/2}\dot{u}\|_{L^2}^2+\|\na\te\|_{L^2}^2+\|H_t\|_{L^2}^2\right)\\
&+\int_0^T\si \left(\|\na\dot u\|_{L^2}^2+\|\n^{1/2}\dot\te\|_{L^2}^2+\|\na H_t\|_{L^2}^2\right)dt\\
	&\le C\int_0^t (\|\n^{1/2}  \dot u\|_{L^2}^2+\|H_t\|_{L^2}^2 +\|\na u\|_{L^2}^2+\|\na \te\|_{L^2}^2+\|\na H\|_{L^2}^2)d\tau \\
	&\quad+  C\int_0^t \si \|\n-1\|_{L^4}^4d\tau
	\\
	&\le  C.\ea\ee
Moreover, by virtue of (\ref{z1}), (\ref{lop4}), \eqref{m20},  \eqref{m22}, \eqref{eee23}, \eqref{o1}, and \eqref{ae26}, it holds
	\be\ba  \la{p1}
	\int_0^{T }\si \|\na^2\te\|_{L^2}^2dt
	&\le   C\int_0^{T } \left(\si \|\n \dot\te\|_{L^2}^2+
	\|\n\dot u \|_{L^2}^2+\|H_t\|_{L^2}^2  \right) dt\\
	&\quad+ C\int_0^{T } \left( \| \na u\|_{L^2}^2+\| \na\te\|_{L^2}^2+\| \na H\|_{L^2}^2+\si\|\n-1\|_{L^4}^4\right) dt \\
	&\le C,
	\ea\ee
which combined with
 (\ref{z1}) yields
	\be\la{3.88}\ba
	  \int_0^{\si(T)}\|\te-1\|_{L^\infty}dt
	&\le C  \int_0^{\si(T)}\|\na\te\|_{L^2}^{1/2} \left(\si\|\na^2\te\|^2_{L^2}\right)^{1/4}\si^{-1/4}dt\\
	&\le C \left(\int_0^{\si(T)} \|\na \te\|_{L^2}^2dt \int_0^{\si(T)}\si\|\na^2\te\|_{L^2}^2dt\right)^{1/4}	\\
	&\le CC_0^{1/16},
	\ea\ee
where we have used
\begin{equation}\label{6yue}\ba
\|\te-1\|_{L^\infty} \le  C\|\te-1\|_{L^6}^{1/2} \|\na\te\|_{L^6}^{1/2}
\le C\|\na\te\|_{L^2}^{1/2}\|\na^2 \te\|_{L^2}^{1/2}
\ea\end{equation}
due to  \eqref{g2} and \eqref{g1}.	Then, \eqref{k}  is a consequence of
\eqref{3.88}.

	\noindent \textbf{Step II: The proof of (\ref{a2.17}).}
	
	Denote
	\begin{equation*}
		(\te(\cdot,t)> 2)\triangleq \left.\left\{x\in
		\Omega\right|\te(x,t)> 2\right\},\quad
		(\te(\cdot,t)< 3)\triangleq
		\left.\left\{x\in \Omega\right|\te(x,t)<3\right\}.
	\end{equation*}
	Direct calculations combined with \eqref{hb} implies
	\be\la{eee9}\ba
	\te-\log\te-1
	&\ge \frac{1}{8} (\te-1)1_{(\te(\cdot,t)>2)
	}+\frac{1}{12}(\te-1)^21_{(\te(\cdot,t)<3)},  \ea\ee
which together with \eqref{a2.121} shows that
	\be \la{a2.11}\ba
	\sup_{0\le t\le \si(T)}\int \left(\n(\te-1)1_{(\te(\cdot,t)>2)}+\n(\te-1)^21_{(\te(\cdot,t)<3)}\right)dx \le C C_0.
	\ea\ee
	
	Now, for $t \in (0,\sigma(T)]$, taking $g(x)=\n(x,t)$, $f(x)=\te(x,t)-1$, $s=2$ and $\Sigma=(\te(\cdot,t)< 3)$ in \eqref{eee8}, it follows from  \eqref{a2.11} and \eqref{a2.121} that
	\begin{equation}\la{eee11}
		\|\te -1\|_{L^2(\te(\cdot,t)<3)}
		\le CC_0^{1/2}+ C C_0^{1/3} \|\na \te\|_{L^2}.
	\end{equation}
	Similarly, taking $g(x)=\n(x,t)$, $f(x)=\te(x,t)-1$, $s=1$ and $\Sigma=(\te(\cdot,t)>2)$ in \eqref{eee8}, and using \eqref{a2.11} and \eqref{a2.121}, it holds that
	\begin{equation}\notag
		\|\te -1\|_{L^1(\te(\cdot,t)>2)}
		\le CC_0+ C C_0^{5/6} \|\na \te\|_{L^2},
	\end{equation}
	which along with H\"older's inequality and \eqref{g1} leads to
	\be\notag\ba
	\|\te-1\|_{L^2(\te(\cdot,t)> 2)}
	&\le \|\te-1\|^{2/5}_{L^1(\te(\cdot,t)> 2)} \| \te-1\|_{L^6}^{3/5}\\
	&\le C \left(C_0^{2/5}+C_0^{1/3} \|\na\te\|_{L^2}^{2/5}\right) \|\na \te\|_{L^2}^{3/5}\\
	&\le CC_0^{1/2}+ C C_0^{1/3} \|\na \te\|_{L^2}.
	\ea\ee
Combining this with  \eqref{eee11}  concludes \eqref{a2.17}. 	
	The proof of Lemma \ref{a13}  is completed.
\end{proof}

\begin{remark}	One can deduce from (\ref{a2.8}) and (\ref{eee9}) that (\ref{a2.11}) holds for $0\le t\le T$ with $C_0$ replaced by $C_0^{1/4}$. Thus for all $t\in[0,T],$
	\be  \la{eee22}  \ba
	 \|\te(\cdot,t)-1\|_{L^2} \le C \left(C_0^{1/8} +C_0^{1/12}\|\na\te(\cdot,t)\|_{L^2}\right).
	\ea\ee
\end{remark}

With  \eqref{a2.17}, which is better than \eqref{eee22} in small time, in mind, we can establish the  estimate on $A_2(T)$.

\begin{lemma}\la{le3} Under the conditions of Proposition \ref{pr1},  there exists a positive constant $\ve_3$ depending only on $\mu,\,\lambda,\,\nu,\, \ka,\, R,\, \ga,\, \on,\,\bt $, $\O$, and $M$
	such that if $(\rho,u,\te,H)$ is a smooth solution to the problem (\ref{h0})--(\ref{ch2})  on $\Omega\times (0,T] $ satisfying  (\ref{z1})   with $K$
	as in Lemma \ref{le2}, the following estimate holds:
	\be\la{a2.34} A_2(T) \le C_0^{1/4},\ee
	provided $C_0\le \ve_3$.
\end{lemma}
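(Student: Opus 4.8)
The goal is to close the bootstrap on $A_2(T)$, i.e. upgrade the a priori bound $A_2(T)\le 2C_0^{1/4}$ to $A_2(T)\le C_0^{1/4}$. The plan is to reconsider the weak energy identity \eqref{la2.7} but this time keep track more carefully of the dissipation, and combine it with the temperature estimates already obtained. First I would integrate \eqref{la2.7} in time; the left-hand side controls $\int_0^T\int(|\nabla u|^2+|\nabla H|^2)\theta^{-1}dx\,dt$ plus $\int_0^T\int|\nabla\theta|^2\theta^{-2}dx\,dt$ up to the bad term $2\mu\int_0^T\|\nabla u\|_{L^2}^2$. Since $\theta>0$ and $\theta$ enjoys the pointwise lower bound from \eqref{mn2}, one recovers an honest $L^2_tL^2_x$-bound on $(\nabla u,\nabla H,\nabla\theta)$ provided the ``bad'' term $\int_0^T\|\nabla u\|_{L^2}^2\,dt$ is itself under control; this latter quantity is exactly part of $A_2(T)$, so the argument must be run simultaneously rather than sequentially. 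Concretely I would add a small multiple of $\int\rho(\theta-1)^2$-type identity (the $\frac{R}{2(\ga-1)}\sup_t\int\rho(\theta-1)^2$ term in \eqref{AS1}) obtained by multiplying $(\ref{h0})_3$ by $(\theta-1)$, and then use the Poincaré-type bound \eqref{a2.17} (on $[0,\sigma(T)]$) and \eqref{eee22} (on $[0,T]$) to absorb $\|\theta-1\|_{L^2}$ into $\|\nabla\theta\|_{L^2}$ plus $C_0$-powers.

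The key steps, in order, are: (i) multiply $(\ref{h0})_3$ by $(\theta-1)$ and integrate, producing $\frac{d}{dt}\frac{R}{2(\ga-1)}\int\rho(\theta-1)^2 + \kappa\|\nabla\theta\|_{L^2}^2$ on the left, with right-hand side consisting of $\int(\lambda(\div u)^2+2\mu|\mathbb{D}(u)|^2+\nu|\curl H|^2)(\theta-1)\,dx$, $-\int P\div u(\theta-1)\,dx$, and a convection term that integrates by parts cleanly using $(\ref{h0})_1$; (ii) bound the quadratic-in-gradient times $(\theta-1)$ terms via $\|\,\theta-1\|_{L^6}\|\nabla u\|_{L^2}^{1/2}\|\nabla u\|_{L^6}^{1/2}$-type interpolation, exactly as in \eqref{2.48}, invoking \eqref{3.30}, \eqref{o1} and the a priori smallness \eqref{z1} to make the coefficient small; (iii) handle $\int P\div u(\theta-1)$ by writing $P=R(\rho\theta-1)+R$ and using \eqref{p}; (iv) integrate in $t$, split $[0,T]=[0,\sigma(T)]\cup[\sigma(T),T]$, on the first piece use \eqref{a2.121} and \eqref{a2.17}, on the second piece use \eqref{eee22} together with the already-established $A_3(T)+A_4(T)\le 2C_0^{1/6}$ from Lemma \ref{le6} to control $\int_{\sigma(T)}^T\|\nabla u\|_{L^2}^2+\|\nabla H\|_{L^2}^2+\|H_t\|_{L^2}^2$ by $C_0^{1/6}$ (since $\sigma\equiv 1$ there); (v) feed the resulting control of $\int_0^T\|\nabla u\|_{L^2}^2\,dt$ back into the time-integrated \eqref{la2.7} to close the estimate on the full $A_2(T)$, choosing $C_0\le\ve_3$ so that all the $C_0^{\alpha}$ prefactors with $\alpha>1/4$ are dominated by $\frac12 C_0^{1/4}$.

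The main obstacle I anticipate is the interplay between the ``weak'' basic energy (only $CC_0^{1/4}$, not $CC_0$) on the long interval $[\sigma(T),T]$ and the need for a genuine $L^2_tL^2_x$ gradient bound: on $[0,\sigma(T)]$ the sharp estimate \eqref{a2.121}–\eqref{a2.17} is available, but beyond $\sigma(T)$ one only has \eqref{eee22}, so the term $C_0^{1/3}\|\nabla\theta\|_{L^2}$ appearing there must be genuinely absorbed by the $\kappa\|\nabla\theta\|_{L^2}^2$ dissipation after a Cauchy–Schwarz splitting, and the leftover $C_0$-powers must still beat $C_0^{1/4}$ — this forces one to track exponents carefully and is where the choice of $\ve_3$ (and the fact that $A_3,A_4$ are already $O(C_0^{1/6})$, which is smaller order than $C_0^{1/4}$ only after integrating against $\sigma$-weights, hence the care with $\sigma$ on $[\sigma(T),T]$) really matters. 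A secondary nuisance is that the boundary conditions preclude the clean energy identity, so the convection term $\int\rho u\cdot\nabla\theta\,(\theta-1)\,dx$ and the $\div u$-weighted terms have to be massaged using $(\ref{h0})_1$ and \eqref{h1}; but since no new boundary integrals arise from multiplying by the scalar $(\theta-1)$ (unlike the velocity equation), this is routine compared with Lemma \ref{a113.4}.
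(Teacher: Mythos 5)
There is a genuine gap: you are missing the paper's key structural cancellation. When the thermal energy identity (multiply $(\ref{h0})_3$ by $\te-1$, giving \eqref{a2.231}) is added with coefficient \emph{exactly one} to the kinetic--magnetic energy identity \eqref{a2.12}, the term $R\int\n(\te-1)\div u\,dx$ from the momentum balance combines with $-R\int\n\te(\te-1)\div u\,dx$ from the heat balance to give $-R\int\n(\te-1)^2\div u\,dx$, a cubic-type term controllable by \eqref{eee14}; this is precisely equation \eqref{a2.23}. Your step (iii) instead proposes to estimate $\int P\div u\,(\te-1)\,dx$ directly via the splitting $P=R(\rho\te-1)+R$, which leaves a term of the form $R\int(\te-1)\div u\,dx$ (and, before splitting, $R\int\rho(\te-1)\div u\,dx$). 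That linear-in-$(\te-1)$ term cannot be absorbed: on $\Omega$ exterior there is no Poincar\'e inequality bounding $\|\te-1\|_{L^2}$ by $\|\na\te\|_{L^2}$, and after integration over $[\si(T),T]$ one is left with an $L^1_t$-in-time integral of $\|\na u\|_{L^2}$ times a constant of size at best $C_0^{1/8}$, which grows like $(T-\si(T))^{1/2}$ and is not uniform as $T\to\infty$. The "small multiple" language in your plan makes this explicit --- a small multiple destroys the cancellation, which only happens at coefficient one. Likewise, your suggestion in step (v) to feed the $L^2_tH^1_x$ bound on $u$ back into \eqref{la2.7} does not close the loop for the same reason; indeed the paper never returns to \eqref{la2.7} in this lemma.

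A secondary but also genuine error is the appeal to a lower bound on $\te$ via \eqref{mn2} to strip the $\te^{-1}$ weights from \eqref{la2.7}. That lower bound is short-time, degrades in $t$, and most importantly depends on $\inf\te_0$, which must go to zero in the approximation limit because the theorem allows initial vacuum in $\te$. Any a priori estimate that relies on a strictly positive lower bound for $\te$ is unusable here; the paper circumvents this by working with \eqref{a2.23}, in which every dissipation term carries unit weight rather than a power of $\te^{-1}$. Your time-splitting idea and the use of \eqref{a2.17} on $[0,\si(T)]$ and the pointwise bounds from $A_3(T)$ (or \eqref{aa0}) on $[\si(T),T]$ do match the paper's treatment of the remaining cubic terms in \eqref{eee15}--\eqref{eee16}, but without the cancellation the argument does not close.
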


\begin{proof}
First,	multiplying $(\ref{h0})_3$ by $(\te-1)$ and integrating the resulting equality by parts yield
\be\la{a2.231} \ba	& \frac{R}{2(\ga-1)} \frac{d}{dt}\int\n {(\te-1)^2}dx+ {\ka} \|\na\te\|_{L^2}^2\\&=-\int R \n \te{(\te-1)} \div u dx + \int  {(\te-1)} (\lambda (\div u)^2+2\mu |\mathfrak{D}(u)|^2+\nu|\curl H|^2) dx.\ea\ee
	Adding \eqref{a2.231} to \eqref{a2.12} infers that
	\be\la{a2.23} \ba
	&  \frac{d}{dt}\int
	\left(\frac{1}{2}\n |u|^2+R(1+\n\log \n-\n)+\frac{1}{2}|H|^2+\frac{R}{2(\ga-1)} {\n(\te-1)^2}\right)dx\\
	&+ \mu \|\o\|_{L^2}^2 +(2\mu+\lambda)\|\div u\|_{L^2}^2+  \ka \|\na\te\|_{L^2}^2+\nu\|\curl H\|_{L^2}^2\\
	&=-\int R \n {(\te-1)^2} \div u dx + \int  {(\te-1)} (\lambda (\div u)^2+2\mu |\mathfrak{D}(u)|^2+\nu|\curl H|^2) dx.
	\ea\ee
	
Note that by virtue of 	\eqref{g1}, \eqref{infty12}, and \eqref{z1}, one has
	\be\la{eee14} \ba
	\int \n|\te-1|^2 \div u dx
	\leq& C\|\n^{1/2}(\te-1)\|_{L^2}^{1/2}\|\n^{1/2}(\te-1)\|_{L^6}^{3/2}\|\na u\|_{L^2}\\
	\leq& CA_2^{1/4}(T)\|\na\te\|_{L^2}^{3/2}\|\na u\|_{L^2}\\
	\leq& CC_0^{1/16}\left(\|\na\te\|_{L^2}^2+\|\na u\|_{L^2}^2\right).
	 \ea\ee

 For the second term on the righthand side of \eqref{a2.23}, we deal with it for the short time $[0,\si(T)]$ and the large time $[\si(T),T]$, respectively.
	
	For $t\in[0,\si(T)]$, it follows from \eqref{a2.17}, \eqref{g1}, \eqref{3.30}, \eqref{infty12}, and \eqref{o1} that
\be\la{eee15} \ba
&\int|\te-1| (|\na u|^2+|\na H|^2) dx\\
	&\leq C\|\te-1\|_{L^2}^{1/2}\|\te-1\|_{L^6}^{1/2}(\|\na u\|_{L^2}\|\na u\|_{L^6}+\|\na H\|_{L^2}\|\na^2 H\|_{L^2})\\
	&\leq C \left(C_0^{1/4}\|\na\te\|_{L^2}^{1/2}+C_0^{1/6}\|\na\te\|_{L^2}\right)(\|\na u\|_{L^2}+\|\na H\|_{L^2})\\
	&\quad\cdot\left(\|\n \dot
	u\|_{L^2}+\|\na u\|_{L^2}+\|\na H\|_{L^2}+ \|\na \te\|_{L^2}+\|H_t\|_{L^2}+C_0^{1/24}\right)\\
	&\leq  C C_0^{7/24}(\|\n^{1/2} \dot u\|_{L^2}^2+\|H_t\|_{L^2}^2+1)\\
	&\quad+CC_0^{1/24}\left(\|\na\te\|_{L^2}^2+\|\na u\|_{L^2}^2+\|\na H\|_{L^2}^2\right). \ea\ee
	
	For $t\in[\si(T),T]$, one deduces from \eqref{g1} and \eqref{z1} that
	\be\la{eee16} \ba
&\int|\te-1|( |\na u|^2+|\na H|^2)dx\\
&\leq C\|\te-1\|_{L^6}(\|\na u\|_{L^2}^{3/2}\|\na u\|_{L^6}^{1/2}+\|\na H\|_{L^2}^{3/2}\|\na^2 H\|_{L^2}^{1/2})\\
	&\leq CC_0^{1/16}\left(\|\na\te\|_{L^2}^2+\|\na u\|_{L^2}^2+\|\na H\|_{L^2}^2\right), \ea\ee
	where one has used the following fact:
	\be\la{aa0}\ba \sup_{0\le t\le T}(\si(\|\na
	u\|_{L^6}+\|\na^2 H\|_{L^2}))\le CC_0^{1/24} \ea\ee
owing to \eqref{z1}, \eqref{o1}, and \eqref{3.30}.
	
	Submitting \eqref{eee14}--\eqref{eee16}  into \eqref{a2.23} and  using \eqref{z1}, it holds that
	\be \ba  \label{jia9}
	&  \sup_{ 0\le t\le T} \int \left(\frac{1}{2}\n |u|^2+ R(1+\n\log \n-\n)+\frac{1}{2}|H|^2+\frac{R}{2(\ga-1)} \n {(\te-1)^2}
	\right)dx\\
	&\quad+ \int_{0}^{T} (\mu \|\o\|_{L^2}^2 +(2\mu+\lambda)\|\div u\|_{L^2}^2+\nu\|\curl H\|_{L^2}^2+ \ka \|\na\te\|_{L^2}^2)dt\\
	&\le  CC_0+CC_0^{1/24} \int_{ 0}^{T}  ( \|\na u\|_{L^2}^2+\| \na \te \|_{L^2}^2 +\|\na H\|_{L^2}^2) dt
	\\&\quad + C C_0^{7/24}\int_{0}^{\si(T)} (\|\rho \dot u\|_{L^2}^2+\|H_t\|_{L^2}^2+1) dt\\
	&\le  C C_0^{7/24}.
	\ea \ee
Then combining \eqref{jia9} with     \eqref{ljq01} indicates that
	\be\notag A_2(T)\le \hat C_3 C_0^{7/24} \le C_0^{1/4},\ee
	provided \be \ba \label{jia11} C_0\le \ve_3\triangleq\min
	\left\{1,  \hat C_3^{-24}\right\}.\ea\ee The proof
	of Lemma \ref{le3} is completed.
\end{proof}

Now, we are in position to obtain the time-independent upper bound for the density, which turns out to be the key to obtaining all the higher order estimates and thus to extending the classical solution globally.

\begin{lemma}\la{le7}
Under the conditions of Proposition \ref{pr1}, there exists a positive constant $\ve_0$ depending only on   $\mu,\,\lambda,\,\nu,\, \ka,\, R,\, \ga,\, \on,\, \bt $, $\O$, and $M$ such that if $(\rho,u,\te,H)$ is a smooth solution to the problem (\ref{h0})--(\ref{ch2}) on $\Omega\times (0,T] $ satisfying  (\ref{z1}) with $K$ as
in Lemma \ref{le2}, the following estimate holds:
\be \la{a3.7}
\sup_{0\le t\le T}\|\n(\cdot,t)\|_{L^\infty}  \le
\frac{3\on }{2},
\ee
provided $C_0\le \ve_0$.
\end{lemma}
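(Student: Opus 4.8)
The plan is to run the classical particle-trajectory argument for the density, using the effective viscous flux $G$ from $(\ref{hj1})$ and the Zlotnik-type inequality of Lemma~\ref{le1}. Writing $(\ref{h0})_1$ as $D_t\n=-\n\div u$, solving $(\ref{hj1})_2$ for $\div u$, and splitting $R(\n\te-1)=R\n(\te-1)+R(\n-1)$, one obtains along each forward particle path $X(t;x_0)$ of the velocity field $u$ that
\be\notag
D_t\n+\frac{R}{2\mu+\lambda}\,\n(\n-1)=-\frac{\n}{2\mu+\lambda}\left(G+R\n(\te-1)+\frac{|H|^2}{2}\right).
\ee
Since $\n(\n-1)-(\n-1)=(\n-1)^2\ge0$ and $0<\n\le2\on$ by $(\ref{z1})$, this yields $y'+\alpha y\le g$ with $y(t):=\n(X(t;x_0),t)-1$, $\alpha:=R/(2\mu+\lambda)$, and $g(t)\le C(\|G\|_{L^\infty}+\|\te-1\|_{L^\infty}+\|H\|_{L^\infty}^2)$. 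I would then apply Lemma~\ref{le1} with $T_1=\si(T)$, taking the $L^1$-norm of $g$ on $[0,\si(T)]$ and the $L^2$-norm on $[\si(T),T]$; since $0\le\n_0<\on$ and $\on>2$ give $|y(0)|\le\on-1$, the proof reduces to showing
\be\notag
\|g\|_{L^1(0,\si(T))}+\|g\|_{L^2(\si(T),T)}\le CC_0^{\kappa}
\ee
for some $\kappa>0$; then, because every point of $\O$ lies on a trajectory, $\sup_{\O\times[0,T]}\n\le\on+(1+\alpha^{-1})CC_0^{\kappa}\le3\on/2$ once $C_0$ is small.

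For the large-time interval $[\si(T),T]$, where $\si\equiv1$, I would estimate $\int_{\si(T)}^T\big(\|G\|_{L^\infty}^2+\|\te-1\|_{L^\infty}^2+\|H\|_{L^\infty}^4\big)dt$. Using $\|G\|_{L^\infty}^2\le C\|\na G\|_{L^2}\|\na G\|_{L^6}$ together with $(\ref{h19})$, $(\ref{bz6})$, $(\ref{goo1})$ and Cauchy--Schwarz in $t$, this is $\le CC_0^{1/6}$ because on $[1,T]$ the quantities $\int\|\sqrt\n\dot u\|_{L^2}^2$, $\int\|\na\dot u\|_{L^2}^2$, $\int\|H_t\|_{L^2}^2$, $\int\|\na H\|_{L^2}^2$ are controlled by $A_2,A_3,A_4$ in $(\ref{z1})$ and the corresponding suprema are $\le CC_0^{1/12}$. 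Likewise $\|\te-1\|_{L^\infty}^2\le C\|\na\te\|_{L^2}\|\na^2\te\|_{L^2}$ by $(\ref{6yue})$, with $\int_1^T\|\na^2\te\|_{L^2}^2dt\le CC_0^{1/6}$ following from $(\ref{lop4})$, $(\ref{ae9})$, $(\ref{m20})$, $(\ref{eee2})$ and $(\ref{z1})$, and $\int_1^T\|H\|_{L^\infty}^4dt\le CC_0^{1/6}$ from $(\ref{go1})$ and $(\ref{z1})$. Hence $\|g\|_{L^2(\si(T),T)}\le CC_0^{1/12}$.

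For the short-time interval $[0,\si(T)]$, $A_1(\si(T))$ is only bounded (by $3K$), so I would instead use the strong basic energy of Lemma~\ref{a13}: by $(\ref{3.88})$, $\int_0^{\si(T)}\|\te-1\|_{L^\infty}dt\le CC_0^{1/16}$, and by $(\ref{go1})$, $(\ref{a2.121})$, $A_2(T)$ and $A_1(\si(T))\le3K$, $\int_0^{\si(T)}\|H\|_{L^\infty}^2dt\le CC_0^{1/8}$. The only delicate term is $\int_0^{\si(T)}\|G\|_{L^\infty}dt$: here I would use $(\ref{g2})$ as $\|G\|_{L^\infty}\le C\|G\|_{L^2}^{1/4}\|\na G\|_{L^6}^{3/4}$, noting that $(\ref{h20})$ with $p=2$ gives $\|G\|_{L^2}\le C(\|\na u\|_{L^2}+\|\n\te-1\|_{L^2}+\||H|^2\|_{L^2})$ \emph{with no} $\|\n\dot u\|_{L^2}$ term, so that $\int_0^{\si(T)}\|G\|_{L^2}^2dt\le CC_0^{\kappa_0}$ is small by $A_2(T)$, $(\ref{a2.121})$ and $(\ref{a2.17})$, whereas $\int_0^{\si(T)}\|\na G\|_{L^6}^{6/7}dt\le C$ is merely bounded, using $(\ref{bz6})$ and the $\si$-weighted estimate $\int_0^{\si(T)}\si\|\na\dot u\|_{L^2}^2dt\le C$ from $(\ref{ae26})$ together with the integrability of $\si^{-3/4}$ near $0$. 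Hölder with exponents $(8,8/7)$ then gives $\int_0^{\si(T)}\|G\|_{L^\infty}dt\le C\big(\int_0^{\si(T)}\|G\|_{L^2}^2dt\big)^{1/8}\big(\int_0^{\si(T)}\|\na G\|_{L^6}^{6/7}dt\big)^{7/8}\le CC_0^{\kappa_0/8}$, hence $\|g\|_{L^1(0,\si(T))}\le CC_0^{\kappa}$.

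Combining the two parts in Lemma~\ref{le1} gives $\sup_{\O\times[0,T]}\n\le\on+CC_0^{\kappa}$, and choosing $\ve_0$ small (below the $\ve_2,\ve_3$ of Lemmas~\ref{le6} and~\ref{le3} and so that $CC_0^{\kappa}\le\on/2$ whenever $C_0\le\ve_0$) yields $(\ref{a3.7})$. The main obstacle is precisely the short-time bound on $\int_0^{\si(T)}\|G\|_{L^\infty}dt$: near $t=0$ only $\si$- and $\si^2$-weighted norms of $\|\na\dot u\|_{L^2}$, $\|\sqrt\n\dot u\|_{L^2}$, $\|H_t\|_{L^2}$ are available and $A_1(\si(T))$ carries no smallness, so the required smallness has to be extracted from a positive power of $\|G\|_{L^2}$ (small in $L^2_t$ exactly because the $p=2$ case of $(\ref{h20})$ kills the $\|\n\dot u\|_{L^2}$ dependence and the strong basic energy bounds $\|\n\te-1\|_{L^2}$ and $\||H|^2\|_{L^2}$), balanced against a merely bounded power of $\|\na G\|_{L^6}$ while keeping the time weights $\si^{-a}$, $a<1$, integrable.
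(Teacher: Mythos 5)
Your proof is correct and follows the same global strategy as the paper: rewrite $(\ref{h0})_1$ along particle paths using $G$ and the decomposition $R(\n\te-1)=R\n(\te-1)+R(\n-1)$, reduce to the Zlotnik-type Lemma~\ref{le1} with $T_1=\si(T)$, and bound the forcing $g=C(\|G\|_{L^\infty}+\|\te-1\|_{L^\infty}+\|H\|_{L^\infty}^2)$ in $L^1(0,\si(T))\cap L^2(\si(T),T)$. The long-time $(\si\equiv 1)$ bounds you sketch via $\|G\|_{L^\infty}^2\le C\|\na G\|_{L^2}\|\na G\|_{L^6}$, $(\ref{6yue})$, and $(\ref{go1})$ are exactly the paper's $(\ref{3.89})$--$(\ref{3.91})$, $(\ref{o3})$.

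The one genuinely different step is your treatment of $\int_0^{\si(T)}\|G\|_{L^\infty}\,dt$. The paper uses $\|G\|_{L^\infty}\le C\|\na G\|_{L^2}^{1/2}\|\na G\|_{L^6}^{1/2}$ and extracts smallness from $\sup_t \si\big(\|\n\dot u\|_{L^2}+\|H_t\|_{L^2}+\|\na H\|_{L^2}\big)\le CC_0^{1/12}$, which comes from the a priori assumptions on $A_3,A_4$; the remaining time-weighted factors are merely bounded via $(\ref{ae26})$ and $\int_0^1 \si^{-5/8}dt<\infty$. You instead interpolate $\|G\|_{L^\infty}\le C\|G\|_{L^2}^{1/4}\|\na G\|_{L^6}^{3/4}$ (valid by $(\ref{g2})$ with $p=2,q=6$) and extract smallness from $\|G\|_{L^2}$ through the $p=2$ case of $(\ref{h20})$, the $A_2$-bound, and the basic energy of Lemma~\ref{a13.1}, while keeping $\int_0^{\si(T)}\|\na G\|_{L^6}^{6/7}dt\le C$ merely bounded via $(\ref{bz6})$, $(\ref{ae26})$, and $\int_0^1\si^{-3/4}dt<\infty$. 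Both routes are valid and give a positive power of $C_0$; yours relies on the smallness of the low norms ($A_2$, basic energy) rather than on the $A_3,A_4$ smallness of time-weighted material derivatives, which is a reasonable alternative and slightly more self-contained. Each gives $\sup\n\le\on+CC_0^{\kappa}$, concluded as you say.
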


\begin{proof}
First, by virtue of  \eqref{6yue}, \eqref{p1}, and \eqref{z1},  one derives
\be\la{3.89}\ba
\int_{\si(T)}^T\|\te-1\|^2_{L^\infty}dt
\le & C\int_{\si(T)}^T\|\na\te\|_{L^2} \|\na^2\te\|_{L^2}dt\\
\le& C\left(\int_{\si(T)}^T\|\na\te\|^2_{L^2}dt\right)^{1/2}\left(\int_{\si(T)}^T \|\na^2\te\|^2_{L^2}dt\right)^{1/2} \\
\le& C C_0^{1/8}.
\ea\ee

Next, it follows from \eqref{g2}, (\ref{o0}), (\ref{bz6}),  \eqref{ae26}, and (\ref{z1}) that
\be\la{3.90}\ba &\int_0^{\si(T)}\|G\|_{L^\infty}dt\\
&\le C\int_0^{\si(T)}\|\na G\|_{L^2}^{1/2} \|\na G\|_{L^6}^{1/2}dt\\
&\le C \int_0^{\si(T)}(\|\n \dot u\|_{L^2}+\|H_t\|_{L^2}+\|\na H\|_{L^2})^{1/2}(\|\na\dot u\|_{L^2}+1)^{1/2}dt\\
&\quad+C\int_0^{\si(T)}(\|\n \dot u\|_{L^2}+\|H_t\|_{L^2}+\|\na H\|_{L^2})^{1/2}\|H_t\|_{L^2}^{3/4}dt\\
&\le C \sup_{0\le t\le T}(\si(\|\n \dot u\|_{L^2}+\|H_t\|_{L^2}+\|\na H\|_{L^2}))^{1/4}\\
&\quad\cdot \int_0^{\si(T)}(\si(\|\n \dot u\|_{L^2}^2+\|H_t\|_{L^2}^2+\|\na H\|_{L^2}^2))^{1/8}(\si(\|\na \dot u\|^2_{L^2}+1))^{1/4}\si^{-5/8}dt\\
&\quad+C \sup_{0\le t\le T}(\si(\|\n \dot u\|_{L^2}+\|H_t\|_{L^2}+\|\na H\|_{L^2}))^{1/2}\int_0^{\si(T)}(\si\|H_t\|_{L^2}^2)^{3/8}\si^{-7/8}dt\\
&\le CC_0^{1/48},
\ea\ee
and
 \be\la{3.91}\ba
&\int_{\si(T)}^T\|G\|^2_{L^\infty}dt\\
&\le C\int_{\si(T)}^T\|\na G\|_{L^2} \|\na G\|_{L^6} dt
   \\ &\le C\int_{\si(T)}^T\left(\|\n^{1/2} \dot u\|^2_{L^2}+
 \|\na\dot u\|_{L^2}^2 +\|\na H\|_{L^2}^2+\|H_t\|_{L^2}^2+\|H_t\|_{L^2}^3\right)dt\\ &\le CC_0^{1/6}.
    \ea\ee
Analogously, by \eqref{g1}, \eqref{g2}, \eqref{z1}, and \eqref{o1}, it holds that
\be\la{o2}\ba &\int_0^{\si(T)}\|H\|_{L^\infty}^2dt\\
&\le C\int_0^{\si(T)}\|\na H\|_{L^2} \|\na^2 H\|_{L^2}dt\\
&\le C \left(\int_0^{\si(T)}\|\na H\|_{L^2}^2dt\right)^{1/2}\left(\int_0^{\si(T)}(\|\na H\|_{L^2}^2+\|H_t\|_{L^2}^2)dt\right)^{1/2}\\
&\le CC_0^{1/8},
\ea\ee
and
 \be\la{o3}\ba
\int_{\si(T)}^T\|H\|^4_{L^\infty}dt
&\le C\int_{\si(T)}^T\|\na H\|_{L^2}^2 \|\na^2 H\|_{L^2}^2 dt
   \\ &\le C\int_{\si(T)}^T(\|\na H\|_{L^2}^2+\|H_t\|_{L^2}^2)dt\\ &\le CC_0^{1/6}.
    \ea\ee

Note that   $(\ref{h0})_1$ can be rewritten in terms of the Lagrangian coordinates as follows
\bnn\ba
(2\mu+\lambda) D_t \n&=-R \n(\n-1)- R\n^2(\te-1)-\n G-\frac{1}{2}\n|H|^2\\
&\le -R(\n-1)+C(\|\te-1\|_{L^\infty}+\| G\|_{L^\infty}+\|H\|_{L^\infty}^2),
\ea\enn
which leads to
\be\la{3.92}\ba
D_t (\n-1)+\frac{R}{2\mu+\lambda} (\n-1)\le C(\|\te-1\|_{L^\infty}+\| G\|_{L^\infty}+\|H\|_{L^\infty}^2).
\ea\ee

Next, taking
$$y=\n-1, \quad\al=\frac{R}{2\mu+\lambda} ,\quad g=C(\|\te-1\|_{L^\infty}+\| G\|_{L^\infty}+\|H\|_{L^\infty}^2),\quad T_1=\si(T),$$
in Lemma \ref{le1},  we thus obtain after using  (\ref{3.88}), \eqref{3.89}--(\ref{3.92}), and (\ref{2.34}) that
 \bnn\ba\n
 & \le \on+1 +C\left(\|g\|_{L^1(0,\si(T))}+\|g\|_{L^2(\si(T),T)}\right) \le \on+1 +\hat C_4C_0^{1/48} ,
 \ea\enn
 which implies \eqref{a3.7},
 provided
  \be \label{xjia11}C_0\le \ve_0\triangleq\min\left\{\ve_1,\cdots,\ve_4\right\}\ee
  with $\ve_4\triangleq\left(\frac{\hat \n-2 }{2\hat C_4 }\right)^{48}$.
The proof of Lemma \ref{le7} is finished.
\end{proof}

At the end of this section, we  summarize some uniform estimates on $(\n,u,\te,H)$, which guarantees the large-time behavior of the classcal solutions and the higher-order estimates in  next section.
\begin{lemma}\la{le8}
	In addition to the conditions of Proposition \ref{pr1}, assume $(\rho_0,u_0,\te_0,H_0)$ satisfies     (\ref{z01}) with $\ve_0$ as in Proposition \ref{pr1}.
Then there exists a  positive constant    $C $     depending only   on  $\mu,\,\lambda,\,\nu,\, \ka,\, R,\, \ga,\, \on,\,\bt$, $\O$, and $M$  such that if $(\rho,u,\te,H)$  is a smooth solution to the problem (\ref{h0})--(\ref{ch2}) on $\Omega\times (0,T] $  satisfying (\ref{z1}) with $K$ as in Lemma \ref{le2}, it holds:
	\be \la{ae3.7}\sup_{0\le t\le T}\si^2\int \n|\dot\te|^2dx + \int_0^T\si^2 \|\na\dot\te\|_{L^2}^2dt\le C.\ee
Moreover, it holds that
\be\la{vu15}\ba
&\sup_{0\le t\le T}\left(  \si\|\na u \|^2_{L^6}+\si\|H\|_{H^2}^2+\si^2\| \na\te\|^2_{H^1}\right)\\
&+\int_0^T(\si \|\na u \|_{L^4}^4+\si \|\na \te \|_{H^1}^2+\si\|H_t\|_{H^1}^2+\si^2\|\na\te_t\|^2_{L^2}+\si\|\n -1\|_{L^4}^4)dt\le C.
\ea\ee
\end{lemma}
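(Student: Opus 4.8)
The point is that, once \eqref{z1} is assumed, Lemmas~\ref{a13.1}--\ref{le7} make \emph{all} the lower-order quantities available with constants independent of $T$ and of $\inf\n_0$: the pieces of $A_1,A_2,A_3,A_4$, the elliptic bounds \eqref{lop4}, \eqref{o1}, \eqref{h17}, \eqref{2tdh}, and the weighted estimates \eqref{m22}, \eqref{p1}, \eqref{ae26}, \eqref{eee23}, \eqref{aa0}, together with $\sup_t\si(\|\n^{1/2}\dot u\|_{L^2}^2+\|\na\te\|_{L^2}^2+\|H_t\|_{L^2}^2)\le C$ (from \eqref{ae26}) and \eqref{a2.112}. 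The only genuinely new ingredient needed is \eqref{ae3.7}; granting it, every term of \eqref{vu15} is produced by an elliptic estimate together with the quantities just listed. Throughout $\si=\si(t)=\min\{1,t\}$ and $C_0\le\ve_0$, so $A_3(T)+A_4(T)\le C$.

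\textbf{Step 1: the bound \eqref{ae3.7}.} Rewrite $(\ref{h0})_3$ as $\frac{R}{\ga-1}\n\dot\te=\ka\Delta\te-R\n\te\div u+\lambda(\div u)^2+2\mu|\mathfrak D(u)|^2+\nu|\curl H|^2=:Q$, and test $\frac{R}{\ga-1}\n\ddot\te=Q_t+\div(uQ)$ against $\si^2\dot\te$, integrating over $\Omega$ by parts exactly as in the derivation of \eqref{an1} and of \eqref{nle7} (the method of Hoff~\cite{Hof1} and Huang--Li~\cite{H-L}; cf. also~\cite{ccw}). Using $(\ref{h0})_1$, the left side equals $\frac{R}{2(\ga-1)}\big[(\si^2\|\n^{1/2}\dot\te\|_{L^2}^2)'-2\si\si'\|\n^{1/2}\dot\te\|_{L^2}^2\big]$; the term $\ka\Delta\te$ in $Q$, after integration by parts and the substitution $\te_t=\dot\te-u\cdot\na\te$, yields the dissipation $-\ka\si^2\|\na\dot\te\|_{L^2}^2$ plus commutators, while the quadratic dissipation terms in $Q$ produce, upon differentiation, contributions controlled by $\|\na\dot u\|_{L^2}$, $\|\na H_t\|_{L^2}$, $\|\n^{1/2}\dot\te\|_{L^2}$, $\|\te\na u\|_{L^2}$, $\|\na u\|_{L^4}^2$, and, through \eqref{lop4} and \eqref{o1}, by $\|\na^2\te\|_{L^2}$ and $\|\na^2 H\|_{L^2}$. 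Estimating these by \eqref{g1}, \eqref{g2}, \eqref{lop4}, \eqref{o1}, \eqref{m20}, \eqref{2.48} and absorbing a small multiple of $\si^2\|\na\dot\te\|_{L^2}^2$ into the left, one reaches a differential inequality
\[
\big(\si^2\|\n^{1/2}\dot\te\|_{L^2}^2\big)'+c\,\si^2\|\na\dot\te\|_{L^2}^2\le a(t)\,\si^2\|\n^{1/2}\dot\te\|_{L^2}^2+g(t),
\]
where, thanks to $\si\le1$, the budgets $\int_0^T\si^2(\|\na\dot u\|_{L^2}^2+\|\na H_t\|_{L^2}^2+\|\n^{1/2}\dot\te\|_{L^2}^2)dt\le A_4(T)\le C$ and $\int_0^T\si(\|\na\dot u\|_{L^2}^2+\|\n^{1/2}\dot\te\|_{L^2}^2+\|\na H_t\|_{L^2}^2)dt\le C$ from \eqref{ae26}, together with \eqref{m22}, \eqref{eee23}, \eqref{aa0}, give $\int_0^T a\,dt\le C$ and $\int_0^T g\,dt\le C$. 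Since $\si^2\|\n^{1/2}\dot\te\|_{L^2}^2\to0$ as $t\to0^+$, Gr\"onwall's inequality and a time integration yield \eqref{ae3.7}.

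\textbf{Step 2: the bounds \eqref{vu15}.} With \eqref{ae3.7} at hand, the elliptic estimate \eqref{lop4}, combined with $\|\na u\|_{L^4}^4\le C\|\na u\|_{L^2}\|\na u\|_{L^6}^3$, \eqref{o1}, \eqref{m20} and $\rho\le2\on$, gives $\sup_t\si^2\|\na^2\te\|_{L^2}^2\le C$, hence (with $\sup_t\si^2\|\na\te\|_{L^2}^2\le A_4(T)\le C$) also $\sup_t\si^2\|\na\te\|_{H^1}^2\le C$. Next, \eqref{h17} with $p=6$, the bound $\rho\le2\on$, $\sup_t\si\|\n^{1/2}\dot u\|_{L^2}^2\le A_3(T)\le C$ and $\sup_t\si\|\na\te\|_{L^2}^2\le C$ give $\sup_t\si\|\na u\|_{L^6}^2\le C$; while \eqref{2tdh}, \eqref{o1}, \eqref{a2.112} and $\sup_t\si\|H_t\|_{L^2}^2\le A_3(T)\le C$ give $\sup_t\si\|H\|_{H^2}^2\le C$. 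For the time integrals: $\int_0^T\si\|\na u\|_{L^4}^4dt\le C$ follows from \eqref{m22} integrated against the $A_3$-bounds and \eqref{eee23}; $\int_0^T\si\|\na\te\|_{H^1}^2dt\le C$ is \eqref{p1} plus the $\na\te$-part of $A_2(T)$; $\int_0^T\si\|H_t\|_{H^1}^2dt\le C$ combines $\int_0^T\si\|\na H_t\|_{L^2}^2dt\le C$ from \eqref{ae26} with $\int_0^T\si\|H_t\|_{L^2}^2dt\le A_3(T)$; $\int_0^T\si\|\n-1\|_{L^4}^4dt\le C$ is \eqref{eee23}; and $\int_0^T\si^2\|\na\te_t\|_{L^2}^2dt\le C$ follows by writing $\na\te_t=\na\dot\te-\na(u\cdot\na\te)$, using \eqref{ae3.7} for the first term and dominating $\si^2\|\na u\|_{L^3}^2\|\na\te\|_{L^6}^2+\si^2\|u\|_{L^\infty}^2\|\na^2\te\|_{L^2}^2$ by $C\si\|\na^2\te\|_{L^2}^2$ via the already established $\sup_t\si\|\na u\|_{L^6}^2\le C$, $\sup_t\si^2\|\na^2\te\|_{L^2}^2\le C$, and then invoking \eqref{p1}. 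This proves \eqref{vu15}.

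\textbf{Main difficulty.} The delicate step is Step~1. Differentiating in time the viscous and magnetic dissipation $\lambda(\div u)^2+2\mu|\mathfrak D(u)|^2+\nu|\curl H|^2$ produces contributions such as $\si^2\int\te\,\na u:\na\dot u$ and $\si^2\int\te\,\curl H\cdot\curl H_t$, whose time integrals can only be met against $\int_0^T\si^2(\|\na\dot u\|_{L^2}^2+\|\na H_t\|_{L^2}^2)dt\le A_4(T)$; this forces the weight to be exactly $\si^2$ and requires using the interpolation identities \eqref{m20}, \eqref{2.48} — trading the possibly large factor $\te$ for gradients by means of $\int_0^{\si(T)}\|\te\|_{L^\infty}dt\le C$ (see \eqref{k}) and $\int_{\si(T)}^T\|\te-1\|_{L^\infty}^2dt\le C$ (see \eqref{3.89}) — together with the elliptic estimates \eqref{lop4}, \eqref{o1}, so that no term overflows the budget supplied by $A_2,A_3,A_4$.
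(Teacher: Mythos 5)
Your proposal follows essentially the same route as the paper: apply the material derivative $\pa_t+\div(u\,\cdot)$ to $(\ref{h0})_3$, test against $\si^2\dot\te$ (the paper tests against $\dot\te$ in \eqref{3.99} and multiplies by $\si^2$ afterwards, which is the same thing), absorb the $\na\dot\te$ dissipation, and bound every remaining term using \eqref{lop4}, \eqref{o1}, \eqref{m20}, \eqref{2.48}, the $A_i$-budgets, \eqref{ae26}, \eqref{p1}, \eqref{aa0}, \eqref{m22}, \eqref{eee23}; then \eqref{vu15} follows from \eqref{ae3.7} via \eqref{lop4}, \eqref{3.30}, \eqref{o1}, \eqref{2tdh}, \eqref{p1}, \eqref{eee23}, and $\na\te_t=\na\dot\te-\na(u\cdot\na\te)$, exactly as in \eqref{vu02}--\eqref{vu01}. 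One small imprecision: the coefficient of $\si^2\|\n^{1/2}\dot\te\|_{L^2}^2$ on the right of \eqref{3.99} after multiplying by $\si^2$ is $C(1+\|\na u\|_{L^6}+\|\na\te\|_{L^2})$, which is \emph{not} integrable in $t$ on $(0,T)$ uniformly in $T$; so you cannot place it in the Gr\"onwall coefficient $a(t)$ with $\int_0^T a\,dt\le C$ as stated — rather, as the paper does, you use $\sup_t\si(1+\|\na u\|_{L^6}+\|\na\te\|_{L^2})\le C$ to peel off a factor of $\si$ and estimate the whole term against $\int_0^T\si\|\n^{1/2}\dot\te\|_{L^2}^2\,dt\le C$ from \eqref{ae26}, i.e.\ it goes into $g(t)$ and no Gr\"onwall step is actually needed. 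With that adjustment the argument is sound and identical in substance to the paper's.
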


\begin{proof}
First, operating the operator $\pa_t+\div(u\cdot) $ to (\ref{h0})$_3 $ and using  (\ref{h0})$_1$, one has
\be\la{3.96}\ba
&\frac{R}{\ga-1} \n \left(\pa_t\dot \te+u\cdot\na\dot \te\right)\\
&=\ka \Delta  \te_t +\ka \div (\Delta \te u)+\left( \lambda (\div u)^2+2\mu |\mathfrak{D}(u)|^2+\nu|\curl H|^2\right)\div u \\
&\quad +R\n \te  \pa_ku^l\pa_lu^k-R\n \dot\te \div u-R\n \te\div \dot u +2\lambda \left( \div\dot u-\pa_ku^l\pa_lu^k\right)\div u\\
&\quad + \mu (\pa_iu^j+\pa_ju^i)\left( \pa_i\dot u^j+\pa_j\dot u^i-\pa_iu^k\pa_ku^j-\pa_ju^k\pa_ku^i\right)\\
&\quad+2\nu(\curl H_t+u\cdot\na\curl H)\cdot\curl H.
\ea\ee
Multiplying (\ref{3.96}) by $\dot \te$ and integrating the resulting equality over $\O$ yield
\be\la{3.99}\ba
& \frac{R}{2(\ga-1)}\left(\int \n |\dot\te|^2dx\right)_t + \ka   \|\na\dot\te\|_{L^2}^2 \\
&\le  C  \int|\na \dot \te|\left( |\na^2\te||u|+ |\na \te| |\na u|\right)dx+C\int  \n|\te-1| |\na\dot u| |\dot \te|dx\\
&\quad +C  \int|\na u|^2|\dot\te|\left(|\na u|+|\te-1| \right)dx+C   \int |\na\dot u|\n|\dot \te| dx \\
&\quad +C   \int\left( |\na u|^2|\dot \te|+\n  |\dot
\te|^2|\na u|+|\na u| |\na\dot u| |\dot \te|\right)dx \\
&\quad+C\int|\na H||\dot\te|(|\na H||\na u|+|\na H_t|+|u||\na^2 H|)dx\\
&\le C\|\na u\|^{1/2}_{L^2}\|\na u\|^{1/2}_{L^6}\|\na^2\te\|_{L^2}\|\na \dot \te\|_{L^2}\\
&\quad+C\|\n(\te-1)\|_{L^2}^{1/2}\|\na\te\|_{L^2}^{1/2}\|\na\dot u\|_{L^2} \|\dot\te\|_{L^6}\\
&\quad+C  \|\na u\|_{L^2}\|\na u\|_{L^6}(\|\na u\|_{L^6}+\|\na \te\|_{L^2})
\|\dot\te\|_{L^6} +C  \|\na\dot u\|_{L^2} \|\n\dot\te\|_{L^2} \\
&\quad+C \|\na u\|^{1/2}_{L^6}\|\na u\|^{1/2}_{L^2} \|\dot\te\|_{L^6}(\|\na u\|_{L^2}
+\|\n\dot\te\|_{L^2}+\|\na\dot u\|_{L^2})  \\
&\quad+C\|\dot\te\|_{L^6}(\|\na^2H\|_{L^2}^2\|\na u\|_{L^2}+\|\na H\|_{L^2}^{1/2}\|\na^2 H\|_{L^2}^{1/2}\|\na H_t\|_{L^2})\\
&\le \frac{\ka}{2}  \|\na\dot\te\|_{L^2}^2 + C\|\na u\|_{L^6}^2\|\na \te \|_{L^2}^2+C\|\na u\|_{L^2}^2\|\na u\|_{L^6}^4\\
&\quad+C (1+\|\na u\|_{L^6}+\|\na\te\|_{L^2})(\|\na^2\te\|_{L^2}^2+\|\na\dot u\|_{L^2}^2+\|\rho^{1/2} \dot \te\|_{L^2}^2)\\
&\quad+ C\|\na u\|_{L^6}\|\na u\|_{L^2}^2+C\|\na^2 H\|_{L^2}^4+C\|\na^2 H\|_{L^2}\|\na H_t\|_{L^2}^2,
\ea\ee
where we have used  \eqref{g1}, \eqref{g2}, \eqref{z1}, and the following fact:
\be \ba\notag
 \int  (\Delta  \te_t + \div (\Delta \te u)) \dot \te dx &=  - \int  (\na  \te_t \cdot \na \dot\te + \Delta \te u \cdot \na \dot \te) dx\\
&= - \int  |\na \dot\te|^2 dx  + \int ( \na(u\cdot \na \te) \cdot \na \dot \te - \Delta \te u \cdot \na \dot \te) dx.
\ea \ee

Multiplying (\ref{3.99}) by $\si^2$ and integrating the resulting inequality over $(0,T),$
it follows from integration by parts, (\ref{z1}), \eqref{aa0}, (\ref{ae26}),  (\ref{p1}), and \eqref{o1} that
\bnn\ba
& \sup_{0\le t\le T}\si^2\int \n|\dot\te|^2dx + \int_0^T\si^2 \|\na\dot\te\|_{L^2}^2dt  \\
&\le C \sup_{0\le t\le T} \left(\si^2\|\na u\|_{L^6}^2 \right) \int_0^T\left( \|\na u\|_{L^2}^2 \|\na u\|_{L^6}^2+\|\na \te \|_{L^2}^2\right)dt \\
&\quad + C \sup_{0\le t\le T} \left(\si\left(1+\|\na u\|_{L^6}+\|\na\te\|_{L^2}\right)\right)\\
&\quad \quad \quad \quad \quad \quad \quad \cdot\int_0^T\si\left(\|\na^2\te\|_{L^2}^2+\|\na\dot u\|_{L^2}^2+\|\rho^{1/2} \dot \te\|_{L^2}^2\right)dt\\
&\quad +C\sup_{0\le t\le T} \left(\si\|\na u\|_{L^6} \right) \int_0^T\|\na u\|_{L^2}^2dt+C\sup_{0\le t\le T} \left(\si^2\|\na^2 H\|_{L^2}^2 \right) \int_0^T\|\na^2 H\|_{L^2}^2dt\\
&\quad+C\int_0^T\si\|\rho^{1/2} \dot \te\|_{L^2}^2dt+C\sup_{0\le t\le T} \left(\si\|\na^2 H\|_{L^2}\right) \int_0^T\si\|\na H_t\|_{L^2}^2dt\\
&\le  C, \ea\enn
where in the last inequality we have used
\be\ba\notag&\int_0^T\|\na u\|_{L^2}^2\|\na u\|_{L^6}^2dt\\
&\le C\int_0^T\|\na u\|_{L^2}^2\left(\|\n^{1/2} \dot u\|_{L^2}^2+\|\na u\|_{L^2}^2+ \|\na \te\|_{L^2}^2+ \|\na H\|_{L^2}^2+ \|H_t\|_{L^2}^2+1\right)dt \\
&\le C\int_0^T\left(\|\n^{1/2} \dot u\|_{L^2}^2+\|\na u\|_{L^2}^2+ \|\na \te\|_{L^2}^2+ \|\na H\|_{L^2}^2+ \|H_t\|_{L^2}^2\right)dt\\
&\le C\ea\ee
due to \eqref{3.30}, \eqref{z1}, and \eqref{infty12}.

Finally, we deduce from (\ref{z1}),  (\ref{3.30}), \eqref{infty12}, \eqref{a2.112}, \eqref{o1},  (\ref{lop4}), (\ref{m20}), (\ref{ae9}), (\ref{ae3.7}), and (\ref{eee23})--(\ref{p1}) that
\be \la{vu02}\ba
&\sup_{0\le t\le T}\left(  \si\|\na u \|^2_{L^6}+\si\|H\|_{H^2}^2+\si^2\| \na\te  \|^2_{H^1}\right)\\
 & +\int_0^T \left(\si\|\na u  \|_{L^4}^4+\si\|\na\te \|_{H^1}^2+\si\|H_t\|_{H^1}^2+\si\|\n-1\|_{L^4}^4\right)dt
 \le C, \ea\ee
which together with  (\ref{z1}) and \eqref{ae3.7} shows
 that\be\la{vu01}\ba
\int_0^T  \si^2 \|  \na\te _t\|_{L^2}^2dt
&\le C\int_0^T  \si^2\|\na \dot \te \|_{L^2}^2  dt+ C\int_0^T  \si^2\|\na(u \cdot\na  \te )\|_{L^2}^2dt\\
&\le C+C\int_0^T\si^2\|\na u \|_{L^2}\|\na u\|_{L^6}\|\na^2 \te \|_{L^2}^2dt  \\ &\le C.\ea\ee
The combination of (\ref{vu02}) and  (\ref{vu01}) leads to (\ref{vu15}) immediately.

The proof of Lemma \ref{le8} is completed.
\end{proof}

\section{\la{se4} A priori estimates (II): higher-order estimates}

This section is devoted to establishing the higher-order estimates of the classical solution $(\rho, u, \te,H)$ to the problem (\ref{h0})--(\ref{ch2})  on $ \Omega\times (0,T]$ with initial data $(\n_0 ,u_0,\te_0,H_0)$ satisfying (\ref{2.1}) and (\ref{3.1}). Moreover,
we assume that both (\ref{z1}) and (\ref{z01}) hold as well. To proceed,
we define $\tilde g $ as
\be \la{co12}\tilde g\triangleq\n_0^{-1/2}\left(
-\mu \Delta u_0-(\mu+\lambda)\na\div u_0+R\na (\n_0\te_0)-(\na\times H_0)\times H_0\right).\ee
Thus we deduce from (\ref{2.1}) and (\ref{3.1}) that
\be\la{wq01}\tilde g\in L^2.\ee
From now on, the generic constant $C $ will depend only  on \bnn
T, \,\, \| \tilde g\|_{L^2},    \,\|\n_0-1\|_{H^2 \cap W^{2,q}}  ,   \,  \,\| u_0\|_{H^2},  \ \,
\| \te_0-1\|_{H^1} ,\,  \,\| H_0\|_{H^2}, \enn
besides  $\mu,\,\lambda,\,\nu,\, \ka,\, R,\, \ga,\, \on,\,\bt, $ $\O$, and $M.$

To begin with, we derive the following estimates on the spatial gradient of
the smooth solution $(\rho,u,\te,H).$

\begin{lemma}\la{le11}
	The following estimates hold:
	\be\label{lee2}\ba
	&\sup_{0\le t\le T} \left(\|\rho^{1/2}\dot u\|_{L^2}^2 + \sigma\|\rho^{1/2}\dot \te\|_{L^2}^2 +\|\te-1\|_{H^1}^2 +\|H\|_{H^2}^2+ \sigma \|\na^2 \theta\|_{L^2}^2+\|H_t\|_{L^2}^2
 \right)  \\
	  &+\ia\left(  \|\nabla\dot u\|_{L^2}^2  + \|\rho^{1/2}\dot \te\|_{L^2}^2+ \|\nabla^2 \theta\|_{L^2}^2 +\sigma \|\nabla\dot \te\|_{L^2}^2+\|\na H_t\|_{L^2}^2 \right) dt\le C,
	\ea\ee
\be\ba\la{qq1}
&\sup_{0\le t\le T}\left(\|u\|_{H^2} +\|\n-1\|_{H^2}\right)\\
&	+ \int_0^{T}\left( \|\nabla u\|_{L^{\infty}}^{3/2} + \si \| \na^3 \te\|_{L^2}^2+\|u\|_{H^3}^2+\|H\|_{H^3}^2 \right)dt\le C.
\ea	\ee
\end{lemma}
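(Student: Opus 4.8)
The plan is to run the standard two-tier scheme for this class of problems: first upgrade the $\sigma$-weighted a priori bounds of Section~\ref{se3} to unweighted ones on the finite interval $[0,T]$ (this is precisely where the compatibility condition (\ref{co2}) is used), and then derive the spatial-regularity bounds by elliptic theory together with a logarithmic Gr\"onwall argument for the density. Throughout I may use all the bounds of Section~\ref{se3}; in particular, since $\sigma\equiv 1$ on $[1,\infty)$, Lemmas \ref{le2}, \ref{le6} and \ref{le3} yield $\int_0^T(\|\na u\|_{L^2}^2+\|\na H\|_{L^2}^2+\|\na\te\|_{L^2}^2+\|\rho^{1/2}\dot u\|_{L^2}^2+\|H_t\|_{L^2}^2)\,dt\le C$.

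\noindent\textbf{Proof of (\ref{lee2}).} Evaluating $(\ref{h0})_2$ at $t=0$ gives $\rho_0\dot u(0)=-\sqrt{\rho_0}\,g$, so $\sqrt{\rho_0}\,\dot u(0)=-g\in L^2$ by (\ref{co2}); similarly $H_t(0)\in L^2$ from $(\ref{h0})_4$ and $\na\te_0\in L^2$ from $\te_0-1\in H^1$. Hence I may take $m=0$ in the differential inequalities (\ref{an1}), (\ref{ae0}), (\ref{nle7}) and (\ref{ht3}). Combining them with small parameters exactly as in (\ref{k8}) with $m=0$, integrating over $(0,T)$, and using the time-integral bounds above, the matter reduces to controlling $\int_0^T(\|\na u\|_{L^4}^4+\|H_t\|_{L^2}^3+\|\na\te\|_{L^2}^3+\|\te\na u\|_{L^2}^2)\,dt$, whose integrands are estimated by (\ref{ae9}), (\ref{eee5}), (\ref{m20}); each cubic term is dominated by $(\sup_{[0,T]}\|\cdot\|_{L^2})\int_0^T\|\cdot\|_{L^2}^2\,dt$. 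Collecting the relevant quantities into $Z(t)\triangleq\|\rho^{1/2}\dot u\|_{L^2}^2+\|H_t\|_{L^2}^2+\|\na\te\|_{L^2}^2$ then produces a closed inequality $\sup_{[0,T]}Z\le C+C(\sup_{[0,T]}Z)^{1/2}$, hence $\sup_{[0,T]}Z\le C$; the dissipation terms $\int_0^T(\|\na\dot u\|_{L^2}^2+\|\na H_t\|_{L^2}^2+\|\rho^{1/2}\dot\te\|_{L^2}^2)\,dt$ follow at once, and $\int_0^T\|\na^2\te\|_{L^2}^2\,dt\le C$ via (\ref{lop4}). The $\sigma$-weighted statements $\sup_{[0,T]}\sigma(\|\rho^{1/2}\dot\te\|_{L^2}^2+\|\na^2\te\|_{L^2}^2)+\int_0^T\sigma\|\na\dot\te\|_{L^2}^2\,dt\le C$ are obtained by repeating the derivation of (\ref{ae3.7}) (test (\ref{3.96}) against $\dot\te$) with the weight $\sigma$ instead of $\sigma^2$, which is now admissible. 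Finally $\|H\|_{H^2}$ is bounded by (\ref{o1}) and $\|\te-1\|_{H^1}$ by the above.

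\noindent\textbf{Proof of (\ref{qq1}); the main obstacle.} The crucial step is the $t$-independent bound on $\|\n-1\|_{H^2\cap W^{2,q}}$. Differentiating $(\ref{h0})_1$ once and twice yields transport equations for $\na\n$ and $\na^2\n$ along $u$; testing in $L^2$ and $L^q$ and using $\div u=(2\mu+\lambda)^{-1}(G+R(\n\te-1)+|H|^2/2)$ together with the elliptic estimates (\ref{h19}), (\ref{2tdu}) gives
$\frac{d}{dt}(\|\na\n\|_{L^2\cap L^q}+\|\na^2\n\|_{L^2})\le C\|\na u\|_{L^\infty}(\|\na\n\|_{L^2\cap L^q}+\|\na^2\n\|_{L^2})+C(\text{forcing})$,
the forcing being $L^1(0,T)$ in time after controlling the mild singularity of $\|\te-1\|_{L^\infty}$ near $t=0$ by the weighted bounds. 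To close this I invoke the Beale--Kato--Majda inequality (Lemma~\ref{le9}): $\|\na u\|_{L^\infty}\le C(\|\div u\|_{L^\infty}+\|\curl u\|_{L^\infty})\ln(e+\|\na^2u\|_{L^q})+C\|\na u\|_{L^2}+C$, where $\|\div u\|_{L^\infty}\le C(\|G\|_{L^\infty}+\|\n\te-1\|_{L^\infty}+\|H\|_{L^\infty}^2)$ and $\|\curl u\|_{L^\infty}\le C\|\curl u\|_{L^6}+C\|\na\o\|_{L^6}$ both lie in $L^1(0,T)$ by (\ref{3.88})--(\ref{o3}), (\ref{jh}) and the bounds already established, while $\ln(e+\|\na^2u\|_{L^q})\le C\ln(e+\Phi)+\chi(t)$ with $\chi\in L^1(0,T)$ and $\Phi(t)\triangleq e+\|\na\n\|_{L^2\cap L^q}+\|\na^2\n\|_{L^2}$. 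This produces $\Phi'\le C\psi(t)\,\Phi\ln\Phi+C\psi(t)$ with $\psi\in L^1(0,T)$, and Gr\"onwall applied to $\ln\Phi$ gives $\sup_{[0,T]}\Phi\le C$. I expect this logarithmic Gr\"onwall—chiefly the verification that $\|\div u\|_{L^\infty}+\|\curl u\|_{L^\infty}\in L^1(0,T)$ and the careful bookkeeping of the forcing and of $\|\na^2u\|_{L^q}$ in terms of $\Phi$—to be the main difficulty.

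\noindent\textbf{Proof of (\ref{qq1}), remaining pieces.} With the density bound in hand the rest is routine: $\sup_{[0,T]}\|u\|_{H^2}$ follows from (\ref{2tdu}) with $k=0$ and the $\sup$-bounds of (\ref{lee2}); $\int_0^T\|u\|_{H^3}^2\,dt$ from (\ref{2tdu}) with $k=1$ and $\int_0^T\|\na\dot u\|_{L^2}^2\,dt\le C$; $\int_0^T\|H\|_{H^3}^2\,dt$ from (\ref{2tdh}) with $k=1$, the identity $\nu\na\times\curl H=-(H_t+u\cdot\na H-H\cdot\na u+H\div u)$, and $\int_0^T\|\na H_t\|_{L^2}^2\,dt\le C$; $\int_0^T\|\na u\|_{L^\infty}^{3/2}\,dt$ from Lemma~\ref{le9} now that $\|\na^2u\|_{L^q}$ is bounded and $\|\div u\|_{L^\infty}+\|\curl u\|_{L^\infty}\in L^{3/2}(0,T)$; and $\int_0^T\sigma\|\na^3\te\|_{L^2}^2\,dt$ from $H^1$-elliptic regularity applied to (\ref{3.29}), using the weighted bounds $\sup_{[0,T]}\sigma\|\rho^{1/2}\dot\te\|_{L^2}^2+\int_0^T\sigma\|\na\dot\te\|_{L^2}^2\,dt\le C$ just proved. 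This completes the plan.
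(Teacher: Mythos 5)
Your proof of (\ref{lee2}) is sound and follows the paper's route: you take $m=0$ in the combined inequality (\ref{k8}) (the references to (\ref{an1}), (\ref{ae0}), (\ref{nle7}), (\ref{ht3}) individually are redundant, since (\ref{ht3}) is already folded into (\ref{ae0}) and (\ref{an1}) carries a built-in $\sigma$-weight, not a free $m$), use the compatibility condition to control $\varphi(0)$, and close. Where the paper writes the cubic terms as $C(\|\n^{1/2}\dot u\|_{L^2}^2+\|H_t\|_{L^2}^2+\|\na\te\|_{L^2}^2)(\varphi+1)$ and applies Gr\"onwall with an $L^1$-in-time integrating factor, you instead isolate a supremum and close $\sup Z\le C+C(\sup Z)^{1/2}$; both are legitimate and essentially equivalent.

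There is, however, a genuine gap in your density estimate for (\ref{qq1}). You put $\|\na\n\|_{L^2\cap L^q}$ and $\|\na^2\n\|_{L^2}$ into a single quantity $\Phi$ and claim $\Phi'\le C\psi(t)\Phi\ln\Phi+C\psi(t)$. But when you test the transport equation for $\na^2\n$ against $\na^2\n$, the cross term
\[
\int\p_{ij}u\cdot\na\n\,\p_{ij}\n\,dx\ \lesssim\ \|\na^2 u\|_{L^3}\,\|\na\n\|_{L^6}\,\|\na^2\n\|_{L^2}
\]
appears, and by (\ref{2tdu}) (i.e.\ (\ref{ua1})) $\|\na^2 u\|_{L^3}\lesssim(\text{$L^2$-in-$t$ data})+(1+\|\na^2\te\|_{L^2})\|\na\n\|_{L^3}$, which is itself of order $\Phi$. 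After dividing by $\|\na^2\n\|_{L^2}$ this produces a term at least quadratic in $\Phi$, not $\Phi\ln\Phi$, and log-Gr\"onwall cannot absorb a Riccati-type $\psi\Phi^2$ without smallness of $\Phi(0)\int\psi$ which you do not have. The paper avoids this precisely by \emph{decoupling}: it first runs the log-Gr\"onwall for $\|\na\n\|_{L^6}$ alone, using (\ref{L11}) with $p=6$ (which sees only first-order derivatives of $\n$ on both sides), obtaining $\sup_{[0,T]}\|\na\n\|_{L^6}\le C$; this then gives $\int_0^T\|\na u\|_{L^\infty}^{3/2}dt\le C$, hence $\sup\|\na\n\|_{L^2}\le C$ by a plain Gr\"onwall; and only at that stage does it attack $\|\na^2\n\|_{L^2}$, in (\ref{ua2}), where the cross-term coefficient is now a time-independent constant and an ordinary (non-logarithmic) Gr\"onwall closes. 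You need this two-step structure; the rest of your argument for (\ref{qq1}) (elliptic estimates for $u$, the $H$-bound via (\ref{2tdh}) and the magnetic equation, and $H^1$-regularity of (\ref{3.29}) for $\si\|\na^3\te\|_{L^2}^2$) is then correct.
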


\begin{proof}
First, for $\varphi(t)$ as in \eqref{wq3}, taking $m=0$ in \eqref{k8}, we obtain after  using (\ref{z1}), \eqref{a2.112}, \eqref{wq2}, (\ref{ae9}),  and \eqref{m20} that
\be\la{ae8}\ba
& \varphi'(t) + \frac{C_1}{2}\left(\|\nabla\dot{u}\|_{L^2}^2+\|\na H_t\|_{L^2}^2\right)+\|\n^{1/2}\dot\te\|_{L^2}^2 \\
&\le C\left(\|\n^{1/2} \dot u\|_{L^2}^2+\|H_t\|_{L^2}^2+\|\na u\|_{L^2}^2+\|\na H\|_{L^2}^2+\|\na    \te\|_{L^2}^2\right)\\
&\quad +C\left(\|\n\dot u\|_{L^2}^3+\|\na\te\|_{L^2}^3+\|H_t\|_{L^2}^3+\|\n-1\|_{L^4}^4\right) \\
&\le C   \left( \|\n^{1/2}\dot u\|_{L^2}^2 +\|H_t\|_{L^2}^2+ \|\na\te\|_{L^2}^2\right) (\varphi(t)+1)+C.
\ea  \ee
Taking into account  \eqref{h0} and \eqref{co12}, we can define
\bnn \ba
\sqrt{\n} \dot u(x,t=0) \triangleq
-\tilde g,
\ea\enn
and
\be\la{fa}
H_t(x,t=0)\triangleq-u_0 \cdot \nabla H_0+H_0 \cdot \nabla u_0- H_0 \mathop{\mathrm{div}} u_0-\nu \nabla \times \mathop{\rm curl} H_0,
\ee
which as well as \eqref{wq3}, \eqref{k6},  (\ref{e6}), (\ref{2.48}),  \eqref{wq01}, \eqref{b2},  \eqref{nnn}, and \eqref{o0} yields that
\be \la{wq02}
|\varphi(0)|\le C\| \tilde g\|_{L^2}^2+C\le C.
\ee
Then, integrating \eqref{ae8}  with respect to $t$
and applying Gr\"{o}nwall's inequality to the resulting inequality, we deduce from  \eqref{z1}, \eqref{wq02}, and (\ref{wq2}) that
\be\label{lee3}\ba
&\sup_{0\le t\le T} \left(\|\rho^{1/2}\dot u\|_{L^2}^2+\|H_t\|_{L^2}^2+\|\na \te\|_{L^2}^2 \right) \\
&+ \ia(\|\nabla\dot{u}\|_{L^2}^2+\|\na H_t\|_{L^2}^2+\|\n^{1/2}\dot\te\|_{L^2}^2)dt\le C,
\ea\ee
which together with \eqref{eee22} leads to
\be\la{ff1} \ba
\sup_{0\le t\le T} \|\te-1\|_{L^2} \le C.
\ea \ee

Next,
multiplying (\ref{3.99}) by $\sigma$ and integrating over $(0,T)$ show that
\be  \ba \la{a5}
&\sup\limits_{0\le t\le T} \si \int \n|\dot\te|^2dx+\int_0^T \si \|\na\dot\te\|_{L^2}^2dt\\
&\le
C\int_0^T\left(  \|\na^2\te\|_{L^2}^2+ \|\n^{1/2}\dot\te\|_{L^2}^2
+ \|\na H_t\|_{L^2}^2+\|\na\dot u\|_{L^2}^2 \right)dt + C\\
&\le  C,
\ea\ee
where we have used (\ref{lee3}),  (\ref{z1}), (\ref{m20}),  (\ref{lop4}), and the following fact:
\be\la{w1}\sup_{0\le t\le T}(\|\na u\|_{L^6}+\|H\|_{H^2})\le C\ee
owing to \eqref{3.30}, \eqref{o1}, \eqref{z1}, \eqref{a2.112}, and \eqref{lee3}.
Hence, it follows from \eqref{lee3}, \eqref{a5}, (\ref{lop4}),  \eqref{m20}, \eqref{z1}, and (\ref{w1}) that
\be \notag \sup\limits_{0\le t\le T} \si\|\na^2\te\|_{L^2}^2 + \int_{0}^{T}\|\na^2\te\|_{L^2}^2 dt \le C,\ee
which together with \eqref{lee3}--\eqref{w1} concludes \eqref{lee2}.

It remains to prove (\ref{qq1}).

First, note that by virtue of  \eqref{2tdu},   \eqref{z1}, \eqref{a2.112},  and  \eqref{lee2}, it holds
\be\ba\la{ua1}
\|\na^2 u\|_{L^p} 
& \le C\left(\|\n\dot u\|_{L^p}+\|\na(\n\te)\|_{L^p}+\||H||\na H|\|_{L^p}\right)+C\\
&\le  C\left(\|\na\dot u\|_{L^2}+\|\na^2\te\|_{L^2}\right)+C(\|\na^2\te \|_{L^2} + 1)\|\nabla\n\|_{L^p}+C.
\ea\ee
Combining this with standard calculations infers  that for $ 2\le p\le 6$,
\be\la{L11}\ba
\partial_t\norm[L^p]{\nabla\rho}
&\le C\norm[L^{\infty}]{\nabla u} \norm[L^p]{\nabla\rho}+C\|\na^2u\|_{L^p}\\
&\le C\left(1+\|\na u\|_{L^{\infty}}+\|\na^2\te \|_{L^2}\right)
\norm[L^p]{\nabla\rho}\\
&\quad +C\left(1+\|\na\dot u\|_{L^2}+\|\na^2\te \|_{L^2}\right). \ea\ee
To deal with $\|\na u\|_{L^\infty}$, we deduce from Lemma \ref{le9}, \eqref{z1}, and (\ref{ua1})  that
\be\la{u13}\ba
\|\na u\|_{L^\infty }
&\le C\left(\|{\rm div}u\|_{L^\infty}+\|\curl u\|_{L^\infty}\right)\log(e+\|\na^2 u\|_{L^6}) +C\|\na u\|_{L^2}+C \\
&\le C\left( \|{\rm div}u\|_{L^\infty } + \|\curl u\|_{L^\infty }
\right)\log(e+ \|\na\dot u\|_{L^2 } + \|\na^2\te \|_{L^2})\\
&\quad +C\left(\|{\rm div}u\|_{L^\infty }+ \|\curl u\|_{L^\infty } \right)
\log\left(e  + \|\na \n\|_{L^6}\right)+C,
\ea\ee
where
\be \la{w2}\ba
& \int_0^T\left(\|\div u\|^2_{L^\infty}+\|\curl u\|^2_{L^\infty} \right)dt \\
& \le  C\int_0^T\left(\|G\|^2_{L^\infty}+ \|\curl u\|^2_{L^\infty}+\|\n\te-1\|^2_{L^\infty}+\|H\|_{L^\infty}^4\right)dt \\
&\le  C\ia\left(\| \na G\|^2_{L^2}+\| \na G\|^2_{L^6} + \| \curl u\|^2_{W^{1,6}} + \|\te-1\|_{L^\infty}^2\right)dt + C \\
&\le C\ia(\|\na \dot u\|^2_{L^2}+\|\na^2\te \|_{L^2}^2)dt+C \\
&\le  C
\ea\ee
owing to (\ref{hj1})$_1$,  (\ref{g1}), \eqref{g2},  (\ref{lee2}), \eqref{o0}, \eqref{w1},  (\ref{z1}), (\ref{bz6}), and \eqref{jh}.

Denote
\bnn\la{gt}\begin{cases}
	f(t)\triangleq  e+\|\na	\n\|_{L^6},\\
	\tilde f(t)\triangleq 1+  \|{\rm div}u\|_{L^\infty }^2+ \|\curl u\|_{L^\infty }^2
	+ \|\na\dot u\|_{L^2 }^2 +\|\na^2\te \|_{L^2}^2,
\end{cases}\enn
then, in light of \eqref{u13},  (\ref{L11}) with  $p=6$ is equivalent to
\bnn f'(t)\le   C \tilde f(t) f(t)\ln f(t) ,\enn
which implies \be\notag  (\ln(\ln f(t)))'\le  C \tilde f(t),\ee
which along with  Gr\"{o}nwall's inequality, \eqref{w2}, and \eqref{lee2}  leads to
\be \la{u113} \sup\limits_{0\le t\le T}\|\nabla \rho\|_{L^6}\le C.\ee
This together with (\ref{u13}), \eqref{w2}, and (\ref{lee2}) gives
\be \la{v6}\ia\|\nabla u\|_{L^\infty}^{3/2}dt \le C.\ee
On the other hand, taking $p=2$ in \eqref{L11} combined with \eqref{v6}, (\ref{lee2}), and  Gr\"{o}nwall's inequality implies
  \be\la{aa94}\ba
\sup\limits_{0\le t\le T}\|\nabla \n\|_{L^2}
\le C,\ea\ee
which gives
  \be\la{aaa94}\ba
\sup\limits_{0\le t\le T}\|\nabla P\|_{L^2}
\le C\sup\limits_{0\le t\le T}\left(\|\na\te\|_{L^2}+\|\na\n\|_{L^2}+\|\te-1 \|_{L^3}\|\na\n\|_{L^6}\right)
\le C \ea\ee
owing to \eqref{z1}, (\ref{lee2}), and (\ref{u113}).
Then it follows from \eqref{aaa94}, \eqref{2tdu}, \eqref{z1}, \eqref{lee2},  and \eqref{a2.112}  that
\be\ba\la{aa95}
\sup\limits_{0\le t\le T} \|u\|_{H^2}
\le &C \sup\limits_{0\le t\le T}\left(\|\n\dot u\|_{L^2}+\|\na P\|_{L^2}+\||H||\na H|\|_{L^2}+1\right)\le C.
\ea\ee

Next, direct calculations show that
\be\la{ua2}\ba
\frac{d}{dt}\|\na^2\n\|^2_{L^2}
& \le C(1+\|\na u\|_{L^{\infty}})\|\na^2\n\|_{L^2}^2+C\|\na u\|^2_{H^2}\\
& \le C(1+\|\na u\|_{L^{\infty}} +\|\na^2\te \|_{L^2}^2)(1+\|\na^2\n\|_{L^2}^2) +C\|\na\dot u \|^2_{L^2}, \ea\ee
where one has used \eqref{z1}, \eqref{u113}, and the following estimate:
\be\ba\la{va2}
\|\nabla u\|_{H^2}&\le C\left(\|\n\dot u\|_{H^1}+\||H||\na H|\|_{H^1}+\|\n\te-1\|_{H^2}\right)+C\\
&\le C\|\na\n\|_{L^3}\|\dot u\|_{L^6}+C\|\na \dot u\|_{L^2}+C\|\n-1\|_{H^2}(\|\te-1\|_{H^2}+1)
\\&\quad+C\|\na^2\te\|_{L^2}+C
\\ &\le  C\|\na\dot u \|_{L^2}+ C  (1+\|\na^2\te \|_{L^2})(1+\|\na^2\n\|_{L^2})
\ea\ee
due to \eqref{2tdu}, (\ref{a2.112}), (\ref{lee2}), (\ref{u113}), \eqref{aa94},  and (\ref{z1}).
Then applying Gr\"{o}nwall's inequality to \eqref{ua2} and using  (\ref{lee2})  and (\ref{v6}), it holds
\be\la{ja3} \sup_{0\le t\le T} \|\na^2\n \|_{L^2}  \le C,\ee
which as well as  \eqref{va2}, \eqref{aa95}, and \eqref{lee2} yields
\be\la{ja4} \int_0^T(\|u\|_{H^3}^2+\|H\|_{H^3}^2) dt\le C ,\ee
where we have used the following fact:
\be\ba\la{d3}
\|\na^3 H\|_{L^2}
&\le C\|\curl^2 H\|_{H^1}+C\|\na H\|_{L^2}\\
&\le C(\|H_t\|_{H^1}+\||H||\na u|\|_{H^1}+\||\na H||u|\|_{H^1})+C\\
&\le C(\|\na H_t\|_{L^2}+1)
\ea\ee
owing to \eqref{2tdh}, \eqref{lee2}, \eqref{mn}, and \eqref{aa95}.
Finally,
combining the standard $H^1$-estimate to   elliptic problem (\ref{3.29}) with  \eqref{z1},  \eqref{lee2}, \eqref{u113}, \eqref{aa94}, and \eqref{aa95}, we obtain that
\be\la{ex4}\ba
\|\na^2\te\|_{H^1}
& \le C\left(\|\n \dot \te\|_{H^1}+\|\n\te\div u\|_{H^1}+\||\na u|^2\|_{H^1}+\||\na H|^2\|_{H^1}+\|\na\te\|_{L^2}\right)\\
& \le C\left(1+ \|\na \dot \te\|_{L^2} +  \|\rho^{1/2} \dot \theta\|_{L^2}  + \|\na(\n\te\div u)\|_{L^2}+ \||\na u||\na^2u|\|_{L^2} \right) \\
&\quad+C \||\na H||\na^2H|\|_{L^2}\\
& \le C\left(1+ \|\na \dot \te\|_{L^2} +  \|\rho^{1/2} \dot \theta\|_{L^2}+C\|\na^3 u\|_{L^2}+\|\na^3 H\|_{L^2} \right).
\ea\ee
Then (\ref{qq1}) is a consequence of  \eqref{lee2}, \eqref{a2.112},   \eqref{ja3}, \eqref{ja4},  \eqref{u113}--\eqref{aa94}, and \eqref{aa95}  immediately.  The proof of Lemma \ref{le11} is finished.
\end{proof}

\begin{lemma}\la{le9-1}
	The following estimates hold:
	\be\la{va5}\ba&
	\sup\limits_{0\le t\le T}
	\|\n_t\|_{H^1}
	+\int_0^T\left(\|  u_t\|_{H^1}^2+\si \| \te_t\|_{H^1}^2+\| \n u_t\|_{H^1}^2+\si \|\n \te_t\|_{H^1}^2
	\right)dt\le C,
	\ea  \ee
\be\la{d7}
\sup_{0\le t\le T}\si(\|\na H_t\|_{L^2}^2+\|H\|_{H^3}^2)+\int_0^T \sigma \left(\|H_{tt}\|_{L^2}^2+ \|\na H_t\|_{H^{1}}^2\right)dt\le C,
\ee
	\be\la{vva5}\ba
	\int_0^T \sigma \left(\|(\n u_t)_t\|_{H^{-1}}^2+\|(\n \te_t)_t\|_{H^{-1}}^2
	\right)dt\le C.\ea\ee
\end{lemma}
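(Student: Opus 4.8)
The plan is to prove the three estimates \eqref{va5}, \eqref{d7} and \eqref{vva5} in that order, each relying on Lemma \ref{le11} and on the preceding ones; throughout, $H^{-1}$ is understood as the dual of $H^1_0(\O)$, so that all boundary terms arising from integration by parts against test functions vanish.

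\textbf{Proof of \eqref{va5}.} The bound $\sup_t\|\n_t\|_{H^1}\le C$ comes directly from $\n_t=-u\cdot\na\n-\n\div u$ and its spatial gradient: every factor is controlled uniformly in time by $\sup_t(\|\n-1\|_{H^2}+\|u\|_{H^2})\le C$ from \eqref{qq1} (which also gives $\|\na\n\|_{L^6}\le C$ and $\|\na u\|_{L^6}\le C$), with no time integration needed. For $u_t$ and $\te_t$ I write $u_t=\dot u-u\cdot\na u$ and $\te_t=\dot\te-u\cdot\na\te$; the convective pieces lie in $H^1$ with $t$-integrable square norm by Lemma \ref{le11}, while the $L^2$-norms of $\dot u,\dot\te$, which are not directly available on the exterior domain, are recovered from the weighted Sobolev inequality (Lemma \ref{lll}) with $g=\n$, giving $\|\dot u\|_{L^2}^2\le C(\|\n^{1/2}\dot u\|_{L^2}^2+\|\na\dot u\|_{L^2}^2)$ and the analogue for $\dot\te$. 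Since $\int_0^T\|\na\dot u\|_{L^2}^2dt$, $\int_0^T\si\|\na\dot\te\|_{L^2}^2dt$ and the sup-norms of $\n^{1/2}\dot u,\n^{1/2}\dot\te$ with the respective weights are all controlled by \eqref{lee2}, the claimed bounds on $\int_0^T\|u_t\|_{H^1}^2dt$ and $\int_0^T\si\|\te_t\|_{H^1}^2dt$ follow (for the gradient of $\te_t$ one also uses $\int_0^T\|\na^2\te\|_{L^2}^2dt\le C$). The pieces $\n u_t$, $\n\te_t$ are then obtained from the product rule $\|\na(\n f)\|_{L^2}\le\|\na\n\|_{L^3}\|f\|_{L^6}+\|\n\|_{L^\infty}\|\na f\|_{L^2}$ together with $\|\n\dot u\|_{L^2}\le C$ and $\si^{1/2}\|\n^{1/2}\dot\te\|_{L^2}\le C$.

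\textbf{Proof of \eqref{d7}.} I differentiate the magnetic equation \eqref{sm} in $t$, multiply by $\si H_{tt}$ and integrate by parts over $\O$, discarding the boundary term because $\curl H_t\times n=0$ on $\pa\O$; this yields $\frac{\nu}{2}\frac{d}{dt}\bigl(\si\|\curl H_t\|_{L^2}^2\bigr)+\si\|H_{tt}\|_{L^2}^2\le\frac{\nu}{2}\si'\|\curl H_t\|_{L^2}^2+\si\|(H\cdot\na u-u\cdot\na H-H\div u)_t\|_{L^2}\|H_{tt}\|_{L^2}$. After absorbing $\tfrac12\si\|H_{tt}\|_{L^2}^2$, the $L^2$-norm of the time derivative of the convective terms is bounded by $C(\|\na H_t\|_{L^2}^{1/2}+\|\na u_t\|_{L^2}+\|\na H_t\|_{L^2})$ using $\sup_t(\|H\|_{H^2}+\|H_t\|_{L^2}+\|u\|_{H^2})\le C$ and \eqref{va5}; since $\si(0)=0$ there is no contribution at $t=0$, and integrating in $t$ together with $\int_0^T\|\na H_t\|_{L^2}^2dt\le C$ and $\int_0^T\|\na u_t\|_{L^2}^2dt\le C$ gives $\sup_t\si\|\curl H_t\|_{L^2}^2+\int_0^T\si\|H_{tt}\|_{L^2}^2dt\le C$. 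Then $\|\na H_t\|_{L^2}\le C\|\curl H_t\|_{L^2}$ by \eqref{tdh1} (applied to $H_t$, which satisfies $\div H_t=0$ and the required boundary conditions), $\|\na^2H_t\|_{L^2}\le C(\|\curl^2H_t\|_{L^2}+\|\na H_t\|_{L^2})$ by \eqref{2tdh} with $\|\curl^2H_t\|_{L^2}$ controlled through the $t$-differentiated equation, and $\|\na^3H\|_{L^2}\le C(\|\na H_t\|_{L^2}+1)$ by \eqref{d3}; all the remaining norms in \eqref{d7} follow.

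\textbf{Proof of \eqref{vva5} and main obstacle.} The crux, and the hardest part, is to estimate $(\n u_t)_t$ and $(\n\te_t)_t$ in $H^{-1}$ without ever using the uncontrolled quantities $u_{tt},\te_{tt}$. Using $\n u_t=\n\dot u-\n u\cdot\na u$ together with the reformulation \eqref{a111} of the momentum equation, I rewrite $(\n u_t)_t=-\pa_t(\n u\cdot\na u)+(2\mu+\lambda)\na\div u_t-\mu\na\times\curl u_t-\na P_t+(H\otimes H)_t$, which involves only first-order time derivatives; pairing with $\phi\in C_c^\infty(\O)$ and integrating by parts, crucially using $u\cdot n=0$ on $\pa\O$ and $\n_t=-\div(\n u)$ to handle terms such as $\pa_t(\n u\cdot\na u)$, every contribution is bounded by $C(1+\|\na u_t\|_{L^2}+\|P_t\|_{L^2}+\|\na H_t\|_{L^2})\|\phi\|_{H^1}$, where $\|P_t\|_{L^2}$ is controlled from \eqref{jia1} ($P_t=R\n\dot\te-\div(Pu)$) together with \eqref{aaa94}, \eqref{lee2} and $\|\n\te-1\|_{L^p}\le C$ for $p\in[2,6]$. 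Multiplying by $\si$ and integrating in $t$, the bound for $(\n u_t)_t$ follows from $\int_0^T\|\na u_t\|_{L^2}^2dt\le C$, $\int_0^T\si\|P_t\|_{L^2}^2dt\le C$ and $\int_0^T\si\|\na H_t\|_{L^2}^2dt\le C$. The estimate for $(\n\te_t)_t$ is entirely analogous: differentiating \eqref{h0}$_3$ in $t$ expresses it through $\Delta\te_t$, $P_t\div u$, $P\div u_t$, $\curl H\cdot\curl H_t$, $(\div u)\div u_t$, $|\na u||\na u_t|$ and $\pa_t(\n u\cdot\na\te)$, all of which are paired against $\phi$ and, after weighting by $\si$, bounded using $\int_0^T\si\|\na\te_t\|_{L^2}^2dt\le C$, $\int_0^T\si\|\na\dot\te\|_{L^2}^2dt\le C$ and the quantities already controlled above. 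The $\si$-weighted bookkeeping near $t=0$, where $\te$ and its gradients degenerate, is what forces the explicit $\si$-weights in the statement and is the delicate point of the argument.
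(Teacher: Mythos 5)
Your proposal is correct and follows essentially the same route as the paper: bound $\|\n_t\|_{H^1}$ from $(\ref{h0})_1$ and \eqref{qq1}; split $u_t=\dot u-u\cdot\na u$, $\te_t=\dot\te-u\cdot\na\te$ and recover the $L^2$-parts via the weighted inequality \eqref{eee8} (Lemma \ref{lll}); test \eqref{sm} by $\si H_{tt}$ and use \eqref{ljq01}, \eqref{2tdh}, \eqref{d3} to deduce \eqref{d7}; and express $(\n u_t)_t$, $(\n\te_t)_t$ via the $t$-differentiated momentum/energy equations so that only first-order time derivatives appear, exactly as in \eqref{va7}--\eqref{d4}. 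The only cosmetic deviations are that you bound $\na P_t$ in $H^{-1}$ through $\|P_t\|_{L^2}$ (the paper estimates $\|\na P_t\|_{L^2}$ directly in \eqref{va10}; both suffice), and your phrase ``differentiate \eqref{sm} in $t$'' should read ``multiply \eqref{sm} by $\si H_{tt}$,'' since \eqref{sm} is already the equation for $H_t$ — the displayed inequality you then write is the correct one.
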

\begin{proof}
First,  one deduces from $(\ref{h0})_1$,  (\ref{qq1}), and \eqref{hs} that
\be\ba\la{sp1}
\|\n_t\|_{H^1}\le& \|\div (\rho u)\|_{H^1}
\le  C \|u\|_{H^2}
(\|\n-1\|_{H^2}+1)
\le C.
\ea \ee
Next, it follows from  (\ref{lee2}) and (\ref{qq1})   that
\be\label{va1}\ba
&\sup_{0\le t\le T}\int\left( \n|u_t|^2 + \si
\n\te_t^2\right)dx +\int_0^T \left(\|\na u_t\|_{L^2}^2+ \si \|\na\te_t\|_{L^2}^2\right)dt\\
&\le \sup\limits_{0\le t\le T}\int \left(\n|\dot u|^2+ \si\n|\dot\te|^2 \right)dx
+\int_0^T\left(\|\na\dot u\|_{L^2}^2+\si \|\na\dot\te\|_{L^2}^2 \right)dt\\
&\quad + \sup\limits_{0\le t\le T}\int \n\left(|u\cdot\na u|^2+\si |u\cdot\na\te|^2\right)dx\\
&\quad +\int_0^T\left(\|\na(u\cdot\na u)\|_{L^2}^2+\si \|\na(u\cdot\na \te)\|_{L^2}^2\right)dt\\
&\le C,\ea\ee
which along with (\ref{qq1}) shows that
\be\notag \ba
& \int_0^T\left(\|\na(\n u_t)\|_{L^2}^2+ \si \|\na(\n\te_t)\|_{L^2}^2\right)dt\\
& \le  C\int_0^T\left(\|  \na u_t \|_{L^2}^2+\|  \na \n\|_{L^3}^2\| u_t \|_{L^6}^2
+ \si \|  \na \te_t \|_{L^2}^2+ \si \|  \na \n\|_{L^3}^2\| \te_t \|_{L^6}^2 \right)dt\\
&\le C.\ea\ee
This together with (\ref{va1}),  (\ref{sp1}), \eqref{qq1}, and \eqref{eee8} infers   (\ref{va5}).

Note that  Lemma \ref{le11} leads to
\be\ba\la{d5}
&\|(H\cdot\na u-u\cdot\na H-H\div u)_t\|_{L^2}\\
&\le C(\|H_t\|_{L^6}\|\na u\|_{L^3}+\|H\|_{L^\infty}\|\na u_t\|_{L^2}+\|u_t\|_{L^6}\|\na H\|_{L^3}+\|\na H_t\|_{L^2}\| u\|_{L^\infty})\\
&\le C(\|\na H_t\|_{L^2}+\|\na u_t\|_{L^2}).
\ea\ee
Multiplying \eqref{sm} by $2 H_{tt}$ and integrating by parts, one has after using  \eqref{d5} that
\be\ba\la{pl}
&\nu(\|\curl H_t\|_{L^2}^2)'(t)+\|H_{tt}\|_{L^2}^2
\le C(\|\na H_t\|_{L^2}^2+\|\na u_t\|_{L^2}^2),
\ea\ee
which as well as \eqref{lee2},  \eqref{va5}, and \eqref{ljq01} yields
\be\la{d6}
\sup_{0\le t\le T}(\si\|\na H_t\|_{L^2}^2)+\int_0^T\si\|H_{tt}\|_{L^2}^2dt\le C.
\ee

Next, notice that \eqref{2tdh} holds as well for $H_t$. Combining this with \eqref{sm} and \eqref{d5} leads to
\be\la{d8}\ba
\|\na^2H_t\|_{L^2}
&\le C(\|\curl^2H_t\|_{L^2}+\|\na H_t\|_{L^2})\\
&\le C(\|H_{tt}\|_{L^2}+\|(H\cdot\na u-u\cdot\na H-H\div u)_t\|_{L^2}+\|\na H_t\|_{L^2})\\
&\le C(\|H_{tt}\|_{L^2}+\|\na H_t\|_{L^2}+\|\na u_t\|_{L^2}),
\ea\ee
which combined with \eqref{d6}, \eqref{lee2}, \eqref{d3}, and \eqref{va5} gives \eqref{d7}.

Finally, differentiating $(\ref{a11})$ with respect to $t$ yields that
\be   \la{va7}\ba (\n u_t)_t
=&-(\n u\cdot\na u)_t + \left((2\mu+\lambda)\na\div u-\mu\na\times\o \right)_t -\na P_t\\
&+(H\cdot\na H-\na H\cdot H)_t.\ea\ee
It   follows from (\ref{va5}), \eqref{qq1}, \eqref{lee2}, and \eqref{lop4} that
\be   \la{va9}\ba  \|(\n u\cdot\na u)_t  \|_{L^{2}}
&=\| \n_t u\cdot\na u+ \n u_t\cdot\na u + \n u\cdot\na u_t  \|_{L^{2}}\\
&\le C\|\n_t\|_{L^6} \|\na u\|_{L^3}+  C\|u_t\|_{L^6} \|\na u\|_{L^3}+  C\|u \|_{L^\infty} \|\na u_t\|_{L^2}\\
&\le C+   C\|\na u_t\|_{L^2},\ea\ee
\be   \la{va10}\ba  \|\na P_t  \|_{L^2}
&=R\|\n_t\na\te +\n \na\te_t +\na\n_t\te +\na\n\te_t\|_{L^2}\\
&\le C\left(\|\n_t\|_{L^6}\|\na\te\| _{L^3}+\|\na\te_t\|_{L^2}
+\|\te\|_{L^\infty}\|\na \n_t\|_{L^2}+\|\na\n\|_{L^3}\|\te_t\| _{L^6}\right)\\
&\le C+C\|\na\te_t\|_{L^2}+C\|\n^{1/2}\te_t\|_{L^2},\ea\ee
and
\be\la{d4}\ba
\|(H\cdot\na H-\na H\cdot H)_t\|_{L^2}
&\le C\|H_t\|_{L^6}\|\na H\|_{L^3}+C\|H\|_{L^\infty}\|\na H_t\|_{L^2}\\
&\le C\|\na H_t\|_{L^2}.
\ea\ee
The combination of (\ref{va7})--(\ref{d4}) and (\ref{va5}) concludes
\be\la{vva04}\ba\int_0^T \si \|(\n u_t)_t\|_{H^{-1}}^2dt\le C.\ea\ee
Analogously,  \bnn\int_0^T \si  \|(\n \te_t)_t\|_{H^{-1}}^2dt\le C,  \enn
which together with (\ref{vva04}) implies (\ref{vva5}). The proof of Lemma \ref{le9-1} is completed.
\end{proof}

\begin{lemma}\la{pe1}
	The following estimate holds:
	\be\la{nq1}
	\sup\limits_{0\le t\le T} \si\left(\|\nabla u_t\|^2_{L^2}+\|\n_{tt} \|^2_{L^2} + \|u\|_{H^3}^2\right)
	+ \int_0^T\si \left(\|\rho^{1/2} u_{tt}\|_{L^2}^2+ \|\nabla u_t\|_{H^1}^2\right)dt\le C.
	\ee
\end{lemma}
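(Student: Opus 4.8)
The proof proceeds by differentiating the momentum equation in time, testing against $\si u_{tt}$, and closing the estimate by Gr\"onwall together with the bounds already established in Lemmas \ref{le11} and \ref{le9-1}. First I would differentiate \eqref{va7} (equivalently \eqref{a11}) in $t$ and write it as
$$\n u_{tt}=-\n_t u_t-(\n u\cdot\na u)_t+\bigl((2\mu+\lambda)\na\div u-\mu\na\times\o\bigr)_t-\na P_t+(H\cdot\na H-\na H\cdot H)_t.$$
Since $u\cdot n=0$ and $\curl u\times n=0$ hold on $\p\O$ for all $t$, also $u_t\cdot n=0$ and $\curl u_t\times n=0$ on $\p\O$, so multiplying by $\si u_{tt}$ and integrating by parts produces no boundary terms from the viscous part, yielding
$$\frac12\frac{d}{dt}\Bigl(\si\int\bigl((2\mu+\lambda)(\div u_t)^2+\mu|\curl u_t|^2\bigr)dx\Bigr)+\int\si\n|u_{tt}|^2dx=\sum(\text{RHS}),$$
where the right-hand side collects the $\si'$-term $\tfrac{\si'}{2}\int((2\mu+\lambda)(\div u_t)^2+\mu|\curl u_t|^2)dx$, the convective terms $-\int\si\n_t u_t\cdot u_{tt}$ and $-\int\si(\n u\cdot\na u)_t\cdot u_{tt}$, the pressure term $-\int\si\na P_t\cdot u_{tt}$, and the magnetic term $\int\si(H\cdot\na H-\na H\cdot H)_t\cdot u_{tt}$.

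Next I would bound the right-hand side. The $\si'$-term is supported on $[0,1]$ and is integrable in $t$ by \eqref{va5}. In the convective and magnetic terms I extract a factor $\sqrt\n$ wherever one is present so that the $u_{tt}$ it pairs with becomes $\sqrt\n u_{tt}$, controlling the remaining factors by $\|\na u\|_{L^6}$, $\|u\|_{H^2}$, $\|\n_t\|_{H^1}$, $\|H\|_{H^2}$ (all bounded via \eqref{lee2}, \eqref{qq1}, \eqref{va5}) and $\|u_t\|_{H^1}$ (integrable by \eqref{va5}); for the pieces carrying no density factor — coming from $\n_t=-\div(\n u)$ — I integrate by parts in $x$ to shift the derivative, again producing only $\sqrt\n u_{tt}$ and $\|\na u_t\|_{L^2}$. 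For the pressure term I write $-\int\si\na P_t\cdot u_{tt}=-\int\si P_t\div u_{tt}$ and integrate by parts in $t$, turning it into a full time derivative $-\frac{d}{dt}\int\si P_t\div u_t$ (absorbed on the left) plus $\int\si' P_t\div u_t$ and $\int\si P_{tt}\div u_t$; the last is the delicate one, handled by writing $P_{tt}=R\te\,\n_{tt}+R\n_t\te_t+R(\n\te_t)_t$ and pairing the $(\n\te_t)_t$-piece against $\div u_t$ using the negative-norm bound \eqref{vva5} together with $\|\div u_t\|_{H^1}$. Every term is then dominated by $\de\int\si\n|u_{tt}|^2+\de\int\si\|\na u_t\|_{H^1}^2$ plus controlled quantities times $\si(\cdot)$; the factor $\de\|\na u_t\|_{H^1}^2$ is re-absorbed after observing, by arguing as in the derivation of \eqref{2tdu} (Lemmas \ref{crle5} and \ref{zhle}) applied to the Lam\'e-type system satisfied by $u_t$, that $\|\na u_t\|_{H^1}^2\le C\|\sqrt\n u_{tt}\|_{L^2}^2+C(\text{controlled})$.

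Gr\"onwall's inequality then applies — note $\si(0)=0$, so no initial data for the $\si$-weighted quantities is needed, and the forcing is integrable by \eqref{va5} and \eqref{vva5} — giving $\sup_{[0,T]}\si\|\na u_t\|_{L^2}^2+\int_0^T\si\|\sqrt\n u_{tt}\|_{L^2}^2\,dt\le C$, and feeding this back into the elliptic estimate above yields $\int_0^T\si\|\na u_t\|_{H^1}^2\,dt\le C$. Finally $\|\n_{tt}\|_{L^2}\le C+C\|\na u_t\|_{L^2}$ follows from $\n_{tt}=-\div((\n u)_t)$ and Lemma \ref{le11}, so $\sup\si\|\n_{tt}\|_{L^2}^2\le C$; and $\sup\si\|u\|_{H^3}^2\le C$ follows from \eqref{2tdu} with $k=1$, using $\|\na\dot u\|_{L^2}\le\|\na u_t\|_{L^2}+C\|u\|_{H^2}^2$ (hence $\sup\si\|\na\dot u\|_{L^2}^2\le C$) together with $\sup\si\|\te-1\|_{H^2}^2\le C$ and $\|H\|_{H^2}\le C$ from Lemma \ref{le11}. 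The main obstacle is precisely the pressure term: since the vacuum forces us to keep only the degenerate weight $\sqrt\n u_{tt}$, we cannot test $\na P_t$ against $u_{tt}$ directly, and the integration-by-parts-in-time produces $P_{tt}$, which must be controlled in a negative Sobolev norm — this is why the bound \eqref{vva5} on $(\n\te_t)_t$ was prepared in Lemma \ref{le9-1}. The accompanying constraint throughout is to keep every constant independent of $\inf\n_0$, so that the later approximation argument goes through.
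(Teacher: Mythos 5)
Your overall strategy matches the paper's — differentiate the momentum equation in $t$, test against $u_{tt}$ with the weight $\si$, integrate the pressure term by parts in time, re-absorb $\|\na u_t\|_{H^1}$ through the elliptic estimate for the differentiated Lam\'e system (the paper's (\ref{nt4})), and close with Gr\"onwall — and your treatment of $P_{tt}$ is a legitimate variant: the paper substitutes the heat equation (\ref{op3}) to estimate $P_{tt}-\ka(\ga-1)\Delta\te_t$ in $L^2$ and moves the $\Delta\te_t$ piece into $\tilde I_6=\ka(\ga-1)\int\na\te_t\cdot\na\div u_t\,dx$, whereas you decompose $P_{tt}=R\te\rho_{tt}+R\rho_t\te_t+R(\rho\te_t)_t$ and invoke (\ref{vva5}) in $H^{-1}$; once the $H^{-1}$ norm is unpacked via the heat equation these produce the same difficult integral. (One small misattribution: the paper does not actually use (\ref{vva5}) in its proof of Lemma \ref{pe1}; that estimate serves a different purpose later.)

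There is, however, a real gap in your handling of the convective terms with a $\rho_t$ factor. Testing against $\si u_{tt}$ produces $\int\si\rho_t u_t\cdot u_{tt}\,dx$, and you propose to handle this by writing $\rho_t=-\div(\rho u)$ and integrating by parts in $x$. But
\[
-\int\si\div(\rho u)\,u_t\cdot u_{tt}\,dx=\int\si\rho u\cdot\na u_t\cdot u_{tt}\,dx+\int\si\rho\,u^k u_t^j\,\p_k u_{tt}^j\,dx,
\]
and the second integral contains $\na u_{tt}$, which is not controlled anywhere in the lemma or its hypotheses; integrating by parts again just returns the original expression, so this road is closed. Since one only controls $\sqrt\rho\, u_{tt}$ (never $u_{tt}$ alone, because of vacuum), the only working device here is the same integration by parts in time you already use for the pressure: write $\rho_t u_t\cdot u_{tt}=\tfrac12\rho_t\p_t|u_t|^2$, absorb $\tfrac12\rho_t|u_t|^2$ into a total time derivative (this is part of the paper's $\tilde I_0$, together with $\rho_t u\cdot\na u\cdot u_t$, $P_t\div u_t$ and the magnetic piece), and then estimate the residual $\tfrac12\int\rho_{tt}|u_t|^2$ — the paper's $\tilde I_1$ — by $\|\rho_{tt}\|_{L^2}\|\na u_t\|_{L^2}^{3/2}(1+\|\na u_t\|_{L^2})^{1/2}$ via \eqref{eee8} and \eqref{s4}. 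The same remark applies to $\rho_t u\cdot\na u\cdot u_{tt}$. With this correction the rest of your argument (the $\si'$-terms, the $\sqrt\rho$-extraction for the genuinely $\rho$-weighted convective and magnetic terms, the re-absorption of $\|\na u_t\|_{H^1}$, and the final deductions of $\si\|\rho_{tt}\|_{L^2}^2$ and $\si\|u\|_{H^3}^2$) goes through as you describe.
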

\begin{proof}
First, differentiating  $(\ref{a11})$  with respect to $t$ leads to \be\la{nt0}\ba \begin{cases}
(2\mu+\lambda)\na\div u_t-\mu \na\times \curl u_t+(H\cdot\na H-\frac{1}{2}\na |H|^2)_t\\
= \n u_{tt} +\n_tu_t+\n_tu\cdot\na u+\n u_t\cdot\na u +\n u\cdot\na u_t+\na P_t
,&\, \text{in}\,\O\times[0,T],\\
u_{t}\cdot n=0,\  \curl u_{t}\times n=0,  &\,\text{on}\,\p\O\times[0,T],\\
u_t\rightarrow0,\,\,&\,\text{as}\,\,|x|\rightarrow\infty.
 \end{cases}\ea\ee
Multiplying (\ref{nt0})$_1$ by $u_{tt}$   and integrating  the resulting equality  by parts yield
\be\la{sp9} \ba&
\frac{1}{2}\frac{d}{dt}\int \left(\mu|\curl u_t|^2 + (2\mu +\lambda)({\rm div}u_t)^2\right)dx
+\int \rho| u_{tt}|^2dx\\
&=\frac{d}{dt}\left(-\frac{1}{2}\int_{ }\rho_t |u_t|^2 dx-\int_{}\rho_t u\cdot\nabla u\cdot u_tdx
+\int_{ }P_t {\rm div}u_tdx\right.\\
&\quad\quad\quad\left.-\int\left((H\otimes H)_t:\na u_t-\frac{1}{2}|H|^2_t\div u_t\right)dx\right)\\
& \quad + \frac{1}{2}\int_{ }\rho_{tt} |u_t|^2 dx+\int_{ }(\rho_{t} u\cdot\nabla u )_t\cdot u_tdx
-\int_{ }\rho u_t\cdot\nabla u\cdot u_{tt}dx\\
&\quad - \int_{ }\rho u\cdot\nabla u_t\cdot u_{tt}dx - \int_{ }\left(P_{tt}-
\ka(\ga-1)\Delta\te_t\right){\rm div}u_tdx\\
&\quad +\ka(\ga-1)\int_{ } \na\te_t\cdot\na {\rm div}u_tdx\\
&\quad+\frac{1}{2}\int\left(2(H\otimes H)_{tt}:\na u_t-|H|^2_{tt}\div u_t\right)dx
\triangleq\frac{d}{dt}\tilde{I}_0+ \sum\limits_{i=1}^7 \tilde{I}_i. \ea \ee

 Each term $\tilde{I}_i(i=0,\cdots,7)$ can be estimated as follows:

We deduce from simple calculations, $(\ref{h0})_1,$   (\ref{va5}), \eqref{qq1}, \eqref{lee2}, \eqref{eee8}, and (\ref{va1}) that
\be \ba \la{sp10}
|\tilde{I}_0|&=\left|-\frac{1}{2}\int\rho_t |u_t|^2 dx\right|+C\norm[L^3]{\rho_t}\|u\|_{L^\infty}\norm[L^2]
{\nabla u}\norm[L^6]{u_t}\\
&\quad+C\|(\n\te)_t\|_{L^2}\|\nabla u_t\|_{L^2}+C\|H\|_{L^\infty}\|H_t\|_{L^2}\|\na u_t\|_{L^2}\\
&\le C\int  \n |u||u_t||\nabla u_t| dx+C\norm[L^2]{\na u_t}\\
&\quad +C(\|\n^{1/2} \te_t\|_{L^2}+\|\n_t\|_{L^2}+\|\n_t\|_{L^3}\|\te-1\|_{L^6})\|\nabla u_t\|_{L^2}\\
&\le C (1+\|\n^{1/2}\te_t\|_{L^2}) \|\nabla u_t\|_{L^2} ,\ea\ee
\be \la{sp11}\ba
2|\tilde{I}_1|&=\left|\int \rho_{tt} |u_t|^2 dx\right|\\
& \le  C\|\n_{tt}\|_{L^2}\|u_t\|_{L^2}^{1/2}\|u_t\|_{L^6}^{3/2}\\
& \le  C\|\n_{tt}\|_{L^2}(1+\|\na u_t\|_{L^2})^{1/2}\|\na u_t\|_{L^2}^{3/2}\\
& \le  C\|\n_{tt}\|_{L^2}^2+C \|\na u_t\|_{L^2}^4+C,
\ea \ee
\be \la{sp12}\ba
|\tilde{I}_2|&=\left|\int \left(\rho_t u\cdot\nabla u \right)_t\cdot u_{t}dx\right|\\
& = \left|  \int\left(\rho_{tt} u\cdot\nabla u\cdot u_t +\rho_t
u_t\cdot\nabla u\cdot u_t+\rho_t u\cdot\nabla u_t\cdot u_t\right)dx\right|\\
&\le   C\norm[L^2]{\rho_{tt}}\norm[L^6]{\na u}\norm[L^6]{u}\norm[L^6]{u_t}
+C\norm[L^2]{\rho_t}\norm[L^6]{u_t}^2\norm[L^6]{\nabla u} \\
&\quad+C\norm[L^3]{\rho_t}\norm[L^{\infty}]{u}\norm[L^2]{\nabla u_t}\norm[L^6]{u_t}\\
& \le C\norm[L^2]{\rho_{tt}}^2 + C\norm[L^2]{\nabla u_t}^2, \ea \ee
\be\ba\la{sp13}
|\tilde{I}_3|+|\tilde{I}_4|&= \left| \int \rho u_t\cdot\nabla u\cdot u_{tt} dx\right|
+\left| \int \rho u\cdot\nabla u_t\cdot u_{tt}dx\right|\\
& \le   C\|\n^{1/2}u_{tt}\|_{L^2}\left(\|u_t\|_{L^6}\|\na u\|_{L^3}
+\|u\|_{L^\infty}\|\na u_t\|_{L^2}\right) \\
& \le  \frac{1}{4}\norm[L^2]{\rho^{{1/2}}u_{tt}}^2 + C \norm[L^2]{\nabla u_t}^2,\ea\ee
and
\be\la{i7}\ba
|\tilde I_7|
&=\left|\frac{1}{2}\int\left(2(H\otimes H)_{tt}:\na u_t-|H|^2_{tt}\div u_t\right)dx\right|\\
&\le C\|\na u_t\|_{L^2}(\|H\|_{L^\infty}\|H_{tt}\|_{L^2}+\|H_t\|_{L^2}^{1/2}\|\na H_t\|_{L^2}^{3/2})\\
&\le C(\|\na u_t\|_{L^2}^2+\|H_{tt}\|_{L^2}^2+\|\na H_t\|_{L^2}^3).
\ea\ee
Then it follows from
(\ref{op3}), \eqref{va10}, \eqref{d3}, and Lemma \ref{le11} that
\bnn\ba & \|P_{tt}-\ka(\ga-1)\Delta \te_t\|_{L^2}\\
&\le  C\|(u\cdot\na P)_t\|_{L^2}+C\|(P\div u)_t\|_{L^2}+C\||\na u||\na u_t|\|_{L^2}+C\||\na H||\na H_t|\|_{L^2}\\
&\le  C\|u_t\|_{L^6}\|\na P\|_{L^3}+C\|u\|_{L^\infty}\|\na P_t\|_{L^2}
+C\|P_t\|_{L^6}\|\na u\|_{L^3}\\
&\quad  +C\|P\|_{L^\infty}\|\na u_t\|_{L^2}+C\|\na u\|_{L^\infty}\|\na u_t\|_{L^2}+C\|\na H\|_{L^\infty}\|\na H_t\|_{L^2}\\
&\le C\left(1+\|\na u\|_{L^\infty}+\|\na^2\te \|_{L^2}\right)\|\na u_t\|_{L^2}\\
&\quad +C\left(1+\|\na\te_t\|_{L^2}+\|\n^{1/2}\te_t\|_{L^2}\right)+C\|\na H_t\|_{L^2}^2,
\ea\enn
which implies
\be\ba\la{sp15}
|\tilde{I}_5|&=\left|\int\left(P_{tt}-\ka(\ga-1)\Delta \te_t\right){\rm div}u_tdx\right|\\
&\le C\norm[L^2]{P_{tt}-\ka(\ga-1)\Delta \te_t}\norm[L^2]{\na u_t}\\
&\le C\left(1+\|\na u\|_{L^\infty}+\|\na^2\te \|_{L^2}\right)\|\na u_t\|_{L^2}^2\\
&\quad+C\left(1+\|\na\te_t\|^2_{L^2}+\|\n^{1/2}\te_t\|_{L^2}^2+\|\na H_t\|_{L^2}^4\right).
\ea\ee
Next,  for the term $\tilde I_6$, Cauthy's inequality implies
\be \la{asp16}\ba
|\tilde{I}_6| &= \left| \ka(\ga-1)\int_{ } \na\te_t\cdot\na{\rm div}u_tdx \right|   \\
& \le  C \|\na^2u_t\|_{L^2}\|\na\te_t\|_{L^2}\\
& \le \frac{1}{4} \|\n^{1/2} u_{tt}\|^2_{L^2}+ C\left(1+\|\na u_t\|^2_{L^2}+\|\n^{1/2} \te_t\|^2_{L^2}+\|\na\te_t\|^2_{L^2}+\|\na H_t\|_{L^2}^2\right),
\ea\ee
where we have used  the following  standard elliptic estimate to Lam\'{e}'s system \eqref{nt0}:
\be\la{nt4}\ba
&\|\na^2u_t\|_{L^2}\\
&\le C\|\n u_{tt}\|_{L^2}
+C\|\n_t\|_{L^3}\|u_t\|_{L^6}+C\|\n_t\|_{L^3}\|\na u\|_{L^6}\|u\|_{L^\infty}\\
&\quad  +C\|u_t\|_{L^6}\|\na u\|_{L^3}+C\|\na u_t\|_{L^2}\|u\|_{L^\infty}+C\|\na P_t\|_{L^2}\\
&\quad+C\|H_t\|_{L^6}\|\na H\|_{L^3}+C\|H\|_{L^\infty}\|\na H_t\|_{L^2}\\
&\le C\left(\|\n u_{tt}\|_{L^2}+\|\na u_t\|_{L^2}+\|\n^{1/2} \te_{t}\|_{L^2}+\|\na H_t\|_{L^2}+\|\na \te_t\|_{L^2}+1\right)\ea\ee
due to Lemma  \ref{le11}, \eqref{va5}, and \eqref{va10}.

Submitting   (\ref{sp11})--(\ref{asp16}) into
(\ref{sp9}) arrives at
\be\la{4.052} \ba
& \frac{d}{dt}\int \left(\mu|\curl u_t|^2 + (2\mu +\lambda)({\rm div}u_t)^2-2\tilde{I}_0\right)dx
+\int \rho| u_{tt}|^2dx\\
&\le  C\left(1+\|\na u\|_{L^\infty}+\|\na u_t\|_{L^2}^2+\|\na^2\te \|_{L^2} \right)\|\na u_t\|_{L^2}^2 \\
&\quad +C\left(1+\|\n_{tt}\|_{L^2}^2+\|\n^{1/2} \te_t\|^2_{L^2}+\|\na \te_t\|_{L^2}^2+\|H_{tt}\|_{L^2}^2+\|\na H_t\|_{L^2}^4\right).\ea\ee
Multiplying \eqref{4.052} by $\sigma$ and integrating the resulting inequality over $(0,T)$, it follows from  \eqref{ljq01}, \eqref{lee2}, (\ref{qq1}),  (\ref{va1}), (\ref{sp10}), \eqref{d7},  (\ref{va5}),
and Gr\"{o}nwall's inequality that
\be\la{nq11}
\sup\limits_{0\le t\le T} \si \|\nabla u_t\|^2_{L^2}
+ \int_0^T\si\int\rho |u_{tt}|^2dxdt
\le C,
\ee
where we have used the following estimate:
\be \la{s4} \ba
\|\n_{tt}\|_{L^2} &= \|\div(\rho u)_t\|_{L^2} \\
& \le C\left(\|\n_t\|_{L^6}\|\nabla u\|_{L^3}+ \|\nabla u_t\|_{L^2}
+\|u_t\|_{L^6}\|\nabla \n\|_{L^3}+\|\nabla \n_t\|_{L^2}\right) \\ &\le C+C\|\na u_t\|_{L^2}\ea\ee
owing to  $(\ref{h0})_1$, \eqref{qq1}, and (\ref{va5}).

Finally, we deduce from Lemma \ref{le11}, \eqref{s4}, (\ref{va2}), \eqref{nt4}, \eqref{va5}, and  (\ref{nq11}) that
\be\notag
\sup\limits_{0\le t\le T}\si\left(\|\n_{tt}\|_{L^2}^2+\|u\|^2_{H^3}\right) + \int_0^T \si\|\nabla u_t\|_{H^1}^2 dt\le C,
\ee
which combined with \eqref{nq11} shows (\ref{nq1}).
The proof of Lemma \ref{pe1} is finished.
\end{proof}

\begin{lemma}\la{pr3} For $q\in (3,6)$ as in Theorem \ref{th1}, it holds that
	\be\la{y2}\ba
	&\sup_{0\le t\le T} \|\n-1\|_{W^{2,q}} +\int_0^T
	 \|\na^2u\|_{W^{1,q}}^{p_0}  dt\le C,
	\ea \ee
	where  \be 1< \la{pppppp} p_0<4q/(5q-6) \in (1,4/3).\ee
\end{lemma}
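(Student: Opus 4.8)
The plan is to propagate the $W^{2,q}$-regularity of $\n-1$ along the transport equation $(\ref{h0})_1$ and to close the loop via the elliptic estimate $(\ref{2tdu})$ for $\na^2 u$. First I would differentiate the mass equation twice in space to obtain a transport-type equation for $\na^2\n$,
\be\notag
\partial_t\na^2\n + u\cdot\na(\na^2\n) = -\,2\na u\cdot\na^2\n-\na^2 u\cdot\na\n-\na^2\n\,\div u - \na\n\,\na\div u - \n\,\na^2\div u,
\ee
multiply by $|\na^2\n|^{q-2}\na^2\n$, integrate over $\Omega$, and use integration by parts on the convective term together with $u\cdot n=0$ on $\p\O$ to kill the boundary contribution. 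This gives
\be\notag
\frac{d}{dt}\|\na^2\n\|_{L^q}\le C\big(1+\|\na u\|_{L^\infty}\big)\|\na^2\n\|_{L^q}+C\|\na\div u\|_{L^\infty}\|\na\n\|_{L^q}+C\|\na^3 u\|_{L^q},
\ee
so that by Gr\"onwall it suffices to control $\int_0^T\|\na u\|_{L^\infty}dt$ and $\int_0^T\|\na^3 u\|_{L^q}^{p_0}\,dt$ (the latter in an $L^{p_0}$-in-time sense, with $p_0>1$), the remaining quantities being already bounded by \eqref{qq1}, \eqref{u113}, and \eqref{aa94}.

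Next I would estimate $\|\na^3 u\|_{L^q}=\|\na^2 u\|_{W^{1,q}}$ via \eqref{2tdu} with $k=1$ and $p=q$: it is bounded by $\|\n\dot u\|_{W^{1,q}}+\|H\cdot\na H\|_{W^{1,q}}+\|\na P\|_{W^{1,q}}+\|\na|H|^2\|_{W^{1,q}}$ plus lower-order $L^2$-terms. The magnetic and pressure pieces are handled with \eqref{hs}, \eqref{qq1}, \eqref{lee2}, the bound $\sup_t\|\n-1\|_{W^{2,q}}$ that we are proving (so these enter Gr\"onwall linearly), and the bound $\|\na^3 H\|_{L^q}\le C(\|\na H_t\|_{L^2}+1)$ coming from \eqref{2tdh}, \eqref{mn}, \eqref{d8}; the key term is $\|\n\dot u\|_{W^{1,q}}$, which requires $\|\na\dot u\|_{L^q}$. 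For this one uses \eqref{tb11} together with \eqref{h19}, \eqref{h191} upgraded to $L^q$, reducing $\|\na\dot u\|_{L^q}$ to $\|\div\dot u\|_{L^q}+\|\curl\dot u\|_{L^q}+\|\na u\|^2_{L^{2q}}+\|\na u\|^2_{L^2}$, and then Gagliardo–Nirenberg: $\|\na\dot u\|_{L^q}\le C\|\na\dot u\|_{L^2}^{(6-q)/(2q)}\|\na^2\dot u\|_{L^2}^{\cdots}$ — but to avoid second derivatives of $\dot u$ it is cleaner to interpolate $\|\na\dot u\|_{L^q}\le C\|\na\dot u\|_{L^2}^{1-a}\|\na\dot u\|_{L^6}^{a}$ with $a=a(q)$, and bound $\|\na\dot u\|_{L^6}$ by $\|\na^2\dot u\|_{L^2}$ only if needed — or, more simply, control $\int_0^T\|\na\dot u\|_{L^q}^{p_0}\,dt$ directly by H\"older in time using $\na\dot u\in L^2(0,T;L^2)\cap L^\infty_{loc}$-type bounds: indeed $\|\na\dot u\|_{L^q}\le C\|\na\dot u\|_{L^2}^{(6-q)/(2q)}\|\na\dot u\|_{L^6}^{(3q-6)/(2q)}$ and $\|\na\dot u\|_{L^6}$ is bounded in $L^2(0,T)$ only with a $\sigma$-weight from \eqref{nq1}, which is exactly why $p_0<4q/(5q-6)$ appears — the exponent is dictated by balancing the integrable singularity $\sigma^{-\beta}$ against $\int_0^T\|\na\dot u\|_{L^2}^2dt<\infty$ and $\int_0^T\sigma\|\na^2 u_t\|_{L^2}^2dt<\infty$ (hence $\int_0^T\sigma\|\na\dot u\|_{L^6}^2 dt<\infty$). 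A short computation of exponents shows $p_0\,\tfrac{3q-6}{2q}\cdot\tfrac12<1$ together with integrability of the weight forces precisely $p_0<4q/(5q-6)$.

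For $\int_0^T\|\na u\|_{L^\infty}dt$ I would use Lemma \ref{le9}: $\|\na u\|_{L^\infty}\le C(\|\div u\|_{L^\infty}+\|\curl u\|_{L^\infty})\ln(e+\|\na^2 u\|_{L^q})+C\|\na u\|_{L^2}+C$, where $\|\na^2 u\|_{L^q}\le C(\|\na\dot u\|_{L^2}+\|\na^2\te\|_{L^2})+C(\|\na^2\te\|_{L^2}+1)\|\na\n\|_{L^q}+C$ by \eqref{2tdu}, and $\|\na\n\|_{L^q}$ is already bounded by interpolation between $\|\na\n\|_{L^2}$ and $\|\na\n\|_{L^6}$ via \eqref{aa94} and \eqref{u113}; since $\int_0^T(\|\div u\|_{L^\infty}^2+\|\curl u\|_{L^\infty}^2)dt\le C$ by \eqref{w2} and $\ln(e+\cdots)$ grows slowly, Cauchy–Schwarz in time gives $\int_0^T\|\na u\|_{L^\infty}dt\le C$. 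With these two time-integral bounds in hand, Gr\"onwall applied to the $\|\na^2\n\|_{L^q}$-inequality yields $\sup_{0\le t\le T}\|\na^2\n\|_{L^q}\le C$; combining with $\sup_t\|\n-1\|_{H^2}\le C$ from \eqref{qq1} gives $\sup_t\|\n-1\|_{W^{2,q}}\le C$, and feeding this back into the $\|\na^2 u\|_{W^{1,q}}$ estimate closes $\int_0^T\|\na^2 u\|_{W^{1,q}}^{p_0}dt\le C$. The main obstacle is the time-singularity bookkeeping: because $\na\dot u$ and $\na^2 u_t$ are only controlled with weights $\sigma$, one does not get $\na^2 u\in L^2(0,T;W^{1,q})$ but merely $L^{p_0}$ for $p_0$ strictly below the threshold $4q/(5q-6)$, and getting the exact exponent right requires carefully combining the Gagliardo–Nirenberg interpolation exponent $(3q-6)/(2q)$ with the integrability $\int_0^T\sigma^{-\alpha}dt<\infty$ for $\alpha<1$.
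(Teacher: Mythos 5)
Your approach matches the paper's: bound $\|\na^2 u\|_{W^{1,q}}$ via \eqref{2tdu}, feed it into a Gr\"onwall inequality for $\|\na^2\n\|_{L^q}$ driven by the transport equation, and obtain the threshold $p_0<4q/(5q-6)$ by interpolating $\|\na u_t\|_{L^q}$ between $L^2$ and $L^6$ and then trading the resulting $\si^{-1/2}$ weight against $\int_0^T\si\|\na u_t\|_{H^1}^2\,dt$ from \eqref{nq1} (and note the binding constraint is the weight integrability, not $\alpha p_0<1$). One caveat: you assert the pressure contribution is ``already bounded,'' but $\|\na^2\te\|_{L^q}$, which enters via $\|\na P\|_{W^{1,q}}$, is not controlled by \eqref{qq1} or \eqref{lee2} directly and requires exactly the same $\si$-weighted interpolation $\|\na^2\te\|_{L^q}\le C\si^{-1/2}(\si\|\na^3\te\|_{L^2}^2)^{3(q-2)/(4q)}$ that you apply to $\|\na(\n\dot u)\|_{L^q}$ -- the paper treats both together in \eqref{4.53} (also, your magnetic bound should read $\|\na^3 H\|_{L^2}$, not $L^q$, as in \eqref{d3}).
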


\begin{proof}
First, it follows from \eqref{2tdu} and Lemma \ref{le11} that
\be\la{a4.74}\ba
\|\na^2u\|_{W^{1,q}}
\le & C\left(\|\n \dot u\|_{W^{1,q}}+\|\na P\|_{W^{1,q}}+\||H||\na H|\|_{W^{1,q}}\right)+C\\
\le & C
(\|\na\dot u\|_{L^2}+\|\na(\n\dot u)\|_{L^q}+  \|\na^2\te\|_{L^2}+ \|\te\na^2\n\|_{L^q}\\
& +\| |\na\n||\na\te|\|_{L^q}+  \|\na^2\te\|_{L^q}+\|\na H\|_{L^{2q}}^2+\||H||\na^2H|\|_{L^q}+1)\\
\le & C\left(\|\na\dot u\|_{L^2}+\|\na(\n\dot u)\|_{L^q}  + \| \na^2 \theta\|_{L^q}+\|\na^3 H\|_{L^2} \right)\\
&+ C(1+ \|\na^2 \theta\|_{L^2})(\| \na^2 \n\|_{L^q} +1),
\ea\ee
which combined with $(\ref{h0})_1$, \eqref{qq1}, and (\ref{va5}) yields
\be\la{sp28}\ba
&\frac{d}{dt}\|\na^2\n\|_{L^q}^q\\
&\le C\|\na u\|_{L^\infty}\|\na^2\n\|_{L^q}^q
+C\|\na^{2} u\|_{W^{1,q}}\|\na^2\n\|_{L^q}^{q-1}(\|\na\n\|_{L^q}+1)\\
&\le C\| u\|_{H^3}\|\na^2\n\|_{L^q}^q+C\|\na^2 u\|_{W^{1,q}}\|\na^2\n\|_{L^q}^{q-1}\\
&\le C\left(\| u\|_{H^3}   + \|\na\dot u\|_{L^2} +\|\na(\n\dot u)\|_{L^q}+\|\na^2 \theta\|_{L^q}+\|\na^3 H\|_{L^2}+\|\na^2 \theta\|_{L^2}+1\right)\\
&\quad\cdot\left(\|\na^2 \n\|_{L^q}^q+1\right).
\ea\ee

Note that by Lemma \ref{le11}, (\ref{g1}),  and \eqref{nq1}, one has
\be\la{4.49}\ba     \|\na(\n\dot u)\|_{L^q}
&\le C\|\na \n\|_{L^6}\|\na \dot u \|_{L^2}^{q/(3(q-2)}\|\na\dot u\|_{L^q}^{2(q-3)/3(q-2)}+C\|\na\dot u \|_{L^q}\\
&\le C\|\na\dot u \|_{L^2}+C\|\na\dot u \|_{L^q}\\
&\le C\|\na u_t \|_{L^2}+ C\|\na u_t \|_{L^q}+C\|\na(u\cdot \na u ) \|_{L^2}+C\|\na(u\cdot \na u ) \|_{L^q}\\
&\le C\si^{-1/2}+ C\|\na u_t \|_{L^2}^{(6-q)/2q}\|\na u_t \|_{L^6}^{3(q-2)/{2q}}\\
& \quad+C\|\na u \|_{L^6}^{6/q}\| \na u \|_{L^\infty}^{2(q-3)/{q}}
+C\| u \|_{L^\infty}\|\na^2 u \|_{L^q}\\
&\le C\si^{-1/2} \left(\si\|\na u_t \|^2_{H^1}\right)^{3(q-2)/{4q}}
+C\|u\|_{H^3}+C\si^{-1/2},\ea \ee
and
\begin{equation}\notag
    \begin{aligned}
    \| \na^2 \theta\|_{L^q} \le& C \|\na^2 \theta\|_{L^2}^{(6-q)/2q} \|\na^3 \theta\|_{L^2}^{3(q-2)/2q}\\
    \le & C \si^{-1/2} \left(\si\|\na^3 \theta \|^2_{L^2} \right)^{3(q-2)/{4q}}.
\end{aligned}
\end{equation}
Combining these with Lemma \ref{le11} and \eqref{nq1} shows that, for $p_0$ as in (\ref{pppppp}),
\be \la{4.53}\int_0^T \left(\|\na(\n \dot u)\|^{p_0}_{L^q} + \|\na^2 \theta\|_{L^q}^{p_0} \right) dt\le C. \ee

Finally, applying  Gr\"{o}nwall's
inequality to (\ref{sp28}), we deduce from  \eqref{lee2},  (\ref{qq1}), and (\ref{4.53}) that
\bnn  \sup\limits_{0\le t\le T}\|\na^2 \n\|_{L^q}\le C,\enn
which along with   Lemma \ref{le11},    (\ref{4.53}),
and (\ref{a4.74}) infers (\ref{y2}).  The proof of Lemma \ref{pr3} is finished.
\end{proof}

\begin{lemma}\la{sq90} For $q\in (3,6)$ as in Theorem \ref{th1}, the following estimate holds:
	\be \ba\la{eg17}
	&\sup_{ 0\le t\le T}\si \left(\|\te_t\|_{H^1}+\|H_{tt}\|_{L^2}+\| u_t\|_{H^2}+\| H_t\|_{H^2}\right)\\
	&+\sup_{ 0\le t\le T}\si \left(\|\na^2\te\|_{H^1}+\| u\|_{W^{3,q}}+\|H\|_{H^4}\right)\\
	&   +\int_0^T   \si^2(\|\na u_{tt}\|_{L^2}^2+\|\na H_{tt}\|_{L^2}^2) dt\le C.\ea  \ee
	
\end{lemma}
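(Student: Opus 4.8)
The plan is to differentiate the equations once more in time and run a $t$-weighted energy estimate, exactly paralleling the passage from Lemma \ref{le11} to Lemma \ref{pe1}, but now at one higher level of regularity. First I would differentiate $(\ref{a11})$ in $t$ and test the resulting identity $(\ref{nt0})_1$ with $\sigma^2 u_{tt}$, which produces $\tfrac{d}{dt}\big(\sigma^2(\mu\|\curl u_t\|_{L^2}^2+(2\mu+\lambda)\|\div u_t\|_{L^2}^2)\big)+\sigma^2\|\rho^{1/2}u_{tt}\|_{L^2}^2$ on the left, plus the usual $\sigma\sigma'$ term which is controlled by \eqref{nq1}. For the right-hand side I would use the elliptic bound \eqref{nt4} for $\|\na^2 u_t\|_{L^2}$, the bounds on $\|\n_{tt}\|_{L^2}$, $\|\na u_t\|_{L^2}$, $\|\na\te_t\|_{L^2}$ from Lemmas \ref{le11}--\ref{pe1}, and the elliptic estimate $(\ref{3.29})$ for $\|\na^2\te\|_{H^1}$ in terms of $\|\na\dot\te\|_{L^2}$, $\|\rho^{1/2}\dot\te\|_{L^2}$, $\|\na^3 u\|_{L^2}$, $\|\na^3 H\|_{L^2}$ via \eqref{ex4}. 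The key new input is a $\sigma^2$-weighted estimate for $\te_t$: differentiating $(\ref{h0})_3$ in $t$ and testing with $\sigma^2\te_t$ (or equivalently revisiting \eqref{3.99} with the weight $\sigma^2$ and integrating the term $\int\n|\dot\te|^2$ already controlled in \eqref{ae3.7}), I would get $\sup_t\sigma^2\|\na\te_t\|_{L^2}^2+\int_0^T\sigma^2\|\rho^{1/2}\te_{tt}\|_{L^2}^2\,dt\le C$, which then feeds back into the $u_{tt}$ estimate and, together with \eqref{pl}/\eqref{d5} weighted by $\sigma^2$, gives $\sup_t\sigma^2\|H_{tt}\|_{L^2}^2+\int_0^T\sigma^2\|\na H_{tt}\|_{L^2}^2\,dt\le C$ and $\sup_t\sigma^2\|\na H_t\|_{H^1}^2\le C$ through \eqref{d8}.

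Once the $\sigma^2$-weighted bounds on $(u_{tt},\te_{tt},H_{tt})$ in $L^2$ and on $(\na u_t,\na\te_t,\na H_t)$ in $L^2$ are in hand, the remaining spatial-regularity statements follow from elliptic theory applied to the stationary systems with the time derivatives now treated as known forcing terms. Specifically: $\|\na^2 u_t\|_{L^2}$ is bounded by \eqref{nt4}; then the Lamé system $(\ref{h0})_2$, read as an elliptic equation for $u$ with right-hand side $\rho\dot u-H\cdot\na H+\na P$, upgrades $\sigma\|u\|_{W^{3,q}}\le C$ via Lemma \ref{zhle} and the $W^{2,q}$ bound on $\rho-1$ from \eqref{y2} (one needs $\|\na\dot u\|_{W^{1,q}}$, which reduces to $\|\na u_t\|_{H^1}$ plus lower-order terms already controlled by \eqref{nq1} and \eqref{va5}). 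Similarly the elliptic problem $(\ref{3.29})$ for $\te$ gives $\sigma\|\na^2\te\|_{H^1}\le C$ once $\sigma\|\na\te_t\|_{L^2}$ is controlled, and \eqref{2tdh} applied to $H$ and to $H_t$ yields $\sigma\|H\|_{H^4}\le C$ and $\sigma\|H_t\|_{H^2}\le C$ from $\sigma\|\na H_{tt}\|_{L^2}$ and $\sigma\|H_{tt}\|_{L^2}$ together with \eqref{d5}. Finally $\sigma\|\te_t\|_{H^1}\le C$ and the spacetime bound $\int_0^T\sigma^2(\|\na u_{tt}\|_{L^2}^2+\|\na H_{tt}\|_{L^2}^2)\,dt\le C$ drop out of the weighted energy identities above after using $\|\na u_{tt}\|_{L^2}\le C(\|\rho^{1/2}u_{tt}\|_{L^2}+\cdots)$-type elliptic estimates differentiated once more in $t$.

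The main obstacle I anticipate is the control of the highest-order time-derivative terms $P_{ttt}$, $\te_{ttt}$, and the triple commutators arising when one differentiates the pressure and the Lamé operator twice in time: the term $\int\sigma^2(P_{tt}-\ka(\ga-1)\Delta\te_t)_t\,\div u_{tt}$ and its analogue for $H$ force one to estimate $\|\na\te_{tt}\|_{L^2}$ and $\|\rho^{1/2}\te_{ttt}\|$-type quantities, which are not directly available. The way around this, as in \cite{H-L}, is to avoid the third time derivative altogether: integrate the $\ka(\ga-1)\int\na\te_{tt}\cdot\na\div u_{tt}$ coupling term by parts in $t$ so that the $\te_{tt}$ contribution appears under a total derivative $\tfrac{d}{dt}$ plus lower-order terms, and absorb the genuinely top-order pieces into $\delta\sigma^2\|\rho^{1/2}u_{tt}\|_{L^2}^2+\delta\sigma^2\|\na u_{tt}\|_{L^2}^2$ using Cauchy's inequality and the elliptic estimates. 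One also has to be careful that all these weighted estimates are compatible with the singularity of $\sigma^{-1}$ near $t=0$: every term picking up a negative power of $\sigma$ must be matched against a bound of the form $\sup_t\sigma^k(\cdots)$ from Lemmas \ref{le11}--\ref{pr3} so that the time integral converges, exactly as in \eqref{3.90} and \eqref{4.49}. With these bookkeeping points handled, Gr\"onwall's inequality closes the argument.
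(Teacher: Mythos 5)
Your plan has a genuine gap at the very first step, and it is not a cosmetic one: the required spacetime bound $\int_0^T\sigma^2(\|\nabla u_{tt}\|_{L^2}^2+\|\nabla H_{tt}\|_{L^2}^2)\,dt\le C$ cannot be obtained by testing the once-differentiated equations. Testing $(\ref{nt0})_1$ against $\sigma^2 u_{tt}$ yields, as you correctly write, $\frac{d}{dt}\big(\sigma^2(\mu\|\curl u_t\|_{L^2}^2+(2\mu+\lambda)\|\div u_t\|_{L^2}^2)\big)+\sigma^2\|\rho^{1/2}u_{tt}\|_{L^2}^2$ on the left, so the coercive spacetime quantity is $\int\sigma^2\|\rho^{1/2}u_{tt}\|_{L^2}^2\,dt$, not $\int\sigma^2\|\nabla u_{tt}\|_{L^2}^2\,dt$ -- in fact the former is already a consequence of Lemma \ref{pe1} and $\sigma\le 1$, so this step produces nothing new. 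The elliptic estimate you invoke to recover $\|\nabla u_{tt}\|_{L^2}$, namely ``$\|\nabla u_{tt}\|_{L^2}\le C(\|\rho^{1/2}u_{tt}\|_{L^2}+\cdots)$,'' does not exist: differentiating the Lam\'e elliptic estimate \eqref{nt4} once more in $t$ gives $\|\nabla^2 u_{tt}\|_{L^2}\lesssim\|\rho u_{ttt}\|_{L^2}+\cdots$, which brings in $u_{ttt}$, not a bound on $\nabla u_{tt}$ in terms of $u_{tt}$. The same issue afflicts the magnetic part: $\eqref{pl}$ weighted by $\sigma^2$ gives $\sup_t\sigma^2\|\curl H_t\|_{L^2}^2+\int\sigma^2\|H_{tt}\|_{L^2}^2\,dt$, which is the wrong shape for what the lemma asserts, $\sup_t\sigma^2\|H_{tt}\|_{L^2}^2+\int\sigma^2\|\nabla H_{tt}\|_{L^2}^2\,dt$.

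To produce coercivity on $\nabla u_{tt}$ and $\nabla H_{tt}$ one must differentiate the momentum and magnetic equations \emph{twice} in $t$ -- as in \eqref{sp30} and \eqref{smm} -- and test against $u_{tt}$ and $H_{tt}$; the diffusion operators acting on $u_{tt}$, $H_{tt}$ then deliver $\int\big((2\mu+\lambda)(\div u_{tt})^2+\mu|\curl u_{tt}|^2+\nu|\curl H_{tt}|^2\big)\,dx$ on the left, while $\frac{d}{dt}\int(\rho|u_{tt}|^2+|H_{tt}|^2)\,dx$ supplies the $\sup$ part. The temperature piece is handled by testing the once-differentiated temperature equation \eqref{eg1} against $\te_{tt}$ -- not $\te_t$, which you propose and which would give the opposite coercivity structure -- yielding $\frac{d}{dt}\big(\frac{\ka(\ga-1)}{2R}\|\nabla\te_t\|_{L^2}^2+H_0\big)+\int\rho|\te_{tt}|^2\,dx$; the $\int\rho|\te_{tt}|^2$ term is then used to absorb, with a fixed multiplicative constant $C_9$, the $\|\rho^{1/2}\te_{tt}\|_{L^2}^2$ contribution that emerges from $P_{tt}$ in the $u_{tt}$-energy. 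The weight $\sigma^2$ is applied to the combined inequality \eqref{ex13} only after this cancellation. Your intuition at the end -- that one must avoid genuine third time derivatives and should couple the temperature and velocity energies -- is the right instinct, but the mechanism you describe (time integration by parts on a term $\int\sigma^2(P_{tt}-\ka(\ga-1)\Delta\te_t)_t\div u_{tt}$ that does not actually appear) is not the one that closes the argument; the paper's $\tilde J_5=\int P_{tt}\div u_{tt}\,dx$ is estimated directly via $\|P_{tt}\|_{L^2}\lesssim\|\rho^{1/2}\te_{tt}\|_{L^2}+\cdots$ with no extra time derivative and no time integration by parts. Once the $\sigma^2$-weighted bound on $(u_{tt},\te_{tt},H_{tt})$ in \eqref{eg10} is in hand, the passage to the claimed spatial regularity via \eqref{nt4}, \eqref{ex4}, \eqref{a4.74}, \eqref{4.49}, \eqref{d8} proceeds as you sketch, and that part of your plan is sound.
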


\begin{proof}
First,  differentiating $(\ref{nt0})$ and $(\ref{sm})$ with
respect to $t$ respectively yields
\be\la{sp30}\ba \begin{cases}
\n u_{ttt}+\n u\cdot\na u_{tt}-(2\mu+\lambda)\nabla{\rm div}u_{tt} +\mu\na \times \curl u_{tt}\\
= 2{\rm div}(\n u)u_{tt} +{\rm div}(\n u)_{t}u_t-2(\n u)_t\cdot\na u_t- \n u_{tt}\cdot\na u
\\\quad -(\n_{tt} u+2\n_t u_t) \cdot\na u-\na P_{tt}+(H\cdot\na H-\frac{1}{2}\na |H|^2)_{tt}, & \text{in}\,\O\times[0,T],\\
u_{tt} \cdot n=0,\quad \curl u_{tt} \times n=0, &  \text{on}\,\p\O\times[0,T],\\
u_{tt}\rightarrow0,\,\,&\text{as}\,\,|x|\rightarrow\infty,
\end{cases}\ea \ee
\be\ba\label{smm}
\begin{cases}
H_{ttt}+\nu \nabla \times (\curl H_{tt})
=(H \cdot \nabla u-u \cdot \nabla H-H \div u)_{tt},&\text{in}\quad\Omega\times[0,T],\\
H_{tt} \cdot n=0,\quad \curl H_{tt} \times n=0,& \text{on}\quad \partial\Omega\times[0,T],\\
H_{tt}\rightarrow 0,&\text{as}\quad |x|\rightarrow 0.
 \end{cases}
 \ea\ee
Multiplying (\ref{sp30})$_1$ and (\ref{smm})$_1$ by $u_{tt}$ and $H_{tt}$ respectively and integrating the resulting equality over ${\Omega}$ by parts lead to
\be \la{sp31}\ba
&\frac{1}{2}\frac{d}{dt}\int (\n |u_{tt}|^2+|H_{tt}|^2)dx\\
&+\int \left((2\mu+\lambda)({\rm div}u_{tt})^2+\mu|\curl u_{tt}|^2+\nu|\curl H_{tt}|^2\right)dx\\
&=-4\int  u^i_{tt}\n u\cdot\na u^i_{tt} dx
-\int (\n u)_t\cdot \left(\na (u_t\cdot u_{tt})+2\na u_t\cdot u_{tt}\right)dx\\
&\quad -\int (\n_{tt}u+2\n_tu_t)\cdot\na u\cdot u_{tt}dx
-\int   \n u_{tt}\cdot\na u\cdot  u_{tt} dx\\
& \quad+\int  P_{tt}{\rm div}u_{tt}dx-\frac{1}{2}\int\left(2(H\otimes H)_{tt}:\na u_{tt}-|H|^2_{tt}\div u_{tt}\right)dx\\
&\quad+\int(H\cdot\na u-u\cdot\na H-H\div u)_{tt}H_{tt}dx
\triangleq\sum_{i=1}^7\tilde{J}_i.\ea\ee
We deduce from Lemmas \ref{le11}--\ref{pe1}, (\ref{va1}), \eqref{eee8},   and \eqref{s4} that, for $\eta\in(0,1],$
\be \la{sp32} \ba
|\tilde{J}_1| &\le C\|\n^{1/2}u_{tt}\|_{L^2}\|\na u_{tt}\|_{L^2}\| u \|_{L^\infty} \le \eta \|\na u_{tt}\|_{L^2}^2+C(\eta) \|\n^{1/2}u_{tt}\|^2_{L^2},\ea\ee
\be \la{sp33}\ba
|\tilde{J}_2| &\le C\left(\|\n u_t\|_{L^3}+\|\n_t u\|_{L^3}\right)
\left(\| \na u_{tt}\|_{L^2}\| u_t\|_{L^6}+\| u_{tt}\|_{L^6}\| \na u_t\|_{L^2}\right)\\
&\le C\left(\|\n^{1/2} u_t\|^{1/2}_{L^2}\|u_t\|^{1/2}_{L^6}+\|\n_t\|_{L^6}\| u\|_{L^6}\right)\| \na u_{tt}\|_{L^2}\| \na u_t\|_{L^2}\\
&\le \eta \|\na u_{tt}\|_{L^2}^2+C(\eta)\| \na u_t\|_{L^2}^{3}+C(\eta)\\
&\le \eta \|\na u_{tt}\|_{L^2}^2+C(\eta)\si^{-3/2}  ,\ea\ee
\be  \la{sp34}\ba
|\tilde{J}_3| &\le C\left(\|\n_{tt}\|_{L^2}\|u\|_{L^6}+
\|\n_{t}\|_{L^2}\|u_{t}\|_{L^6} \right)\|\na u\|_{L^6}\|u_{tt}\|_{L^6} \\
&\le \eta \|\na u_{tt}\|_{L^2}^2+C(\eta)\si^{-1}  ,\ea\ee
\be  \la{sp36}\ba &
|\tilde{J}_4|+|\tilde{J}_5|\\
&\le  C\|\n u_{tt}\|_{L^2} \|\na u\|_{L^3}\|u_{tt}\|_{L^6}
+C \|(\n_t\te+\n\te_t)_t\|_{L^2}\|\na u_{tt}\|_{L^2}\\
&\le  \eta \|\na u_{tt}\|_{L^2}^2+C(\eta) \left(\|\n^{1/2}u_{tt}\|^2_{L^2}
+\|\n_{tt}\te\|_{L^2}^2+\|\n_{t}\te_t\|_{L^2}^2 +\|\n^{1/2}\te_{tt}\|_{L^2}^2\right) \\
&\le  \eta \|\na u_{tt}\|_{L^2}^2+C(\eta)\left( \|\n^{1/2}u_{tt}\|^2_{L^2}
+\|\na\te_t\|_{L^2}^2 +\|\n^{1/2}\te_{tt}\|_{L^2}^2+\sigma^{-2}\right), \ea\ee
\be\la{j6}\ba
|\tilde J_6|
&\le C\|\na u_{tt}\|_{L^2}(\|H\|_{L^\infty}\|H_{tt}\|_{L^2}+\|H_t\|_{L^2}^{1/2}\|\na H_t\|_{L^2}^{3/2})\\
&\le \eta\|\na u_{tt}\|_{L^2}^2+C(\eta)(\|H_{tt}\|_{L^2}^2+\|\na H_t\|_{L^2}^3)\\
&\le \eta\|\na u_{tt}\|_{L^2}^2+C(\eta)(\|H_{tt}\|_{L^2}^2+\si^{-3/2}),
\ea\ee
and
\be\la{j7}\ba
|\tilde J_7|
&\le C\|H_{tt}\|_{L^2}(\|H\|_{L^\infty}\|\na u_{tt}\|_{L^2}+\|u\|_{L^\infty}\|\na H_{tt}\|_{L^2})\\
&\quad+C\|H_{tt}\|_{L^2}(\|{H_{tt}}\|_{L^6}\|\na u\|_{L^3}+\|{u_{tt}}\|_{L^6}\|\na H\|_{L^3})\\
&\quad+C\|H_{tt}\|_{L^6}(\|{H_{t}}\|_{L^3}\|\na u_t\|_{L^2}+\|{u_{t}}\|_{L^3}\|\na H_t\|_{L^2})\\
&\le \eta(\|\na u_{tt}\|_{L^2}^2+\|\na H_{tt}\|_{L^2}^2)+C(\eta)(\|H_{tt}\|_{L^2}^2+\|H_t\|_{H^1}^2\|u_t\|_{H^1}^2)\\
&\le \eta(\|\na u_{tt}\|_{L^2}^2+\|\na H_{tt}\|_{L^2}^2)+C(\eta)(\|H_{tt}\|_{L^2}^2+\si^{-2}).
\ea\ee
Putting (\ref{sp32})--(\ref{j7}) into (\ref{sp31}), we obtain after using \eqref{ljq01} and choosing $\eta$ suitably small  that
\be \la{ex12}\ba
& \frac{d}{dt}\int (\n |u_{tt}|^2+|H_{tt}|^2)dx+C_8 \int(|\na u_{tt}|^2+|\na H_{tt}|^2)dx \\
& \le  C\si^{-2} +C\|\n^{1/2}u_{tt}\|^2_{L^2}
+C\|\na\te_t\|_{L^2}^2+C\|H_{tt}\|_{L^2}^2+C_9\|\n^{1/2}\te_{tt}\|_{L^2}^2.\ea\ee

Next, differentiating \eqref{3.29} with respect to $t$ gives
\be\la{eg1}\ba \begin{cases}
-\frac{\ka(\ga-1)}{R}\Delta \te_t+\n\te_{tt}\\
=-\n_t\te_{t}- \n_t\left(u\cdot\na \te+(\ga-1)\te\div u\right)-\n\left( u\cdot\na
\te+(\ga-1)\te\div u\right)_t\\
\quad+\frac{\ga-1}{R}\left(\lambda (\div u)^2+2\mu |\mathfrak{D}(u)|^2+\nu|\curl H|^2\right)_t,  \qquad \qquad   \quad\,\,\text{in}\,\O\times[0,T],\\
\na \te_t\cdot n=0,  \qquad \qquad \qquad \qquad\qquad\qquad\qquad \qquad\qquad\ \ \ \ \ \ \,   \text{on}\,\p\O\times[0,T],\\
\nabla\te_t\rightarrow0,\qquad \qquad \qquad \qquad\qquad\qquad\qquad \qquad\qquad\qquad\ \ \ \  \text{as}\,|x|\rightarrow\infty.
\end{cases}\ea\ee
Multiplying  (\ref{eg1})$_1$ by $\te_{tt}$ and integrating the resulting
equality over $\Omega$ imply
\be\la{ex5}\ba
& \left(\frac{\ka(\ga-1)}{2R}\|\na \te_t\|_{L^2}^2+H_0\right)_t+ \int\n\te_{tt}^2dx \\
&=\frac{1}{2}\int\n_{tt}\left( \te_t^2
+2\left(u\cdot\na \te+(\ga-1)\te\div u\right)\te_t\right)dx\\
&\quad + \int\n_t\left(u\cdot\na\te+(\ga-1)\te\div u \right)_t\te_{t}dx\\
& \quad-\int\n\left(u\cdot\na\te+(\ga-1)\te\div u\right)_t\te_{tt}dx\\
& \quad -\frac{\ga-1}{R}\int \left(\lambda (\div u)^2+2\mu |\mathfrak{D}(u)|^2+\nu|\curl H|^2\right)_{tt}\te_t dx
\triangleq\sum_{i=1}^4H_i,\ea\ee
where
\bnn\ba H_0\triangleq & \frac{1}{2}\int \n_t\te_{t}^2dx
+\int\n_t\left(u\cdot\na\te+(\ga-1)\te\div u\right) \te_tdx\\
&- \frac{\ga-1}{R}\int\left(\lambda (\div u)^2+2\mu |\mathfrak{D}(u)|^2 +\nu|\curl H|^2\right)_t\te_t dx. \ea\enn
It follows from  $(\ref{h0})_1,$  (\ref{eee8}),  (\ref{va1}), \eqref{s4}, and Lemmas \ref{le11}--\ref{pe1} that
\be\la{ex6}\ba
|H_0|\le & C\int \n|u||\te_{t}||\na\te_{t}|dx+ C(\|\na u\|_{L^3}\|\na u_t\|_{L^2}+\|\na H\|_{L^3}\|\na H_t\|_{L^2}) \|\te_t\|_{L^6} \\
&+C\|\n_t\|_{L^3}\|\te_t\|_{L^6}\left( \|\na\te\|_{L^2} \|u\|_{L^\infty}+ \|\na u\|_{L^2}+ \|\theta-1\|_{L^6}\|\na u\|_{L^3}\right)\\
\le &C  \|\na\te_t\|_{L^2}\left(\|\n^{1/2}\te_t\|_{L^2}+\|\na u_t\|_{L^2}+\|\na H_t\|_{L^2}+1\right)\\
\le &\frac{\ka(\ga-1)}{4R} \|\na\te_t\|_{L^2}^2+C\si^{-1},\ea\ee
\be\la{ex7}\ba
|H_1|& \le C\|\n_{tt}\|_{L^2}\left(\|\te_t\|_{L^4}^{2}
+\|\te_t\|_{L^6}\left(\|u\cdot\na \te\|_{L^3}+\|\na u\|_{L^3}+\|\te-1\|_{L^6}\|\na u\|_{L^6} \right)\right)\\
&\le C\|\n_{tt}\|_{L^2}\left(\|\rho^{1/2} \te_t\|_{L^2}^{2} + \|\na \te_t\|_{L^2}^{2}
+\si^{-1/2}  \right) \\
& \le  C(1+\|\na u_{t}\|_{L^2} )\|\na \te_t\|^2_{L^2}+C\si^{-3/2},\ea\ee
\be\la{ex10}\ba
|H_4|&\le C\int \left(|\na u_t|^2+|\na u||\na u_{tt}|+|\na H_t|^2+|\na H||\na H_{tt}|\right)|\te_t|dx\\
&\le C\left(\|\na u_t\|_{L^2}^{3/2}\|\na u_t\|_{L^6}^{1/2}
+ \|\na u\|_{L^3} \|\na u_{tt}\|_{L^2}\right)\|\te_t\|_{L^6}\\
&\quad+ C\left(\|\na H_t\|_{L^2}^{3/2}\|\na H_t\|_{L^6}^{1/2}
+ \|\na H\|_{L^3} \|\na H_{tt}\|_{L^2}\right)\|\te_t\|_{L^6}\\
&\le \de(\|\na u_{tt}\|^2_{L^2}+\|\na H_{tt}\|^2_{L^2})+C(\|\na^2 u_t\|^2_{L^2}+\|\na^2 H_t\|^2_{L^2})\\
&\quad+C(\de)\|\na\te_t\|_{L^2}^2+C\si^{-2}(\|\na u_t\|_{L^2}^2+\|\na H_t\|^2_{L^2}),\ea\ee
and
\be\la{ex9}\ba
|H_2|+|H_3|&\le C\left(\si^{-1/2}\|\na u_t\|_{L^2}+\|\na\te_t\|_{L^2}\right)
\left(\|\n_t\|_{L^3} \|\te_t\|_{L^6}+\|\n \te_{tt}\|_{L^2}\right)\\
&\le \frac{1}{2}\int\n\te_{tt}^2dx+C\|\na\te_t\|_{L^2}^2+C\si^{-1 } \|\na u_t\|^2_{L^2}, \ea\ee
where in the last inequality we have used the following fact:
\be\la{eg12}\ba
 &\|\left(u\cdot\na\te+(\ga-1)\te\div u \right)_t\|_{L^2}\\
& \le  C\left(\|u_t\|_{L^6}\|\na\te\|_{L^3}+\|\na\te_t\|_{L^2}+\|\te_t\|_{L^6}\|\na u\|_{L^3}
+\|\te\|_{L^\infty}\|\na u_t\|_{L^2}\right)\\
& \le  C\|\na u_t\|_{L^2}(\|\na^2 \te\|_{L^2}+1)+ C\|\na \te_t\|_{L^2}\ea\ee
due to  Lemma \ref{le11}.

Then, substituting (\ref{ex7})--(\ref{ex9}) into (\ref{ex5}) infers
\be\la{ex11}\ba
& \left(\frac{\ka(\ga-1)}{2R}\|\na \te_t\|_{L^2}^2+H_0\right)_t
+\frac{1}{2}\int\n\te_{tt}^2dx \\
&\le  \de(\|\na u_{tt}\|^2_{L^2}+\|\na H_{tt}\|^2_{L^2})+C(\de)((1+\|\na u_{t}\|_{L^2}) \|\na \te_t\|^2_{L^2}+\si^{-3/2})\\
&\quad +C(\|\na^2 u_t\|^2_{L^2}+\|\na^2 H_t\|^2_{L^2})+C\si^{-2}(\|\na u_t\|_{L^2}^2+\|\na H_t\|^2_{L^2}).\ea\ee

Finally, for $C_9$ as in (\ref{ex12}), adding (\ref{ex11}) multiplied by
$2 (C_9+1) $ to  (\ref{ex12})
and choosing $\de$ suitably small, one derives
\be\la{ex13}\ba
& \left[ 2 (C_9+1)\left(\frac{\ka(\ga-1)}{2R}\|\na \te_t\|_{L^2}^2+H_0\right)
+\int (\n |u_{tt}|^2+|H_{tt}|^2)dx\right]_t\\
&\quad + \int\n\te_{tt}^2dx+\frac{C_8}{2}\int (|\na u_{tt}|^2+|\na H_{tt}|^2)dx\\
&\le C (1+\|\na u_{t}\|_{L^2}^2+\|\na H_{t}\|_{L^2}^2) (\si^{-2} +\|\na \te_t\|^2_{L^2})
+C\|\n^{1/2}u_{tt}\|^2_{L^2}\\
&\quad +C\|H_{tt}\|_{L^2}^2+ C(\|\na^2 u_t\|^2_{L^2}+\|\na^2 H_t\|^2_{L^2}).\ea\ee
Multiplying (\ref{ex13}) by $\si^2$ and integrating the resulting inequality over $(0,T),$
we  obtain after using (\ref{ex6}),  \eqref{lee2},   (\ref{nq1}), \eqref{d7},  (\ref{va5}), and Gr\"{o}nwall's inequality that
\be \la{eg10}\ba
&\sup_{ 0\le t\le T}\si^2\int \left(|\na\te_t|^2+\n |u_{tt}|^2+|H_{tt}|^2\right)dx\\
&+\int_{0}^T\si^2\int \left(\n\te_{tt}^2+|\nabla u_{tt}|^2+|\na H_{tt}|^2\right)dxdt\le C,\ea\ee
which as well as  Lemmas \ref{le11}--\ref{pr3}, (\ref{nt4}), (\ref{ex4}),  \eqref{va1}, (\ref{a4.74}), \eqref{d8},
and (\ref{4.49}) yields
\be\la{sp20} \sup_{ 0\le t\le T}\si \left(\|\na u_t\|_{H^1}
+ \|H_t\|_{H^2}+\|H\|_{H^4}+\|\na^2\te\|_{H^1}+\|\na^2u\|_{W^{1,q}} \right)\le C,\ee
where we have used the following estimate:
\be\ba\notag
\|\na ^4H\|_{L^2}
&\le C(\|\curl^2 H\|_{H^2}+\|\na H\|_{L^2})\\
&\le C (\|H_{t}\|_{H^2}+\|H\cdot\na u-u\cdot\na H-H\div u\|_{H^2}+1)\\
&\le C(\|H_{t}\|_{H^2}+\|H\|_{H^2}\|\na u\|_{H^2}+\|u\|_{H^2}\|\na H\|_{H^2}+1)
\ea\ee
owing to Lemma \ref{le11}, \eqref{2tdh}, \eqref{mn}, and  \eqref{hs}.

Hence, one concludes  (\ref{eg17}) by using  (\ref{eg10}),   (\ref{sp20}), \eqref{va1}, \eqref{eee8},
and (\ref{qq1}). The proof of Lemma \ref{sq90} is completed.
\end{proof}

\begin{lemma}\la{sq91} The following estimate holds:
	\be \la{egg17}\sup_{ 0\le t\le T}\si^2  \left(\|\na^2\te\|_{H^2}+\| \te_t\|_{H^2}+\|\n^{1/2}\te_{tt}\|_{L^2}  \right)
	+\int_0^T\si^4\|\na \te_{tt}\|_{L^2}^2 dt\le C.\ee
\end{lemma}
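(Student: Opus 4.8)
## Proof Proposal for Lemma \ref{sq91}

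The plan is to obtain one more level of time-differentiated estimates beyond those of Lemma \ref{sq90}, bootstrapping the $\sigma^2$-weighted bounds on $(u_{tt},H_{tt},\theta_t)$ into $\sigma^4$-weighted bounds on $\theta_{tt}$ and the corresponding $H^2$-estimates on $\theta_t$ and $\nabla^2\theta$. The main identity is the equation $(\ref{eg1})$ for $\theta_t$, whose right-hand side I now know lies in $L^2$ uniformly (with the appropriate $\sigma$-weights) by Lemmas \ref{le11}--\ref{sq90}. First I would differentiate $(\ref{eg1})$ once more in $t$ to obtain an elliptic equation for $\theta_{tt}$ of the form
\be\notag
-\frac{\ka(\ga-1)}{R}\Delta\theta_{tt}+\n\theta_{ttt}
= \mathcal{F},
\ee
where $\mathcal{F}$ collects terms involving $\n_{tt}$, $\n_t$, $\n$, $u_{tt}$, $u_t$, $H_{tt}$, $H_t$, $\theta_{tt}$, $\theta_t$ and their spatial gradients, together with the quadratic dissipation terms differentiated twice. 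Then, testing this equation against $\sigma^4\theta_{tt}$ and integrating by parts (exactly as in the derivation of $(\ref{ex5})$, but with one more time derivative and the weight $\sigma^4$ instead of $\sigma^2$), I would produce the differential inequality
\be\notag
\left(\si^4\|\na\theta_{tt}\|_{L^2}^2 + \si^4\tilde H_0\right)_t + \si^4\|\n^{1/2}\theta_{ttt}\|_{L^2}^2
\le C\si^3\|\na\theta_{tt}\|_{L^2}^2 + (\text{controllable terms}),
\ee
after absorbing the highest-order terms. The "controllable terms" are handled using the Gagliardo–Nirenberg inequalities \eqref{g1}, \eqref{g2}, the div-curl estimates, and the already-established bounds $\si^2\|H_{tt}\|_{L^2}^2\le C$, $\si^2\|\na u_{tt}\|_{L^2}^2\,(\text{integrated})\le C$, $\si^2\|\na\theta_t\|_{L^2}^2\le C$, $\si\|u_t\|_{H^2}^2\le C$, etc.

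The key auxiliary estimates I would need are: (i) an $H^1$-bound on $\theta_{tt}$ weighted by $\sigma^4$, which follows from applying standard elliptic regularity to $(\ref{eg1})$ differentiated in $t$ once more and using $\sup\si^4\|\na\theta_{tt}\|_{L^2}^2\le C$ together with $\sup\si^4\|\n^{1/2}\theta_{tt}\|_{L^2}^2\le C$ (the latter coming from Lemma \ref{sq90} since $\sigma^2\|\n^{1/2}\theta_{tt}\|_{L^2}$ is already bounded, so a fortiori $\sigma^4$-weighted is); (ii) $\sigma^2$-weighted $L^2(0,T;L^2)$ bounds on $\na u_{tt}$ and $\na H_{tt}$, available from \eqref{eg17}; and (iii) the elliptic estimate for $\na^3\theta_t$ obtained by applying Lemma \ref{zhle} to the $t$-differentiated equation $(\ref{eg1})$ — this gives $\|\na^2\theta_t\|_{H^1}$ controlled by $\|\n^{1/2}\theta_{tt}\|_{L^2}$, $\|\na u_t\|_{H^1}$, $\|\na H_t\|_{H^1}$, and lower-order terms, all of which are bounded with weight $\sigma$ by Lemma \ref{sq90}. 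Combining the differential inequality with Grönwall's inequality (Lemma \ref{le1} or the standard one), using that $\si^4\tilde H_0$ is dominated by $\tfrac12\si^4\|\na\theta_{tt}\|_{L^2}^2$ plus integrable-in-time remainder terms (as in $(\ref{ex6})$), yields
\be\notag
\sup_{0\le t\le T}\si^4\|\na\theta_{tt}\|_{L^2}^2 + \int_0^T\si^4\|\n^{1/2}\theta_{ttt}\|_{L^2}^2\,dt \le C.
\ee
Wait — the statement asks for $\int_0^T\si^4\|\na\theta_{tt}\|_{L^2}^2\,dt\le C$ rather than the $\theta_{ttt}$-integral; so I would instead extract the dissipation term $\int\n\theta_{ttt}^2$ only as a nonnegative quantity and note that the $H^1$-elliptic estimate for $\theta_{tt}$ converts the pointwise bound $\sup\si^2\|\n^{1/2}\theta_{tt}\|_{L^2}\le C$ plus the integrated quantities into $\int_0^T\si^4\|\na\theta_{tt}\|_{L^2}^2\,dt\le C$ directly. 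Finally, the bounds $\sup\si^2\|\na^2\theta\|_{H^2}$, $\sup\si^2\|\theta_t\|_{H^2}$, and $\sup\si^2\|\n^{1/2}\theta_{tt}\|_{L^2}$ follow by feeding these estimates back into the elliptic problems $(\ref{3.29})$ and $(\ref{eg1})$ and using $(\ref{2tdh})$--$(\ref{2tdu})$, Lemma \ref{le11}, Lemma \ref{sq90}, and the Sobolev product inequality \eqref{hs}.

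The main obstacle I anticipate is controlling the highest-order nonlinear terms in $\mathcal{F}$ — specifically those of the form $\n(u\cdot\na\theta + (\ga-1)\theta\div u)_{tt}$ and the twice-differentiated viscous dissipation $(\lambda(\div u)^2 + 2\mu|\mathbb{D}(u)|^2 + \nu|\curl H|^2)_{tt}$ — when tested against $\sigma^4\theta_{tt}$. These generate terms like $\si^4\|\na u_{tt}\|_{L^2}\|\na\theta_{tt}\|_{L^2}$ and $\si^4\|\na u_t\|_{L^4}^2\|\theta_{tt}\|_{L^6}$, which must be split carefully: the first is absorbed using the fact that $\si^2\|\na u_{tt}\|_{L^2}^2$ is integrable (so $\si^4\|\na u_{tt}\|_{L^2}^2 \le \si^2\cdot\si^2\|\na u_{tt}\|_{L^2}^2$ has a good time-integral against a bounded weight), while the second requires the interpolation $\|\na u_t\|_{L^4}\le C\|\na u_t\|_{L^2}^{1/4}\|\na^2u_t\|_{L^2}^{3/4}$ combined with $\sup\si\|\na u_t\|_{H^1}\le C$ from \eqref{sp20}. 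Keeping track of which negative powers of $\sigma$ appear and verifying they are all integrable near $t=0$ (i.e. the exponent stays below $1$, as in $(\ref{sp33})$--$(\ref{ex10})$) is the delicate bookkeeping step; the choice of weight $\sigma^4$ is precisely what makes all these terms integrable. Once the weights are verified, the argument is a routine (if lengthy) Grönwall closure.
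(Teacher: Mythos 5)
Your plan has the right overall shape (differentiate $(\ref{eg1})$ once more in $t$, use a weighted energy estimate for $\theta_{tt}$, then feed back into the elliptic problems $(\ref{eg1})$ and $(\ref{3.29})$), but the central energy identity is the wrong one, and this is not a typo-level slip: it changes which quantities you must control and breaks the closure.

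You say you will test the equation for $\theta_{tt}$ against $\sigma^4\theta_{tt}$, but the differential inequality you write down,
\be\notag
\left(\si^4\|\na\theta_{tt}\|_{L^2}^2 + \si^4\tilde H_0\right)_t + \si^4\|\n^{1/2}\theta_{ttt}\|_{L^2}^2 \le \cdots,
\ee
is what you get from testing against $\theta_{ttt}$. Testing against $\theta_{tt}$ produces the parabolic form
\be\notag
\frac12\frac{d}{dt}\int\n|\theta_{tt}|^2\,dx + \frac{\ka(\ga-1)}{R}\|\na\theta_{tt}\|_{L^2}^2 = \cdots,
\ee
which is exactly the paper's $(\ref{eg3})$: energy $\|\n^{1/2}\theta_{tt}\|_{L^2}^2$, dissipation $\|\na\theta_{tt}\|_{L^2}^2$ — the two quantities $(\ref{egg17})$ actually asks for. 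Your version, with $\|\na\theta_{tt}\|_{L^2}^2$ as the energy and $\|\n^{1/2}\theta_{ttt}\|_{L^2}^2$ as the dissipation, cannot be closed at this level of regularity. Concretely: the right-hand side contains terms of the form $\n_{tt}(\theta_t+u\cdot\na\theta+(\ga-1)\theta\div u)$ — tested against $\theta_{ttt}$, these cannot be absorbed into the dissipation $\|\n^{1/2}\theta_{ttt}\|_{L^2}$ because there is no spare $\n^{1/2}$, and since vacuum is allowed you cannot use $\|\theta_{ttt}\|_{L^6}\le C\|\na\theta_{ttt}\|_{L^2}$ (that quantity is not in your dissipation). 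Integrating these terms by parts in $t$, as in the construction of $H_0$ in $(\ref{ex5})$, produces $\n_{ttt}$, for which no bound exists in this paper. There is a second obstruction: when $d/dt$ acts on $\si^4\|\na\theta_{tt}\|_{L^2}^2$ it generates $4\si^3\si'\|\na\theta_{tt}\|_{L^2}^2$, and $\si'/\si\sim t^{-1}$ near $t=0$ is not integrable, so it cannot be absorbed by Gr\"onwall against the unknown $\si^4\|\na\theta_{tt}\|_{L^2}^2$. The paper's choice of test function avoids both problems: the $\si^3\si'$ term acts on $\|\n^{1/2}\theta_{tt}\|_{L^2}^2\le \si^{-2}\cdot\si^2\|\n^{1/2}\theta_{tt}\|_{L^2}^2$, and the factor $\si^2\|\n^{1/2}\theta_{tt}\|_{L^2}^2$ is already $L^1$ in time by $(\ref{eg10})$, while the $\n_{tt}$-terms tested against $\theta_{tt}$ are handled via $\|\theta_{tt}\|_{L^6}\le C\|\na\theta_{tt}\|_{L^2}$, which \emph{is} the dissipation.

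One further circularity: you assert that $\sup\si^2\|\n^{1/2}\theta_{tt}\|_{L^2}\le C$ "is already bounded" by Lemma \ref{sq90}. It is not; $(\ref{eg10})$ gives only the time-integral $\int_0^T\si^2\|\n^{1/2}\theta_{tt}\|_{L^2}^2\,dt\le C$, and the sup-in-time bound on $\si^2\|\n^{1/2}\theta_{tt}\|_{L^2}$ is precisely one of the new conclusions of the present lemma. Using it as an input is begging the question. The correct route is the paper's: test $(\ref{eg2})$ against $\theta_{tt}$, multiply by $\si^4$, estimate $\si^4\tilde K_i$ individually using Lemmas \ref{le11}--\ref{sq90} and $(\ref{eg10})$, close by Gr\"onwall to obtain $(\ref{eg13})$, and then derive the $H^2$-bounds on $\theta_t$ and $\na^2\theta$ from the elliptic estimates for $(\ref{eg1})$ and $(\ref{3.29})$.
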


\begin{proof}
First, differentiating $(\ref{eg1})$ with respect to $t$ gives
\be\la{eg2}\ba \begin{cases}
\n\te_{ttt}-\frac{\ka(\ga-1)}{R}\Delta \te_{tt}\\
=-\n u
\cdot\na\te_{tt} - \n_{tt}\left(\te_t+ u\cdot\na \te+(\ga-1)\te\div u\right)\\
\quad + 2\div(\n u)\te_{tt}- 2\n_t\left(u\cdot\na \te+(\ga-1)\te\div u\right)_t\\
\quad - \n\left(u_{tt}\cdot\na \te+2u_t\cdot\na\te_t+(\ga-1)(\te\div u)_{tt}\right)\\
\quad +\frac{\ga-1}{R}\left(\lambda (\div u)^2+2\mu |\mathfrak{D}(u)|^2+\nu|\curl H|^2 \right)_{tt}, \quad\quad&\text{in}\,\O\times[0,T],\\
\na\te_{tt}\cdot n=0,  \qquad \qquad\qquad &\text{on}\,\p\O\times[0,T],\\
\na\te_{tt}\rightarrow 0\qquad\qquad\qquad\qquad\qquad\qquad\ \ \ \ \   &\text{as}\,|x|\rightarrow\infty.
\end{cases}\ea\ee
Multiplying (\ref{eg2})$_1$ by $\te_{tt}$ and integrating the resulting equality over $\Omega$ lead to
\be\la{eg3}\ba
&\frac{1}{2}\frac{d}{dt}\int\n|\te_{tt}|^2dx +\frac{\ka(\ga-1)}{R}\int|\na \te_{tt}|^2dx\\
&=-4\int \te_{tt}\n u\cdot\na\te_{tt}dx  -\int \n_{tt}\left(\te_t
+ u\cdot\na \te+(\ga-1)\te\div u\right)\te_{tt}dx\\
&\quad - 2\int\n_t\left(u\cdot\na \te+(\ga-1)\te\div u\right)_t\te_{tt}dx\\
& \quad - \int\n\left(u_{tt}\cdot\na \te+2u_t\cdot\na\te_t
+(\ga-1)(\te\div u)_{tt}\right)\te_{tt}dx\\
& \quad +\frac{\ga-1}{R}\int  \left(\lambda (\div u)^2+2\mu |\mathfrak{D}(u)|^2+\nu|\curl H|^2\right)_{tt}\te_{tt}dx
\triangleq \sum_{i=1}^5\tilde K_i.\ea\ee

It follows from
Lemmas \ref{le11}--\ref{pe1}, \ref{sq90}, and \eqref{eee8}   that
 \be \la{eg4} \ba
\si^4|\tilde K_1|&\le C\si^4\|\n^{1/2}\te_{tt}\|_{L^2}\|\na \te_{tt}\|_{L^2}\|u\|_{L^\infty}\\
&\le \de \si^4\|\na \te_{tt}\|_{L^2}^2+C(\de) \si^4\|\n^{1/2}\te_{tt}\|^2_{L^2} ,\ea\ee
\be \la{eg16}\ba
\si^4|\tilde K_2|&\le C \si^4\|\n_{tt}\|_{L^2}\|\te_{tt}\|_{L^6} \left( \|\te_t\|_{H^1}+\|\na\te\|_{L^3}
+1\right) \\
&\le \de\si^4\|\na \te_{tt}\|_{L^2}^2+C(\de)
,\ea\ee
\be \la{eg7}\ba
\si^4|\tilde K_4|&\le C\si^4\|\te_{tt}\|_{L^6}
\left( \|\na\te\|_{L^3}\|\n u_{tt}\|_{L^2}
+\|\na\te_t\|_{L^2}\|\na u_t\|_{L^2}\right)\\
&\quad+ C\si^4\|\te_{tt}\|_{L^6} \left( \|\na u\|_{L^3}\|\n \te_{tt}\|_{L^2}
+ \|\na u_t\|_{L^2}\| \te_t\|_{L^3}\right)\\
&\quad+C\si^4\|\te\|_{L^\infty}\|\n\te_{tt}\|_{L^2} \|\na u_{tt}\|_{L^2} \\
&\le \de\si^4\|\na \te_{tt}\|_{L^2}^2
+C(\de)\left(\si^4 \|\n^{1/2} \te_{tt}\|_{L^2}^2+\si^3\|\na u_{tt}\|_{L^2}^2  \right)+C(\de),\ea\ee
\be \la{eg8}\ba
\si^4|\tilde K_5|&\le C\si^4\|\te_{tt}\|_{L^6}
\left( \|\na u_t\|_{L^2}^{3/2}\|\na u_{t}\|_{L^6}^{1/2}+\|\na u\|_{L^3}\|\na u_{tt}\|_{L^2}\right)  \\
&\quad+C\si^4\|\te_{tt}\|_{L^6}
\left( \|\na H_t\|_{L^2}^{3/2}\|\na H_{t}\|_{L^6}^{1/2}+\|\na H\|_{L^3}\|\na H_{tt}\|_{L^2}\right)  \\
&\le \de\si^4\|\na \te_{tt}\|_{L^2}^2
+C(\de)\si^4\left(\|\na u_{tt}\|_{L^2}^2 +\|\na H_{tt}\|_{L^2}^2\right) +C(\de),\ea\ee
and
\be \la{eg6}\ba
\si^4|\tilde K_3|&\le C \si^4\|\n_t\|_{L^3} \|\te_{tt}\|_{L^6}
\left( \si^{-1/2}\|\na u_t\|_{L^2} +\|\na\te_{t}\|_{L^2} \right) \\
&\le \de\si^4\|\na \te_{tt}\|_{L^2}^2 +C(\de),\ea\ee
where in the last inequality we have used (\ref{eg12}).

Next, multiplying (\ref{eg3})  by $\si^4$ and substituting (\ref{eg4})--(\ref{eg6}) into the resulting inequality, one obtains after  choosing $\de$ suitably small that
\bnn \ba
& \frac{d}{dt}\int\si^4\n|\te_{tt}|^2dx +\frac{\ka(\ga-1)}{R}\int\si^4|\na \te_{tt}|^2dx\\
& \le  C\si^2\left(\|\n^{1/2} \te_{tt}\|_{L^2}^2
+\|\na u_{tt}\|_{L^2}^2 +\|\na H_{tt}\|_{L^2}^2 \right)+C,\ea\enn
which along with (\ref{eg10})   gives
\be\la{eg13} \sup_{ 0\le t\le T}\si^4\int  \n |\te_{tt}|^2dx
+\int_{0}^T\si^4\int_{ } |\nabla \te_{tt}|^2 dxdt\le C.\ee

Finally, applying the standard $L^2$-estimate  to (\ref{eg1}), we deduce from  Lemmas \ref{le11}--\ref{pe1}, (\ref{va1}),  (\ref{eg13}),
and (\ref{eg17}) that
\be\la{eg14}\ba
&\sup_{0\le t\le T}\si^2\|\na^2\te_t\|_{L^2}\\
&\le  C\sup_{0\le t\le T}\si^2\left(\|\n\te_{tt}\|_{L^2}
+  \|\n_t\|_{L^3}\|\te_t\|_{L^6}+\|\n_t\|_{L^6} \left(\|\na\te\|_{L^3}+1\right)\right)\\
& \quad +C\sup_{0\le t\le T}\si^2\left(\|\n^{1/2} \te_t\|_{L^2}+\|\na\te_t\|_{L^2}+ (1+\|\na^2\te\|_{L^2}) \|\na u_t\|_{L^2}\right)\\
&\quad+C\sup_{0\le t\le T}\si^2(
 \|\na u_t\|_{L^6}+\|\na H_t\|_{L^6})\\
& \le  C.\ea\ee
Moreover, it follows from  the standard $H^2$-estimate  of
$(\ref{3.29})$, (\ref{hs}), \eqref{d7},  \eqref{nq1}, and Lemma \ref{le11}   that
\bnn\ba &\|\na^2\te\|_{H^2}\\
&\le C\left(\|\n\te_t\|_{H^2}+\|\n u\cdot\na\te\|_{H^2}
+\|\n\te\div u\|_{H^2}+\||\na u|^2\|_{H^2}+\||\na H|^2\|_{H^2} \right)\\
&\le C\left((1+\|\n-1\|_{H^2} )\|\te_t\|_{H^2}
+(\|\n-1\|_{H^2}+1) \| u\|_{H^2}\|\na\te\|_{H^2}\right)\\
&\quad+C(1+\|\n-1\|_{H^2})(1+\|\te-1\|_{H^2})  \| \div u\|_{H^2}+C(\|\na u\|^2_{H^2}+\|\na H\|^2_{H^2})\\
&\le C\si^{-1}+ C\| \na^3\te \|_{L^2}+C\| \te_t\|_{H^2}.\ea\enn
Combining this with
  (\ref{eg17}),  (\ref{eg14}), and  (\ref{eg13}) concludes (\ref{egg17}).
We finish the proof of Lemma \ref{sq91}.
\end{proof}

\section{\la{se5}Proof of  Theorem  \ref{th1}}

With all the a priori estimates in Sections \ref{se3} and \ref{se4}
at hand, we are in a  position to prove the main result  of this paper in this section.

\begin{pro} \la{pro2}

 For  given numbers $M>0$ (not necessarily small),
  $\on> 2,$ and $\bt>1,$   assume that  $(\rho_0,u_0,\te_0,H_0)$ satisfies (\ref{2.1}),  (\ref{3.1}),
and   (\ref{z01}). Then    there exists a unique classical solution  $(\rho,u,\te,H) $      of problem (\ref{h0})--(\ref{ch2})
 in $\Omega\times (0,\infty)$ satisfying (\ref{mn5}) and (\ref{mn2}) with $T_0$ replaced by any $T\in (0,\infty).$
  Moreover,  (\ref{zs2}), (\ref{a2.112}), (\ref{ae3.7}), and  (\ref{vu15})  hold for any $T\in (0,\infty).$

 \end{pro}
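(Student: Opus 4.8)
The plan is the standard continuation argument: extend the local classical solution from Lemma~\ref{th0} to all times using the a priori bounds of Sections~\ref{se3} and \ref{se4}, which are uniform on any finite time interval. First I would observe that since the data satisfy \eqref{2.1} and \eqref{3.1} with $\inf_\Omega\rho_0>0$, the quantity $\tilde g$ in \eqref{co12} lies in $H^1\subset L^2$, so the compatibility condition \eqref{wq01} holds automatically and all estimates in Lemmas~\ref{le11}--\ref{sq91} are available once the a priori assumptions \eqref{z1} are known. Then I would set $T^*$ to be the supremum of all $T>0$ for which \eqref{h0}--\eqref{ch2} has a solution on $\Omega\times(0,T]$ in the class \eqref{mn5} satisfying \eqref{z1}, and the goal is to show $T^*=\infty$.

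Next I would verify $T^*>0$. Using the definitions \eqref{As1}--\eqref{As3}, continuity in $t$ of the norms there, and $\sigma(t)=t$ near $t=0$, the time-integral parts of $A_2,A_3,A_4$ vanish as $T\downarrow 0$ while the supremum parts tend to their $t=0$ values; since $\frac{R}{2(\ga-1)}\int\rho_0(\theta_0-1)^2\,dx\le(\bt+1)C_0$ by \eqref{hb} and $\|\na u_0\|_{L^2}^2+\|\na H_0\|_{L^2}^2\le M^2<3K$, the stronger bounds \eqref{zs2}, and a fortiori \eqref{z1}, hold on a short interval when $C_0\le\ve_0$, so $T^*>0$. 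For every $T<T^*$ Proposition~\ref{pr1} then applies and upgrades \eqref{z1} to \eqref{zs2}; letting $T\uparrow T^*$ and using continuity, \eqref{zs2}, and with it \eqref{a2.112}, \eqref{ae3.7}, \eqref{vu15} and all the higher-order estimates of Section~\ref{se4}, holds on $[0,T^*]$.

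The core of the argument, assuming $T^*<\infty$, is to re-start the local theory at $t=T^*$. From \eqref{lee2}, \eqref{qq1}, \eqref{y2} and Lemma~\ref{le9-1} one gets $(\rho-1,u,\theta-1,H)(\cdot,T^*)\in H^3$; from the Lagrangian form of $(\ref{h0})_1$ together with $\int_0^{T^*}\|\div u\|_{L^\infty}\,dt<\infty$ (which follows from \eqref{w2} via Cauchy--Schwarz) one gets $\inf_\Omega\rho(\cdot,T^*)\ge(\inf_\Omega\rho_0)e^{-C\sqrt{T^*}}>0$; and the lower bound \eqref{mn2} with the same integrability gives $\inf_\Omega\theta(\cdot,T^*)>0$. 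Hence $(\rho,u,\theta,H)(\cdot,T^*)$ satisfies \eqref{2.1}, so Lemma~\ref{th0} applies with $T^*$ as the starting time and extends the solution in the class \eqref{mn5} to $\Omega\times(0,T^*+T_0']$ for some $T_0'>0$; because \eqref{zs2} is a strict improvement of \eqref{z1}, continuity in $t$ shows \eqref{z1} still holds slightly past $T^*$, contradicting the definition of $T^*$. Therefore $T^*=\infty$; uniqueness on each bounded interval is inherited from Lemma~\ref{th0}, and \eqref{mn5}, \eqref{mn2}, \eqref{zs2}, \eqref{a2.112}, \eqref{ae3.7}, \eqref{vu15} hold for every finite $T$.

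I expect the main obstacle to be the bootstrap bookkeeping rather than any isolated estimate: one must make sure \eqref{z1} is an open condition along the flow and that Proposition~\ref{pr1} closes it strictly, so that the set of admissible $T$ is both open and closed in $(0,T^*]$. A subsidiary point needing care is that the Section~\ref{se4} estimates carry the weights $\sigma(t)=\min\{1,t\}$, hence are uniform only away from $t=0$; this is harmless here because the local theory is only re-started at the strictly positive time $T^*$, where the full $H^3$-regularity of Lemma~\ref{th0} is at hand. Finally, the lower bound on $\rho$ produced this way decays like $e^{-C\sqrt{T}}$ as $T\to\infty$, which is acceptable since the vacuum case of Theorem~\ref{th1} is recovered afterwards by an approximation scheme whose a priori estimates are, by design, independent of $\inf_\Omega\rho_0$.
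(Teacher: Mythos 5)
Your overall strategy---set up a maximal time on which \eqref{z1} holds, use Proposition~\ref{pr1} to upgrade \eqref{z1} to \eqref{zs2}, and re-start the local theory at the putative blow-up time---is exactly the paper's. But there is a genuine gap in the step ``From \eqref{lee2}, \eqref{qq1}, \eqref{y2} and Lemma~\ref{le9-1} one gets $(\rho-1,u,\theta-1,H)(\cdot,T^*)\in H^3$.'' Those estimates give $\rho-1 \in L^\infty(0,T;H^2\cap W^{2,q})$ and $\rho_t\in L^\infty(0,T;H^1)$, which does \emph{not} control $\nabla^3\rho$ in $L^2$: $W^{2,q}$ with $q\in(3,6)$ carries no information about third derivatives, and on an exterior (infinite-measure) domain $L^q$ control of $\nabla^2\rho$ does not even imply $L^2$ control of $\nabla^2\rho$. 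Since Lemma~\ref{th0} explicitly requires $\rho_0-1\in H^3$ in order to re-start the local evolution, your continuation cannot be closed with what you have cited. For $u$, $\theta$, and $H$ the $\sigma$-weighted estimates of Lemmas~\ref{pe1}--\ref{sq91} do give $H^3$ (indeed higher) regularity at the positive time $T^*$, but nothing of the sort is available for $\rho$.

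The paper fills precisely this hole with an additional argument (the claim labelled \eqref{y12}). It is not for free: one first uses the full regularity \eqref{2.1} and $\inf_\Omega\rho_0>0$ to show that $\sqrt{\rho}\dot\theta(\cdot,0)\in L^2$ and $u_t(\cdot,0),H_t(\cdot,0)\in H^1$ have initial data bounded by $\tilde C=\tilde C(T_*,\inf\rho_0)$; one then re-derives the Section~4 estimates \emph{without} the $\sigma$-weights, obtaining $\sup_t\|\nabla^2\theta\|_{L^2}$, $\sup_t(\|u_t\|_{H^1}+\|\nabla H_t\|_{L^2})$, $\sup_t(\|u\|_{H^3}+\|H\|_{H^3})$, and $\int_0^T(\|\nabla^3\theta\|_{L^2}^2+\|\nabla u_t\|_{H^1}^2)\,dt$ uniformly in $T<T_*$; and only then does a transport-equation/Gr\"onwall argument for $\|\nabla^3\rho\|_{L^2}$ close, because it needs $\|\nabla^4 u\|_{L^2}$ integrable, which comes from the preceding unweighted bounds via \eqref{2tdu}. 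Your ``subsidiary point'' acknowledges that the $\sigma$-weights are lost near $t=0$, but the resolution is not merely that $T^*>0$: you need the unweighted estimates from $t=0$ onward to feed the Gr\"onwall inequality for $\nabla^3\rho$. Without this extra loop the contradiction argument does not reach $H^3$ regularity of the density at $T^*$, and the re-application of Lemma~\ref{th0} is unjustified.
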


\begin{proof}
First, with the help of the standard local existence result (Lemma \ref{th0}), there exists a small $T_0>0$ which may depend on
$\inf\limits_{x\in \Omega}\n_0(x), $  such that the  problem
 (\ref{h0})--(\ref{ch2})  with   initial data $(\n_0 ,u_0,\te_0,H_0 )$
 has   a unique classical solution $(\n,u,\te,H)$ on $\O\times(0,T_0],$  which satisfies (\ref{mn6})--(\ref{mn2}). Now we will use the a priori estimates in Proposition \ref{pr1} to extend the local classical solution to all time.
Since
\bnn A_1(0)\le M^2,\quad  A_2(0)\le  C_0^{1/4},\quad A_3(0)+A_4(0)=0, \quad  \n_0<
 \hat{\rho},\quad \te_0\le \bt,\enn  then  there exists a
$T_1\in(0,T_0]$ such that (\ref{z1}) holds for $T=T_1.$

 We set \bnn \notag T^* =\sup\left\{T\,\left|\, \sup_{t\in [0,T]}\|(\n-1,u,\te-1,H)\|_{H^3}<\infty\right\},\right.\enn  and \be \la{s1}T_*=\sup\{T\le T^* \,|\,{\rm (\ref{z1}) \
holds}\}.\ee Then $ T^*\ge T_* \geq T_1>0.$
 Next, we claim that
 \be \la{s2}  T_*=\infty.\ee  Otherwise,    $T_*<\infty.$
Hence, by (\ref{z01}), Proposition \ref{pr1} tells us  (\ref{zs2})
  holds for all $0<T<T_*,$  which  implies
    Lemmas \ref{le11}--\ref{sq91} still hold for all  $0< T< T_* .$
Note here that all  constants $C$  in  Lemmas \ref{le11}--\ref{sq91}
depend  on $T_*  $ and $\inf\limits_{x\in \Omega}\n_0(x)$, are in fact independent  of  $T$.
Then,  we claim that  there
exists a positive constant $\tilde{C}$ which may  depend  on $T_* $
and $\inf\limits_{x\in \Omega}\n_0(x)$   such that, for all  $0< T<
 T_*,$  \be\la{y12}\ba \sup_{0\le t\le T}
\| \n-1\|_{H^3}   \le \tilde{C},\ea \ee which together with Lemmas \ref{le9-1}, \ref{pe1},  \ref{sq90},  \eqref{mn2}, and (\ref{3.1}) infers
$$\|(\n(x,T_*)-1,u(x,T_*),\te(x,T_*)-1,H(x,T_*))\|_{H^3}
 \le \tilde{C},$$
 $$\inf_{x\in \Omega}\n(x,T_*)>0,\quad\inf_{x\in \Omega}\te(x,T_*)>0.$$
Lemma \ref{th0} thus implies that there exists some $T^{**}>T_*,$  such that
(\ref{z1}) holds for $T=T^{**},$   which contradicts (\ref{s1}).
Hence, (\ref{s2}) holds. This as well as  Lemmas \ref{th0}, \ref{a13.1}, \ref{le8}, and Proposition \ref{pr1}, thus concludes the proof of   Proposition \ref{pro2}.

 Finally, it remains to prove (\ref{y12}). By $(\ref{h0})_3$ and (\ref{2.1}), one can define
 \be\ba\notag
\sqrt\n\dot\theta(\cdot,0)\triangleq &- \frac{\ga-1}{R} \rho_0^{-1/2}
\left(\ka\Delta\te_0-R\rho_0 \theta_0 \div u_0\right)\\
&+ \frac{\ga-1}{R} \rho_0^{-1/2}
\left(\lambda (\div u_0)^2+2\mu |\mathfrak{D}(u_0)|^2+\nu|\curl H_0|^2\right),
 \ea\ee
 which along with  (\ref{2.1}) yields
 \be \la{ssp91}\|\sqrt\n\dot\theta(\cdot,0)\|_{L^2}\le \tilde{C}.\ee
Then it follows from \eqref{ssp91},  (\ref{3.99}), and Lemma \ref{le11} that
\be  \ba \la{a51}
\sup\limits_{0\le t\le T} \int \n|\dot\te|^2dx+\int_0^T \|\na\dot\te\|_{L^2}^2dt \le \tilde{C},
\ea\ee
which combined with  (\ref{lop4}) and Lemma \ref{le11} shows that
\be\la{sp211}
\sup\limits_{0\le t\le T}\|\na^2 \theta\|_{L^2} \le \tilde{C}.\ee

Next, by  $(\ref{h0})_2$ and  (\ref{2.1}), one can define
\be\ba\notag
u_t(\cdot,0) \triangleq &-u_0\cdot\na u_0-R\n_0^{-1}\na (\n_0\te_0)\\
&+\n_0^{-1}\left( \mu \Delta u_0 + (\mu+\lambda) \na \div u_0 +H_0\cdot\na H_0-\frac{1}{2}\na|H_0|^2\right),
\ea\ee
which as well as (\ref{2.1}) and \eqref{fa} leads to
\be\la{ssp9}
\|\na u_t(\cdot,0)\|_{L^2}+\|\na H_t(\cdot,0)\|_{L^2}\le \tilde C.
\ee
Thus, one deduces from Gr\"{o}nwall's inequality, Lemmas \ref{le11}, \ref{le9-1}, \eqref{eee8}, \eqref{va1},  (\ref{4.052}),  (\ref{s4}), \eqref{pl}, and (\ref{a51})--(\ref{ssp9})  that
\be\la{ssp1}
\sup_{0\le t\le T}(\| u_t\|_{H^1}+\|\na H_t\|_{L^2})+\int_0^T\int (\n|u_{tt}|^2+|H_{tt}|^2)dxdt\le\tilde{C},\ee
which together with  (\ref{va2}), \eqref{d3}, \eqref{sp211}, and (\ref{qq1}) implies
\be\la{sp221} \sup\limits_{0\le t\le T}(\|u\|_{H^3}+\|H\|_{H^3}) \le \tilde{C}.\ee
This combined with Lemma \ref{le11},  \eqref{ex4}, \eqref{nt4}, (\ref{a51}), (\ref{sp211}), (\ref{ssp1}),   and  (\ref{sp221}) gives
\be\la{ssp24} \ia\left(\|\na^3\te\|_{L^2}^2+ \|\nabla u_t\|_{H^1}^2\right)dt\le \tilde{C} . \ee

Now, some standard calculations lead to
 \be\la{sp134}\ba  & \left(\|\na^3 \n\|_{L^2} \right)_t \\
&\le \tilde{C}\left(\| |\na^3u| |\na \n| \|_{L^2}+ \||\na^2u||\na^2
      \n|\|_{L^2}+ \||\na u||\na^3 \n|\|_{L^2} +\| \na^4u \|_{L^2} \right)\\
&\le \tilde{C}\left(\| \na^3 u\|_{L^2}\|\na \n \|_{H^2}+ \| \na^2u\|_{L^3}\|\na^2 \n \|_{L^6}
       +\|\na u\|_{L^\infty}\|\na^3 \n\|_{L^2} +\| \na^4u \|_{L^2}\right) \\
&\le \tilde{C}(1+ \| \na^3\n \|_{L^2}+ \| \na^2 u_t\|^2_{L^2}+ \|\na^3\te\|_{L^2}), \ea\ee
where we have used (\ref{sp221}),  Lemma \ref{le11}, and the following fact:
\be \notag\ba \|\na^2 u\|_{H^2}
&\le\tilde{C}\left(\| \n\dot u \|_{H^2}+\||H||\na H|\|_{H^2}+\|\na P\|_{H^2} +1\right)\\
&\le \tilde{C}(1+\| \n-1\|_{H^2})(\|  u_t \|_{H^2}+\|  u \|_{H^2}\|  \na u \|_{H^2})+\tilde{C}\|H\|_{H^2}\|\na  H \|_{H^2}\\
&\quad+\tilde C(1+\|\n-1\|_{H^2}+\|\te-1 \|_{H^2})(\|\na\n\|_{H^2}+\|\na\te\|_{H^2})+\tilde C\\
& \le \tilde{C} (1+ \|\na^2  u_t\|_{L^2}+\|\na^3 \n \|_{L^2}+\|\na^3 \te \|_{L^2})\ea\ee
owing to (\ref{2tdu}),  \eqref{hs}, \eqref{sp211}, \eqref{ssp1}, \eqref{sp221}, and Lemma \ref{le11}.

Applying Gr\"{o}nwall's inequality to  \eqref{sp134} and using (\ref{ssp24})  show that
\bnn\la{sp26} \sup\limits_{0\le t\le
T}\|\nabla^3  \n\|_{L^2} \le \tilde{C},\enn which together with
(\ref{qq1}) gives (\ref{y12}).
The proof of Proposition \ref{pro2} is completed.
\end{proof}

With  Proposition \ref{pro2} at hand, we are ready to prove  Theorem \ref{th1}.

\begin{proof}[Proof of  Theorem   \ref{th1}]
 Let $(\n_0,u_0,\te_0,H_0)$  satisfying (\ref{co3})--(\ref{co2}) be the initial data in Theorem \ref{th1}.  Assume that  $C_0$  satisfies (\ref{co14}) with \be\notag\ve\triangleq \ve_0/2,\ee
  where  $\ve_0$  is given in Proposition \ref{pr1}.

To begin with, we  construct the approximate initial data $(\n_0^{m,\eta},u_0^{m,\eta}, \te_0^{m,\eta},H_0^{m,\eta})$. For constants
\be\notag
m \in \mathbb{Z}^+,\ \  \eta \in \left(0, \eta_0 \right),\ \  \eta_0\triangleq \min\xl\{1,\frac{1}{2}(\on-\sup\limits_{x\in \O}\n_0(x)) \xr\},
\ee
we set
\begin{align*}
\n_0^{m,\eta} = \frac{\n_0^{m}+ \eta}{1+\eta},\ \  u_0^{m,\eta}=\frac{u_0^m }{1+\eta},\ \  \te_0^{m,\eta}= \frac{\te_0^{m} + \eta}{1+\eta},\ \ H_0^{m,\eta}=\frac{H_0^m }{1+\eta},
\end{align*}
where $\n_0^{m}$, $u_0^m$, $\te_0^m$,  and $H_0^m$ satisfy that
\be\ba\notag
&0 \le \n_0^{m} \in C^{\infty},\ \  \lim_{m \to \infty} \|\n_0^{m} -\rho_0\|_{H^2 \cap W^{2,q}}=0,
\ea\ee
\be\notag
u_0^{m} \in C^{\infty},\ \ u_0^m\cdot n=0 ,\,\,\curl u_0^m\times n=0\,\,\text{on}\,\,\p\O,\ \ \lim_{m \to \infty}\| \tilde{u}_0^m -{u}_0\|_{H^2}=0,
\ee
\be\ba\notag
H_0^{m} \in C^{\infty},\ \ \div H_0^m&=0,\ \ H_0^m\cdot n=0 ,\,\,\curl H_0^m\times n=0\,\,\text{on}\,\,\p\O,\\ &\lim_{m \to \infty}\| \tilde{H}_0^m -{H}_0\|_{H^2}=0,
\ea\ee
and $\te_0^m$ is the unique smooth solution to the following Poisson equation:
\be\notag\begin{cases}
	\Delta \te_0^m=\Delta \tilde{\te}_0^m,&\text{in}\,\,\O,\\
	\na \te_0^m\cdot n=0 ,&\text{on}\,\,\p\O,\\
	 \te_0^m\rightarrow1,\,\,&\text{as}\,\,|x|\rightarrow\infty,
\end{cases}\ee
with $\tilde{\te}_0^m=\tilde\te_0\ast j_{m^{-1}}$, $\tilde\te_0-1$ is the  $H^1$-extension of $\te_0-1$,  and  $j_{m^{-1}}(x)$ is the standard mollifying kernel of width $m^{-1}$.

Then for any $\eta\in (0, \eta_0)$, there exists $m_1(\eta)\ge 0$ such that for $m \ge m_1(\eta)$, the approximate initial data
$(\n_0^{m,\eta},u_0^{m,\eta}, \te_0^{m,\eta},H_0^{m,\eta})$ satisfies
\be \la{de3}\begin{cases}(\n_0^{m,\eta}-1,u_0^{m,\eta}, \te_0^{m,\eta}-1,H_0^{m,\eta})\in C^\infty ,\quad \div H_0^{m,\eta}=0,\\
	\dis \frac{\eta}{2}\le  \n_0^{m,\eta}  <\hat\n,~~\, \frac{\eta}{4}\le \te_0^{m,\eta} \le \hat \te,~~\,\|\na u_0^{m,\eta}\|_{L^2}+\|\na H_0^{m,\eta}\|_{L^2} \le M, \\
	\dis u_0^{m,\eta}\cdot n=0,~~\,\curl u_0^{m,\eta}\times n=0,~~\,\na \te_0^{m,\eta}\cdot n=0,\,\, \text{on}\,\p\O,\\
\dis H_0^{m,\eta}\cdot n=0,~~\,\curl H_0^{m,\eta}\times n=0,\,\, \text{on}\,\p\O,	\end{cases}
\ee
and
 \be\ba \la{de03}
&\lim\limits_{\eta\rightarrow 0} \lim\limits_{m\rightarrow \infty}
\left(\| \n_0^{m,\eta} - \n_0 \|_ {H^2 \cap W^{2,q}}+\| u_0^{m,\eta}-u_0\|_{H^2}\right)=0,\\
&\lim\limits_{\eta\rightarrow 0} \lim\limits_{m\rightarrow \infty}
\left(\| \te_0^{m,\eta}- \te_0  \|_{H^1}+\|H_0^{m,\eta}-H_0\|_{H^2}\right)=0.
\ea\ee
Moreover,  the initial energy $C_0^{m,\eta}$
for $(\n_0^{m,\eta},u_0^{m,\eta}, \te_0^{m,\eta},H_0^{m,\eta}),$ which is defined by  the right-hand side of (\ref{e})
with $(\n_0,u_0,\te_0,H_0)$   replaced by
$(\n_0^{m,\eta},u_0^{m,\eta}, \te_0^{m,\eta},H_0^{m,\eta}),$
satisfies \bnn \lim\limits_{\eta\rightarrow 0} \lim\limits_{m\rightarrow \infty} C_0^{m,\eta}=C_0.\enn
Therefore, there exists  an  $\eta_1\in(0, \eta_0) $
such that, for any $\eta\in(0,\eta_1),$ we can find some $m_2(\eta)\geq m_1(\eta)$  such that   \be \la{de1} C_0^{m,\eta}\le C_0+\ve_0/2\le  \ve_0 , \ee
provided that\be  \la{de7}0<\eta<\eta_1 ,\,\, m\geq m_2(\eta).\ee

 Now, assume that $m,\eta$ satisfy (\ref{de7}),
  Proposition \ref{pro2} combined with (\ref{de1}) and (\ref{de3}) thus implies that the problem (\ref{h0})--(\ref{ch2}) with  initial data $(\n_0^{m,\eta},u_0^{m,\eta}, \te_0^{m,\eta},H_0^{m,\eta})$
has a smooth solution  $(\n^{m,\eta},u^{m,\eta}, \te^{m,\eta},H^{m,\eta}) $
   on $\Omega\times (0,T] $ for all $T>0. $
    Moreover,  (\ref{h8}), \eqref{zs2}, (\ref{a2.112}), \eqref{ae3.7}, and (\ref{vu15})   with $(\n,u,\te,H)$  being replaced by $(\n^{m,\eta},u^{m,\eta}, \te^{m,\eta},H^{m,\eta})$ all hold.

 Next, for the initial data $(\n_0^{m,\eta},u_0^{m,\eta}, \te_0^{m,\eta},H_0^{m,\eta})$, the function $\tilde g$ in (\ref{co12})  is
 \be \la{co5}\ba \tilde g & \triangleq(\n_0^{m,\eta})^{-1/2}\left(-\mu \Delta u_0^{m,\eta}-(\mu+\lambda)\na\div
 u_0^{m,\eta}+R\na (\n_0^{m,\eta}\te^{m,\eta}_0)\right)\\
 &\quad-(\n_0^{m,\eta})^{-1/2}(\na\times H_0^{m,\eta})\times H_0^{m,\eta}\\
& = (\n_0^{m,\eta})^{-1/2}\sqrt{\n_0}g+\mu(\n_0^{m,\eta})^{-1/2}\Delta(u_0-u_0^{m,\eta})\\
&\quad+(\mu+\lambda) (\n_0^{m,\eta})^{-1/2} \na \div(u_0-u_0^{m,\eta})+ R(\n_0^{m,\eta})^{-1/2} \na(\n_0^{m,\eta}\te_0^{m,\eta}-\n_0\te_0)\\
&\quad+(\n_0^{m,\eta})^{-1/2}((\na\times H_0)\times H_0-(\na\times H_0^{m,\eta})\times H_0^{m,\eta}),\ea\ee
where in the second equality we have used (\ref{co2}).
Since $g \in L^2,$ it follows from (\ref{co5}),  (\ref{de3}), (\ref{de03}), and  (\ref{co3})  that for any $\eta\in(0,\eta_1),$ there exist some $m_3(\eta)\geq m_2(\eta)$ and a positive constant $C$ independent of $m$ and $\eta$ such that
 \be\la{de4}
 	\|\tilde g\|_{L^2}\le (1+\eta)^{1/2}\|g\|_{L^2}+C\eta^{-1/2}\de(m) + C\eta^{1/2},
 	\ee
with   $0\le\de(m) \rightarrow 0$ as
$m \rightarrow \infty.$ Hence,  for any  $\eta\in(0,\eta_1),$ there exists some $m_4(\eta)\geq m_3(\eta)$ such that for any $ m\geq m_4(\eta)$,
 \be \la{de9}\de(m) <\eta.\ee  We thus obtain from (\ref{de4}) and (\ref{de9}) that
there exists some positive constant $C$ independent of $m$ and $\eta$ such that  \be\la{de14}  \|\tilde g \|_{L^2}\le \|g \|_{L^2}+C,\ee provided that\be \la{de10} 0<\eta<\eta_1,\,\,  m\geq m_4(\eta).\ee

 Now, we   assume that $m,$  $\eta$ satisfy (\ref{de10}).
 We thus deduce from (\ref{de3})-- (\ref{de1}),  (\ref{de14}), Proposition \ref{pr1}, 
 and Lemmas \ref{le8}, \ref{le11}--\ref{sq91} that for any $T>0,$
 there exists some positive constant $C$ independent of $m$ and $\eta$ such that
 (\ref{h8}), (\ref{zs2}),   (\ref{a2.112}),  \eqref{ae3.7}, (\ref{vu15}), \eqref{lee2}, \eqref{qq1},  (\ref{va5})--(\ref{vva5}), \eqref{nq1}, (\ref{y2}),  (\ref{eg17}),
  and  (\ref{egg17})  hold for  $(\n^{m,\eta},u^{m,\eta}, \te^{m,\eta},H^{m,\eta}) .$
   Then passing  to the limit first $m\rightarrow \infty,$ then $\eta\rightarrow 0,$
 along with standard arguments leads to  that there exists a solution $(\n,u,\te,H)$ of the problem (\ref{h0})--(\ref{ch2})
   on $\Omega\times (0,T]$ for all $T>0$, which   satisfies  (\ref{h8}), (\ref{a2.112}),    \eqref{ae3.7},  (\ref{vu15}),  \eqref{lee2}, \eqref{qq1}, (\ref{va5})--(\ref{vva5}), \eqref{nq1}, (\ref{y2}),  (\ref{eg17}), (\ref{egg17}),
   and  the estimates of $A_i(T)\,(i=1,\cdots,4)$ in
   (\ref{zs2}). Therefore,    $(\n,u,\te,H)$ satisfies  (\ref{h8}) and \eqref{h9}.

Finally, since the proof of the uniqueness of $(\n,u,\te,H)$ is similar to that  of \cite[Theorem 1]{choe1}, we omit it here for simplicity. To finish the proof of Theorem \ref{th1}, it remains to prove (\ref{h11}).
On the one hand, one can rewrite $\eqref{h0}_1$  as
\be\la{vvv}(\n-1)_t+\div((\n-1)u)+\div u=0.\ee
Multiplying (\ref{vvv}) by $4(\n-1)^3$
and integrating by parts, one derives that, for $t\ge 1,$
\be  \notag(\|\n-1\|_{L^4}^4)'(t) =-3\int(\n-1)^4\div u dx-4\int(\n-1)^3\div u dx,\ee
which combined with (\ref{vu15}) implies that
\be \la{hhh}
\int_{1}^{\infty}|(\|\n-1\|_{L^4}^4)'(t)| dt\le C\int_{1}^{\infty}\|\n-1\|_{L^4}^4dt +C\int_{1}^{\infty} \|\na u\|_{L^4}^4 dt \le C.\ee
On the other hand, we deduce from $A_i(T)(i=2,3,4)$ in  (\ref{zs2}) and (\ref{vu15}) that
\be\la{mmq}\ba
\int_1^\infty |(\|\na u\|_{L^2}^2)'(t)|dt &=2\int_1^\infty \left|\int \pa_j u^i\pa_j u^i_t dx\right|dt\\
&=2\int_1^\infty \left|\int \pa_j u^i\pa_j(\dot u^i-u^k\pa_k u^i ) dx\right|dt \\&=\int_1^\infty \left|\int (2\pa_j u^i\pa_j \dot u^i-2\pa_j u^i\pa_j u^k\pa_k u^i+|\na u|^2\div u ) dx\right|dt\\&\le C\int_1^\infty \left(\|\na u\|_{L^2}\|\na \dot u\|_{L^2}+\|\na u\|_{L^3}^3\right)dt\\&\le C\int_1^\infty \left(\|\na  \dot u\|_{L^2}^2+\|\na u\|^2_{L^2}+\|\na u\|_{L^4}^4\right)dt \le C,\\
\ea\ee
\be \la{vu34}\ba \int_1^\infty|\left(\|\na\te\|_{L^2}^2\right)'(t)
|dt&= 2\int_1^\infty\left| \int \na\te\cdot \na\te_tdx\right|dt\\
&\le C\int_1^\infty\left(\|\na  \te \|_{L^2}^2+\|\na \te_t\|^2_{L^2}\right)dt \le C,\ea\ee
and
\be \la{vuu}\ba \int_1^\infty|\left(\|\na H\|_{L^2}^2\right)'(t)
|dt&= 2\int_1^\infty\left| \int \na H\cdot \na H_tdx\right|dt\\
&\le C\int_1^\infty\left(\|\na  H \|_{L^2}^2+\|\na H_t\|^2_{L^2}\right)dt \le C.\ea\ee
It thus follows from \eqref{vu15}, \eqref{hhh}--\eqref{vuu},  and $A_2(T)$ in (\ref{zs2}) that
\be\notag
\lim_{t\rightarrow\infty}
\left(\|\rho-1\|_{L^4}+\|\na u\|_{L^2}+\|\na\te\|_{L^2}+\|\na H\|_{L^2}\right)=0,
\ee
which together with \eqref{a2.112}, \eqref{h8}, and (\ref{vu15}) concludes (\ref{h11}).
 The proof of Theorem \ref{th1} is completed.
\end{proof}


\begin {thebibliography} {99}



\bibitem{bkm} J.T. Beale,  T. Kato, A. Majda,
Remarks on the breakdown of smooth solutions for the 3-D Euler equations. {\it Commun. Math. Phys.}, {\bf 94}(1984), 61--66.


\bibitem{C-L} G.C. Cai, J. Li, Existence and exponential growth of global classical solutions to the compressible Navier-Stokes equations with slip boundary conditions in 3D bounded domains. {\it Indiana Univ. Math. J.}, in press.

\bibitem{C-L-L} G.C. Cai, J. Li, B.Q. L\"u, Global classical solutions to the compressible Navier-Stokes equations with slip boundary conditions in 3D exterior domains. 	arXiv: 2112.05586.

\bibitem{cw2002}
G.Q. Chen, D. Wang,
 Global solutions of nonlinear magnetohydrodynamics with large
  initial data.
\newblock {\em J. Differ. Eqs.}, {\bf 182}(2002), 344--376.

\bibitem{ct}
Q. Chen, Z. Tan,
Global existence and convergence rates of smooth solutions for the compressible magnetohydrodynamic equations.
\newblock {\em Nonlinear Anal.}, {\bf 72}(2010), 4438--4451.

\bibitem{ccw} Y.Z. Chen, Y.K. Chen, X. Wang, Global well-posedness of full compressible magnetohydrodynamic system in 3D bounded domains with large oscillations and vacuum. arXiv: 2208.04480.

\bibitem{chs2021-mhd}
Y.Z. Chen, B. Huang, X.D. Shi,
 Global strong and weak solutions to the initial-boundary-value
  problem of two-dimensional compressible MHD system with large initial data  and vacuum.
\newblock {\em SIAM J. Math. Anal.}, {\bf 54}(3)(2022), 3817--3847.

\bibitem{chs2020-mhd}
Y.Z. Chen, B. Huang, X.D. Shi,
 Global strong solutions to the compressible magnetohydrodynamic
  equations with slip boundary conditions in 3D bounded domains.
\newblock arXiv: 2102.07341.

\bibitem{chs}
Y.Z. Chen, B. Huang, X.D. Shi,
Global strong solutions to the compressible magnetohydrodynamic equations with slip boundary conditions in a 3D exterior domain.
 arXiv: 2112.08111.


\bibitem{choe1}Y. Cho, H. Kim, Existence results for viscous polytropic fluids with vacuum. {\it J. Differ. Eqs.}, {\bf 228}(2006), 377--411.


\bibitem{fcpm}
F. Crispo,  P.  Maremonti, An interpolation inequality in exterior domains.  {\it Rend. Sem. Mat.Univ. Padova},  {\bf 112}(2004), 11-39.

\bibitem{df2006}
B. Ducomet, E. Feireisl,
 The equations of magnetohydrodynamics: On the interaction between
  matter and radiation in the evolution of gaseous stars.
\newblock {\em Comm. Math. Phys.}, {\bf 266}(2006), 595--629.

\bibitem{fy2009}
J.S. Fan, W.H. Yu,
 Strong solution to the compressible magnetohydrodynamic equations with vacuum.
\newblock {\em Nonlinear Anal.: Real World Appl.}, (2009), 392--409.

\bibitem{fl2020}
E. Feireisl, Y. Li,
 On global-in-time weak solutions to the magnetohydrodynamic system of
  compressible inviscid fluids.
\newblock {\em Nonlinearity}, {\bf 33}(1)(2020), 139--155.





\bibitem{Hof1} D. Hoff,  Discontinuous solutions of the Navier-Stokes equations
for multidimensional flows of heat-conducting fluids. {\it Arch.
Ration. Mech. Anal.},  {\bf 139}(1997), 303--354.

\bibitem{hhpz2017}
G.Y. Hong, X.F. Hou, H.Y. Peng, C.J. Zhu,
 Global existence for a class of large solutions to three-dimensional
  compressible magnetohydrodynamic equations with vacuum.
\newblock {\em Siam J. Math. Anal.}, {\bf 49}(4)(2017), 2409--2441.

\bibitem{hjp2022}
X.F. Hou, M.N. Jiang, H.Y. Peng,
 Global strong solution to 3D full compressible magnetohydrodynamic
  flows with vacuum at infinity.
\newblock {\em Z. Angew. Math. Phys.}, {\bf 73}(1)(2022),
\newblock Id/No 13.

\bibitem{hw2008}
X.P. Hu, D.H. Wang,
 Global solutions to the three-dimensional full compressible
  magnetohydrodynamic flows.
\newblock {\em Commun. Math. Phys.}, {\bf 283}(2008), 255--284.

\bibitem{hw2010}
X.P. Hu, D.H. Wang,
 Global existence and large-time behavior of solutions to the
  three-dimensional equations of compressible magnetohydrodynamic flows.
\newblock {\em Arch. Ration. Mech. Anal.}, {\bf 197}(2010), 203--238.

\bibitem{hss2021}
B. Huang, X.D. Shi, Y. Sun,
 Large-time behavior of magnetohydrodynamics with
  temperature-dependent heat-conductivity.
\newblock {\em J. Math. Fluid Mech.}, {\bf 23}(3)(2021), 23.


\bibitem{H-L}
X.D. Huang,  J. Li, Global classical and weak solutions to the three-dimensional full compressible Navier-Stokes system with vacuum and large oscillations.  {\it Arch. Ration. Mech. Anal.}, \textbf{227}(2018), 995--1059.

\bibitem{h1x} X.D. Huang, J. Li, Z.P. Xin,
Serrin type criterion for the three-dimensional compressible flows. {\it  Siam J. Math. Anal.}, {\bf 43}(4)(2011), 1872--1886.

\bibitem{hulx} X.D. Huang, J. Li, Z.P. Xin,  Global well-posedness of classical solutions with large oscillations and vacuum to the three-dimensional isentropic compressible Navier-Stokes equations. {\it Comm. Pure Appl. Math.}, {\bf 65}(4)(2012), 549--585.


\bibitem{k1984}
S. Kawashima,
 Smooth global solutions for two-dimensional equations of
  electro-magneto-fluid dynamics.
\newblock {\em Japan J. Appl. Math.}, {\bf 1}(1984), 207--222.

\bibitem{ko1982}
S. Kawashima, M. Okada,
 Smooth global solutions for the one-dimensional equations in
  magnetohydrodynamics.
\newblock {\em Proc. Japan Acad. Ser. A Math. Sci.}, {\bf 53}(9)(1982), 384--387.






\bibitem{lxz2013}
H.L. Li, X.Y. Xu, J.W. Zhang,
 Global classical solutions to {3D} compressible magnetohydrodynamic
  equations with large oscillations and vacuum.
\newblock {\em SIAM J. Math. Anal.}, {\bf 45}(2013), 1356--1387.

\bibitem{lll}J. Li, J.X. Li, B.Q. L\"u, Global classical solutions to the full compressible
Navier-Stokes system with slip boundary conditions in 3D
exterior domains. arXiv: 2208.11925.

\bibitem{ls2019}
Y. Li, Y.Z. Sun,
 Global weak solutions to a two-dimensional compressible MHD
  equations of viscous non-resistive fluids.
\newblock {\em J. Differ. Eqs.}, {\bf 267}(6)(2019), 3827--3851.

\bibitem{ls2021}
Y. Li, Y.Z. Sun,
 On global-in-time weak solutions to a two-dimensional full  compressible nonresistive {MHD} system.
\newblock {\em SIAM J. Math. Anal.}, {\bf 53}(4)(2021), 4142--4177.

\bibitem{llz2021}
H.R. Liu, T.Luo, H.~Zhong,
Global solutions to an initial boundary problem for the compressible
  3-D MHD equations with {Navier-slip} and perfectly conducting boundary
  conditions in exterior domains.
 arXiv: 2106.04329.

\bibitem{lyz2013}
S.Q. Liu, H.B. Yu, J.W. Zhang,
 Global weak solutions of 3D compressible MHD with discontinuous
  initial data and vacuum.
\newblock {\em J. Differ. Eqs.}, {\bf 254}(2013), 229--255.

\bibitem{lz2020-mhd}
Y. Liu,  X. Zhong,
 Global well-posedness to three-dimensional full compressible
  magnetohydrodynamic equations with vacuum.
\newblock {\em Z. Angew. Math. Phys.}, {\bf 71}(6)(2020), 1--25.

\bibitem{lz2021-fmhd}
Y. Liu,  X. Zhong,
Global existence and decay estimates of strong solutions for
  compressible non-isentropic magnetohydrodynamic flows with vacuum.
 arXiv: 2108.06726.

\bibitem{lz2022-fmhd}
Y. Liu,  X. Zhong,
Global strong solution for 3D compressible heat-conducting
  magnetohydrodynamic equations revisited.
 arXiv: 2201.12069.

\bibitem{lhm}
 H. Louati, M. Meslameni, U. Razafison,  Weighted $L^p$ theory for vector potential operators in three-dimensional exterior domains. {\it  Math.  Meth. Appl. Sci.}, {\bf 39}(8)(2014), 1990-2010.

\bibitem{lh2016}
L. Lu,  B. Huang,
On local strong solutions to the {Cauchy} problem of the
  two-dimensional full compressible magnetohydrodynamic equations with vacuum  and zero heat conduction.
\newblock {\em Nonlinear Anal.: Real World Appl.}, {\bf 31}(2016), 409--430.

\bibitem{lh2015}
B.Q. L\"u, B. Huang,
 On strong solutions to the Cauchy problem of the two-dimensional
  compressible magnetohydrodynamic equations with vacuum.
\newblock {\em Nonlinearity}, {\bf 28}(2)(2015), 509--530.

\bibitem{lsx2016}
B.Q. L\"u, X.D. Shi, X.Y. Xu,
 Global existence and large-time asymptotic behavior of strong
  solutions to the compressible magnetohydrodynamic equations with vacuum.
\newblock {\em Indiana Univ. Math. J.}, {\bf 65}(3)(2016), 925--975.

\bibitem{M1} A. Matsumura, T.   Nishida,   The initial value problem for the equations of motion of viscous and heat-conductive gases. {\it J. Math. Kyoto Univ.}, {\bf 20}(1980), 67--104.


\bibitem{ANIS}
A. Novotny, I.  Straskraba, {\it Introduction to the mathematical theory of compressible flow}. Oxford Lecture Ser. Math. Appl., Oxford Univ. Press, Oxford, 2004.






\bibitem{pg}
X.K. Pu, B.L. Guo, Global existence and convergence rates of smooth solutions for the full compressible MHD equations. {\it Z. Angew. Math. Phys.}, {\bf 64}(2013), 519--538.

\bibitem{sh2012}
A. Suen, D. Hoff,
\newblock Global low-energy weak solutions of the equations of
  three-dimensional compressible magnetohydrodynamics.
\newblock {\em Arch. Ration. Mech. Anal.}, {\bf 205}(2012), 27--58.

\bibitem{tw2018}
Z. Tan, Y.J. Wang,
 Global well-posedness of an initial-boundary value problem for
  viscous non-resistive MHD systems.
\newblock {\em SIAM J. Math. Anal.}, {\bf 50}(1)(2018), 1432--1470.

\bibitem{tg2016}
T. Tang, H.J. Gao,
 Strong solutions to 3D compressible magnetohydrodynamic equations
  with Navier-slip condition.
\newblock {\em Math. Meth. Appl. Sci.}, {\bf 39}(10)(2016), 2768--2782.

\bibitem{Tani}
A. Tani, On the first initial-boundary value problem of compressible viscous fluid motion. {\it Publ. Res. Inst. Math. Sci. Kyoto Univ.}, \textbf{13}(1977), 193--253.

\bibitem{vww}
W. von Wahl, Estimating $\nabla u$ by $\div u$ and $\curl u$. {\it Math. Meth. Appl. Sci.}, \textbf{15}(1992), 123--143.

\bibitem{Wang2003}
D.H. Wang,
 Large solutions to the initial-boundary value problem for planar
  magnetohydrodynamics.
{\em SIAM J. Math. Anal.}, {\bf 63}(4)(2003), 1424--1441.

\bibitem{ww2017}
J.H. Wu, Y.F. Wu,
Global small solutions to the compressible 2D magnetohydrodynamic
  system without magnetic diffusion.
\newblock {\em Adv. Math.}, {\bf 310}(2017), 759--888.

\bibitem{xh2017}
S. Xi,  X.W. Hao,
Existence for the compressible magnetohydrodynamic equations with
  vacuum.
\newblock {\em J. Math. Anal. Appl.}, {\bf 453}(2017), 410--433.

\bibitem{zhu2015}
S.G. Zhu,
On classical solutions of the compressible magnetohydrodynamic
  equations with vacuum.
\newblock {\em SIAM J. Math. Anal.}, {\bf 47}(4)(2015), 2722--2753.
\end {thebibliography}

\end{document}